\documentclass[a4paper,11pt]{amsart}
\title{Gopakumar-Vafa invariants via vanishing cycles}
\date{}
\author{Davesh Maulik and Yukinobu Toda}

\usepackage{fancybox}
\usepackage{amscd}
\usepackage{amsmath}
\usepackage{amssymb}
\usepackage{amsthm}
\usepackage{float}
\usepackage[dvips]{graphicx}
\usepackage{tikz}
\usepackage[all,ps,dvips]{xy}

\usepackage{color}

\usepackage{array}
\usepackage{amscd}

\DeclareFontFamily{U}{rsfs}{%
\skewchar\font127}
\DeclareFontShape{U}{rsfs}{m}{n}{%
<-6>rsfs5<6-8.5>rsfs7<8.5->rsfs10}{}
\DeclareSymbolFont{rsfs}{U}{rsfs}{m}{n}
\DeclareSymbolFontAlphabet
{\mathrsfs}{rsfs}
\DeclareRobustCommand*\rsfs{%
\@fontswitch\relax\mathrsfs}

\theoremstyle{plain}
\newtheorem{thm}{Theorem}[section]
\newtheorem{prop}[thm]{Proposition}
\newtheorem{lem}[thm]{Lemma}

\newtheorem{defi}[thm]{Definition}
\newtheorem{rmk}[thm]{Remark}
\newtheorem{cor}[thm]{Corollary}

\newtheorem{prop-defi}[thm]{Proposition-Definition}
\newtheorem{thm-defi}[thm]{Theorem-Definition}
\newtheorem{lem-defi}[thm]{Lemma-Definition}

\newtheorem{conj}[thm]{Conjecture}

\newdimen\argwidth
\def\db[#1\db]{
 \setbox0=\hbox{$#1$}\argwidth=\wd0
 \setbox0=\hbox{$\left[\box0\right]$}
  \advance\argwidth by -\wd0
 \left[\kern.3\argwidth\box0 \kern.3\argwidth\right]}

\newcommand{\cC}{\mathcal{C}}

\newcommand{\eE}{\mathcal{E}}
\newcommand{\fF}{\mathcal{F}}
\newcommand{\gG}{\mathcal{G}}
\newcommand{\hH}{\mathcal{H}}
\newcommand{\iI}{\mathcal{I}}

\newcommand{\lL}{\mathcal{L}}
\newcommand{\mM}{\mathcal{M}}

\newcommand{\oO}{\mathcal{O}}
\newcommand{\pP}{\mathcal{P}}

\newcommand{\sS}{\mathcal{S}}

\newcommand{\uU}{\mathcal{U}}
\newcommand{\vV}{\mathcal{V}}

\newcommand{\zZ}{\mathcal{Z}}

\newcommand{\bL}{\mathbb{L}}
\newcommand{\wH}{\widehat{H}}

\newcommand{\Hom}{\mathop{\rm Hom}\nolimits}

\newcommand{\dotimes}{\stackrel{\textbf{L}}{\otimes}}
\newcommand{\dR}{\mathbf{R}}

\newcommand{\Hilb}{\mathop{\rm Hilb}\nolimits}

\newcommand{\Pic}{\mathop{\rm Pic}\nolimits}

\newcommand{\Chow}{\mathop{\rm Chow}\nolimits}
\newcommand{\GV}{\mathop{\rm GV}\nolimits}
\newcommand{\GVloc}{\mathop{\rm GV}^{\rm{loc}}\nolimits}

\newcommand{\Sh}{\mathop{\rm Sh}\nolimits}

\newcommand{\Obs}{\mathop{\rm Obs}\nolimits}

\newcommand{\id}{\textrm{id}}

\newcommand{\Ext}{\mathop{\rm Ext}\nolimits}
\newcommand{\Spec}{\mathop{\rm Spec}\nolimits}

\newcommand{\Coh}{\mathop{\rm Coh}\nolimits}

\newcommand{\cneq}{\mathrel{\raise.095ex\hbox{:}\mkern-4.2mu=}}
\newcommand{\eqcn}{\mathrel{=\mkern-4.5mu\raise.095ex\hbox{:}}}

\newcommand{\gr}{\mathop{\rm gr}\nolimits}

\newcommand{\Perv}{\mathop{\rm Perv}\nolimits}

\newcommand{\Ex}{\mathop{\rm Ex}\nolimits}

\newcommand{\oPPer}{\mathop{\rm ^{0}Per}\nolimits}

\newcommand{\iPPer}{\mathop{\rm ^{-1}Per}\nolimits}

\newcommand{\pH}{\mathop{^{p}\mathcal{H}}\nolimits}

\newcommand{\pF}{\mathop{^{{p}}\mathcal{F}}\nolimits}
\newcommand{\oF}{\mathop{^{{0}}\mathcal{F}}\nolimits}
\newcommand{\iF}{\mathop{^{{-1}}\mathcal{F}}\nolimits}
\newcommand{\pT}{\mathop{^{{p}}\mathcal{T}}\nolimits}

\newcommand{\pPPer}{\mathop{\rm ^{\mathit{p}}Per}\nolimits}

\newcommand{\PT}{\mathop{\rm PT}\nolimits}
\newcommand{\IC}{\mathop{\rm IC}\nolimits}

\newcommand{\Sym}{\mathop{\rm Sym}\nolimits}

\newcommand{\Imm}{\mathop{\rm Im}\nolimits}

\newcommand{\Ker}{\mathop{\rm Ker}\nolimits}

\newcommand{\RHom}{\mathop{\dR\mathrm{Hom}}\nolimits}

\makeatletter
 
  \@addtoreset{equation}{section}
\makeatother

\begin{document}
\maketitle

\begin{abstract}
In this paper, we propose an ansatz for defining Gopakumar-Vafa invariants of Calabi-Yau threefolds, using perverse sheaves of vanishing cycles.  
Our proposal is a modification 
of a recent approach of Kiem-Li, 
which is itself based on earlier ideas of Hosono-Saito-Takahashi.   
We conjecture that
 these invariants are equivalent
to other curve-counting theories such as 
Gromov-Witten theory and Pandharipande-Thomas theory.  

Our main theorem is that, for local surfaces, our invariants agree with PT invariants for irreducible one-cycles. 
We also give a counter-example to the Kiem-Li conjectures, where our invariants match the predicted answer.  Finally, we give examples where
our invariant matches the expected answer in cases where the cycle is non-reduced, non-planar, or non-primitive.
\end{abstract}

\setcounter{tocdepth}{1}
\tableofcontents

\section{Introduction}
\subsection{Background}
Let $X$ be a smooth projective Calabi-Yau 
threefold over $\mathbb{C}$. 
For $g\ge 0$ and $\beta \in H_2(X, \mathbb{Z})$, 
the corresponding \textit{Gromov-Witten invariant} 
\begin{align*}
\mathrm{GW}_{g, \beta}
=\int_{[\overline{M}_g(X, \beta)]^{\rm{vir}}} 1 
 \in \mathbb{Q}
\end{align*}
enumerates 
stable maps $f \colon C \to X$
from connected, nodal curves 
$C$ of 
arithmetic genus $g$ such that $f_{\ast}[C]=\beta$. 
In general, these invariants are given by
an infinite sequence of rational numbers; nevertheless, 
for fixed $\beta$,
they are expected to be controlled by a finite collection of
integer invariants.  Indeed,
based on the string duality between type IIA 
and M theory, 
 Gopakumar-Vafa~\cite{GV} 
conjectured the existence of
integer-valued invariants
\begin{align}\label{intro:ng}
n_{g, \beta} \in \mathbb{Z}, \ 
g\ge 0, \ \beta \in H_2(X, \mathbb{Z})
\end{align}
which vanish for sufficiently large $g$
and which determine the Gromov-Witten series by the identity
\begin{align}\label{intro:GW/GV}
\sum_{\beta>0, g\ge 0}
\mathrm{GW}_{g, \beta}
\lambda^{2g-2}t^{\beta}
=\sum_{\beta>0, g\ge 0, k\ge 1}
\frac{n_{g, \beta}}{k}
\left(2\sin\left( \frac{k\lambda}{2} \right)\right)^{2g-2}
t^{k\beta}. 
\end{align}
The invariants (\ref{intro:ng})
are called \textit{Gopakumar-Vafa (GV for short) invariants}. 

In order to define these invariants directly\footnote{
One can take equation \eqref{intro:GW/GV} as an indirect definition of GV invariants, in which case integrality and vanishing 
become conjectures.  A symplectic approach to proving the integrality is pursued in \cite{Ionel-Parker}.}, 
the original
approach of Gopakumar-Vafa was to use the
$sl_2 \times sl_2$-action 
on the cohomology of a certain moduli space
of D-branes, which should be given 
by a moduli space of one-dimensional sheaves.
The goal of this paper is to propose
an ansatz for making this mathematically precise.  As we will review shortly,
there have been earlier efforts in this direction, most notably by Hosono-Saito-Takahashi~\cite{HST} and Kiem-Li~\cite{KL}.
Our approach is a modification of the recent Kiem-Li proposal via perverse sheaves of vanishing cycles, where 
we use the perverse filtration for the Hilbert-Chow map instead of the action of $sl_2 \times sl_2$.
We then show that our definition of GV invariants 
matches with stable pair invariants introduced by 
Pandharipande-Thomas~\cite{PT}
in several cases, in particular for irreducible one-cycles on local surfaces.

\subsection{Proposed definition}
Let
$\Sh_{\beta}(X)$ denote the moduli space of one-dimensional 
stable sheaves $E$ on $X$ 
satisfying\footnote{The case for other choices of $\chi(E)$
 will be considered in Subsection~\ref{subsec:chi}.}
\begin{align*}
[E]=\beta \in H_2(X, \mathbb{Z}), \ \chi(E)=1.
\end{align*}
Let $\Chow_{\beta}(X)$ denote the Chow 
variety parameterizing effective one-cycles on $X$ with 
homology class $\beta$. 
There is a Hilbert-Chow map
\begin{align}\label{intro:HC}
\pi \colon \Sh_{\beta}^{\rm{red}}(X) \to \Chow_{\beta}(X)
\end{align}
sending 
a stable sheaf to its fundamental one-cycle.
In~\cite{BDJS, KL}, a certain 
perverse sheaf $\phi_{\sS h}$ on $\Sh_{\beta}(X)$
is constructed whose Euler characteristic recovers the usual invariants
of \textit{Donaldson-Thomas (DT for short) theory}~\cite{Thom}. 
Roughly speaking, the moduli space $\Sh_{\beta}(X)$ is 
locally written as a critical locus of 
some function on a smooth scheme, 
and $\phi_{\sS h}$ is obtained by gluing together the locally-defined
sheaves of vanishing cycles.  An important subtlety in this construction is that the gluing is not uniquely 
determined and depends on a choice of \textit{orientation data}, 
that is a square root of the virtual canonical 
line bundle on $\Sh_{\beta}(X)$. 

We propose the following definition of 
GV invariants: 
\begin{defi}\emph{(Definition~\ref{def:GV})}
\label{intro:defi}
We define the GV invariants 
$n_{g, \beta}$ 
by the identity
\begin{align}\label{intro:defGV}
\sum_{i \in \mathbb{Z}} 
\chi(\pH^i(\dR \pi_{\ast}\phi_{\sS h}))y^i=
\sum_{g\ge 0}n_{g, \beta}(y^{\frac{1}{2}}+y^{-\frac{1}{2}})^{2g}. 
\end{align}
\end{defi}
Here 
$\pH^i(-)$ is the $i$-th cohomology functor
with respect to the perverse t-structure.
By the self-duality of $\phi_{\sS h}$ and the Verdier duality, 
the LHS of (\ref{intro:defGV}) is uniquely written as the 
form of the RHS.  
The GV invariants in Definition~\ref{intro:defi}
are obviously integers, which vanish for sufficiently large $g$. 

As currently formulated
our GV invariants in (\ref{intro:defGV})
 depend on a choice of an orientation, 
and a canonical choice is not known. 
We impose an additional restriction
that the orientation data is trivial along the fibers
of  (\ref{intro:HC}).  These are denoted \textit{Calabi-Yau}
orientations, and we conjecture that such choices always exist.
 As we will see, the invariants $n_{g, \beta}$ defined by (\ref{intro:defGV})
are independent of the choice of CY orientation data.

Our definition of GV invariants 
in Definition~\ref{intro:defi}
is related to the character formula 
of the $sl_2 \times sl_2$-actions 
in earlier approaches~\cite{HST, KL}, 
see Subsection~\ref{subsec:previous}.  As far as we know,
the observation that the character formula can be reformulated via perverse cohomology
goes back to work of Chuang-Diaconescu-Pan~\cite{CDP2} on the Hitchin fibration and the survey article
of Pandharipande-Thomas~\cite{PT13} in their discussion of \cite{HST}.

One advantage of this reformulation
is that it naturally 
lifts to a definition of \textit{local GV invariants}, i.e. 
we can define a constructible function (see Definition~\ref{def:locgv})
\begin{align}\label{intro:locgv}
n_{g, -}^{\rm{loc}} \colon \Chow_{\beta}(X) \to \mathbb{Z}
\end{align}
whose integral
over the Chow variety gives $n_{g, \beta}$. 
This makes it possible to compare 
our GV invariants with 
stable pair invariants 
for a fixed one-cycle. 

\subsection{PT/GV correspondence for irreducible cycles}

Given $X$ as before, a
\textit{stable pair} on $X$ consists of 
a pair 
\begin{align*}
(F, s), \ 
s \colon \oO_X \to F
\end{align*}
such that $F$ is a pure one-dimensional sheaf on $X$, 
and $s$ is a morphism whose 
cokernel is at most zero-dimensional. 
Virtual invariants for stable pair spaces were
introduced by Pandharipande-Thomas (PT for short) in~\cite{PT}. 
 In~\cite{PT3}, \textit{local PT invariants} are defined by taking the 
constructible function
\begin{align}\label{intro:locpt}
P_{n, -}^{\rm{loc}} \colon \Chow_{\beta}(X) \to \mathbb{Z}
\end{align}
obtained by integrating the Behrend function~\cite{Beh}
over the locus of the moduli 
space of stable pairs with fixed fundamental cycle. 
The usual \textit{PT invariant} $P_{n, \beta} \in \mathbb{Z}$
is determined from the local invariants by integrating over the Chow variety; when
$X$ is projective, this agrees with the original definition
by virtual classes~\cite{Beh}.

For any threefold, there is a conjectural
equivalence~\cite{MNOP,PT} between
GW and PT invariants, which is proved in many cases~\cite{MNOP, BrPa, PP}. 
In particular, formula (\ref{intro:GW/GV})
implies a conjectural equivalence between PT invariants 
and our GV invariants.
Furthermore, since both PT invariants and our GV invariants
can be refined to local invariants (\ref{intro:locgv}), (\ref{intro:locpt}), 
we can conjecture a local PT/GV correspondence
(see Figure~1). For an irreducible one-cycle, 
it is given as follows:
\begin{conj}\label{intro:conj}
For an irreducible one-cycle
$\gamma \in \Chow_{\beta}(X)$, we have the identity
\begin{align*}
\sum_{n\in \mathbb{Z}}P_{n, \gamma}^{\rm{loc}}q^n=
\sum_{g\ge 0} n_{g, \gamma}^{\rm{loc}}
(q^{\frac{1}{2}}+q^{-\frac{1}{2}})^{2g-2}. 
\end{align*}
\end{conj}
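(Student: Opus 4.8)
The plan is to reduce the identity to a purely fiberwise statement over the point $\gamma\in\Chow_{\beta}(X)$, and then to recognize the two sides as different cohomological shadows of a single family of compactified Jacobians. Let $C$ denote the integral curve underlying the irreducible cycle $\gamma$. First I would analyze the fiber $\pi^{-1}(\gamma)\subset\Sh_{\beta}(X)$: since $\gamma$ is irreducible of multiplicity one, a stable sheaf $E$ with fundamental cycle $\gamma$ is a rank-one torsion-free sheaf on $C$, so $\pi^{-1}(\gamma)$ is identified with the compactified Jacobian $\overline{\mathrm{Jac}}(C)$. On the stable-pair side I would identify the locus of pairs $(F,s)$ with fundamental cycle $\gamma$: because $C$ is integral, the section $s$ is determined by its zero-dimensional cokernel, and such pairs are classified by the Hilbert schemes of points $\Hilb^{n}(C)$. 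Thus $P_{n,\gamma}^{\mathrm{loc}}$ becomes the Behrend-weighted Euler characteristic of $\Hilb^{n}(C)$, while the local GV numbers $n_{g,\gamma}^{\mathrm{loc}}$ are read off from the stalks $\pH^{i}(\dR\pi_{\ast}\phi_{\sS h})_{\gamma}$, i.e. from the perverse cohomology of $\overline{\mathrm{Jac}}(C)$.

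With these identifications in hand, the conjecture becomes a comparison between the Hilbert-scheme generating series of $C$ and the perverse numbers of its compactified Jacobian. The essential bridge is a Macdonald-type formula: the perverse filtration for the Abel-Jacobi map on $\overline{\mathrm{Jac}}(C)$ encodes the cohomology of all the $\Hilb^{n}(C)$, and the combinatorial repackaging is exactly the factor $(q^{1/2}+q^{-1/2})^{2g-2}$ with $g$ the geometric genus of $C$. Concretely I would spread $C$ out in a versal family over a Severi-type base whose total space of curves is smooth, apply the relative decomposition theorem to the relative compactified Jacobian, and extract the fiberwise perverse filtration at $[C]$; this is precisely the content of the support theorem of Migliorini-Shende and the Macdonald formula of Maulik-Yun.

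The step requiring the most care is matching the vanishing-cycle sheaf $\phi_{\sS h}$ with the intersection-cohomology sheaf that enters the Macdonald formula. Here the Calabi-Yau orientation hypothesis is decisive: triviality of the orientation data along the fibers of $\pi$ forces $\phi_{\sS h}$ to restrict, up to the expected shift, to $\IC_{\pi^{-1}(\gamma)}$, so that $\pH^{i}(\dR\pi_{\ast}\phi_{\sS h})_{\gamma}$ genuinely computes the perverse cohomology of $\overline{\mathrm{Jac}}(C)$ appearing above, independently of the chosen CY orientation. In parallel I would verify that the Behrend function on the stable-pair moduli agrees with the signed Euler characteristic produced by the same vanishing-cycle data, which is what certifies that $P_{n,\gamma}^{\mathrm{loc}}$ is the correct left-hand input.

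The main obstacle is generality. For an arbitrary Calabi-Yau threefold $X$ the curve $C$ may have non-planar singularities; then the versal total space is no longer smooth, the support theorem of Migliorini-Shende is unavailable, and $\phi_{\sS h}$ is a genuine vanishing-cycle complex rather than a shifted constant or $\IC$ sheaf. The crux is therefore to establish a decomposition/support theorem for $\dR\pi_{\ast}\phi_{\sS h}$ valid for non-planar integral curves---in effect a vanishing-cycle enhancement of the Maulik-Yun formula. When $X$ is a local surface the curve $C$ is automatically planar and this obstacle disappears, which is why the fiberwise comparison can be pushed through unconditionally in that case.
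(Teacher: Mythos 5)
Your overall skeleton---identify sheaves supported on $\gamma$ with rank-one torsion-free sheaves on the integral curve $C$, identify stable pairs with Hilbert schemes of points on $C$, and feed everything into the versal Macdonald formula of Maulik--Yun and Migliorini--Shende---is indeed the paper's route (Section~\ref{sec:PT/GV}). But the mechanism you propose at the decisive step contains a genuine error: it is \emph{not} true that a CY orientation forces $\phi_{\sS h}$ to restrict, up to shift, to $\IC_{\pi^{-1}(\gamma)}$, nor that $\pH^{i}(\dR\pi_{\ast}\phi_{\sS h})|_{\gamma}$ computes the plain perverse cohomology of $\overline{\mathrm{Jac}}(C)$. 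The moduli space $\Sh_{\beta}(X)$ is a genuine critical locus: already for the rigid nodal rational curve of Subsection~\ref{ex:enriques}, $\phi_{\sS h}$ is locally the vanishing-cycle complex of $xyu$, which is neither pure nor the IC sheaf of its support; and if your claim held, the invariants here would coincide with the Hosono--Saito--Takahashi and Kiem--Li prescriptions, contradicting the counterexample of Section~\ref{sec:compare}. A CY orientation only trivializes the twisting local system $\lL_{\xi}$ in Theorem~\ref{thm:BDJS}; it does not convert a vanishing-cycle sheaf into an intersection complex. Relatedly, perverse truncation of $\dR\pi_{\ast}\phi_{\sS h}$ along $\Chow_{\beta}(X)$ is not a fiberwise operation, so the opening reduction ``to a purely fiberwise statement over the point $\gamma$'' is not available as stated. (A smaller slip: in the factor $(q^{1/2}+q^{-1/2})^{2g-2}$ the exponent carries the summation index, with the $-2$ coming from the prefactor $q/(1+q)^{2}$ in (\ref{id:versal}); the normalization $q^{n+1-g}$ uses the \emph{arithmetic} genus, not the geometric one.)

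What the paper actually does (Proposition~\ref{prop:irred2}) sidesteps any fiberwise identification: one constructs a strictly CY $d$-critical chart (Definition~\ref{defi:slcy}) exhibiting $\Sh_{U}$ as the critical locus of $f_{J}=g\circ\pi_{J}$ on the smooth relative compactified Jacobian $\overline{J}$, and $P_{U}$ as the critical locus of $f^{[n]}=g\circ\pi^{[n]}$ on $\cC^{[n]}$, for a \emph{single} function $g$ on the versal base $T$. Then $\phi_{\sS h}\cong\phi_{f_{J}}(\IC(\overline{J}))$, and instead of restricting to the fiber one applies the perverse-t-exact functor $\phi_{g}$ to the identity (\ref{id:versal}) in $K(\Perv(T))$, using compatibility of vanishing cycles with proper push-forward, and finally takes stalk Euler characteristics at $i_{T}(\gamma)$. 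In particular the ``vanishing-cycle enhancement of the Macdonald formula'' that you single out as the crux comes for free from functoriality once the chart exists; the genuine technical difficulty, and the content of Sections~\ref{sec:local}--\ref{sec:smooth}, is producing such charts---via the dual obstruction cone over $\Sh_{S}$, $P_{S}$ for local surfaces, and via the identification of the moduli space with the relative Jacobian of the universal smooth curve in the smooth-curve case. Your final paragraph correctly observes that the non-planar case is open, but the missing ingredient is not a support theorem for $\dR\pi_{\ast}\phi_{\sS h}$ so much as the existence of a strictly CY chart over a base where a Macdonald-type formula is known.
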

We will formulate the conjecture for general one-cycles in~Conjecture~\ref{GV:conj2};
it requires contributions coming from effective summands of the cycle $\gamma$.

The first main result of this paper is to 
prove Conjecture~\ref{intro:conj} for local surfaces. 
\begin{thm}\emph{(Theorem~\ref{thm:locsur})}
\label{intro:thm1}
Let $S$ be a smooth projective surface 
with $H^1(\oO_S)=0$, and 
\begin{align*}
p \colon X=\mathrm{Tot}(K_S) \to S
\end{align*}
the non-compact CY 3-fold. 
Then Conjecture~\ref{intro:conj} is true for 
any one-cycle $\gamma$ on $X$ 
such that $p_{\ast}\gamma$ is irreducible. 
\end{thm}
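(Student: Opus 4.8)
The plan is to reduce both the GV and PT sides to the cohomology of a single compactified Jacobian, exploiting the fiberwise $\mathbb{C}^\ast$-action on $X=\mathrm{Tot}(K_S)$. The first step is the spectral correspondence of Beauville--Narasimhan--Ramanan: pushing forward along $p$ identifies a compactly supported one-dimensional sheaf $E$ on $X$ with a Higgs pair $(p_\ast E,\phi)$ on $S$, where $\phi \colon p_\ast E \to p_\ast E\otimes K_S$ is multiplication by the tautological section. Under this identification $\Sh_\beta(X)$ becomes a moduli space of Higgs sheaves on $S$, and the Hilbert--Chow map $\pi$ becomes the Hitchin-type map sending $(E,\phi)$ to its spectral curve $\widetilde C \subset X$. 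The hypothesis that $p_\ast\gamma$ is irreducible means that the corresponding spectral curve lies over the fixed integral base curve $C=\Supp(p_\ast\gamma)\subset S$; the assumption $H^1(\oO_S)=0$ guarantees that the spectral curves sweep out a linear system, so that the Hitchin base is smooth and the relative compactified Jacobian behaves as in the classical Hitchin picture.

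The crucial second step is a dimensional reduction for the perverse sheaf of vanishing cycles. Since $X$ is the total space of a line bundle, the local superpotential cutting out $\Sh_\beta(X)$ as a critical locus is linear along the Higgs-field directions, and one shows that $\phi_{\sS h}$ is isomorphic, up to a canonical shift and twist, to the intersection complex $\IC$ of the Higgs moduli space. This simultaneously produces a Calabi--Yau orientation, so that $\phi_{\sS h}$, and hence the local invariants $n_{g,\gamma}^{\mathrm{loc}}$, are canonically defined and independent of further choices. Consequently $\dR\pi_\ast\phi_{\sS h}$ is computed by the pushforward of $\IC$ along the Hitchin map, and the local GV invariants at $\gamma$ are read off from the perverse filtration of this pushforward restricted to the point $\gamma\in\Chow_\beta(X)$.

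To turn this into a computable local statement I would invoke Ngô's support theorem: over the locus of integral spectral curves the only support of $\dR\pi_\ast\phi_{\sS h}$ is the whole Hitchin base, so the perverse cohomology sheaves $\pH^i(\dR\pi_\ast\phi_{\sS h})$ are shifted intersection complexes whose stalks at $\gamma$ recover the cohomology of the compactified Jacobian $\overline{\mathrm{Jac}}(\widetilde C_\gamma)$ of the spectral curve, graded by perverse degree. This identifies the left-hand series $\sum_{g\ge 0} n_{g,\gamma}^{\mathrm{loc}}(y^{1/2}+y^{-1/2})^{2g}$ with the perverse Poincaré polynomial of $\overline{\mathrm{Jac}}(\widetilde C_\gamma)$.

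On the PT side, for the integral support curve the local stable-pair locus with fundamental cycle $\gamma$ is a fiber of the relative Hilbert scheme of points on the spectral curve, and $P_{n,\gamma}^{\mathrm{loc}}$ is its Behrend-weighted Euler characteristic. The final step is to match the two series through the Hilbert-scheme/compactified-Jacobian correspondence for integral planar curves (the Macdonald-type results of Migliorini--Shende and Maulik--Yun), which expresses $\sum_n\chi(\Hilb^n(\widetilde C_\gamma))q^n$ as the same Poincaré polynomial of $\overline{\mathrm{Jac}}(\widetilde C_\gamma)$, now arranged in the basis $(q^{1/2}+q^{-1/2})^{2g-2}$; the change from exponent $2g$ to $2g-2$ accounts for the relative dimension between stable pairs and sheaves. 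I expect the main obstacle to lie in the second step, namely promoting the dimensional reduction to an isomorphism of perverse sheaves (rather than merely of Euler characteristics) and verifying that the perverse filtration for $\pi$ is compatible with the cohomological grading on the compactified Jacobian; this is precisely the point at which the vanishing-cycle definition must be converted into classical Hodge-theoretic data, after which the support theorem and the Macdonald formula apply directly.
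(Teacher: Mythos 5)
Your skeleton has the right endpoints: the spectral correspondence (which is exactly the paper's description of $p_{\ast}\colon {\Sh}_X\to{\Sh}_S$ as a dual obstruction cone, Lemma~\ref{lem:Mobs}) and the reduction to the Maulik--Yun/Migliorini--Shende Macdonald formula for integral planar curves. But the middle step is a genuine gap, not just a technical obstacle. Your dimensional-reduction claim is false as stated: under the spectral correspondence, the ``Higgs moduli space'' \emph{is} $\Sh_{\beta}(X)$ itself, so you are asserting $\phi_{\sS h}\cong \IC(\Sh_{\beta}(X))$ up to shift and twist. This fails whenever $\Sh_{\beta}(X)$ is singular: in the rigid nodal rational curve example (Subsection~\ref{ex:enriques}) the moduli space is analytically the critical locus of $xyu$, and there $\phi_{\sS h}$ is a non-pure, non-semisimple perverse sheaf whose weight-graded pieces are the IC sheaves of the three components plus skyscrapers in weights $\pm 1$; Section~\ref{sec:compare} shows moreover that the weight spectral sequence (\ref{weight:spectral}) need not degenerate, which is precisely the discrepancy between the vanishing-cycle invariants and any IC-based (HST/KL-type) definition. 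What dimensional reduction actually computes is the pushforward of $\phi_{\sS h}$ along the cone projection $p_{\ast}\colon {\Sh}_X\to {\Sh}_S$ (the Higgs-field directions), not the sheaf itself; and that projection is a \emph{different} fibration from the Hilbert--Chow map $\pi$ to $\Chow_{\beta}(X)$, since $\pi$ remembers the spectral curve while $p_{\ast}$ forgets it. Because $\phi_{\sS h}$ is not pure, your appeal to Ng\^o's support theorem for $\dR\pi_{\ast}\phi_{\sS h}$ has no decomposition theorem behind it; and the family of cycles over $\Chow_{\beta}(X)$ is not versal or even equigeneric (in the example above $\Chow_{\beta}(X)=\mathbb{A}^1$, with generic cycle a smooth rational curve and special cycle of arithmetic genus one), so support-theorem or Macdonald-type statements cannot be invoked for that family directly.

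The paper's mechanism avoids both problems and never converts $\phi_{\sS h}$ into classical Hodge-theoretic data. Locally over the Chow variety, ${\Sh}_X$ is quotiented by a free action of the vector-bundle group $O_H$, and the quotient is identified with the critical locus of an explicit function $f_J$ on $\overline{J}_U\times_U E^{\vee}$, where $\overline{J}\to T$ is the smooth relative compactified Jacobian of a \emph{versal} family and $E$ is a bundle with regular section cutting out the linear system $H_{\beta}\subset T$ (diagram (\ref{dia:MR2})). The Macdonald identity of Theorem~\ref{thm:versal} is proved upstairs, where total spaces are smooth and IC sheaves are constant, and is transported downstairs by applying the vanishing cycle functor $\phi_g$, which commutes with proper pushforward and preserves the perverse t-structure (Proposition~\ref{prop:irred2}); this functoriality is what substitutes for purity and for the support theorem in your sketch. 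The CY orientation you hoped to get for free comes instead from triviality of the relative canonical bundle of $\overline{J}$ (Lemma~\ref{lem:defphiM}), and matching the Hilbert--Chow fibration of $X$ with the critical-locus picture costs smooth surjections with fibers $H^1(\oO_C(C))^{\vee}$ on all three spaces ${\Sh}_X$, $P_X$, $\Chow_{\beta}(X)$ (Lemmas~\ref{lem:relphi} and~\ref{lem:relP}) --- without that step the stalk of $\dR\pi_{\ast}\phi_{\sS h}$ at $\gamma$ cannot be compared with anything computed from the versal family.
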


We also prove Conjecture~\ref{intro:conj}
for smooth one-cycles on an arbitrary Calabi-Yau threefold $X$.

\begin{thm}\emph{(Theorem~\ref{thm:smooth})}
\label{intro:thm2}
For a smooth curve $C \subset X$, 
Conjecture~\ref{intro:conj} is true 
for the one cycle $\gamma=[C]$. 
\end{thm}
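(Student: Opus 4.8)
The plan is to reduce everything to a local computation near the fibre over the point $\gamma=[C]\in\Chow_\beta(X)$, exploiting the smoothness of $C$ to put both moduli spaces into the same fibred form over the deformation space of $C$. Set-theoretically the two relevant loci are easy to identify. Since $C$ is a smooth irreducible curve, a stable sheaf $E$ with $\chi(E)=1$ whose fundamental cycle is $[C]$ is a line bundle of degree $g$ on $C$; hence the fibre $\pi^{-1}(\gamma)$ is $\Pic^{g}(C)\cong\mathrm{Jac}(C)$, an abelian variety of dimension $g$. Likewise a stable pair with fundamental cycle $[C]$ and $\chi=n$ is $(\oO_C\to L,\,s)$ with $\deg L=n+g-1$ and $s$ a nonzero section, so the PT locus with fixed cycle $\gamma$ is $\Sym^{n+g-1}(C)$, smooth of dimension $n+g-1$.

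The technical heart is a local product lemma. Let $N=N_{C/X}$; by Calabi-Yau adjunction $\det N\cong K_C$ and $\chi(N)=0$, while the deformations of $C$ inside $X$ are governed by $H^0(N)$ with obstructions $H^1(N)$, and Serre duality gives $H^1(N)^\ast\cong H^0(N)$. Thus the local deformation space of $[C]$ in $\Chow_\beta(X)$ is a critical locus $B=\Crit(f_B)$ for a function $f_B$ on the smooth germ $H^0(N)$. I claim that near the fibre over $\gamma$ the support map factors as $\pi=\iota\circ p$, where $\iota\colon B\hookrightarrow\Chow_\beta(X)$ is the inclusion and $p\colon\Sh_\beta(X)\to B$ is the relative Picard scheme $\Pic^{g}(\mathcal C/B)$, a proper smooth abelian fibration of relative dimension $g$; the analogous statement holds for the PT space with $\Pic$ replaced by the relative symmetric product $\Sym^{n+g-1}(\mathcal C/B)$. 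The point is that line bundles on a fixed smooth curve are unobstructed ($H^2(\oO_C)=0$), so the entire obstruction is concentrated in the curve-deforming directions $H^1(N)$, and consequently the $d$-critical (local Landau--Ginzburg) structure is pulled back from $B$. Proving this splitting carefully, i.e.\ that the superpotential on the smooth chart of $\Sh_\beta(X)$ is $p^\ast f_B$ with no mixing between the Jacobian and normal-bundle directions, is the step I expect to be the main obstacle.

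Granting the lemma, the GV side is a short computation. Smooth-pullback compatibility of vanishing cycles gives $\phi_{\sS h}=p^\ast\phi_B[g]$, where $\phi_B=\phi_{f_B}(\mathbb Q[\dim H^0(N)])$; here the hypothesis that the orientation be Calabi-Yau, i.e.\ trivial along the fibres of $\pi$, is exactly what guarantees that no extra rank-one twisting local system appears along $\Pic^{g}(C)$. Since $p$ is a proper smooth abelian fibration, $\dR p_\ast\mathbb Q\cong\bigoplus_{k=0}^{2g}\wedge^k\mathbb V[-k]$ with $\mathbb V=R^1p_\ast\mathbb Q$ of rank $2g$, so the projection formula and the exactness of $\iota_\ast$ give $\pH^i(\dR\pi_\ast\phi_{\sS h})=\iota_\ast(\phi_B\otimes\wedge^{\,i+g}\mathbb V)$. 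Taking the stalk at $\gamma$ and writing $c\cneq\chi((\phi_B)_0)$ for the Behrend-type local contribution of $B$ at $[C]$, the defining identity becomes $\sum_i\chi((\pH^i)_\gamma)y^i=c\sum_k\binom{2g}{k}y^{k-g}=c\,(y^{1/2}+y^{-1/2})^{2g}$, so that $n^{\rm loc}_{g,\gamma}=c$ and $n^{\rm loc}_{g',\gamma}=0$ for $g'\neq g$.

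On the PT side the same lemma shows that the Behrend function of the stable-pair space restricted to $\Sym^{n+g-1}(C)$ is the constant $(-1)^{n+g-1}c$, whence $P^{\rm loc}_{n,\gamma}=(-1)^{n+g-1}c\,e(\Sym^{n+g-1}C)$, with $e(-)$ the topological Euler characteristic. Summing over $n$ and applying Macdonald's formula $\sum_m e(\Sym^m C)t^m=(1-t)^{2g-2}$ at $t=-q$ yields $\sum_n P^{\rm loc}_{n,\gamma}q^n=c\,q^{1-g}(1+q)^{2g-2}=c\,(q^{1/2}+q^{-1/2})^{2g-2}$. Comparing with the GV side, both equal $c\,(q^{1/2}+q^{-1/2})^{2g-2}$, which is precisely the identity of Conjecture~\ref{intro:conj}; note that the unknown local factor $c$ occurs identically on the two sides and therefore cancels, so its value never has to be computed.
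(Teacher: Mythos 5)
Your reduction and your final computation are sound \emph{granted} your ``local product lemma'', and granted that lemma your endgame is genuinely more elementary than the paper's: the paper feeds the same statement into Proposition~\ref{prop:irred2}, whose proof rests on the Maulik--Yun/Migliorini--Shende theorem (Theorem~\ref{thm:versal}) for versal families of planar curves, whereas you only use Deligne's decomposition $\dR p_{\ast}\mathbb{Q}\cong\bigoplus_{k}\wedge^{k}\mathbb{V}[-k]$ for a smooth proper family together with the classical Macdonald formula --- which is exactly what Theorem~\ref{thm:versal} specializes to over the locus of smooth fibres. Your side remarks are also consistent with the paper: the CY orientation is indeed what rules out a nontrivial $\mathbb{Z}/2\mathbb{Z}$-twist along $\Pic^{g}(C)$, and your unknown constant $c$ equals $(-1)^{g}\nu_{\Sh}(\oO_C)$, so your answer agrees with the paper's formula (\ref{smooth:GV}), with the cancellation of $c$ between the two sides being exactly how that formula verifies Conjecture~\ref{intro:conj}.

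The genuine gap is the lemma itself, which is not a routine splitting statement: it is the technical heart of the paper's proof (Lemmas~\ref{prop:isomJM}, \ref{lem:isomJM2} and Proposition~\ref{lem:M:CY}), and your justification --- $H^2(\oO_C)=0$, hence all obstructions sit in the $H^1(N_{C/X})$ directions, hence the $d$-critical structure is pulled back --- is a tangent/obstruction-space heuristic that proves nothing about superpotentials. Three things must actually be supplied. First, a scheme-theoretic (not just pointwise) identification of the locus $\Sh'$ of sheaves supported on smooth curves with the relative Jacobian of the universal curve; the paper obtains this by proving that the universal support subscheme, cut out by the annihilator ideal of the universal sheaf, is flat over $\Sh'$. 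Second, a smooth ambient space in which to realize a critical chart: the universal curve over the Hilbert-scheme germ $B$ does not extend over the smooth germ $H^0(N_{C/X})$, so ``$\Pic^{g}(\cC/B)$'' is $\Sh'$ itself rather than a smooth space containing it; the paper instead embeds the Hilbert scheme $H$ of smooth curves into a smooth versal base $T$ (Lemma~\ref{lem:versal}) and uses the relative Jacobian $J\to T$, which is smooth, as the ambient. Third --- and this is precisely where ``no mixing between the Jacobian and normal-bundle directions'' lives --- a proof that Joyce's section $s_{\Sh}\in\Gamma(\sS_{\Sh'}^{0})$ descends, i.e.\ equals $\pi_{\Sh}^{\ast}s_H$ for a $d$-critical structure $s_H$ on $H$; the paper does this by a cohomological computation (exactness in (\ref{exact:S}), using that $\pi_{J\ast}(\oO_J/\pi_J^{\ast}I^2)\cong\oO_T/I^2$ and that $\Omega_T$ injects into $\Omega_J$ modulo $I$) and then invokes Propositions~2.7 and~2.8 of \cite{JoyceD} to produce the chart. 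The same descent must be carried out on the pairs side before you may assert that the Behrend function is constant equal to $(-1)^{n+g-1}c$ on $\Sym^{n+g-1}(C)$. None of this follows formally from unobstructedness of line bundles, which is why the paper devotes most of Section~\ref{sec:smooth} to it.
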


The situations in Theorem~\ref{intro:thm1}, \ref{intro:thm2}
include many cases where both 
$\Sh_{\beta}(X)$ and $\Chow_{\beta}(X)$ are singular. 
In particular, in these cases, the sheaf $\phi_{\sS h}$ will typically not be pure and the Behrend function will not be constant.
The main idea to prove the above theorems is to use
the vanishing cycles functor to reduce to the generalized Macdonald formula 
for versal deformations of locally planar curves, proved in~\cite{MY2, MS}.
The basic outline of this reduction is explained in Section~\ref{sec:PT/GV},
which we recommend for readers who just want to see the essential concept without
the (many) technical details.

\subsection{Examples for non-reduced cycles}

One limitation of the theorems above is they only apply in cases where the one-cycle is integral.
In Section~\ref{sec:non-red}, 
we produce an infinite family of examples where the local PT/GV 
correspondence holds for 
one-cycles that are non-reduced or non-planar.

The idea for the construction is
to study how our invariants behave for certain 3-fold flops and
combine this analysis with Theorem~\ref{intro:thm1}, \ref{intro:thm2}.
We will show the following result:
\begin{thm}\emph{(Corollary~\ref{cor:ng2})}
Let $\phi \colon X \dashrightarrow X^{\dag}$ be a flop between CY 3-folds 
and $C \subset X$ an irreducible curve which is not contained in $\Ex(\phi)$. 
Suppose that Conjecture~\ref{intro:conj}
holds for $\gamma=[C]$. Then the local PT/GV correspondence holds for 
$\phi_{\ast}\gamma$.
\end{thm}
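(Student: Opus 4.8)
The plan is to reduce the statement to the flop-invariance of each side of the local PT/GV correspondence separately, and then invoke the hypothesis for $\gamma$. Let $\bar X$ be the common base of the flop, so that $X$ and $X^{\dag}$ are small resolutions of $\bar X$, isomorphic in codimension one; this identifies numerical curve classes, and we set $\beta=[C]$ and $\beta^{\dag}=\phi_{\ast}\beta$. Since $C\not\subset\Ex(\phi)$, the map $\phi$ is an isomorphism near the generic point of $C$, so $\phi(C)$ is again an irreducible curve, while its cycle $\phi_{\ast}\gamma$ may become non-reduced or non-planar according to how $C$ meets the flopping locus. I would then aim to prove the two flop-invariance identities
\[
\sum_{n\in\mathbb{Z}} P_{n,\phi_{\ast}\gamma}^{\rm loc}\, q^n = \sum_{n\in\mathbb{Z}} P_{n,\gamma}^{\rm loc}\, q^n
\]
for the stable pair side and
\[
\sum_{g\ge 0} n_{g,\phi_{\ast}\gamma}^{\rm loc}\,\bigl(q^{\frac{1}{2}}+q^{-\frac{1}{2}}\bigr)^{2g-2} = \sum_{g\ge 0} n_{g,\gamma}^{\rm loc}\,\bigl(q^{\frac{1}{2}}+q^{-\frac{1}{2}}\bigr)^{2g-2}
\]
for the vanishing-cycle side; granting both, the identity of Conjecture~\ref{intro:conj} assumed for $\gamma$ transports to the required correspondence (in the general-cycle form of Conjecture~\ref{GV:conj2}) for $\phi_{\ast}\gamma$.

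For the vanishing-cycle side I would use the Bridgeland--Chen derived equivalence $\Phi\colon D^b(X)\simto D^b(X^{\dag})$ attached to the flop. Being a Fourier--Mukai equivalence between Calabi--Yau threefolds, $\Phi$ preserves the local d-critical charts underlying $\Sh_{\beta}(X)$ and, after matching the Calabi--Yau orientation data on the two sides, the glued perverse sheaves of vanishing cycles $\phi_{\sS h}$. The key point is that, although $\Phi$ carries $\Coh(X)$ to a perverse heart on $X^{\dag}$ and hence does not preserve sheaves in general, a one-dimensional stable sheaf $E$ with $[E]=\beta$ and cycle $\gamma$ is supported on a curve meeting $\Ex(\phi)$ in only finitely many points; for such classes one expects $\Phi(E)$ to lie in the standard heart and to be again a one-dimensional stable sheaf with cycle $\phi_{\ast}\gamma$. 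This would give an isomorphism between the relevant loci of $\Sh_{\beta}(X)$ and $\Sh_{\beta^{\dag}}(X^{\dag})$ intertwining the Hilbert--Chow maps over $\gamma$ and $\phi_{\ast}\gamma$ and matching $\phi_{\sS h}$; consequently $\dR\pi_{\ast}\phi_{\sS h}$ and its perverse cohomologies $\pH^i$ agree along these fibers, and taking fiberwise Euler characteristics yields $n_{g,\gamma}^{\rm loc}=n_{g,\phi_{\ast}\gamma}^{\rm loc}$.

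For the stable pair side the same derived equivalence applies: a stable pair $(\oO_X\to F)$ with cycle $\gamma$ should be sent by $\Phi$ to an object on $X^{\dag}$ which, for classes supported away from $\Ex(\phi)$, is again a stable pair with cycle $\phi_{\ast}\gamma$, and since $\Phi$ preserves the Behrend/d-critical structure the weighted counts defining $P_{n,\gamma}^{\rm loc}$ and $P_{n,\phi_{\ast}\gamma}^{\rm loc}$ coincide; alternatively this can be extracted from the known flop invariance of PT theory, refined to a fixed cycle. The main obstacle is precisely the behavior over the flopping locus: even though $C\not\subset\Ex(\phi)$, its image can acquire embedded or non-reduced structure where it meets the flopped curves, so one must verify that $\Phi$ genuinely carries the sheaf-theoretic, pair-theoretic, orientation, and d-critical data across these degenerate points, and that the summand contributions entering the general-cycle correspondence for $\phi_{\ast}\gamma$ are the images of those for $\gamma$. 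Establishing this compatibility on $\Ex(\phi)$ is the heart of the argument, and it is exactly this degeneration that makes the corollary valuable, since it manufactures instances of the correspondence for non-reduced, non-planar, or non-primitive cycles outside the reach of Theorems~\ref{intro:thm1} and \ref{intro:thm2}.
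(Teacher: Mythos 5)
Your overall strategy---flop-invariance of each side separately, then transport of the correspondence from $\gamma$ to $\phi_{\ast}\gamma$---is the same as the paper's, and your treatment of the sheaf/GV side matches it: the Bridgeland equivalence $\Phi$ with kernel $\oO_{X\times_Y X^{\dag}}$ induces an isomorphism between the relevant loci of $\Sh_{\beta}(X)$ and $\Sh_{\phi_{\ast}\beta}(X^{\dag})$, intertwining the Hilbert--Chow maps and matching $d$-critical structures and CY orientations, whence $n_{g,\gamma}^{\rm loc}=n_{g,\phi_{\ast}\gamma}^{\rm loc}$. (The paper makes your ``one expects'' step rigorous using the perverse hearts $\oPPer_{\le 1}(X/Y)$ and $\iPPer_{\le 1}(X^{\dag}/Y)$ and purity of pushforwards to $Y$.)

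The genuine gap is on the stable-pair side. The identity you set out to prove,
\begin{align*}
\sum_{n\in\mathbb{Z}} P_{n,\phi_{\ast}\gamma}^{\rm loc}\, q^n
=\sum_{n\in\mathbb{Z}} P_{n,\gamma}^{\rm loc}\, q^n ,
\end{align*}
is false in general, and the mechanism you propose for it (that $\Phi$ carries stable pairs with cycle $\gamma$ to stable pairs with cycle $\phi_{\ast}\gamma$) does not work: stable pairs are \emph{not} preserved by the flop equivalence, which is exactly why PT flop invariance is a wall-crossing theorem (Toda, Calabrese) of the form $\phi_{\ast}\left(\PT(X)/\PT(X/Y)\right)=\PT(X^{\dag})/\PT(X^{\dag}/Y)$, with nontrivial correction factors from pairs supported on the exceptional locus. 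When $C$ meets $\Ex(\phi)$ with multiplicity $m\ge 1$, the cycle $\phi_{\ast}\gamma$ contains the flopped curve with multiplicity $m$, so the moduli of pairs with cycle $\phi_{\ast}\gamma$ is a genuinely different space, and the raw local PT series at $\phi_{\ast}\gamma$ receives convolution contributions from effective sub-cycles (multiples of the exceptional curve) that have no counterpart at the irreducible cycle $\gamma$. What is flop-invariant is the BPS-type local invariant obtained after taking the logarithm defining $n_{g,-}^{P,\rm loc}$, namely $n_{g,\gamma}^{P,\rm loc}=n_{g,\phi_{\ast}\gamma}^{P,\rm loc}$; this is what the paper extracts by localizing the wall-crossing argument, and it is the only correct form of your parenthetical alternative (``known flop invariance of PT theory, refined to a fixed cycle''). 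Note moreover that if both of your raw identities did hold, combined with the hypothesis at $\gamma$ they would yield $\sum_n P_{n,\phi_{\ast}\gamma}^{\rm loc}q^n=\sum_{g\ge 0} n_{g,\phi_{\ast}\gamma}^{\rm loc}(q^{1/2}+q^{-1/2})^{2g-2}$, i.e. the irreducible-cycle form of the correspondence at $\phi_{\ast}\gamma$; this is not Conjecture~\ref{GV:conj2} and generically contradicts it, since the contributions of effective summands of $\phi_{\ast}\gamma$ are typically nonzero.
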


In typical examples, the flopped cycle $\phi_{\ast}\gamma$ can be arranged to be non-reduced and non-planar.
In such cases,
the correspondence requires the more general formulation of
Conjecture~\ref{GV:conj2}.  
In the examples obtained above, the contributions from effective summands of the reducible one-cycle will typically be nonzero.  Furthermore, we can iterate the above theorem to obtain
more complicated one-cycles where our conjecture holds.

In Section~\ref{sec:non-prim}, we also give examples of the local PT/GV correspondence where the one-cycle is non-primitive.
One interesting family of such examples is due to 
Chuang-Diaconescu-Pan~\cite{CDP2}, when $X$ is the total space of $K_C \oplus \mathcal{O}_C$ over a curve $C$ of genus $g$.
In this case, the conjecture for $\beta = r[C]$ is a consequence of the $P=W$ conjecture for Higgs bundles of rank $r$ on the curve $C$; since
the $P=W$ conjecture is proven in rank $2$, this gives an infinite family of non-primitive examples.

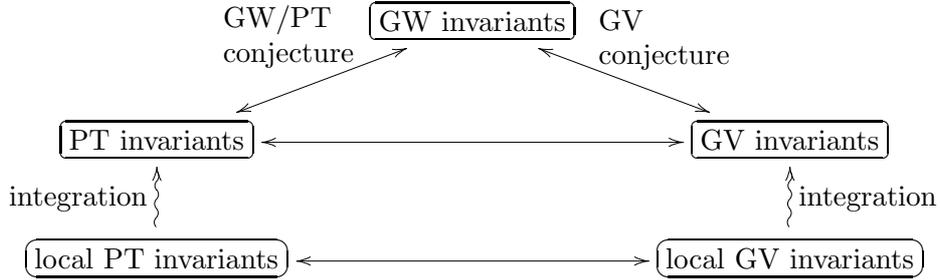
\begin{figure}\label{fig:intro}
\begin{align*}
\xymatrix{
& \ar@{<->}[ld]_-{\begin{array}{l}\mbox{GW/PT} \\
\mbox{conjecture}
\end{array}}
  \ovalbox{\mbox{GW invariants}}  \ar@{<->}[rd]^{\begin{array}{l}
\mbox{GV} \\
\mbox{conjecture}
\end{array}} & \\
\ovalbox{\mbox{PT invariants}} \ar@{<->}[rr] &  &  \ovalbox{\mbox{GV invariants}} \\
\ovalbox{\mbox{local PT invariants}} \ar@{~>}[u]^-{\mbox{integration}}
 \ar@{<->}[rr]^{}&  & 
 \ovalbox{\mbox{local GV invariants}}\ar@{~>}[u]_-{\mbox{integration}}
}
\end{align*}
\caption{GW/PT/GV correspondence}
\end{figure}

\subsection{Previous history}\label{subsec:previous}

A mathematical approach for
GV invariants was first proposed by 
Hosono-Saito-Takahashi (HST for short)~\cite{HST}. 
They considered the intersection complex
$\IC(\Sh_{\beta}(X))$ and applied
the decomposition theorem~\cite{BBD} 
with respect to the map (\ref{intro:HC})
to define the action of $sl_2 \times sl_2$ on the intersection 
cohomology
$IH^{\ast}(\Sh_{\beta}(X))$.
They
rearrange this as 
\begin{align}\notag
IH^{\ast}(\Sh_{\beta}(X))=\bigoplus_{g\ge 0}
I_g \otimes R_g
\end{align}
where $I_g$ is the 
cohomology of a $g$-dimensional complex torus 
with its natural left $sl_2$-action, and $R_g$ is a certain (virtual)
right
$sl_2$-representation. 
The
HST definition is 
given by the Euler characteristic of $R_g$. 
Since the IC-sheaf is not sensitive to the virtual structure of
$\Sh_{\beta}(X)$,
one can find examples where this approach does not match
the expected answer, even in genus $0$.  

In genus $0$,
Katz~\cite{Katz} proposed as a definition the virtual integral
\begin{align}\notag
n_{0, \beta}=\int_{[\Sh_{\beta}(X)]^{\rm{vir}}} 1 \in \mathbb{Z}. 
\end{align}
Since this can be written as a Behrend-weighted Euler characteristic, one can
use a wall-crossing argument to prove this is compatible with the GV invariants defined 
via stable pairs spaces.

More recently, Kiem-Li proposed a combination of these two approaches, using
the perverse sheaf $\phi_{\sS h}$ mentioned earlier.  One ambiguity in their proposal 
is they do not specify
how to choose an orientation on $\Sh_{\beta}(X)$ to define $\phi_{\sS h}$, and different choices
lead to different prescriptions. Once chosen, they consider
a natural lift of $\phi_{\sS h}$ to the category of mixed Hodge modules.  In order to apply the
decomposition theorem, they require a pure Hodge module, so take the associated graded 
$\gr_W^{\bullet}(\phi_{\sS h})$
with respect to the weight filtration, and
apply the formalism of \cite{HST} described earlier.
 In genus $0$, independent of orientation, this recovers Katz's definition.

\subsection{Counter-example to Kiem-Li conjecture}
We will give an example 
that the earlier definitions~\cite{HST, KL}
are not deformation-invariant and, in particular, do not
match the invariants as calculated via PT theory.  
Let $S$ be an Enriques surface and 
$E$ an elliptic curve. 
Let 
$\sigma \colon \widetilde{S} \to S$ be
its K3 cover. 
We take a CY 3-fold
\begin{align*}
X=(\widetilde{S} \times E)/\langle \tau \rangle.
\end{align*}
Here 
$\tau$ is 
an involution of $\widetilde{S} \times E$
which acts on $\widetilde{S}$ by 
the covering involution of $\sigma$
and acts on $E$ by 
$x \mapsto -x$. 
An Enriques surface $S$ always 
admits an elliptic fibration $S \to \mathbb{P}^1$, 
with a double fiber $2C$. 
We take a curve class $\beta$ in $X$ by
\begin{align*}
\beta=([C], 0) \in H_2(S, \mathbb{Z}) \oplus \mathbb{Z}[E]
=H_2(X, \mathbb{Z}). 
\end{align*}

\begin{prop}\emph{(Proposition~\ref{prop:counter2})}\label{prop:counter}
Suppose that $C$ is of type $I_n$ for $n\ge 2$, i.e. 
$C$ is a circle of $\mathbb{P}^1$ with $n$-irreducible components.  
Then the HST, KL, our definitions, 
and the expected 
answers (from GW or PT theory)
are given in the following table: 
\begin{align*}
\begin{tabular}{|c|c|c|c|c|c|c|} \hline
 & \rm{HST} & \rm{KL} & \rm{ours} & \rm{expected}  \\ \hline
$n_{0, \beta}$  & $-8n$  &  $0$ &  $0$ & $0$   \\ \hline
$n_{1, \beta}$  & $4n$  &  $4n$ &  $4$ & $4$  \\ \hline
$n_{\ge 2, \beta}$  & $0$  &  $0$ & $0$ & $0$ \\ \hline
\end{tabular}
\end{align*}
\end{prop}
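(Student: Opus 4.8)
The plan is to compute all four columns from an explicit description of the moduli space $\Sh_{\beta}(X)$ and the Hilbert--Chow map. First I would pin down the geometry. Writing $p\colon X\to S$ for the isotrivial elliptic fibration with fibre $E$ and $X\to E/\{\pm 1\}=\mathbb{P}^1$ for the other projection, the four branch points of $E\to\mathbb{P}^1$ produce four copies $S_1,\dots,S_4\subset X$ of the Enriques surface, each occurring as a reduced multiple fibre, so that $N_{S_i/X}\cong K_{S}$ is the nontrivial $2$-torsion canonical bundle. Since a curve in a K3 fibre would push forward to $2[C]$ on $S$, every one-cycle in class $\beta=([C],0)$ is forced to lie in one of the $S_i$, and because the half-fibre $C$ is rigid in $S$ we get $\Chow_{\beta}(X)=\{C_1,\dots,C_4\}$, four reduced points. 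Correspondingly $\Sh_{\beta}^{\mathrm{red}}(X)=\bigsqcup_{i=1}^4 \overline{\mathrm{Jac}}(C_i)$, four copies of the compactified Jacobian of the genus-one curve $C=I_n$, with $\pi$ collapsing each to its point. The local model of $X$ near $C_i$ is $\mathrm{Tot}(K_S)$, and the normal bundle of $C_i$ in $X$ is $\eta\oplus\eta$ with $\eta$ the nontrivial $2$-torsion line bundle on $I_n$; this $2$-torsion data is the source of all the subtlety below.

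With this in hand, the HST and expected columns are quick. For HST one feeds $\IC(\Sh_{\beta}(X))=\bigoplus_{i=1}^4 \IC(\overline{\mathrm{Jac}}(C_i))$ through the decomposition theorem for $\pi$. Since $\overline{\mathrm{Jac}}(I_n)\cong I_n$ and intersection cohomology of a nodal curve localises on its $n$ components, $IH^{\ast}(\overline{\mathrm{Jac}}(C_i))$ has Betti numbers $(n,0,n)$; assembling the four copies in the centred variable gives the character $4n(y+y^{-1})=4n(y^{1/2}+y^{-1/2})^2-8n$, whence $n_{0}^{\mathrm{HST}}=-8n$ and $n_{1}^{\mathrm{HST}}=4n$. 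The expected column comes from deformation invariance of GW/PT invariants: deforming the Enriques surface so that the multiple fibre becomes a smooth elliptic curve turns each $C_i$ into a super-rigid genus-one curve contributing $n_1=1$, so $n_{0}=0$, $n_{1}=4$, $n_{\ge 2}=0$. The discrepancy $4n$ versus $4$ is exactly the failure of deformation invariance of $IH^{\ast}$, since $IH^{\ast}(I_n)=(n,0,n)$ jumps to $H^{\ast}(E)=(1,2,1)$ under smoothing.

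For our definition I would compute $\phi_{\sS h}$ directly on $\bigsqcup_i\overline{\mathrm{Jac}}(C_i)$ and then read off the perverse Leray pieces of $\dR\pi_{\ast}\phi_{\sS h}$. At a line bundle $F=L$ on $C_i$ one has $\Ext^1_X(F,F)=H^1(\mathcal{O}_{C})=\mathbb{C}$ and $\Ext^2_X(F,F)=\mathbb{C}$ (using $\eta^{\otimes 2}=\mathcal{O}$ and $H^0(\eta)=H^1(\eta)=0$), the potential vanishes, and $\phi_{\sS h}$ is the shifted constant sheaf along the smooth locus of the Jacobian. At each of the $n$ non-locally-free sheaves sitting over the nodes, $H^0(\eta\otimes\mathcal{E}nd F)\neq 0$ switches on a nontrivial superpotential, and the resulting sheaf of vanishing cycles is precisely what repairs the shifted constant sheaf into a self-dual object computing the nearby smooth fibre. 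Concretely, following the vanishing-cycle reduction used for Theorem~\ref{thm:locsur}, the computation localises to the generalized Macdonald formula for the versal deformation of the locally planar curve $I_n$ of \cite{MY2, MS}; since this versal deformation smooths $I_n$ to a genus-one curve, $\dR\pi_{\ast}\phi_{\sS h}$ acquires the cohomology $(1,2,1)$ of a smooth elliptic curve on each of the four components, giving $n_{0}=0$, $n_{1}=4$. Finally, for the KL column one runs the same $sl_2\times sl_2$ machine on the pure Hodge module $\grW(\phi_{\sS h})$ in place of $\IC$; the Behrend-weighted Euler characteristic forces $n_{0}^{\mathrm{KL}}=0$ in agreement with Katz~\cite{Katz}, but passing to $\grW$ reintroduces the $n$-fold component contribution in the genus-one sector, so $n_{1}^{\mathrm{KL}}=4n$.

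The main obstacle is the middle step: the explicit determination of $\phi_{\sS h}$ at the $n$ singular points of $\overline{\mathrm{Jac}}(I_n)$, that is, identifying the superpotential produced by the $2$-torsion normal bundle and checking that the resulting vanishing cycles assemble, after $\dR\pi_{\ast}$ and the perverse filtration, into the smooth-fibre answer $(1,2,1)$ rather than the intersection-cohomology answer $(n,0,n)$. This is where the CY orientation condition must be used: one must verify that a Calabi--Yau orientation, trivial along the fibres of $\pi$, exists in this example and that the vanishing-cycle gluing with respect to it is the one matching the versal-deformation computation. The HST, KL, and expected columns, by contrast, reduce to the bookkeeping above once $\Sh_{\beta}(X)$ and $\pi$ are identified.
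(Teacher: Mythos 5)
Your proposal rests on a geometric description of the moduli problem that is incorrect precisely in the case $n\ge 2$, and this invalidates the computation of the crucial column (``ours''). You claim that every one-cycle in class $\beta$ must lie in one of the four Enriques surfaces $S_i$, so that $\Chow_{\beta}(X)=\{C_1,\dots,C_4\}$ is four reduced points and $\Sh_{\beta}^{\rm red}(X)$ is four copies of the compactified Jacobian of $I_n$. That argument only works when the cycle is forced to be irreducible, i.e.\ in the $I_0$ case. For $C=C^{(1)}\cup\dots\cup C^{(n)}$ a cycle of rational curves, a one-cycle $\gamma$ with $\widehat{p}_{\ast}\gamma=C$ splits as $\gamma_1+\dots+\gamma_n$ with $\widehat{p}_{\ast}\gamma_j=C^{(j)}$, and since the $E$-bundle $\widehat{p}$ is trivial over each simply connected component $C^{(j)}$, each $\gamma_j$ is a constant section at an arbitrary point $x_j\in E$, not just at a $2$-torsion point. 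Hence $\Chow_{\beta}(X)=E^{\times n}$; the image of the Hilbert--Chow map is a union of $n$ elliptic curves $\Gamma_1,\dots,\Gamma_n$ meeting at the four points $y_i$, and $\Sh_{\beta}(X)$ has, besides the four fibers $C_i\cong I_n$ over the $y_i$, $n$ extra irreducible components $M_k$ dominating the $\Gamma_k$ (they parametrize structure sheaves of partial normalizations of $C$ placed at varying heights in $E$). Indeed the paper highlights this dimension jump of the Chow variety as the interesting feature of the example.

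This is not a cosmetic discrepancy: the mechanism that produces $n_{1,\beta}=4$ rather than $4n$ lives exactly in the structure you have discarded. Because the components $M_k$ pass through the nodes $p_i^{(j)}$ of the fibers $C_i$, the moduli space has \emph{three} branches at each node and is locally the critical locus of $xyz$; the paper computes the weight filtration of $\phi_{\sS h}$ there ($\gr_W^0=\bigoplus \IC$ of the three branches, $\gr_W^{\pm 1}=\mathbb{Q}_0$) and shows that the weight spectral sequence for $\dR\pi_{\ast}\phi_{\sS h}$, taken relative to the positive-dimensional Chow variety, has a rank $n-1$ differential $E_1^{0,-1}\to E_1^{1,-1}$ at each $y_i$, cutting $4n$ classes in $\pH^{-1}$ down to $4$. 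In your model (four disjoint Jacobians over four points) there are only two branches at each node, no components $M_k$, and the perverse filtration degenerates to the cohomological one, so no such cancellation is available; your appeal to the Macdonald formula for a versal deformation of $I_n$ is also not what the paper uses (the relevant local family is $g'=\sum_j t_j u_j$ on $T\times\mathbb{A}^n$, producing the $xyz$ model, not the versal smoothing of $I_n$ alone). That your HST and KL numbers nevertheless agree with the table is a numerical coincidence: the missing components $M_k$ map to curves isomorphic to $E$ and contribute zero Euler characteristic to those two columns, but they cannot be ignored for the perverse-filtration invariant, which is the whole point of the counter-example.
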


The result of Proposition~\ref{prop:counter}
is a summary of the computations in Subsection~\ref{subsec:counter}. 
Since all Enriques surfaces
are deformation equivalent, 
the resulting invariants $n_{g, \beta}$ should 
be independent of the type of $C$. 
The result of $g=1$ for KL definition 
is true for any choice of orientation data, 
which does not match with the predicted answer. 
Therefore Proposition~\ref{prop:counter} gives a counter-example 
to the Kiem-Li conjecture~\cite[Conjecture~7.4]{KL}. 

One interesting feature of this example is that 
our invariants are preserved under deformations despite the
fact that the Chow variety itself jumps in dimension. 
Although our invariant in this example is deformation-invariant, we do not see a mechanism for this in general families of CY threefolds, due to our poor understanding of the Chow variety.  For this reason, our definition of GV invariants still may not be the final one and a better understanding of the Hilbert-Chow map and deformation invariance will be needed.

One can also 
ask what happens if we study the motivic vanishing cycles associated to 
$\Sh_{\beta}(X)$ in the sense of Bussi-Joyce-Meinhardt~\cite{BDM}, 
and define the motivic GV invariants as in~\cite{ToBPS}.  It turns 
out that the motivic GV invariants are different from Kiem-Li's invariants
due to some rearrangement of weights, but 
in any case they also do not give a correct answer.
An example already occurs in the case of the nodal rational curve in 
Section~\ref{ex:enriques}, where 
the genus one motivic invariant becomes zero.

\subsection{Outline of the paper}
In Section~\ref{sec:dcrit},
we review Joyce's notion of $d$-critical structures and 
introduce GV type invariants for $d$-critical schemes. 
In Section~\ref{sec:GVCY}, we 
define GV invariants on Calabi-Yau 3-folds and formulate
conjectures relating them with PT and GW invariants. 
In Section~\ref{sec:PT/GV}, we explain the idea proving 
PT/GV correspondence using versal deformations of curves. 
In Section~\ref{sec:local}, we prove Theorem~\ref{intro:thm1}.
In Section~\ref{sec:smooth}, we prove Theorem~\ref{intro:thm2}. 
In Section~\ref{sec:compare}, we prove Proposition~\ref{prop:counter}. 
In Section~\ref{sec:non-red}, we produce examples 
for non-reduced cycles using 3-fold flops. 
In Section~\ref{sec:non-prim}, we discuss some examples for non-primitive 
cycles. 
In Appendix~\ref{sec:append}, we discuss 
Calabi-Yau orientation data for $d$-critical schemes.

\subsection{Acknowledgments}
We are grateful to 
Tomoyuki Abe, 
Jim Bryan, Duiliu-Emanuel Diaconescu, 
Igor Dolgachev, 
Jesse Kass, 
Young-Hoon Kiem, 
Jun Li, 
Georg Oberdieck, 
Rahul Pandharipande, 
Christian Schnell and 
Richard Thomas
for many discussions and valuable comments. 
We are also grateful to the referees for many suggestions and comments. 
This article was written while 
Y.~T.~
was visiting Massachusetts Institute of Technology
from 2015 October to 2016 September
by the 
JSPS Program for Advancing Strategic International Networks to Accelerate the 
Circulation of Talented Researchers.  
D.~M.~is supported by 
NSF grants DMS-1645082 and DMS-1564458.
Y.~T.~is supported by World Premier 
International Research Center Initiative
(WPI initiative), MEXT, Japan, 
and Grant-in Aid
for Scientific Research grant (No.~26287002)
from MEXT, Japan.

\subsection{Notation and convention}
In this paper, all varieties and schemes 
are defined over $\mathbb{C}$. 
For a scheme $M$, we
will only consider constructible sheaves with $\mathbb{Q}$-coefficients. 
We denote by $\Perv(M)$ the category 
of perverse sheaves on $M$, which is the heart of 
a t-structure on the derived category of 
constructible sheaves on $M$ (see~\cite{BBD}). 
Let $\iota \colon M^{\rm{red}} \hookrightarrow M$ be the reduced part
of $M$. 
Since $\iota$ is a homeomorphism, 
we always identity $\Perv(M)$ with $\Perv(M^{\rm{red}})$
in a natural way. 

For a bounded 
complex $E$ of constructible sheaves on $M$, 
we denote by $\pH^i(E)$ the $i$-th cohomology with respect 
to the perverse t-structure, and $\chi(E)$ is the 
the Euler characteristic of $\dR \Gamma(M, E)$. 
For a constructible function $\nu$ on a scheme $M$, 
the weighted Euler characteristic is denoted by 
\begin{align*}
\int_M \nu \ de \cneq \sum_{m \in \mathbb{Z}}
m \cdot e(\nu^{-1}(m)). 
\end{align*}
Here $e(-)$ is the topological Euler characteristic. 
We will use the fact that, 
for a complex $E$ of constructible sheaves on a 
finite type scheme $M$, we have 
\begin{align*}
\chi(E)=\int_{M} \nu_E \ de
\end{align*}
where $\nu_E$ is the constructible function 
given by $p \mapsto \chi(E|_{p})$.

\section{GV type invariants for $d$-critical schemes}\label{sec:dcrit}

In this section, we review Joyce's work on (algebraic) 
$d$-critical structures and define GV-type
invariants in this setting\footnote{In~\cite{JoyceD}, Joyce also introduces an 
analytic version of $d$-critical structures; this is equivalent to the notion of virtual 
critical structures in~\cite{KL}.
Although we work with algebraic $d$-critical structures, 
the arguments of this section also apply for 
analytic $d$-critical structures.}.  We then introduce the notion of Calabi-Yau $d$-critical structures, which 
allows us to fix the ambiguity due to choice of orientation.

\subsection{$d$-critical schemes}
We recall the notion of $d$-critical structures
introduced by Joyce~\cite{JoyceD}. 
For any complex scheme $M$, 
Joyce~\cite{JoyceD} shows that 
there exists a canonical sheaf of 
$\mathbb{C}$-vector spaces 
$\sS_{M}$ on $M$
satisfying the following
property:
for any Zariski open subset $R \subset M$
and a closed embedding 
$i \colon R \hookrightarrow V$
into a smooth scheme $V$, 
there is an exact sequence
\begin{align}\label{S:property}
0 \longrightarrow \sS_{M}|_{R}
 \longrightarrow \oO_V/I^2 \stackrel{d_{\rm{DR}}}{\longrightarrow}
\Omega_V/I \cdot \Omega_V. 
\end{align}
Here $I \subset \oO_V$ is the ideal sheaf
which defines $R$
and $d_{\rm{DR}}$ is the de-Rham differential. 
Moreover there is a natural decomposition 
\begin{align*}
\sS_M=\sS_M^0 \oplus \mathbb{C}_M
\end{align*}
where $\mathbb{C}_M$ is the constant sheaf
on $M$. 
The sheaf $\sS_M^{0}$ restricted to $R$ is 
the kernel of the 
composition 
\begin{align*}
\sS_M|_{R} \hookrightarrow \oO_V/I^2 \twoheadrightarrow \oO_{R^{\rm{red}}}. 
\end{align*}
For example, 
suppose that  
$f \colon V \to \mathbb{A}^1$ is
a regular function such that 
\begin{align}\label{R=df}
R=\{df=0\}, \ 
f|_{R^{\rm{red}}}=0.
\end{align}
Then 
$f+I^2$ is an element of
$\Gamma(R, \sS_{M}^0|_{R})$. 

\begin{defi}\emph{(\cite{JoyceD})}
A pair $(M, s)$
for a complex scheme $M$
and $s \in \Gamma(M, \sS_M^0)$
is called a $d$-critical 
scheme 
if for any 
$x \in M$, there is an open 
neighborhood $x \in R \subset M$,  
a closed embedding $i \colon R \hookrightarrow V$
into a smooth scheme $V$, 
a regular function $f \colon V \to \mathbb{A}^1$
satisfying (\ref{R=df})
such that 
$s|_{V}=f+I^2$
holds. 
In this case, 
the data
\begin{align}\label{crit:chart}
\xi=
(R, V, f, i)
\end{align}
is called a $d$-critical chart.
The section $s$ is called a $d$-critical 
structure of $M$.  
\end{defi} 
Roughly speaking, a 
$d$-critical
scheme $(M, s)$ is locally written 
as a critical locus of some function $f$ on a 
smooth scheme, and 
the section $s$ remembers the function $f$.  
Given a $d$-critical scheme $(M, s)$, 
there exists a line bundle 
$K_{M, s}$ on $M^{\rm{red}}$, called the 
\textit{virtual canonical line bundle}, 
such that for any $d$-critical chart (\ref{crit:chart})
there is a natural isomorphism
\begin{align}\label{nat:K}
K_{M, s}|_{R^{\rm{red}}} \stackrel{\cong}{\to}
K_V^{\otimes 2}|_{R^{\rm{red}}}. 
\end{align}
\begin{defi}\emph{(\cite{JoyceD})}
An orientation of a $d$-critical scheme $(M, s)$
is a
 choice of a square root line bundle 
$K_{M, s}^{1/2}$ for $K_{M, s}$ on $M^{\rm{red}}$
and an isomorphism
\begin{align}\label{isom:orient}
(K_{M, s}^{1/2})^{\otimes 2} \stackrel{\cong}{\to}
K_{M, s}. 
\end{align}
A $d$-critical scheme with an orientation is called
an oriented $d$-critical scheme. 
\end{defi}

\subsection{Sheaves of vanishing cycles}
Let $f \colon V \to \mathbb{A}^1$ be 
a regular function on a smooth scheme $V$, 
and set 
$R=\{df=0\}$.
Suppose that $f|_{R^{\rm{red}}} =0$
and set $V_0=f^{-1}(0)$.  
We have the associated vanishing cycle functor
(see~\cite[Theorem~5.2.21]{Dimbook})
\begin{align*}
\phi_f \colon \Perv(V) \to \Perv(V_0). 
\end{align*}
Let $\IC(V) \in \Perv(V)$ be the intersection complex 
on $V$, which 
coincides with $\mathbb{Q}_V[\dim V]$ 
since $V$ is smooth. 
We have the perverse sheaf 
of vanishing cycles
supported on $R^{\rm{red}} \subset V_0$
\begin{align}\label{vsheaf}
\phi_f(\IC(V)) \in \Perv(R) \subset \Perv(V_0). 
\end{align}
Let $(M, s)$ be a $d$-critical scheme. 
For a $d$-critical chart 
$(R, V, f, i)$ as in 
(\ref{crit:chart}), 
we have the sheaf of vanishing cycles 
(\ref{vsheaf}) on $R$. 
In~\cite{BDJS} it is proved that 
if $(M, s)$ is oriented, then the 
sheaves of vanishing cycles (\ref{vsheaf}) glue 
to give a global perverse sheaf on $M$. 
Let
\begin{align}\label{isom:KK}
(K_{M, s}^{1/2}|_{R^{\rm{red}}})^{\otimes 2}
\cong K_V^{\otimes 2}|_{R^{\rm{red}}}
\end{align}
be the isomorphism given by
the composition of (\ref{nat:K})
and (\ref{isom:orient}). 
Then there is a 
$\mathbb{Z}/2\mathbb{Z}$-principal bundle 
\begin{align}\notag
\tau_R \colon 
\widetilde{R}^{\rm{red}} \to R^{\rm{red}}
\end{align}
which parametrizes local square roots 
of the isomorphism (\ref{isom:KK}). 
We have the decomposition
\begin{align*}
\tau_{R\ast} \mathbb{Q}_{\widetilde{R}^{\rm{red}}}
=\mathbb{Q}_{R^{\rm{red}}} \oplus \lL_{\xi}
\end{align*}
for a rank one local system $\lL_{\xi}$ on $R^{\rm{red}}$. 
The following result is proved in~\cite{BDJS}
(also see~\cite{KL} for the same result 
in the framework of virtual critical structures): 
\begin{thm}\emph{(\cite[Theorem~6.9]{BDJS})}\label{thm:BDJS}
For an oriented 
$d$-critical scheme $\mM=(M, s, K_{M, s}^{1/2})$, 
there exists a natural 
perverse sheaf 
$\phi_{\mM}$ on $M$ such that for any 
$d$-critical chart (\ref{crit:chart})
there is a natural isomorphism
\begin{align}\label{isom:IC}
\phi_{\mM}|_{R} \stackrel{\cong}{\to}
\phi_f(\IC(V)) \otimes \lL_{\xi}. 
\end{align}
Moreover there exists a 
natural isomorphism
$\mathbb{D}_M(\phi_{\mM}) \cong \phi_{\mM}$, 
where $\mathbb{D}_M$ is the Verdier dualizing functor. 
\end{thm}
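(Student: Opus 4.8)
The plan is to construct $\phi_{\mM}$ by descent, using that perverse sheaves form a stack on $M$: it suffices to produce, on each $d$-critical chart, the prescribed local model $\phi_f(\IC(V)) \otimes \lL_\xi$ together with comparison isomorphisms on overlaps of charts satisfying the cocycle condition on triple overlaps. The substance of the theorem is therefore the construction of these comparison isomorphisms and the verification that the orientation data makes them coherent.

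First I would establish the local comparison. Given two $d$-critical charts $(R_1, V_1, f_1, \iota_1)$ and $(R_2, V_2, f_2, \iota_2)$ over a common open $R_{12} \subset R_1 \cap R_2$, Joyce's structure theory for $d$-critical loci guarantees that, after shrinking, both embed into a common chart $(R_{12}, V, f, \iota)$ with $V_i \hookrightarrow V$ and $f|_{V_i} = f_i \boxplus q_i$ for a nondegenerate quadratic form $q_i$ in the normal directions. The comparison of vanishing cycles then follows from the Thom--Sebastiani isomorphism for $\phi$: adding a nondegenerate quadratic form alters $\phi_{f_i}(\IC(V_i))$ only by a twist by the rank-one local system of square roots of $\det q_i$, equivalently the orientation local system of the normal bundle. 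This yields a canonical-up-to-sign isomorphism between the two local models.

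The main obstacle, and the reason the orientation data enters, is controlling the signs in the previous step. The Thom--Sebastiani comparison depends on a choice of square root of the determinant line of the quadratic form, so the naive gluing datum is defined only up to a $\mathbb{Z}/2\mathbb{Z}$ ambiguity on each overlap. I would organize these ambiguities into a \v{C}ech $2$-cocycle valued in $\mathbb{Z}/2\mathbb{Z}$ and identify its class with the obstruction to extracting a global square root of the virtual canonical line bundle $K_{M,s}$, using the chart-local isomorphism (\ref{nat:K}) comparing $K_{M,s}$ with $K_V^{\otimes 2}$; under this identification the determinant line appearing in the Thom--Sebastiani twist matches the transition data for $K_{M,s}$. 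A choice of orientation, that is of $K_{M,s}^{1/2}$ together with (\ref{isom:orient}), trivializes this $\mathbb{Z}/2\mathbb{Z}$ gerbe. Concretely, the $\mathbb{Z}/2\mathbb{Z}$-bundle $\tau_R$ of local square roots of (\ref{isom:KK}) and its associated local system $\lL_\xi$ absorb the sign discrepancy, so that the twisted models $\phi_f(\IC(V)) \otimes \lL_\xi$ acquire comparison isomorphisms satisfying the cocycle condition. Verifying this cocycle identity on triple overlaps is the technical heart of the argument.

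With coherent gluing data in hand, descent produces the global perverse sheaf $\phi_{\mM}$ satisfying (\ref{isom:IC}). For self-duality, I would argue locally: since $\IC(V) = \mathbb{Q}_V[\dim V]$ is Verdier self-dual and $\phi_f$ commutes with Verdier duality in the normalization making $\phi_f(\IC(V))$ perverse, each local model is self-dual, and $\lL_\xi$, being a rank-one $\mathbb{Q}$-local system, is self-dual as well. It then remains to check that the gluing isomorphisms are compatible with $\mathbb{D}_M$, so that the local self-dualities glue to a global one; this compatibility reduces once more to the sign bookkeeping already fixed by the orientation, yielding $\mathbb{D}_M(\phi_{\mM}) \cong \phi_{\mM}$.
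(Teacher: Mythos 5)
This theorem is not proved in the paper under review: it is quoted directly from \cite[Theorem~6.9]{BDJS}, so there is no internal proof to compare against. Your proposal is nevertheless an accurate reconstruction of the actual argument in that reference --- comparison of overlapping $d$-critical charts via Joyce's stabilization by nondegenerate quadratic forms, the Thom--Sebastiani isomorphism with its square-root twist, identification of the resulting $\mathbb{Z}/2\mathbb{Z}$ sign ambiguity with the obstruction to a square root of $K_{M,s}$ so that the orientation data and the local systems $\lL_{\xi}$ make the gluing data satisfy the cocycle condition, descent for perverse sheaves, and self-duality deduced from the Verdier self-duality of the local models $\phi_f(\IC(V))$ together with compatibility of the gluing maps with duality.
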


\subsection{GV type invariants}\label{subsec:GVtype}
For an oriented $d$-critical scheme
$\mM=(M, s, K_{M, s}^{1/2})$, let 
\begin{align*}
\pi \colon M^{\rm{red}} \to T
\end{align*}
be 
a projective morphism
to a finite type complex scheme $T$. 
We will use the 
perverse sheaf $\phi_{\mM}$ 
in Theorem~\ref{thm:BDJS} through the following
lemma:
\begin{lem}\label{lem:ng}
There exist unique $\GV_{g, \mM/T} \in \mathbb{Z}$ for 
$g \in \mathbb{Z}_{\ge 0}$, 
and $\GV_{g, \mM/T}=0$ for $g\gg 0$, 
such that 
\begin{align}\label{def:ng}
\sum_{i \in \mathbb{Z}}
\chi(
\pH^i(
\dR \pi_{\ast} \phi_{\mM}))y^i
=\sum_{g\ge 0} \GV_{g, \mM/T}(y^{\frac{1}{2}}+y^{-\frac{1}{2}})^{2g}. 
\end{align}
\end{lem}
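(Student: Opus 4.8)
The plan is to reduce the statement to an elementary fact about symmetric Laurent polynomials, after first establishing that the left-hand side of \eqref{def:ng} is invariant under $y \mapsto y^{-1}$.

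First I would record the symmetry. Since $\pi$ is projective, hence proper, the functor $\dR \pi_{\ast}$ commutes with Verdier duality, so $\mathbb{D}_T(\dR \pi_{\ast}\phi_{\mM}) \cong \dR \pi_{\ast}\mathbb{D}_M(\phi_{\mM})$; combined with the self-duality $\mathbb{D}_M(\phi_{\mM}) \cong \phi_{\mM}$ from Theorem~\ref{thm:BDJS}, this shows that the complex $\dR \pi_{\ast}\phi_{\mM}$ on $T$ is itself self-dual. Because Verdier duality exchanges the two perverse $t$-structures, it swaps perverse cohomology degrees, $\pH^i(\mathbb{D}_T(-)) \cong \mathbb{D}_T(\pH^{-i}(-))$, so that $\pH^i(\dR \pi_{\ast}\phi_{\mM}) \cong \mathbb{D}_T(\pH^{-i}(\dR \pi_{\ast}\phi_{\mM}))$. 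Finally, over $\mathbb{C}$ the Euler characteristic of a constructible complex agrees with its compactly supported Euler characteristic, and the latter is preserved by Verdier duality; hence $\chi(\pH^i(\dR \pi_{\ast}\phi_{\mM})) = \chi(\pH^{-i}(\dR \pi_{\ast}\phi_{\mM}))$ for every $i$. Writing $P(y)$ for the left-hand side of \eqref{def:ng}, this gives the symmetry $P(y) = P(y^{-1})$, while the boundedness of $\dR \pi_{\ast}\phi_{\mM}$ ensures that $P$ is a genuine (finite) Laurent polynomial with integer coefficients.

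It then remains to prove the purely algebraic claim that any symmetric $P \in \mathbb{Z}[y, y^{-1}]$ with $P(y) = P(y^{-1})$ is uniquely of the form $\sum_{g \ge 0} a_g (y^{\frac{1}{2}}+y^{-\frac{1}{2}})^{2g}$ with $a_g \in \mathbb{Z}$ and $a_g = 0$ for $g \gg 0$. The ring of symmetric integral Laurent polynomials is the polynomial ring $\mathbb{Z}[u]$ in the single variable $u = y + y^{-1}$. Since $(y^{\frac{1}{2}}+y^{-\frac{1}{2}})^2 = u + 2$, one has $(y^{\frac{1}{2}}+y^{-\frac{1}{2}})^{2g} = (u+2)^g$, and the family $\{(u+2)^g\}_{g \ge 0}$ is obtained from the monomial basis $\{u^g\}_{g \ge 0}$ by a unitriangular integer change of basis; it is therefore itself a $\mathbb{Z}$-basis of $\mathbb{Z}[u]$. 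This simultaneously yields existence, uniqueness, integrality, and the vanishing for large $g$, so one may set $\GV_{g, \mM/T} \cneq a_g$.

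The only nontrivial input is the symmetry step, and within it the point requiring care is the passage from the sheaf-theoretic self-duality of $\phi_{\mM}$ to a numerical symmetry of Euler characteristics: this rests on the commutation of $\dR \pi_{\ast}$ with $\mathbb{D}$ (valid precisely because $\pi$ is proper) together with the identity $\chi_c = \chi$ for constructible complexes over $\mathbb{C}$. Once $P(y) = P(y^{-1})$ is established, the remaining argument is the elementary change-of-basis computation above.
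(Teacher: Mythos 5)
Your proof is correct and follows essentially the same route as the paper's: the self-duality $\mathbb{D}_M(\phi_{\mM})\cong\phi_{\mM}$ combined with compatibility of $\dR\pi_{\ast}$ with Verdier duality (using properness of $\pi$) gives $\pH^i(\dR\pi_{\ast}\phi_{\mM})\cong\mathbb{D}_T(\pH^{-i}(\dR\pi_{\ast}\phi_{\mM}))$, hence symmetry of the Laurent polynomial under $y\mapsto y^{-1}$, and the rest is the elementary uniqueness of the expansion in $(y^{\frac{1}{2}}+y^{-\frac{1}{2}})^{2g}$. The only differences are cosmetic: you justify the passage from the duality isomorphism to equality of Euler characteristics via $\chi=\chi_c$ (a point the paper leaves implicit), and you phrase the algebra as a unitriangular change of basis in $\mathbb{Z}[y+y^{-1}]$ where the paper argues by induction on the degree.
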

\begin{proof}
By the isomorphism 
$\mathbb{D}_M(\phi_{\mM}) \cong \phi_{\mM}$
and the Verdier duality, we have the isomorphism
\begin{align*}
\mathbb{D}_T(\dR \pi_{\ast}\phi_{\mM})
\cong \dR \pi_{\ast}\phi_{\mM}. 
\end{align*}
Since $\mathbb{D}_T$ preserves the perverse
t-structure, 
it follows that 
\begin{align*}
\mathbb{D}_T(\pH^{-i}(\dR \pi_{\ast}\phi_{\mM}))
\cong \pH^i(\dR \pi_{\ast}\phi_{\mM}).
\end{align*}
Therefore the LHS of (\ref{def:ng}) is
a polynomial of $y^{\pm 1}$ which is invariant under
$y \mapsto y^{-1}$. 
By the induction of the degree of $y$, it is 
easy to see that any polynomial of $y^{\pm 1}$ invariant 
under $y \mapsto y^{-1}$ is uniquely written 
as the form of the RHS of (\ref{def:ng}). 
\end{proof}
We also have the following local version
of Lemma~\ref{lem:ng}, 
whose proof is identical to Lemma~\ref{lem:ng}. 
\begin{lem}\label{lem:ngloc}
In the situation of Lemma~\ref{lem:ng}, 
for $t \in T$
there exist unique $\GVloc_{g, \mM/T, t}\in \mathbb{Z}$
for $g \in \mathbb{Z}_{\ge 0}$, 
and $\GVloc_{g, \mM/T, t}=0$ for $g\gg 0$,  
such that 
\begin{align}\notag
\sum_{i \in \mathbb{Z}}
\chi(
\pH^i(
\dR \pi_{\ast} \phi_{\mM})|_{t})y^i
=\sum_{g\ge 0} \GVloc_{g, \mM/T, t}(y^{\frac{1}{2}}+y^{-\frac{1}{2}})^{2g}. 
\end{align}
\end{lem}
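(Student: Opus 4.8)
The plan is to repeat the proof of Lemma~\ref{lem:ng} verbatim, replacing the global hypercohomology $\dR\Gamma(T,-)$ by the stalk at $t$. The input from duality is entirely $t$-independent: as in the proof of Lemma~\ref{lem:ng}, the self-duality $\mathbb{D}_M(\phi_{\mM}) \cong \phi_{\mM}$ of Theorem~\ref{thm:BDJS}, together with Verdier duality and the properness of the projective morphism $\pi$, yields $\mathbb{D}_T(\dR \pi_{\ast}\phi_{\mM}) \cong \dR \pi_{\ast}\phi_{\mM}$, and since $\mathbb{D}_T$ is t-exact for the perverse t-structure this gives the symmetry
\begin{align}\label{eq:Dsym}
\mathbb{D}_T\!\left(\pH^{-i}(\dR \pi_{\ast}\phi_{\mM})\right) \cong \pH^{i}(\dR \pi_{\ast}\phi_{\mM})
\end{align}
of perverse sheaves on $T$. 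I would take \eqref{eq:Dsym} as given and only reprove the elementary bookkeeping at the point $t$.

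Writing $j_t \colon \{t\} \hookrightarrow T$ and $Q_i \cneq \pH^i(\dR \pi_{\ast}\phi_{\mM})$, the left-hand side of the asserted identity is $\sum_i \chi(j_t^{\ast} Q_i)\, y^i$. To put it in the required form I must show it is invariant under $y \mapsto y^{-1}$, that is $\chi(j_t^{\ast} Q_i) = \chi(j_t^{\ast} Q_{-i})$ for all $i$. By \eqref{eq:Dsym} it suffices to prove the following local compatibility between restriction to a point and Verdier duality: for every constructible complex $K$ on $T$,
\begin{align}\label{eq:stalkcostalk}
\chi(j_t^{\ast} \mathbb{D}_T K) = \chi(j_t^{\ast} K).
\end{align}
Applying \eqref{eq:stalkcostalk} to $K = Q_{-i}$ and using $\mathbb{D}_T Q_{-i} \cong Q_i$ then gives $\chi(j_t^{\ast} Q_i) = \chi(j_t^{\ast} Q_{-i})$, exactly the symmetry needed.

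The content of \eqref{eq:stalkcostalk} is the coincidence of the stalk and costalk Euler characteristics at $t$. Indeed, base change along $j_t$ gives $j_t^{\ast} \mathbb{D}_T K \cong \mathbb{D}_{\{t\}} j_t^{!} K$, and over the point $\{t\}$ the dualizing functor $\mathbb{D}_{\{t\}}$ negates cohomological degrees and dualizes the finite-dimensional stalks, so $\chi \circ \mathbb{D}_{\{t\}} = \chi$. Hence \eqref{eq:stalkcostalk} is equivalent to $\chi(j_t^{!} K) = \chi(j_t^{\ast} K)$, which holds for any constructible complex on a complex variety: choosing a conical neighborhood of $t$ adapted to a stratification and using the triangle relating $j_t^{!}K$, the sections over the neighborhood, and the sections over its punctured link, the claim reduces to the vanishing of the Euler characteristic of the link of $t$, a consequence of the free $S^1$-action coming from the $\mathbb{C}^{\ast}$-scaling of the complex cone. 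This step---the only genuine addition to the proof of Lemma~\ref{lem:ng}---is where I expect to spend the most care, although it is a standard fact; alternatively one may simply cite it from the theory of constructible sheaves on complex varieties.

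With \eqref{eq:stalkcostalk} in hand the left-hand side is a Laurent polynomial in $y$ invariant under $y \mapsto y^{-1}$, and the same induction on the degree of $y$ used at the end of Lemma~\ref{lem:ng} shows it is uniquely a $\mathbb{Z}$-linear combination of the $(y^{\frac{1}{2}}+y^{-\frac{1}{2}})^{2g}$; reading off the coefficients produces the unique integers $\GVloc_{g, \mM/T, t}$, which vanish for $g \gg 0$ since the polynomial has bounded degree. This completes the plan.
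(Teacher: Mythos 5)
Your proposal is correct and is in substance the paper's own proof: the paper disposes of Lemma~\ref{lem:ngloc} with the single remark that its proof is identical to that of Lemma~\ref{lem:ng}, i.e. the self-duality $\mathbb{D}_M(\phi_{\mM})\cong\phi_{\mM}$, properness of $\pi$, and exactness of $\mathbb{D}_T$ for the perverse t-structure give $\mathbb{D}_T(\pH^{-i}(\dR\pi_{\ast}\phi_{\mM}))\cong\pH^{i}(\dR\pi_{\ast}\phi_{\mM})$, after which the same induction on the degree in $y$ produces the unique integers. What you add, and correctly isolate as the only genuinely new ingredient, is that the stalkwise symmetry requires Verdier duality to preserve local Euler characteristics, $\chi((\mathbb{D}_T K)|_t)=\chi(K|_t)$, equivalently $\chi(j_t^{!}K)=\chi(j_t^{\ast}K)$; the paper leaves this implicit, just as its proof of Lemma~\ref{lem:ng} implicitly uses the global counterpart $\chi_c=\chi$ for constructible complexes on the (possibly non-proper) variety $T$.

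One caution about how you propose to justify that fact. The appeal to ``the free $S^1$-action coming from the $\mathbb{C}^{\ast}$-scaling of the complex cone'' does not work in general: the local conic structure of a complex variety at a point is only a homeomorphism with the \emph{topological} cone over the link, not a complex-analytic cone structure, so there is no $\mathbb{C}^{\ast}$-scaling, and the link of a general singular point carries no natural circle action (the argument is valid only for germs that really are cones, e.g. quasi-homogeneous ones). The vanishing $\chi(L,K|_L)=0$ you need is nevertheless true: the standard route is Sullivan's theorem that links of points of complex analytic spaces have zero Euler characteristic (they are compact stratified spaces all of whose strata are odd-dimensional), fed into the stratification argument you describe and applied inductively to the links inside all strata closures so as to handle arbitrary constructible coefficients; equivalently, one cites the fact that the Verdier duality involution on $\mathbb{C}$-constructible functions is the identity (Kashiwara--Schapira, or Sch\"urmann's book), which is exactly the statement $\chi(j_t^{!}K)=\chi(j_t^{\ast}K)$. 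Note that some genuinely complex-geometric input is unavoidable here: on $\mathbb{R}$, for $K=\mathbb{Q}_{[0,\infty)}$ one has $\chi(j_0^{\ast}K)=1$ but $\chi(j_0^{!}K)=0$, so the claimed equality fails for real constructible sheaves. With the $S^1$ argument replaced by this citation, your proof is complete and matches the paper's intent.
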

For each $g \in \mathbb{Z}_{\ge 0}$, 
we have the constructible function on $T$
\begin{align*}
\GVloc_{g, \mM/T, -} \colon T \to \mathbb{Z}, \ 
t \mapsto \GVloc_{g,\mM/T, t}.
\end{align*}
Then the integer $\GV_{g, \mM/T}$ in (\ref{def:ng}) is written as 
\begin{align}\label{int:id}
\GV_{g, \mM/T}=\int_{T} \GVloc_{g, \mM/T, -} \ de. 
\end{align}

In genus zero, our GV type invariants are described in terms of 
Behrend's constructible function. 
For the perverse sheaf $\phi_{\mM}$ in Theorem~\ref{thm:BDJS}, 
the 
\textit{Behrend constructible function}~\cite{Beh}
on $M$ is defined 
by 
\begin{align}\label{Bfunc}
\nu_{M} \colon M \to \mathbb{Z}, \ 
p \mapsto \chi(\phi_{\mM}|_{p}). 
\end{align}
The constructible function 
$\nu_{M}$ is independent of the choice of orientation 
data. Indeed, it is proved in~\cite{Beh} that 
$\nu_M$ only depends on the scheme structure of $M$. 
\begin{lem}\label{lem:g=0}
We have the identities
\begin{align*}
\GV_{0, \mM/T}=\int_{M} \nu_{M} \ de, \ 
\GVloc_{0, \mM/T, t}=\int_{\pi^{-1}(t)} \nu_{M} \ de. 
\end{align*}
In particular, $\GV_{0, \mM/T}$, $\GVloc_{0, \mM/T, t}$ are independent 
of the choice of orientation. 
\end{lem}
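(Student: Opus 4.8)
The plan is to specialize the generating identity (\ref{def:ng}) at $y=-1$. Since $(y^{\frac12}+y^{-\frac12})^2=y+2+y^{-1}$ vanishes at $y=-1$, we have $(y^{\frac12}+y^{-\frac12})^{2g}|_{y=-1}=0$ for every $g\ge 1$, while the $g=0$ term equals $1$. Hence evaluating the right-hand side of (\ref{def:ng}) at $y=-1$ isolates exactly the genus-zero coefficient and returns $\GV_{0,\mM/T}$. This single observation drives the whole lemma, and it also explains conceptually why genus zero is governed by the Behrend function: at $y=-1$ the perverse Poincar\'e polynomial degenerates to the signed Euler characteristic.

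First I would compute the left-hand side of (\ref{def:ng}) at $y=-1$, obtaining $\sum_{i}(-1)^i\chi(\pH^i(\dR\pi_{\ast}\phi_{\mM}))$. Because the Euler characteristic of $\dR\Gamma$ is additive in distinguished triangles, it is additive along the perverse filtration, so this alternating sum equals $\chi(\dR\pi_{\ast}\phi_{\mM})$. As $\pi$ is projective we have $\dR\Gamma(T,\dR\pi_{\ast}\phi_{\mM})\cong\dR\Gamma(M,\phi_{\mM})$, whence $\chi(\dR\pi_{\ast}\phi_{\mM})=\chi(\phi_{\mM})$. By the identity $\chi(E)=\int_M\nu_E\ de$ recalled in the notation subsection, applied to $E=\phi_{\mM}$, this equals $\int_M\nu_{M}\ de$, where $\nu_M$ is precisely the Behrend function (\ref{Bfunc}). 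Combining with the previous paragraph yields $\GV_{0,\mM/T}=\int_M\nu_{M}\ de$.

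For the local statement I would run the same argument after restricting to a point $t\in T$. Evaluating the identity of Lemma~\ref{lem:ngloc} at $y=-1$ gives $\GVloc_{0,\mM/T,t}=\sum_i(-1)^i\chi(\pH^i(\dR\pi_{\ast}\phi_{\mM})|_t)$. Since restriction to the point $t$ is a triangulated functor and the stalk Euler characteristic is additive over triangles, this alternating sum computes $\chi((\dR\pi_{\ast}\phi_{\mM})|_t)$; proper base change (using that $\pi$ is projective) then identifies $(\dR\pi_{\ast}\phi_{\mM})|_t$ with $\dR\Gamma(\pi^{-1}(t),\phi_{\mM}|_{\pi^{-1}(t)})$, whose Euler characteristic is $\int_{\pi^{-1}(t)}\nu_{M}\ de$. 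Finally the orientation-independence is immediate: the excerpt already records that $\nu_M$ depends only on the scheme structure of $M$, so both right-hand sides are manifestly independent of the choice of $K_{M,s}^{1/2}$.

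The argument is essentially formal and I do not expect a genuine obstacle. The only point demanding a little care is the local case, where one must be sure that the perverse-filtration additivity survives restriction to $t$ --- that is, that $\sum_i(-1)^i\chi(\pH^i(-)|_t)$ really computes the stalk Euler characteristic $\chi((-)|_t)$ --- which is exactly what additivity of stalk Euler characteristic over the perverse truncation triangles provides, in tandem with proper base change.
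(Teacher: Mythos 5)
Your proposal is correct and follows exactly the paper's own argument: substituting $y=-1$ into (\ref{def:ng}) (resp.\ the identity in Lemma~\ref{lem:ngloc}) kills all $g\ge 1$ terms, the alternating sum of perverse cohomologies collapses to $\chi(\phi_{\mM})$ (resp.\ its stalk version via proper base change), and this equals the weighted Euler characteristic of the Behrend function, whose independence of orientation is Behrend's result cited in the paper. The only difference is that you spell out the additivity and base-change steps that the paper leaves implicit.
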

\begin{proof}
By substituting $y=-1$ to (\ref{def:ng}), 
we obtain 
$\GV_{0, \mM/T}=\chi(\phi_{\mM})$. 
Therefore the first identity holds. 
The second identity also holds in the similar way. 
\end{proof}

\subsection{Strictly Calabi-Yau conditions}
In this subsection, we introduce a certain CY condition
for $d$-critical schemes.  While not needed for 
our definitions, it is convenient for proofs later on.

\begin{defi}\label{defi:slcy}
(i)
A $d$-critical scheme $(M, s)$ is called 
strictly CY if 
there is a global 
$d$-critical chart 
\begin{align}\label{cychart}
(M, J, f, i), 
\ i \colon M \hookrightarrow J, \ 
f \colon J \to \mathbb{A}^1
\end{align}
of $(M, s)$ such that $K_J=\oO_J$. 
Here 
$i$ is a closed embedding and $f$ is a regular function. 
A $d$-critical chart (\ref{cychart}) 
is called a CY $d$-critical chart. 

(ii)
A $d$-critical scheme $(M, s)$ with
a projective morphism $\pi \colon M^{\rm{red}} \to T$
is called strictly CY at $t \in T$
if there is an open neighborhood $t \in U \subset T$
such that $(M_U, s_U)$ is strictly CY.
Here $M_U \cneq \iota(\pi^{-1}(U))$, 
$\iota \colon M^{\rm{red}} \hookrightarrow M$ is the 
natural closed embedding, and $s_U=s|_{M_U}$. 
\end{defi} 
\begin{rmk}
If a $d$-critical scheme $(M, s)$ is strictly CY, 
the line bundle $i^{\ast}K_J=\oO_{M}$
together with the identity map 
$\mathrm{id} \colon \oO_M^{\otimes 2} \to \oO_M$
gives a CY orientation
(see Definition~\ref{loc:cy}) of $(M, s)$. 
\end{rmk}

In the situation of 
Definition~\ref{defi:slcy}~(ii), 
let us take 
a CY $d$-critical chart $(M_U, J, f, i)$. 
Moreover, let us make the additional assumption
that the ring $H^0(\oO_J)$ is finitely generated.
The function $f$ factors through the 
affinization
\begin{align*}
f \colon J \stackrel{\pi_J}{\longrightarrow}
 T\cneq \Spec H^0(\oO_J) \stackrel{g}{\longrightarrow} \mathbb{A}^1. 
\end{align*} 
Suppose that $U \subset T$ is affine. 
The Stein factorization 
of $\pi \colon M_U^{\rm{red}} \to U$ 
is given by
\begin{align*}
\pi \colon 
M_U^{\rm{red}} \stackrel{\pi_1}{\longrightarrow}
\overline{U} \cneq \Spec H^0(\oO_{M_U^{\rm{red}}}) \stackrel{\pi_2}{\longrightarrow}
 U. 
\end{align*}
By the property of the
affinization, we have the commutative diagram
\begin{align}\label{dia:sCYchart}
\xymatrix{
M_U^{\rm{red}} 
\ar@<-0.3ex>@{^{(}->}[r]^-{\iota} 
 \ar[d]_-{\pi_1} 
& M_U 
 \ar@<-0.3ex>@{^{(}->}[r]^-{i} 
& J \ar[rd]^-{f} \ar[d]_-{\pi_J} & \\
\overline{U} \ar[rr]^-{i'} & & T \ar[r]^-{g} & \mathbb{A}^1. 
}
\end{align}
In the proof of Theorem~\ref{intro:thm1}, \ref{intro:thm2}, 
we will show the strictly CY conditions 
for moduli of one dimensional sheaves 
and use the diagrams as above.  

\begin{rmk}
Let 
$\mM_U=(M_U, s_U, i^{\ast}K_J)$
be the oriented $d$-critical scheme 
given by the CY $d$-critical chart $(M_U, J, i, f)$, 
and $\phi_{\mM_U}$ the
perverse sheaf given 
in Theorem~\ref{thm:BDJS}. 
Although $\phi_{\mM_U}$ is not necessary 
pure, 
one can use the BBD decomposition theorem~\cite{BBD}
for $\dR \pi_{J\ast} \IC(J)$, 
the compatibility of $\phi_g$
 with 
proper push forwards~\cite[Proposition~4.2.11]{Dimbook}, 
to show the decomposition
\begin{align}\label{phi:decompose}
\dR \pi_{\ast} \phi_{\mM_U}
\cong \bigoplus_{j \in \mathbb{Z}} \pH^{j}(\dR \pi_{\ast}\phi_{\mM_U})[-j]. 
\end{align}
By (\ref{phi:decompose}), one can interpret 
$\GVloc_{g, (M, s)/T, t}$ in terms of 
the character of $sl_2$-action on
the RHS of (\ref{phi:decompose}). 
As this fact will not be used in this paper, we omit the details. 
\end{rmk}

\section{GV invariants on Calabi-Yau 3-folds}\label{sec:GVCY}
Let $X$ be a smooth 
quasi-projective CY 3-fold, i.e. there is an isomorphism
\begin{align}\label{isom:omega}
\oO_X \stackrel{\cong}{\to} K_X.
\end{align}
Below, we fix the isomorphism (\ref{isom:omega})
In this section, we define GV invariants on $X$
and formulate the conjectures relating them with PT and GW
invariants.

\subsection{Definition of GV invariants}\label{subsec:defgv}
Let 
\begin{align}\notag
\Coh_{\le 1}(X) \subset \Coh(X)
\end{align}
be the subcategory 
consisting of 
sheaves whose supports are compact 
and have dimensions less than or equal to one. 
For an ample divisor $\omega$ on $X$ and 
$0\neq E \in \Coh_{\le 1}(X)$, the $\omega$-slope 
$\mu_{\omega}(E)$ is defined by
\begin{align*}
\mu_{\omega}(E) \cneq \frac{\chi(E)}{\omega \cdot [E]} \in 
\mathbb{Q} \cup \{\infty\}. 
\end{align*}
\begin{defi}
An object $E \in \Coh_{\le 1}(X)$ 
is called $\omega$-(semi)stable 
if for any subobject $0\neq E'\subsetneq E$, we have 
the inequality
$\mu_{\omega}(E')<(\le) \mu_{\omega}(E)$. 
\end{defi}
\begin{rmk}\label{rmk:omega}
For $E \in \Coh_{\le 1}(X)$ with $\chi(E)=1$, 
its $\omega$-stability 
is equivalent to that $\chi(E') \le 0$ for any 
subsheaf $0\subsetneq E' \subsetneq E$. 
In particular, it is independent of a
choice of $h$. 
\end{rmk}
For $\beta \in H_2(X, \mathbb{Z})$, 
we denote by 
\begin{align}\label{moduli:M}
\Sh_{\beta}(X)
\end{align}
the moduli space of $\omega$-stable
$E \in \Coh_{\le 1}(X)$ with $[E]=\beta$, $\chi(E)=1$. 
Note that $\Sh_{\beta}(X)$ 
is independent of $h$ by Remark~\ref{rmk:omega}. 
Also the condition $\chi(E)=1$ implies that $\Sh_{\beta}(X)$ is fine, i.e. 
there is no strictly $\omega$-semistable $E \in \Coh_{\le 1}(X)$
with $[E]=\beta$, $\chi(E)=1$, and 
there is a universal sheaf  
\begin{align*}
\eE \in \Coh(X \times \Sh_{\beta}(X)). 
\end{align*}
Then it is known that $\Sh_{\beta}(X)$ admits a canonical $d$-critical 
structure:  
\begin{thm}\emph{(\cite{BBBJ})}\label{thm:CYM}
A choice of the trivialization (\ref{isom:omega}) 
gives a canonical 
$d$-critical structure $s_{\Sh}$ on
$\Sh_{\beta}(X)$
such that its virtual canonical bundle
is given by 
\begin{align}\label{K:vir}
K_{\Sh}^{\rm{vir}}
 \cneq  \det \left(\dR p_{\Sh\ast} 
\dR \hH om_{X \times \Sh_{\beta}(X)}(\eE, \eE)
\right)|_{\Sh_{\beta}^{\rm{red}}(X)}. 
\end{align}
Here $p_{\Sh} \colon X \times \Sh_{\beta}(X) \to \Sh_{\beta}(X)$
 is the projection. 
\end{thm}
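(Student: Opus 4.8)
The plan is to realize the $d$-critical structure as the classical shadow of a $(-1)$-shifted symplectic structure on a derived enhancement of $\Sh_{\beta}(X)$. First I would produce a derived scheme $\mathfrak{M}$ with classical truncation $t_0(\mathfrak{M})=\Sh_{\beta}(X)$, cut out as the open locus of stable points inside the derived moduli of (compactly supported) objects in $D^b(\Coh(X))$, in the sense of Toën--Vaquié. Since $X$ is a Calabi--Yau threefold, the fixed trivialization $\oO_X \xrightarrow{\cong} K_X$ of (\ref{isom:omega}) makes $X$ into an $\mathcal{O}$-oriented Calabi--Yau of dimension $3$, and the mapping-stack construction of Pantev--Toën--Vaquié--Vezzosi then endows $\mathfrak{M}$ with a canonical $(2-3)=(-1)$-shifted symplectic form $\omega$. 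It is exactly here that the dependence on (\ref{isom:omega}) enters, since $\omega$ is built from the chosen orientation (when $X$ is non-compact one replaces proper pushforward by its compactly supported variant, using that the sheaves in question have compact support).

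Next I would invoke the $d$-critical analogue of the Darboux theorem of Ben-Bassat--Brav--Bussi--Joyce \cite{BBBJ}: the classical truncation of any $(-1)$-shifted symplectic derived scheme carries a canonical $d$-critical structure, locally modelled on $\Crit(f)\subset V$ for a regular function $f$ on a smooth scheme $V$. Applying this to $(\mathfrak{M},\omega)$ produces the section $s_{\Sh}$, which is canonical once (\ref{isom:omega}) has been fixed.

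It then remains to identify the virtual canonical bundle with (\ref{K:vir}). For a $d$-critical structure arising this way one has $K_{M,s}\cong \det(\mathbb{L})|_{M^{\mathrm{red}}}$, where $\mathbb{L}$ is the cotangent complex; in a critical chart $\mathbb{L}|_R \simeq [\,T_V \xrightarrow{\mathrm{Hess}(f)} \Omega_V\,]$ in degrees $[-1,0]$, so $\det\mathbb{L}|_R=\det(\Omega_V)\otimes\det(T_V)^{-1}=K_V^{\otimes 2}$, which recovers the local comparison (\ref{nat:K}). Writing $G \cneq \dR p_{\Sh\ast}\dR\HOM(\eE,\eE)$, the deformation theory of sheaves identifies the tangent complex of $\mathfrak{M}$ with $G[1]$ (up to the scalar summand $\oO[0]\oplus\oO[-3]$ coming from $\Hom(E,E)$ and $\Ext^3(E,E)$, which contributes trivially to the determinant), hence $\mathbb{L}\simeq G^{\vee}[-1]$. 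Relative Serre duality along $p_{\Sh}$ together with $K_X\cong\oO_X$ gives $G^{\vee}\cong G[3]$, which is precisely the $(-1)$-shifted self-duality of the obstruction theory. With the conventions $\det(C[n])=(\det C)^{(-1)^n}$ and $\det(C^{\vee})=(\det C)^{-1}$ this yields $\det\mathbb{L}=\det(G^{\vee}[-1])=\det(G)$, and restricting to $\Sh_{\beta}^{\mathrm{red}}(X)$ reproduces (\ref{K:vir}).

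The main obstacle is not the determinant bookkeeping, which is routine, but the construction and gluing of the global $(-1)$-shifted symplectic structure on $\mathfrak{M}$ and the passage through the Darboux theorem to a genuinely global $d$-critical section: one must check that the local critical charts glue compatibly, that the resulting $s_{\Sh}$ depends only on (\ref{isom:omega}), and that the above local identification of $\det\mathbb{L}$ globalizes to the stated isomorphism on $\Sh_{\beta}^{\mathrm{red}}(X)$. This is exactly the technical content imported from \cite{BBBJ}.
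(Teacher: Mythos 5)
Your proposal is correct and follows essentially the same route the paper takes (see Remark~\ref{rmk:shifted} and the remark after it): the derived moduli space of stable sheaves acquires a canonical $(-1)$-shifted symplectic structure from the trivialization (\ref{isom:omega}) via \cite{PTVV}, the Darboux theorem of \cite{BBBJ} then induces the $d$-critical structure on the classical truncation, and the virtual canonical bundle is identified with $\det$ of the self-dual obstruction complex exactly as in your bookkeeping. The only point where the paper is more specific is the quasi-projective case, which it handles by citing Bussi \cite[Theorem~5.2]{Bussi} or Preygel \cite[Theorem~3.0.6]{Pre} combined with the argument of \cite{PTVV}, rather than your informal appeal to compactly supported pushforward, but this is the same idea.
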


\begin{rmk}\label{rmk:shifted}
The canonical $d$-critical structure in 
Theorem~\ref{thm:CYM}
is induced from derived deformation theory.  
Let $\widehat{\Sh}_{\beta}(X)$ be the derived moduli space of
$h$-stable sheaves $E$ with $[E]=\beta$, $\chi(E)=1$. 
If $X$ is projective, by~\cite{PTVV},
a choice of (\ref{isom:omega})
gives a canonical $(-1)$-shifted symplectic structure on 
$\widehat{\Sh}_{\beta}(X)$, 
which is written as a Darboux form by~\cite{BBBJ}, 
and induce the canonical $d$-critical structure
on its truncation $\Sh_{\beta}(X) \subset \widehat{\Sh}_{\beta}(X)$ 
in Theorem~\ref{thm:CYM}. 
\end{rmk}
\begin{rmk}
Since we only assume that $X$ is quasi-projective, 
we need a generalization of the result in~\cite{PTVV}
to the quasi-projective case. 
If $X$ is written as $(u \neq 0)$ for a smooth projective 
variety $Y$ and $0\neq u \in H^0(-K_Y)$, 
Bussi~\cite[Theorem~5.2]{Bussi} 
shows that $\widehat{\Sh}_{\beta}(X)$ has a canonical $(-1)$-shifted 
symplectic structure. 
In general, this follows from 
the work of Preygel~\cite[Theorem~3.0.6]{Pre}
combined with the original argument of~\cite{PTVV}. 
\end{rmk}
Let 
\begin{align}\label{HCmap}
\pi \colon \Sh_{\beta}^{\rm{red}}(X) \to \Chow_{\beta}(X)
\end{align}
be the Hilbert-Chow morphism. 
Here $\Chow_{\beta}(X)$
is the Chow variety which parametrizes compactly supported
effective one-cycles on $X$ with homology class $\beta$
(see~\cite{Ko}), 
and the map 
(\ref{HCmap}) sends 
a one dimensional sheaf $E$ 
to 
the associated fundamental cycle of $E$. 
Because 
$\Sh_{\beta}(X)$ is a fine moduli space, 
the morphism (\ref{HCmap}) is a 
projective morphism. 
\begin{rmk}
Here we use the classical definition of the Chow variety, which is 
a reduced scheme and denoted as 
$\Chow'(X)$ in~\cite{Ko}.
The existence of the map (\ref{HCmap}) follows from, for example, 
by the argument of~\cite[Corollary~7.15]{Ryd}.  
\end{rmk}

The $d$-critical scheme
in Theorem~\ref{thm:CYM} always
admits a (non-canonical) orientation~\cite{NNAO}. 
Let us take one of them and consider 
an oriented $d$-critical scheme
\begin{align*}
\sS h_{\beta}(X)=(\Sh_{\beta}(X), s_{\Sh}, (K_{\Sh}^{\rm{vir}})^{1/2}).
\end{align*}
In the following definition, 
we take a special type of orientation data $(K_{\Sh}^{\rm{vir}})^{1/2}$, 
called \textit{CY orientation data} (see~Definition~\ref{loc:cy}).
We expect that such an orientation data always exist
(or at least locally on $\Chow_{\beta}(X)$, 
 see Conjecture~\ref{conj:GV1}). 

\begin{defi}\label{def:GV}
We define the 
invariants $n_{g, \beta} \in \mathbb{Z}$ by
(see Lemma~\ref{lem:ng})
\begin{align*}
n_{g, \beta} \cneq 
\GV_{g, \sS h_{\beta}(X)/\Chow_{\beta}(X)}. 
\end{align*}
The invariant $n_{g, \beta}$ is called the
genus $g$ Gopakumar-Vafa invariant 
with curve class $\beta$. 
\end{defi}

\begin{rmk}
Once we take a CY orientation data, the 
resulting invariant $n_{g, \beta}$ is independent of 
an orientation data as long as it is CY
(see Lemma~\ref{lem:gvcompare}). 
\end{rmk}

The local version is defined in the similar way: 

\begin{defi}\label{def:locgv}
We define the invariants 
$n_{g, \gamma}^{\rm{loc}} \in \mathbb{Z}$ for $g\in \mathbb{Z}_{\ge 0}$
by (see Lemma~\ref{lem:ngloc})
\begin{align*}
n_{g, \gamma}^{\rm{loc}} \cneq 
\GVloc_{g, \sS h_{\beta}(X)/\Chow_{\beta}(X), \gamma}. 
\end{align*}
The invariant $n_{g, \gamma}^{\rm{loc}}$ is called the
local genus $g$ Gopakumar-Vafa invariant at $\gamma$. 
\end{defi}

\begin{rmk}\label{def:global}
Similarly to the global case, 
the local GV invariant 
$n_{g, \gamma}^{\rm{loc}}$ is defined 
using a CY orientation data. 
However for the local case, we only need 
such an orientation data locally 
on $\Chow_{\beta}(X)$ near $\gamma$
(i.e. only need to assume
that $\Sh_{\beta}(X)$ is CY at $\gamma$
in Definition~\ref{loc:cy}~(iii)).
If the latter condition holds for any $\gamma \in \Chow_{\beta}(X)$, 
(i.e. (\ref{HCmap}) is a CY fibration in Definition~\ref{loc:cy}~(iv)), 
we can define the global 
GV invariants by the identity
(see Remark~\ref{rmk:loc/global})
\begin{align*}
n_{g, \beta}=\int_{\Chow_{\beta}(X)} n_{g, -}^{\rm{loc}} \ de.
\end{align*}
If $\Sh_{\beta}(X)$ has a global CY orientation data, the above 
definition coincides with the one in Definition~\ref{def:GV}. 
\end{rmk}
If $X$ is projective, 
our genus zero GV invariant agrees with 
Katz's definition~\cite{Katz}:
\begin{lem}\label{lem:Katz}
If $X$ is projective, we 
have the identity
\begin{align*}
n_{0, \beta}=\int_{[\Sh_{\beta}(X)]^{\rm{vir}}}1. 
\end{align*}
\end{lem}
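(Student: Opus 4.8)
The plan is to reduce both sides to the same Behrend-weighted Euler characteristic. First I would apply Lemma~\ref{lem:g=0} with $\mM=\sS h_{\beta}(X)$ and $T=\Chow_{\beta}(X)$, which gives immediately
\[
n_{0,\beta}=\GV_{0,\sS h_{\beta}(X)/\Chow_{\beta}(X)}=\int_{\Sh_{\beta}(X)}\nu_{\Sh_{\beta}(X)}\,de,
\]
where $\nu_{\Sh_{\beta}(X)}$ is the constructible function of \eqref{Bfunc} given by the pointwise Euler characteristic $p\mapsto \chi(\phi_{\sS h_{\beta}(X)}|_{p})$. So it suffices to identify this weighted Euler characteristic with Katz's virtual integral.

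Next I would recall that, since $X$ is projective, $\Sh_{\beta}(X)$ is a projective scheme carrying a \emph{symmetric} obstruction theory: by Remark~\ref{rmk:shifted}, the $(-1)$-shifted symplectic structure of \cite{PTVV} on the derived moduli space induces on the truncation $\Sh_{\beta}(X)$ a self-dual obstruction theory of virtual dimension zero, hence a virtual class $[\Sh_{\beta}(X)]^{\rm vir}$ of dimension $0$. Katz's invariant is by definition the degree of this class. The core external input is then Behrend's theorem \cite{Beh}: for a proper scheme $M$ equipped with a symmetric obstruction theory, the degree of the virtual class equals the Euler characteristic weighted by the intrinsic Behrend function,
\[
\int_{[M]^{\rm vir}}1=\int_{M}\nu_{M}^{\rm Beh}\,de.
\]
Applying this with $M=\Sh_{\beta}(X)$ reduces the lemma to the equality $\nu_{\Sh_{\beta}(X)}=\nu_{\Sh_{\beta}(X)}^{\rm Beh}$ between the vanishing-cycle function of \eqref{Bfunc} and Behrend's intrinsic one.

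This last equality is exactly the content recalled after \eqref{Bfunc}. Via the local identification \eqref{isom:IC} of $\phi_{\mM}$ with the vanishing cycle sheaf $\phi_{f}(\IC(V))$ on a $d$-critical chart $(R,V,f,i)$, the stalk Euler characteristic $\chi(\phi_{\mM}|_{p})$ is expressed through the Milnor fiber of $f$ at $p$; twisting by $\lL_{\xi}$ does not change the pointwise Euler characteristic, so the resulting function is independent of the orientation. Behrend proves in \cite{Beh} that precisely this Milnor-fiber expression computes his intrinsic function and depends only on the scheme structure of $M$, which yields the desired identification.

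The main obstacle is therefore not a single hard computation but matching the structures correctly: one must ensure that the symmetric obstruction theory used to define $[\Sh_{\beta}(X)]^{\rm vir}$ in Katz's formula is the one induced by the same $(-1)$-shifted symplectic (equivalently $d$-critical) structure $s_{\Sh}$ of Theorem~\ref{thm:CYM}. Once this compatibility is in place, Behrend's intrinsic function attached to the obstruction theory coincides with the function \eqref{Bfunc} built from $\phi_{\mM}$, and the two chains of equalities above meet, giving $n_{0,\beta}=\int_{[\Sh_{\beta}(X)]^{\rm vir}}1$.
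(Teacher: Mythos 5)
Your proposal is correct and follows essentially the same route as the paper: reduce the left-hand side to the Behrend-weighted Euler characteristic via Lemma~\ref{lem:g=0}, then invoke Behrend's theorem that for the projective moduli space $\Sh_{\beta}(X)$ the degree of the virtual class equals that weighted Euler characteristic. The extra compatibility checks you spell out (that the stalkwise Euler characteristic of $\phi_{\sS h}$ is orientation-independent and agrees with Behrend's intrinsic function, which depends only on the scheme structure) are exactly the facts the paper records after \eqref{Bfunc} and uses implicitly in its two-line proof.
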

\begin{proof}
The assumption implies that $\Sh_{\beta}(X)$ is also projective and the RHS
makes sense. 
Then the lemma follows from Lemma~\ref{lem:g=0} together with  
the result of Behrend~\cite{Beh} that the degree of the 
virtual class of $\Sh_{\beta}(X)$ coincides with the
weighted Euler number of its Behrend constructible function. 
\end{proof}

\subsection{Conjectures on the relation to PT/GW invariants}
\label{sec:PTGW}
Let $X$ be 
a smooth quasi-projective CY 3-fold
as before. 
By definition, a \textit{stable pair}
introduced by Pandharipande-Thomas~\cite{PT}
 consists of a pair 
\begin{align*}
(F, s), \ F \in \Coh_{\le 1}(X), \ s \colon \oO_X \to F
\end{align*}
where $F$ is
pure one-dimensional 
and $s$ is surjective in dimension one.
For $\beta \in H_2(X, \mathbb{Z})$ and $n\in \mathbb{Z}$, 
let
\begin{align*}
P_n(X, \beta)
\end{align*}
be the moduli space of 
stable pairs $(F, s)$ on $X$ with 
$[F]=\beta$ and $\chi(F)=n$. 
By~\cite{BBBJ}, 
the moduli space of stable pairs $P_n(X, \beta)$ admits 
a canonical $d$-critical 
structure $s_P$
whose virtual canonical line bundle 
$K_P^{\rm{vir}}$ is given similarly to (\ref{K:vir})
for universal stable pairs. 
By~\cite{NNAO}, 
it always has a 
(non-canonical) orientation $(K_{P}^{\rm{vir}})^{1/2}$.
An oriented $d$-critical scheme
\begin{align*}
\pP_n(X, \beta)=(P_n(X, \beta), s_P, (K_{P}^{\rm{vir}})^{1/2})
\end{align*}
determines
the sheaf of vanishing cycles $\phi_{\pP}$ on $P_n(X, \beta)$
by Theorem~\ref{thm:BDJS}. 
Similarly to (\ref{Bfunc}), let 
\begin{align*}
\nu_{P} \colon P_n(X, \beta) \to \mathbb{Z}, \ 
p \mapsto \chi(\phi_{\pP}|_{p})
\end{align*}
be the Behrend
constructible function
on $P_n(X, \beta)$. 
The PT invariant is defined by 
\begin{align*}
P_{n, \beta} \cneq \int_{P_n(X, \beta)} \nu_{P} \ de.
\end{align*}
If $X$ is projective, 
it coincides with 
the integration of the zero-dimensional 
virtual class of $P_n(X, \beta)$~\cite{Beh}. 
\begin{rmk}\label{P:dcritical}
We don't need the $d$-critical structure on $P_n(X, \beta)$ 
to define $P_{n, \beta}$, as we only need the 
Behrend constructible function $\nu_P$
to define it. 
The $d$-critical structure on $P_n(X, \beta)$
and the associated vanishing cycle sheaf will be used 
later, e.g. in Subsection~\ref{subsec:GVCY3}. 
\end{rmk}

By~\cite[Lemma~3.1]{PT}, 
there exist unique integers
\begin{align*}
n_{g, \beta}^{P} \in \mathbb{Z}, \ \beta>0, \ 
g \in \mathbb{Z}
\end{align*}
such that 
the logarithm of the generating series of 
stable pairs 
is written as 
\begin{align}\label{PT/GV:form}
&\log \left(
1+
\sum_{\beta>0, 
 n\in \mathbb{Z}}
P_{n, \beta}(-q)^n t^{\beta} \right) \\
&= \notag
\sum_{\beta>0}
\sum_{g\in \mathbb{Z}, k\ge 1}
\frac{n_{g, \beta}^{P}}{k}
(-1)^{g-1}
(q^{\frac{k}{2}}-q^{-\frac{k}{2}})^{2g-2} t^{k\beta}. 
\end{align}
Here $\beta>0$ means that $\beta$ is a homology class of an effective 
one cycle on $X$.
\begin{conj}\label{GV:conj1}
Under the situation of Definition~\ref{def:GV}, we have the identity
\begin{align}\label{id:GV/PT}
n_{g, \beta}^P=n_{g, \beta}, \ \beta>0, 
\ g \in \mathbb{Z}. 
\end{align}
\end{conj}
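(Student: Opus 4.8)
The plan is to deduce the global identity \eqref{id:GV/PT} from a cycle-by-cycle \emph{local} PT/GV correspondence over the Chow variety, and then to establish that local correspondence by reducing, via the vanishing cycle functor, to a Macdonald-type formula for families of locally planar curves. On the GV side we already have the integration formula \eqref{int:id}, $n_{g,\beta}=\int_{\Chow_\beta(X)} n_{g,-}^{\rm{loc}}\,de$, and on the PT side the invariant $P_{n,\beta}$ is likewise an integral of the Behrend function over $P_n(X,\beta)$, which fibers over $\Chow_\beta(X)$ by sending a pair $(F,s)$ to the fundamental cycle of $F$. First I would fix a cycle $\gamma\in\Chow_\beta(X)$ and prove the local statement of Conjecture~\ref{intro:conj} (and its reducible analogue), namely that $\sum_n P_{n,\gamma}^{\rm{loc}}q^n$ and $\sum_g n_{g,\gamma}^{\rm{loc}}(q^{1/2}+q^{-1/2})^{2g-2}$ agree. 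The passage from these local identities to the global logarithmic form \eqref{PT/GV:form} is where the $1/k$ factors and the $k\beta$ summation enter: the $\log$ encodes the decomposition of a cycle into effective summands, so one must check that the contributions of reducible cycles organize plethystically on both sides. This is precisely the content of the general formulation Conjecture~\ref{GV:conj2}, so the global conjecture reduces to the local one together with a combinatorial bookkeeping step.

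For the local correspondence itself, the idea is to compare the two $d$-critical moduli spaces fiberwise over $\Chow_\beta(X)$. Near a fixed cycle $\gamma$ one expects both $\Sh_\beta(X)$ and $P_n(X,\beta)$ to be governed by the same local model: the versal deformation of the locally planar curve underlying $\gamma$, with $\Sh_\beta(X)$ appearing as a relative compactified Jacobian and $P_n(X,\beta)$ as a relative Hilbert scheme of points. I would first establish the strictly CY condition of Definition~\ref{defi:slcy} for $\Sh_\beta(X)$ near $\gamma$, so that a CY orientation exists and the diagram \eqref{dia:sCYchart} is available; this lets one compute $\dR\pi_*\phi_{\sS h}$ in terms of the honest cohomology of the Jacobian fibration rather than the virtual $d$-critical data. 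Using the compatibility of $\phi_f$ with proper pushforward (as in the decomposition \eqref{phi:decompose}), the perverse filtration on $\dR\pi_*\phi_{\sS h}$ is then identified with the perverse Leray filtration of the support map of the compactified Jacobian.

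The key input matching the two sides is the generalized Macdonald formula for versal families of locally planar curves proved in \cite{MY2, MS}, which relates the generating series of Euler characteristics of relative Hilbert schemes of points (the PT side) to the perverse filtration on the relative compactified Jacobian (the GV side). Applying the vanishing cycle functor converts the statement there into the statement about $\phi_{\sS h}$ needed here, which is how Theorem~\ref{intro:thm1} and Theorem~\ref{intro:thm2} are obtained. The main obstacle to the full conjecture is that this reduction is only available when the local geometry is that of a planar curve and when CY orientation data exist; for a general CY threefold and an arbitrary (non-reduced, non-planar, or non-primitive) cycle neither the local structure of $\Sh_\beta(X)$ nor that of $\Chow_\beta(X)$ is understood, and there is no known analogue of the Macdonald formula. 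Even granting the local correspondence, deformation-invariance of both sides over families in which $\Chow_\beta(X)$ changes dimension remains unclear, so a complete proof of \eqref{id:GV/PT} in general appears out of reach with present techniques.
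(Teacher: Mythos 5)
This statement is a conjecture, and the paper offers no proof of it in general: it only records (in a remark following Conjecture~\ref{GV:conj2}) that the global identity follows from the local Conjecture~\ref{GV:conj2} by integration over $\Chow_{\beta}(X)$, and then establishes that local statement in special cases via Proposition~\ref{prop:irred2} — strictly CY $d$-critical charts plus the vanishing cycle functor applied to the Macdonald formula of~\cite{MY2, MS} — which is exactly what yields Theorems~\ref{thm:locsur} and~\ref{thm:smooth}. Your proposal follows precisely this route (global-to-local reduction, then versal deformations of locally planar curves and the Macdonald formula, with CY orientation data making the comparison of perverse filtrations possible) and correctly identifies why it does not extend to arbitrary cycles, so it matches the paper's treatment of this conjecture in both strategy and scope.
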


The local PT invariants are also defined in a similar way.  
For $\gamma \in \Chow_{\beta}(X)$, 
let $P_n^{\rm{loc}}(X, \gamma) \subset P_n(X, \beta)$ be the 
closed subset 
corresponding to stable pairs $(F, s)$ 
such that the fundamental cycle of $F$ 
coincides with $\gamma$.  
We define 
\begin{align*}
P_{n, \gamma}^{\rm{loc}} \cneq \int_{P_n^{\rm{loc}}(X, \gamma)} \nu_{P} \ de. 
\end{align*}
We have the constructible function
\begin{align*}
P_{n, -}^{\rm{loc}} \colon 
\Chow_{\beta}(X) \to \mathbb{Z}, \ 
\gamma \mapsto P_{n, \gamma}^{\rm{loc}}.
\end{align*}
Similarly to (\ref{PT/GV:form}), there 
exist locally constructible functions
\begin{align*}
n_{g, -}^{P, \rm{loc}} \colon 
\Chow(X) \cneq \coprod_{\beta>0} \Chow_{\beta}(X) \to \mathbb{Z}, \ 
g \in \mathbb{Z}
\end{align*}
such that we have the identity
\begin{align}\label{PT/GV:local}
&\log\left(
1+
\sum_{
 n\in \mathbb{Z}}
P_{n, -}^{\rm{loc}}q^n \right) \\
&= \notag
\sum_{
g\in \mathbb{Z}, k\ge 1
}
(k)_{\ast}
\frac{n_{g, -}^{P, \rm{loc}}}{k}
(-1)^{g-1}
((-q)^{\frac{k}{2}}-(-q)^{-\frac{k}{2}})^{2g-2}. 
\end{align}
The above identity is interpreted
as the identity of $\mathbb{Q}(\!(q)\!)$-valued
 functions
on $\Chow(X)$. Here for 
functions $n_1$, $n_2$ 
on $\Chow(X)$, the product is defined by 
\begin{align}\label{const:prod}
n_1 \cdot n_2 \cneq 
(+)_{\ast}(n_1 \boxtimes n_2)
\end{align}
where 
$+$ is an obvious addition map of one cycles
\begin{align*}
+ \colon \Chow(X) \times \Chow(X) \to \Chow(X).  
\end{align*}
The logarithm of the LHS of (\ref{PT/GV:local})
is taken with respect to the product (\ref{const:prod}). 
Also $(k)_{\ast}$ in the RHS of (\ref{PT/GV:local}) is the 
push-forward of the map 
$\gamma \mapsto k\gamma$ on $\Chow(X)$. 

\begin{conj}\label{GV:conj2}
Under the situation of Definition~\ref{def:locgv},
we have the identity
\begin{align}\label{id:GV/PT2}
n_{g, \gamma}^{P, \rm{loc}}=n_{g, \gamma}^{\rm{loc}}, 
\ \gamma \in \Chow_{\beta}(X), 
\ g \in \mathbb{Z}. 
\end{align} 
\end{conj}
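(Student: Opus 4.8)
The plan is to establish the identity \eqref{id:GV/PT2} by transporting both sides, via the vanishing cycle functor, to the generalized Macdonald formula for versal deformations of locally planar curves proved in~\cite{MY2, MS}. I would first treat the geometric situation of Theorem~\ref{intro:thm1}, where $X=\mathrm{Tot}(K_S)$, since there $\Sh_{\beta}(X)$ admits a clean surface-theoretic model. A compactly supported one-dimensional sheaf $E$ on $X$ is the same datum as a pure sheaf $G=p_{\ast}E$ on $S$ together with a Higgs field $\phi\colon G\to G\otimes K_S$, so $\Sh_{\beta}(X)$ is identified with a moduli space of stable Higgs sheaves on $S$ and the Hilbert-Chow map \eqref{HCmap} becomes the associated Hitchin-type support map sending $(G,\phi)$ to its spectral one-cycle. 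The hypothesis that $p_{\ast}\gamma$ is irreducible then confines attention to spectral curves in a fixed integral linear system, whose total space of deformations is locally planar, matching the setup of \cite{CDP2}.

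The key technical input on the $d$-critical side is to verify that $\Sh_{\beta}(X)$ is strictly Calabi-Yau in the sense of Definition~\ref{defi:slcy}. Using $H^1(\oO_S)=0$, I would exhibit a global CY $d$-critical chart $(M,J,f,i)$ in which $J$ is a \emph{smooth} moduli space of sheaves on $S$ and $f$ is the natural superpotential cutting out $\Sh_{\beta}(X)$, the condition $K_J=\oO_J$ following from the surface geometry. This accomplishes two things at once. First, by the discussion surrounding \eqref{phi:decompose} it produces the decomposition $\dR\pi_{\ast}\phi_{\sS h}\cong\bigoplus_j\pH^j(\dR\pi_{\ast}\phi_{\sS h})[-j]$, so that the left-hand side of \eqref{intro:defGV} is governed by an honest perverse filtration even though $\phi_{\sS h}$ is not pure. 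Second, through the chart it lets me compute $\phi_f(\IC(J))$ via Theorem~\ref{thm:BDJS}: after checking that a CY orientation makes the twisting local system $\lL_{\xi}$ trivial, the vanishing cycle sheaf is identified, up to shift, with the intersection complex of the compactified Jacobian fibration over the linear system, so that the perverse cohomology computing $n^{\mathrm{loc}}_{g,\gamma}$ is exactly the perverse filtration on the relative compactified Jacobian of \cite{MY2, MS}.

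In parallel I would give the matching description of the stable pairs side: a stable pair on $X$ supported near $\gamma$ is equivalent to surface data producing a point of a relative Hilbert scheme of points of the spectral curves, so the local series $\sum_n P^{\mathrm{loc}}_{n,\gamma}q^n$ is the Behrend-weighted generating series of these relative Hilbert schemes. The generalized Macdonald formula of \cite{MY2, MS} is precisely the identity equating the relative Hilbert scheme series of a versal family of locally planar curves with the perverse-graded series of the relative compactified Jacobian; substituting the two identifications above turns it into Conjecture~\ref{intro:conj}, hence into \eqref{id:GV/PT2}, for integral $\gamma$. For the reducible cycles covered by the general statement one must further check that both sides are multiplicative for the convolution product \eqref{const:prod} attached to the addition map on $\Chow(X)$, so that the contributions of effective summands agree; this is the role of the $\log$ and $\exp$ in \eqref{PT/GV:local}.

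The main obstacle I expect is the precise identification of $\phi_{\sS h}$ with the intersection complex of the compactified Jacobian fibration, together with the triviality of $\lL_{\xi}$. This requires constructing the strictly CY chart globally over the relevant part of $\Chow_{\beta}(X)$ and controlling how the vanishing cycle functor interacts with the spectral correspondence and with the singularities of the moduli space; because $\phi_{\sS h}$ fails to be pure, one cannot simply invoke the decomposition theorem and must instead lean on the strict CY structure to produce \eqref{phi:decompose}. Handling the fully general reducible case, where $\Chow_{\beta}(X)$ and the fibers of \eqref{HCmap} can degenerate badly, is precisely why the complete Conjecture~\ref{GV:conj2} remains open beyond the local-surface and smooth-curve situations.
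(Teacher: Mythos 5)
First, a matter of scope: the statement you were asked to prove is a \emph{conjecture} in the paper; the authors establish it only for local surfaces (Theorem~\ref{thm:locsur}) and smooth curves (Theorem~\ref{thm:smooth}), and your proposal, which restricts to $X=\mathrm{Tot}(K_S)$ and concedes the general case, matches that scope and the paper's broad strategy (versal deformations, the formula of \cite{MY2, MS}, vanishing cycles). However, there is a genuine error at the heart of your identification step. You claim that, for a CY orientation, $\phi_{\sS h}$ is ``identified, up to shift, with the intersection complex of the compactified Jacobian fibration over the linear system,'' so that the perverse cohomology computing $n_{g,\gamma}^{\rm{loc}}$ is ``exactly'' the perverse filtration of \cite{MY2, MS}. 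This is false in general and contradicts the paper's own analysis: $\phi_{\sS h}$ is typically \emph{not pure}, and an IC-sheaf prescription is essentially the HST-type definition that the counter-example of Proposition~\ref{prop:counter2} rules out. The correct mechanism is weaker and structurally different: the identity (\ref{id:versal}) of \cite{MY2, MS} is an identity of IC pushforwards over the versal base $T$, and one applies the vanishing cycle functor $\phi_g$ of the superpotential $g$ on $E^{\vee}$ --- which is nonzero precisely because the linear system $H_{\beta}$ sits inside $T$ as the zero locus $\{s=0\}$ of a section of a bundle $E$, in general far from versally --- to both sides, using compatibility of $\phi_g$ with proper pushforward and its perverse t-exactness. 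The objects computing $n_{g,\gamma}^{\rm{loc}}$ are $\pH^i(\dR \pi_{J\ast}\phi_{f_J}(\IC(\overline{J}_U\times_U E^{\vee})))=\phi_g(\pH^i(\dR \pi_{J\ast}\IC(\overline{J})))$, i.e.\ vanishing cycles of the Macdonald summands, not those summands themselves; your identification holds only in the special case where the linear system is itself versal and $H^0(K_S|_C)=0$ (e.g.\ del Pezzo surfaces), whereas the theorem covers cases like the rigid nodal curve of Subsection~\ref{ex:enriques}, where $\phi_{\sS h}$ is the vanishing cycle sheaf of $xyz$ and is visibly impure.

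Two further ingredients your sketch omits would make the construction collapse as written. First, your proposed chart ``$J$ a smooth moduli space of sheaves on $S$'' cannot exist: ${\Sh}_S$ is itself singular in general (it is cut out of the relative compactified Jacobian $\overline{J}$ of the \emph{versal} family by the section $\pi_J^{\ast}s$), so the CY chart must be built on $\overline{J}_U\times_U E^{\vee}$ with superpotential $f_J(j,e)=\langle s(\pi_J(j)),e\rangle$; producing it is exactly where the paper needs the dual obstruction cone description of Jiang--Thomas, namely ${\Sh}_X\cong\Obs^{\ast}(\uU_{\Sh}^{\bullet})$ over ${\Sh}_S$ (Lemma~\ref{lem:Mobs}) and $P_X\cong\Obs^{\ast}(\uU_P^{\bullet})$ over $P_S$ (Lemma~\ref{lem:Pobs}), together with the comparison of obstruction theories in Proposition~\ref{prop:natU} and Lemma~\ref{lem:Ptri}. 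Second, the Hilbert--Chow target is not the linear system: $\Chow_{\beta}(X)$ fibers over $\lvert L_{\beta}\rvert$ with fiber $H^0(\nu_{\ast}\oO_{\widetilde{C}}\otimes K_S)$ (Lemma~\ref{lem:chow}), so the Chow variety of $X$ jumps, and the HC map on ${\Sh}_X$ only matches the map to $E^{\vee}$ after quotienting by the group scheme $O_H$, with the sign and shift bookkeeping of Lemmas~\ref{lem:relphi} and \ref{lem:relP}. Without these steps the evaluation of both sides of (\ref{id:GV/PT2}) at a point $\gamma\in\Chow_{\beta}(X)$, rather than at a point of the linear system, cannot even be formulated.
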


\begin{rmk}
By integrating over the Chow variety, it is 
easy to see that Conjecture~\ref{GV:conj2} implies 
Conjecture~\ref{GV:conj1}. 
\end{rmk}

\begin{rmk}
For an irreducible one cycle $\gamma$, 
Conjecture~\ref{GV:conj2} is equivalent to 
Conjecture~\ref{intro:conj}. 
\end{rmk}

\begin{rmk}
The identity (\ref{id:GV/PT}) 
in particular implies $n_{g, \beta}^{P}=0$ for $g<0$, 
which is the strong rationality conjecture 
in~\cite[Conjecture~3.14]{PT}.
By~\cite[Theorem~6.4]{Tsurvey}, the
above vanishing is equivalent to the multiple 
cover conjecture of generalized DT invariants of 
one dimensional semistable sheaves
(see~\cite[Conjecture~6.3]{Tsurvey}), and in this 
case the identity (\ref{id:GV/PT}) holds for $g=0$. 
Similarly in the local case, the vanishing $n_{g, \gamma}^{P, \rm{loc}}=0$ 
for $g<0$ 
is equivalent to the multiple cover conjecture of local 
generalized DT invariants (see~\cite[Conjecture~4.13]{Todpara}) 
and in this case
(\ref{id:GV/PT2}) holds for $g=0$. 
\end{rmk}

If $X$ is projective, 
the comparison with Gromov-Witten invariants is 
also formulated in a similar way. 
Let 
\begin{align*}
\mathrm{GW}_{g, \beta} \in \mathbb{Q}
\end{align*}
be the 
genus $g$ Gromov-Witten invariant 
on $X$ with curve class $\beta$. 
Then there exist (a priori rational numbers)
\begin{align*}
n_{g, \beta}^{GW} \in \mathbb{Q}, \ 
g\in \mathbb{Z}_{\ge 0}, \ \beta>0
\end{align*}
satisfying the identity: 
\begin{align*}
\sum_{\beta>0, g\ge 0}
\mathrm{GW}_{g, \beta}
\lambda^{2g-2}t^{\beta}
=\sum_{\beta>0, g\ge 0, k\ge 1}
\frac{n_{g, \beta}^{GW}}{k}
\left(2\sin\left( \frac{k\lambda}{2} \right)\right)^{2g-2}
t^{k\beta}. 
\end{align*}
\begin{conj}\label{conj:GV/GW}
Suppose that $X$ is a smooth projective CY 3-fold. 
Then under the situation of Definition~\ref{def:GV},
we have the identity
\begin{align*}
n_{g, \beta}^{GW}=n_{g, \beta}, \ \beta>0, \ g \in \mathbb{Z}_{\ge 0}. 
\end{align*}
\end{conj}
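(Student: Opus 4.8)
The plan is to treat Conjecture~\ref{conj:GV/GW} not as an independent assertion but as a formal consequence of two inputs: the Gromov-Witten/stable-pair correspondence of \cite{MNOP,PT}, and our own PT/GV comparison (Conjecture~\ref{GV:conj1}). Concretely, the first identifies the Gromov-Witten and stable-pair partition functions of $X$ under the variable change $q=e^{i\lambda}$, while the second gives $n_{g,\beta}=n_{g,\beta}^{P}$ for the invariants of Definition~\ref{def:GV}. Granting both, it suffices to match the three GV-type resummations — the one defining $n_{g,\beta}^{GW}$, the one in \eqref{PT/GV:form} defining $n_{g,\beta}^{P}$, and the identity of Definition~\ref{def:GV} defining $n_{g,\beta}$ — and compare coefficients.

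First I would carry out the purely formal step $n_{g,\beta}^{GW}=n_{g,\beta}^{P}$. Setting $q=e^{i\lambda}$ one has $q^{k/2}-q^{-k/2}=2i\sin(k\lambda/2)$, hence
\begin{align*}
(-1)^{g-1}(q^{k/2}-q^{-k/2})^{2g-2}=(2\sin(k\lambda/2))^{2g-2},
\end{align*}
so the sign $(-1)^{g-1}$ in \eqref{PT/GV:form} is exactly absorbed and the right-hand side of \eqref{PT/GV:form} becomes $\sum_{\beta,g,k}\frac{n_{g,\beta}^{P}}{k}(2\sin(k\lambda/2))^{2g-2}t^{k\beta}$. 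By the GW/PT correspondence the two logarithms of partition functions agree, and this series must equal $\sum_{\beta,g,k}\frac{n_{g,\beta}^{GW}}{k}(2\sin(k\lambda/2))^{2g-2}t^{k\beta}$. Since the GV resummation is a formal bijection, comparing coefficients yields $n_{g,\beta}^{GW}=n_{g,\beta}^{P}$ for all $g,\beta$; combined with Conjecture~\ref{GV:conj1} this gives $n_{g,\beta}^{GW}=n_{g,\beta}^{P}=n_{g,\beta}$.

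The hard part is that both inputs are themselves open in general: the GW/PT correspondence is a theorem only in restricted settings \cite{MNOP,BrPa,PP}, and Conjecture~\ref{GV:conj1} is the central conjecture of this paper, established here only for the irreducible and smooth one-cycles of Theorem~\ref{intro:thm1} and Theorem~\ref{intro:thm2}. Thus this reduction establishes the Gromov-Witten/GV comparison unconditionally only on the overlap of cases where both inputs are known. A direct attack that bypasses stable pairs — reading the Gromov-Witten series off the perverse filtration of $\dR\pi_{\ast}\phi_{\sS h}$ itself — would require a genuinely new comparison between vanishing-cycle cohomology and Gromov-Witten theory, and is further obstructed by the fact that Definition~\ref{def:GV} already presupposes the conjectural existence of CY orientation data. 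For these reasons I expect the general statement to remain out of reach, with the reduction above being the natural route to every case one can currently settle.
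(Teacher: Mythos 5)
First, a point of framing: the paper does not (and cannot) prove this statement --- it is a conjecture, and the only argument the paper offers is the remark immediately following it, which records exactly the reduction you propose: the conjectural GW/PT correspondence of \cite{MNOP, PT} gives $n_{g,\beta}^{P}=n_{g,\beta}^{GW}$, which combined with Conjecture~\ref{GV:conj1} gives $n_{g,\beta}^{GW}=n_{g,\beta}$. So your overall strategy coincides with the paper's, and your trigonometric bookkeeping is correct: with the paper's conventions (the factor $(-q)^n$ in \eqref{PT/GV:form}), the substitution $q=e^{i\lambda}$ gives $q^{k/2}-q^{-k/2}=2i\sin(k\lambda/2)$, and $i^{2g-2}=(-1)^{g-1}$ absorbs the sign exactly as you say.

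The genuine gap is in your claim that the remaining step is a \emph{formal bijection} of resummations. It is not, and the paper's remark is careful about precisely this point. The two resummations run over different index sets: \eqref{PT/GV:form} is over $g\in\mathbb{Z}$, while the GW resummation is over $g\ge 0$ with a priori infinitely many nonzero terms. A nonzero $n_{g,\beta}^{P}$ with $g<0$ contributes $(2\sin(k\lambda/2))^{2g-2}$, which has a pole of order $2-2g\ge 4$ at $\lambda=0$, and if $n_{g,\beta}^{P}\neq 0$ for arbitrarily negative $g$ the substitution $q=e^{i\lambda}$ does not even produce a formal Laurent series in $\lambda$; uniqueness of an expansion in powers of $s=(2\sin(k\lambda/2))^{2}$ requires the support in $g$ to be bounded below. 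This is why the paper's remark assumes ``either $n_{g,\beta}^{P}=0$ for $g<0$ or $n_{g,\beta}^{GW}=0$ for $g\gg 0$'' before invoking GW/PT --- the former being the strong rationality conjecture \cite[Conjecture~3.14]{PT}. Your argument, as ordered, proves the coefficient identity first and brings in Conjecture~\ref{GV:conj1} only afterwards, so it silently uses one of these vanishing statements without justification. The fix is available inside your own hypotheses, but requires reordering the logic: grant Conjecture~\ref{GV:conj1} \emph{first}; since the invariants $n_{g,\beta}$ of Definition~\ref{def:GV} vanish for $g<0$ (they are defined only for $g\ge 0$, extended by zero) and for $g\gg 0$ (Lemma~\ref{lem:ng}), this supplies $n_{g,\beta}^{P}=0$ for $g<0$ and $g\gg 0$, and only then does GW/PT together with the uniqueness of the bounded-below resummation yield $n_{g,\beta}^{GW}=n_{g,\beta}^{P}=n_{g,\beta}$.
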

\begin{rmk}
Suppose that we know either 
$n_{g, \beta}^P=0$ for $g<0$ or 
$n_{g, \beta}^{GW}=0$ for $g\gg 0$. 
Then the conjectural GW/DT(PT)
correspondence~\cite{MNOP, PT}
implies $n_{g, \beta}^{P}=n_{g, \beta}^{GW}$. 
\end{rmk}

\subsection{Independence of Euler characteristic}\label{subsec:chi}
For $k \in \mathbb{Z}$, let 
\begin{align*}
\Sh_{\beta, k}(X)
\end{align*}
 be the moduli space of 
$\omega$-stable $E \in \Coh_{\le 1}(X)$
satisfying $[E]=\beta$, $\chi(E)=k$.
We note that, for $k\neq 1$, the moduli space $\Sh_{\beta, k}(X)$
may depend on a choice of $\omega$. 
Similarly to the $k=1$ case, 
we have the Hilbert-Chow map
\begin{align*}
\pi_{k} \colon \Sh_{\beta, k}^{\rm{red}}(X) \to \Chow_{\beta}(X).
\end{align*}
In Subsection~\ref{subsec:defgv}, we used the moduli space 
for $k=1$ and the map 
$\pi=\pi_1$ to define GV invariants. 
For other value of $k$, assuming that
$\Sh_{\beta, k}(X)$ is fine and $\pi_k$ is a CY fibration
(see Definition~\ref{loc:cy}), we
expect that we
 can also use $\Sh_{\beta, k}(X)$ to define GV invariants. 
Namely by replacing $\Sh_{\beta}(X)$ by $\Sh_{\beta, k}(X)$
in Definition~\ref{def:GV} and Definition~\ref{def:locgv},  
we have the GV type invariants
\begin{align}\label{GV:invk}
n_{g, \beta, k, \omega} \in \mathbb{Z}, \quad
n_{g, \gamma, k, \omega}^{\rm{loc}} \in \mathbb{Z}
\end{align}
for $\beta>0$ and $\gamma \in \Chow_{\beta}(X)$
respectively. 
A priori, the invariants (\ref{GV:invk}) may also 
depend on $\omega$. 
\begin{conj}\label{conj:ngk}
Assuming that 
$\Sh_{\beta, k}(X)$ is fine, 
we have the following: 
\begin{enumerate}
\item 
$\pi_k$ is a CY fibration. In particular, the invariants (\ref{GV:invk})
are defined. 
\item The invariants (\ref{GV:invk}) are independent of $\omega$. 
\item The invariants (\ref{GV:invk}) are independent of $k$. 
\end{enumerate}
\end{conj}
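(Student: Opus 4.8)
The plan is to treat the three parts in increasing order of difficulty, with a wall-crossing analysis of the pair $(\phi_{\mM}, \pi_k)$ as the central tool and the conjectural comparison with PT invariants as a guiding principle, since the latter refers neither to $k$ nor to $\omega$. For (1), the fineness hypothesis produces a universal sheaf $\eE$ on $X \times \Sh_{\beta, k}(X)$, so Theorem~\ref{thm:CYM} applies verbatim and equips $\Sh_{\beta, k}(X)$ with a canonical $d$-critical structure whose virtual canonical bundle is given by (\ref{K:vir}). To see that $\pi_k$ is a CY fibration I would reproduce the construction of Calabi-Yau orientation data from Appendix~\ref{sec:append} and from the $k=1$ case: along a fixed cycle $\gamma$ the relevant part of the deformation-obstruction complex is unobstructed, so the virtual canonical bundle restricted to a fibre of $\pi_k$ admits a square root that is trivial along the fibre. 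Since the existence of a global such square root is precisely the content of Conjecture~\ref{conj:GV1}, in practice I only expect to obtain the local statement needed for $n_{g, \gamma, k, \omega}^{\mathrm{loc}}$, in the sense of Remark~\ref{def:global}.

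For (2), fix $k$ and vary $\omega$. Because $\Sh_{\beta, k}(X)$ is fine, the walls are locally finite, and on a wall there appear strictly semistable sheaves fitting in exact sequences $0 \to E' \to E \to E'' \to 0$ with $\mu_{\omega}(E') = \mu_{\omega}(E'')$; crossing a single wall replaces $\Sh_{\beta, k}(X)$ by a birational modification governed by these sequences. The goal is to prove that the perverse filtration of $\dR \pi_{k\ast} \phi_{\mM}$ is unchanged across the wall. The numerical shadow of this is already available: the Hall-algebra wall-crossing formalism for one-dimensional sheaves controls the weighted Euler characteristic, which by Lemma~\ref{lem:g=0} pins down the $g=0$ term. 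The real task is to upgrade this to an identity of perverse-filtered complexes rather than of numbers.

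For (3), tensoring by a power $\oO_X(m\omega)$ of the polarization preserves $\omega$-stability and raises $\chi$ by $m(\omega \cdot \beta)$, hence induces isomorphisms $\Sh_{\beta, k}(X) \cong \Sh_{\beta, k + m(\omega \cdot \beta)}(X)$ that intertwine the Hilbert-Chow maps and the canonical $d$-critical structures; this proves (3) for all $k$ in a fixed residue class modulo $\omega \cdot \beta$. The remaining residue classes are not reachable by such twists, and for them I would argue indirectly via Conjecture~\ref{GV:conj2}: each $n_{g, \gamma, k, \omega}^{\mathrm{loc}}$ should equal the PT-side invariant $n_{g, \gamma}^{P, \mathrm{loc}}$, which is manifestly independent of $k$ and $\omega$.

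The genuinely new content, and the reason the statement remains conjectural, is the all-genus (perverse) wall-crossing in (2) together with the residue classes in (3) that line-bundle twists cannot reach. The available wall-crossing machinery is numerical: it determines only the $y=-1$ specialisation of (\ref{def:ng}), hence genus $0$. Promoting it to a statement about the full perverse filtration of $\dR \pi_{k\ast} \phi_{\mM}$ requires a categorified wall-crossing for sheaves of vanishing cycles compatible with the Hilbert-Chow map, which is not presently available; I expect this to be the main obstacle.
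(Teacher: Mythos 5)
This statement is a conjecture in the paper, and the paper offers no proof of it---only a paragraph of remarks recording partial evidence. So there is no proof to compare against; what can be compared is your strategy against the paper's own assessment, and on that score your proposal tracks the paper closely. The paper notes essentially the points you identify: part (ii) is known in genus zero by the Joyce--Song wall-crossing argument (see \cite[Theorem~6.6]{JS}), i.e.\ exactly the numerical, Behrend-weighted statement obtained from the $y=-1$ specialisation of (\ref{def:ng}); part (iii) in genus zero is a case of the multiple cover conjecture for generalized DT invariants \cite[Conjecture~4.13]{Todpara}, with the primitive genus-zero case proved by wall-crossing \cite[Lemma~2.12]{Todmu}; and the all-genus statements are open precisely because no wall-crossing formalism exists at the level of the perverse filtration of $\dR \pi_{k\ast}\phi_{\mM}$, which is the obstacle you single out. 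Your reading of (1) is also consistent with the paper: fineness supplies a universal sheaf so that Theorem~\ref{thm:CYM} applies verbatim, and the CY-fibration property is the genuinely conjectural part, parallel to Conjecture~\ref{conj:GV1}.

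The one place your proposal is weaker than the paper's own suggestion is part (iii). Twisting by $\oO_X(m\omega)$ shifts $\chi$ by $m(\omega\cdot\beta)$ and therefore only identifies moduli spaces with $k$ in a fixed residue class modulo $\omega\cdot\beta$, as you note; but your fallback for the remaining residue classes---invoking Conjecture~\ref{GV:conj2}---is circular, since that comparison is itself conjectural and establishes nothing about the invariants as actually defined. The paper's suggestion, following the argument of \cite[Proposition~2.1]{PT3}, is instead to tensor with a line bundle of degree one on the support of the cycle $\gamma$: for an irreducible cycle the stable sheaves are rank-one torsion-free sheaves on an integral curve, so such a twist shifts $\chi$ by exactly $1$ and gives isomorphisms $\Sh_{\beta,k}(X) \cong \Sh_{\beta,k+1}(X)$ locally over the Chow variety, compatible with the Hilbert--Chow maps and the $d$-critical structures. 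This reaches every $k$ (for irreducible cycles), not just one residue class, and requires no appeal to the PT side.
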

In genus zero, Conjecture~\ref{conj:ngk} (ii) 
is known to be true by a wall-crossing argument
(see~\cite[Theorem~6.6]{JS}), 
and Conjecture~\ref{conj:ngk} (iii) is a special case of the
multiple cover conjecture for generalized DT invariants
(see~\cite[Conjecture~4.13]{Todpara}). 
By another wall-crossing argument, Conjecture~\ref{conj:ngk} (iii) 
for a primitive one cycle $\gamma$ 
is proved when $g=0$ (see~\cite[Lemma~2.12]{Todmu}), 
and we expect that a similar argument may be applied 
for $g>0$. 
For an irreducible one cycle $\gamma$, 
Conjecture~\ref{conj:ngk} (iii)
should follow along with the same argument of~\cite[Proposition~2.1]{PT3}
by tensoring a local line bundle
with degree one on the support of $\gamma$.

\section{PT/GV correspondence for locally planar curves}\label{sec:PT/GV}
In 
this section, we explain an approach
for proving Conjecture~\ref{GV:conj2}
for integral planar curves. 
More precisely, we show
that the existence 
of strictly CY $d$-critical 
chart (see Definition~\ref{defi:slcy}) 
together with 
the results of~\cite{MY2, MS}
implies the conjecture.

\subsection{GV formula for locally versal deformations}
Let $C$ be an integral projective curve with 
at worst planar singularities. 
Let $g$ be the 
arithmetic genus of $C$, and 
 $\{c_1, \ldots, c_k\}$
the singular set of $C$. 
Let
\begin{align}\label{v:deform}
\pi_{T} \colon 
\cC \to T
\end{align}
be a flat family of curves with 
smooth base $T$ 
such that
$C=\pi_{T}^{-1}(0)$ for $0 \in T$. 
We recall the notion of locally versal family: 
\begin{defi}
A family (\ref{v:deform}) is called 
\textit{locally versal} at $0 \in T$ if the 
natural map
\begin{align}\label{Tan:map}
\mathrm{Tan}_0(T) \to \prod_{i=1}^{k} \mathrm{Tan}_0
(\mathrm{Def}(C, c_i))
\end{align}
is surjective.
Here $\mathrm{Def}(C, c_i)$ is the miniversal
deformation space of the singularity $c_i \in C$, 
and $\mathrm{Tan}_0(\ast)$ is the tangent space at $0$. 
If a family (\ref{v:deform})
is locally versal at any $t \in T$, 
it is called a locally versal family. 
\end{defi}
We will use the following lemma: 
\begin{lem}\label{lem:versal}
For any projective 
flat family of curves $\pi_H \colon \cC_{H} \to H$ 
with at worst planar singularities, 
there is a locally 
versal family of curves $\cC \to T$
and a 
closed immersion $H \hookrightarrow T$ such that 
$\cC_H=\cC \times_T H$. 
\end{lem}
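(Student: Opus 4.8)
The plan is to realize the given family as the pull-back of the universal family over a suitable open locus of a Hilbert scheme, where positivity of the polarization simultaneously forces smoothness of the base and versality of the induced deformations of the singularities, and then to upgrade the classifying morphism to a closed immersion by adding dummy parameters. First I would fix a relatively very ample line bundle $\lL$ on $\cC_H$ over $H$ and pass to a high power $\lL^{\otimes m}$. Since $\pi_H$ is projective and $H$ is of finite type, for $m \gg 0$ the sheaf $\pi_{H\ast}\lL^{\otimes m}$ is locally free and globally generated, producing a closed embedding $\cC_H \hookrightarrow \mathbb{P}^N \times H$ over $H$ into a fixed projective space, with every fiber $\cC_h \subset \mathbb{P}^N$ embedded by the uniformly positive polarization $\lL^{\otimes m}|_{\cC_h}$. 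Because the fibers are planar, each $\cC_h$ is a local complete intersection in $\mathbb{P}^N$, so its normal sheaf $N_h \cneq N_{\cC_h/\mathbb{P}^N}$ is locally free and sits in the standard right-exact sequence $T_{\mathbb{P}^N}|_{\cC_h} \to N_h \to \mathcal{T}^1_{\cC_h} \to 0$, where $\mathcal{T}^1_{\cC_h}$ is the skyscraper sheaf whose stalk at a singular point $c_i$ is $\mathrm{Tan}_0(\mathrm{Def}(\cC_h, c_i))$.

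The heart of the argument, and the step I expect to be the main obstacle, is to arrange, uniformly in $h$, that (i) $H^1(\cC_h, N_h)=0$ and (ii) the local-to-global map $H^0(\cC_h, N_h) \to H^0(\cC_h, \mathcal{T}^1_{\cC_h}) = \bigoplus_i \mathrm{Tan}_0(\mathrm{Def}(\cC_h, c_i))$ is surjective. Both should hold once $m$ is large: (i) is Serre vanishing on the fibers, made uniform over the quasi-compact base $H$ by semicontinuity, and (ii) follows by applying (i) to the kernel of $N_h \to \mathcal{T}^1_{\cC_h}$, i.e. from the fact that a sufficiently positive normal bundle separates the finite-length jets recorded by the Tjurina algebras at the finitely many singular points. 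Property (i) says that the Hilbert scheme $\Hilb \cneq \Hilb(\mathbb{P}^N)$, taken with the Hilbert polynomial of the fibers, is smooth at every point $[\cC_h]$, while (ii) says precisely that its universal family is locally versal there in the sense of the preceding definition. Let $U \subset \Hilb$ be the open locus where (i) and (ii) hold, and let $\cC_U \to U$ be the restriction of the universal family; then $U$ is smooth, $\cC_U \to U$ is locally versal, and the embedded family yields a classifying morphism $\psi \colon H \to U$ with $\psi^{\ast}\cC_U \cong \cC_H$.

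Finally I would correct the failure of $\psi$ to be a closed immersion by adding dummy parameters. Choosing a closed immersion $\iota \colon H \hookrightarrow V$ into a smooth variety $V$, I set $T = V \times U$, which is smooth, and let $\cC$ be the pull-back of $\cC_U$ along the projection $T \to U$. The map $h \mapsto (\iota(h), \psi(h))$ factors as the graph of $\psi$ followed by $\iota \times \mathrm{id}_U$, hence is a closed immersion $H \hookrightarrow T$, and by construction $\cC \times_T H \cong \psi^{\ast}\cC_U \cong \cC_H$. Since the deformation of the singularities along $T$ uses only the $U$-directions, for which versality was established in (ii), the map $\mathrm{Tan}_t(T) \to \prod_i \mathrm{Tan}_0(\mathrm{Def}(\cC_t, c_i))$ remains surjective at every $t \in T$, so $\cC \to T$ is locally versal. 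This produces the desired locally versal family together with the closed immersion $H \hookrightarrow T$.
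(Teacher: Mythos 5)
Your proposal is correct and follows essentially the same route as the paper: embed the family in $\mathbb{P}^N \times H$ by a sufficiently positive relative polarization, pass to the classifying map into the Hilbert scheme, use positivity together with planarity to get smoothness of the Hilbert scheme and local versality of its universal family along the image, and then add dummy parameters to turn the classifying map into a closed immersion. Two remarks on the details. First, the paper organizes the cohomological step via the chain $\Hom(I/I^2,\oO_C) \to \Ext^1_C(\Omega_C,\oO_C) \to H^0(\eE xt^1(\Omega_C,\oO_C))$ rather than your normal-sheaf sequence; these are the same computation, but note that your claim (i) is not literally Serre vanishing for $N_h$ (which changes with $m$): as in the paper, one arranges $H^1(\oO_{\cC_h}(1))=0$ by Serre vanishing, deduces $H^1(T_{\mathbb{P}^N}|_{\cC_h})=0$ from the Euler sequence, and then $H^1(N_h)=0$ and the surjectivity in (ii) follow from right-exactness of $H^1$ on a curve, exactly as your kernel argument suggests. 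Second, the final step genuinely differs: the paper takes $T=H\times U$ with the graph embedding $(\id,h)\colon H \hookrightarrow T$, whereas you take $T=V\times U$ for a smooth variety $V$ containing $H$ as a closed subscheme. Your variant has the merit that $T$ is smooth even when $H$ is singular, which the paper's definition of a locally versal family requires (and which the paper's $H\times U$ does not guarantee); the cost is the assumption that $H$ embeds in a smooth variety, which is harmless in the paper's applications where $H$ is quasi-projective.
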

\begin{proof}
Since $\pi_H$ is projective, 
there is a closed embedding
$\cC_H \subset \mathbb{P}^n \times H$ 
as $H$-schemes. 
Since $\pi_H$ is flat, it induces the morphism of schemes
$h \colon H \to \Hilb(\mathbb{P}^n)$, 
where $\Hilb(\mathbb{P}^n)$ is the Hilbert scheme of 
one dimensional subschemes in $\mathbb{P}^n$. 
Let 
\begin{align}\label{pi:Hilb}
\pi_{\Hilb} \colon \cC_{\Hilb} \to \Hilb(\mathbb{P}^n)
\end{align}
 be
the universal curve. 
We check that $\pi_{\Hilb}$ is locally 
versal at any point in the image of $h$. 
For $t \in H$, 
let us write $C=\pi_H^{-1}(t) \subset \mathbb{P}^n$. 
Let $I \subset \oO_{\mathbb{P}^n}$ be the ideal 
sheaf of $C$. 
By the exact sequence 
\begin{align*}
0 \to I/I^2 \to \Omega_{\mathbb{P}^n}|_{C} \to \Omega_C \to 0
\end{align*}
we have the exact sequence
\begin{align*}
\Hom(I/I^2, \oO_C) &\to 
\Ext_C^1(\Omega_C, \oO_C) 
\to H^1(C, T_{\mathbb{P}^n}|_{C}) \\
&\to 
\Ext_C^1(I/I^2, \oO_C) \to 
\Ext_C^2(\Omega_C, \oO_C) \to 0. 
\end{align*}
By replacing the embedding
$\cC_H \subset \mathbb{P}^n \times H$
 if necessary, we may 
assume that $H^1(C, \oO_{C}(1))=0$ holds
for any choice of $t \in H$. 
Then the vanishing  
$H^1(C, T_{\mathbb{P}^n}|_{C})=0$
also holds. 
Since the singularities of $C$ are planar, 
we also have $\Ext_C^2(\Omega_C, \oO_C)=0$. 
By the above exact sequence, 
we have $\Ext_C^1(I/I^2, \oO_C)=0$, 
therefore $\Hilb(\mathbb{P}^n)$ is smooth 
at $h(t)$. 
Moreover we have the surjections
\begin{align*}
\Hom(I/I^2, \oO_C) \twoheadrightarrow 
\Ext_C^1(\Omega_C, \oO_C) \twoheadrightarrow 
H^0(C, \eE xt^1(\Omega_C, \oO_C)).
\end{align*}
The above map is identified with the map
(\ref{Tan:map}) for the family
 (\ref{pi:Hilb}), 
therefore 
 $\pi_{\Hilb}$ is locally versal at $t$. 

Let $h(H) \subset U \subset \Hilb(\mathbb{P}^n)$ be an
open subset on which 
the corresponding one dimensional subscheme have 
at worst planar singularities, and 
$\pi_{\Hilb}$ is locally versal. 
We set $T=H \times U$, 
and $\cC=H \times \pi_{\Hilb}^{-1}(U)$. 
Then $(\id \times \pi_{\Hilb}) \colon \cC \to T$
is a locally versal family. 
By taking the embedding 
$(\id, h) \colon H \hookrightarrow T$, 
the lemma follows. 
\end{proof}
For a locally versal family (\ref{v:deform}),
let  
\begin{align*}
\pi^{[n]} \colon 
\cC^{[n]} \to T, \ 
\pi_J \colon \overline{J} \to T
\end{align*}
be 
the $\pi_{T}$-relative Hilbert scheme of $n$-points, 
$\pi_{T}$-relative
rank one torsion free sheaves with Euler characteristic one, 
respectively. 
The following is the main result of~\cite{MY2, MS}. 
\begin{thm}\emph{(\cite{MY2, MS})}\label{thm:versal}
Both of $\cC^{[n]}$ and $\overline{J}$ are smooth
for any $n\ge 1$. 
After replacing $T$ by its \'etale cover if necessary, 
we have the identity
in $K(\Perv(T))(\!(q)\!)$: 
\begin{align}\label{id:versal}
\sum_{n\ge 0}
\dR \pi_{\ast}^{[n]} \IC(\cC^{[n]}) q^{n+1-g}
=\frac{q}{(1+q)^2}
\sum_{i\in \mathbb{Z}} \pH^i(\dR \pi_{J\ast}\IC(\overline{J}))q^{i}. 
\end{align}
\end{thm}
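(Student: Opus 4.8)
The plan is to follow the strategy of \cite{MY2, MS}, whose technical heart is a support theorem of Ng\^o type; the smoothness claim and the reduction to smooth fibers are comparatively formal once that is in hand. First I would dispose of smoothness. Since $\pi_T$ is locally versal, the surjectivity \eqref{Tan:map} guarantees that every first-order deformation of a singularity $c_i \in C_t$ is realized inside the family. For planar (hence Gorenstein, local complete intersection) singularities the obstructions to deforming a length-$n$ subscheme $Z \subset C_t$, respectively a rank-one torsion-free sheaf $F$ (controlled by $\Ext^2_{C_t}(F,F)$), are absorbed by these versal directions of $T$; hence the deformation functors of the pairs $(t,Z) \in \cC^{[n]}$ and $(t,F) \in \overline{J}$ are unobstructed and both total spaces are smooth of the expected dimension. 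Properness of $\pi^{[n]}$ and $\pi_J$ with smooth source then makes the decomposition theorem \cite{BBD} available: each of $\dR\pi^{[n]}_\ast\IC(\cC^{[n]})$ and $\dR\pi_{J\ast}\IC(\overline{J})$ is a direct sum of shifted $\IC$-sheaves of local systems supported on closed subvarieties $Z \subseteq T$.

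The main obstacle is the \emph{support theorem}: one must show that every simple summand occurring above is supported on all of $T$, i.e. is the intermediate extension of a local system living on a dense open subset. This is the analogue for compactified Jacobians of Ng\^o's support theorem for the Hitchin fibration. The decisive input is a $\delta$-regularity estimate supplied by versality: the locus in $T$ over which the fibers acquire total $\delta$-invariant at least $\delta$ has codimension exactly $\delta$, the sharp inequality that forbids any proper support. I would feed this, together with the equidimensionality of the fibers of $\pi_J$ (the compactified Jacobian of an integral planar curve is irreducible of dimension $g$) and with self-duality and relative hard Lefschetz, into the support inequalities, as in \cite{MS}; alternatively one can follow \cite{MY2}, deducing full support from an independence/perverse-filtration comparison that links the decompositions of $\pi^{[n]}$ and $\pi_J$ directly. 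I expect this step to consume the bulk of the work.

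Granting the support theorem, both sides of \eqref{id:versal} are sums of $\IC$-sheaves of local systems of full support, so each is determined by its restriction to any dense open subset, and after passing to an \'etale cover of $T$ the relevant monodromy on $R^1\pi_{T\ast}\mathbb{Q}$ is trivialized. I would therefore restrict to the open locus $U \subset T$ over which $C_t$ is smooth, which is dense since versality provides smoothing directions and the discriminant is a proper closed subset. Over $U$ one has $\cC^{[n]}|_U = \Sym^n_U(\cC|_U)$ and $\overline{J}|_U = \mathrm{Jac}_U$, an abelian scheme of relative dimension $g$; the identity \eqref{id:versal} then becomes, fibrewise, the classical relation between the cohomology of the symmetric products of a smooth genus-$g$ curve and the exterior algebra $\Lambda^\bullet$ of its Jacobian, the prefactor $q/(1+q)^2$ and the shifts $q^{n+1-g}$, $q^i$ accounting precisely for the Abel--Jacobi projective-bundle structure $\Sym^n_U \to \mathrm{Jac}_U$ (a $\mathbb{P}^{n-g}$-bundle for $n \ge 2g-1$) together with the low-degree terms. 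This is the cohomological Macdonald formula, a direct computation; it verifies \eqref{id:versal} over $U$, and by the support theorem the identity propagates to all of $T$.
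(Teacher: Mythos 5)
The paper itself contains no proof of this statement: Theorem~\ref{thm:versal} is quoted as the main result of \cite{MY2, MS}, so the only thing to compare your sketch against is the strategy of those references, and on that score your outline is faithful. You correctly identify the architecture: smoothness of $\cC^{[n]}$ and $\overline{J}$ over a versal base, the decomposition theorem for the proper maps $\pi^{[n]}$ and $\pi_J$, a full-support theorem whose decisive input is the versality estimate that the locus of cogenus $\ge \delta$ has codimension $\delta$ (used Ng\^o-style in \cite{MY2}, and via Hilbert-scheme support inequalities in \cite{MS}), the classical Macdonald formula over the dense locus of smooth fibers, and propagation to all of $T$ because intermediate extensions are determined by their restriction to a dense open set. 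Two calibrations. First, the smoothness claims are not formal consequences of ``obstructions absorbed by versal directions'': they are theorems in their own right (Shende for the relative Hilbert schemes, Fantechi--G\"ottsche--van Straten for the relative compactified Jacobian), each requiring a genuine analysis of the punctual geometry of planar singularities; a blind write-up would have to reprove or cite them. Second, your stated reason for the \'etale cover is wrong: the monodromy of $R^1\pi_{T\ast}\mathbb{Q}$ is in general infinite and cannot be trivialized by a finite \'etale base change, and no such trivialization is needed, since the fibrewise identification of $H^{\ast}(\Sym^n C_t)$ with the exterior algebra of $H^1$ is canonical, hence monodromy-equivariant. The \'etale cover appears in the statement because the proofs in \cite{MY2, MS} require a section of $\cC \to T$ through the smooth locus of the fibers; such a section exists only after \'etale base change, and it is what defines the Abel--Jacobi morphisms $\cC^{[n]} \to \overline{J}$ and identifies compactified Jacobians of different degrees --- precisely the maps your projective-bundle and support-transfer arguments rely on over all of $T$ rather than just over the smooth locus.
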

Let $g \colon T \to \mathbb{A}^1$
be a regular function.  
We define $f^{[n]}$ and $f_{J}$ by the 
commutative diagram 
\begin{align*}
\xymatrix{
\overline{J} \ar[rd]_-{\pi_J}  \ar[rrd]^-{f_{J}} & &  \\
  &  T \ar[r]_-{g} & \mathbb{A}^1  \\
\cC^{[n]} \ar[ru]^-{\pi^{[n]}} \ar[rru]_-{f^{[n]}} &  &
}
\end{align*}
Let us consider the associated vanishing cycle sheaves:
\begin{align}\label{vanish:phi}
&\phi^{[n]} \cneq \phi_{f^{[n]}}(\IC(\cC^{[n]}))
\in \Perv(\cC^{[n]}), \\
\notag
&\phi_{J} \cneq \phi_{f_{J}}(\IC(\overline{J})) \in 
\Perv(\overline{J}).
\end{align}
Applying Theorem~\ref{thm:versal}, we have the following lemma: 
\begin{lem}
After replacing $T$ by its \'etale cover if necessary, 
we have the identity in $K(\Perv(T))(\!(q)\!)$:
\begin{align}\label{id:phi}
\sum_{n\ge 0}
\dR \pi_{\ast}^{[n]}\phi^{[n]} q^{n+1-g}
=\frac{q}{(1+q)^2}
\sum_{i\in \mathbb{Z}} \pH^{i} (\dR \pi_{J\ast}\phi_{J}) q^{i}. 
\end{align}
\end{lem}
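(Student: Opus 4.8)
The plan is to deduce (\ref{id:phi}) by applying the vanishing cycle functor $\phi_g$ to the identity (\ref{id:versal}) of Theorem~\ref{thm:versal}, using two standard properties of $\phi_g$: its perverse t-exactness and its commutation with proper push-forward. I keep the same \'etale cover of $T$ furnished by Theorem~\ref{thm:versal}, and pull back $g$, the families $\cC^{[n]} \to T$, $\overline{J} \to T$, and all associated sheaves along it; since vanishing cycles commute with smooth (in particular \'etale) base change, no further choices are needed.

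First recall that, with the normalization of Subsection~\ref{subsec:GVtype}, the functor $\phi_g \colon \Perv(T) \to \Perv(T_0)$, where $T_0 = g^{-1}(0)$, is t-exact for the perverse t-structures. Consequently its restriction to hearts is an exact functor of abelian categories, so it induces a group homomorphism $K(\Perv(T)) \to K(\Perv(T_0))$, which I extend $\mathbb{Z}(\!(q)\!)$-linearly; composing with the t-exact, fully faithful push-forward along the closed immersion $T_0 \hookrightarrow T$ lets me view its target inside $K(\Perv(T))(\!(q)\!)$. Moreover t-exactness yields a natural isomorphism $\phi_g(\pH^i(K)) \cong \pH^i(\phi_g(K))$ for every complex $K$ and every $i$. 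Since $\pi^{[n]}$ and $\pi_J$ are projective and $f^{[n]} = g \circ \pi^{[n]}$, $f_J = g \circ \pi_J$, the compatibility of vanishing cycles with proper push-forward~\cite[Proposition~4.2.11]{Dimbook} gives
\begin{align*}
\phi_g(\dR \pi_{\ast}^{[n]} \IC(\cC^{[n]})) &\cong \dR \pi_{\ast}^{[n]} \phi_{f^{[n]}}(\IC(\cC^{[n]})) = \dR \pi_{\ast}^{[n]} \phi^{[n]}, \\
\phi_g(\dR \pi_{J\ast} \IC(\overline{J})) &\cong \dR \pi_{J\ast} \phi_{f_J}(\IC(\overline{J})) = \dR \pi_{J\ast} \phi_J.
\end{align*}

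Applying the induced $\mathbb{Z}(\!(q)\!)$-linear map to (\ref{id:versal}) now gives (\ref{id:phi}). On the left-hand side, the class of the complex $\dR \pi_{\ast}^{[n]} \IC(\cC^{[n]})$ in $K(\Perv(T))$ is the alternating sum of its perverse cohomologies; because $\phi_g$ commutes with $\pH^i$, its image is the class of $\dR \pi_{\ast}^{[n]} \phi^{[n]}$, so the left side becomes $\sum_{n \ge 0} \dR \pi_{\ast}^{[n]} \phi^{[n]} q^{n+1-g}$. On the right-hand side the scalar factor $q/(1+q)^2$ and the formal weights $q^i$ pass through the linear map, while each graded term transforms as $\phi_g(\pH^i(\dR \pi_{J\ast} \IC(\overline{J}))) \cong \pH^i(\phi_g(\dR \pi_{J\ast} \IC(\overline{J}))) \cong \pH^i(\dR \pi_{J\ast} \phi_J)$, by t-exactness followed by the push-forward compatibility above. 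This is precisely the right-hand side of (\ref{id:phi}).

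The points needing care here are formal rather than genuine obstacles: one must use the perverse (t-exact) normalization of $\phi_g$ so that it simultaneously descends to Grothendieck groups and commutes with $\pH^i$, and one must ensure the \'etale cover and the auxiliary function $g$ are pulled back compatibly with both families. Given Theorem~\ref{thm:versal}, there is no new geometric content: the lemma is the image of the versal-deformation identity under the exact functor $\phi_g$.
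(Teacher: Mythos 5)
Your proof is correct and follows exactly the paper's own argument: both apply $\phi_g$ to the identity (\ref{id:versal}), using the compatibility of vanishing cycles with proper push-forward (\cite[Proposition~4.2.11]{Dimbook}) to identify $\phi_g(\dR \pi_{\ast}^{[n]}\IC(\cC^{[n]}))$ with $\dR \pi_{\ast}^{[n]}\phi^{[n]}$ and $\phi_g(\dR \pi_{J\ast}\IC(\overline{J}))$ with $\dR \pi_{J\ast}\phi_J$, and the perverse t-exactness of $\phi_g$ to commute it past $\pH^i$. The additional remarks about descent to Grothendieck groups and compatibility with the \'etale cover are formal details the paper leaves implicit, and they are handled correctly.
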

\begin{proof}
By the compatibility of vanishing cycles with 
proper push forwards (see~\cite[Proposition~4.2.11]{Dimbook}), 
we have
\begin{align*}
\phi_g(\dR \pi_{\ast}^{[n]}\IC(\cC^{[n]}))
=\dR \pi_{\ast}^{[n]}\phi^{[n]}, \ 
\phi_g(\dR \pi_{J\ast} \IC(\overline{J}))=
\dR \pi_{J\ast}\phi_J. 
\end{align*}
Since $\phi_g$ preserves the perverse t-structure, 
it commutes with the 
perverse cohomology functor $\pH^i(-)$. Therefore we have 
\begin{align*}
\phi_g(\pH^{i} (\dR \pi_{J\ast}\IC(\overline{J})))
=\pH^{i} (\dR \pi_{J\ast}\phi_J). 
\end{align*} 
Therefore the lemma follows by 
applying the vanishing cycle functor $\phi_g$ to both sides of 
(\ref{id:versal}). 
\end{proof}

\subsection{GV formula for Calabi-Yau 3-folds}\label{subsec:GVCY3}
Let $X$ be a smooth quasi-projective CY 3-fold, 
and $C \subset X$
an integral projective curve with at worst planar singularities
with arithmetic genus $g$. 
For $\beta=[C] \in H_2(X, \mathbb{Z})$
and $n\in \mathbb{Z}$, let
\begin{align*}
(P=P_{n+1-g}(X, \beta), s_{P}), \ 
(\Sh=\Sh_{\beta}(X), s_{\Sh}) 
\end{align*}
be the $d$-critical schemes considered
in Section~\ref{sec:PTGW} and Section~\ref{subsec:defgv}
respectively. 
We have the Hilbert-Chow morphisms
\begin{align*}
\xymatrix{
P^{\rm{red}}
\ar[dr]_{\pi_P}  &   &  \Sh^{\rm{red}} \ar[dl]^{\pi_{\Sh}} \\
& \Chow_{\beta}(X).
}
\end{align*}
For an open neighborhood
$[C] \in U \subset \Chow_{\beta}(X)$, 
let $\Sh_U \subset \Sh$, $P_U \subset P$ 
be the open subschemes 
whose underlying spaces are $\pi_{\Sh}^{-1}(U)$, $\pi_P^{-1}(U)$
respectively. 
Suppose that there exist
a 
locally versal deformation 
$\pi_T \colon \cC \to T$
of $C$  
and closed embeddings 
\begin{align*}
i_T \colon U \hookrightarrow T, \ 
i_P \colon P_U \hookrightarrow \cC^{[n]}, \ 
i_{\Sh} \colon {\Sh}_U \hookrightarrow \overline{J}
\end{align*}
such that we have the commutative diagrams
\begin{align*}
\xymatrix{
{\Sh}_U^{\rm{red}} 
 \ar@<-0.3ex>@{^{(}->}[r]^-{\iota} 
 \ar[d]_-{\pi_{\Sh}} 
& {\Sh}_U 
 \ar@<-0.3ex>@{^{(}->}[r]^-{i_{\Sh}} 
& \overline{J} \ar[rd]^-{f_{J}} \ar[d]_-{\pi_{J}} & \\
U  \ar@<-0.3ex>@{^{(}->}[rr]^-{i_T} & & T \ar[r]^-{g} & \mathbb{A}^1,
}
\xymatrix{
P_U^{\rm{red}} \ar@<-0.3ex>@{^{(}->}[r]^-{\iota}  
 \ar[d]_-{\pi_P} 
& P_U 
 \ar@<-0.3ex>@{^{(}->}[r]^-{i_P} 
& \cC^{[n]} \ar[rd]^-{f^{[n]}} \ar[d]_-{\pi^{[n]}} & \\
U  \ar@<-0.3ex>@{^{(}->}[rr]^-{i_T}
 & & T \ar[r]^-{g} & \mathbb{A}^1
}
\end{align*}
giving $d$-critical charts 
\begin{align*}
({\Sh}_U, \overline{J}, f_J, i_{\Sh}), \ 
(P_U, \cC^{[n]}, f^{[n]}, i_P)
\end{align*}
of 
$({\Sh}_U, s_{\Sh}|_{{\Sh}_U})$, $(P_U, s_P|_{P_U})$
respectively.  
Since the $\pi_J$-relative 
canonical line bundle of $\overline{J}$ is trivial
(see~\cite[Theorem~A]{MRV}),  
$({\Sh}, s_{\Sh})$ 
has an orientation data
on ${\Sh}_U^{\rm{red}}$ which is trivial 
as a line bundle, i.e. 
CY orientation in Definition~\ref{loc:cy}. 
Indeed 
the above diagram for ${\Sh}$ 
is a strictly 
CY $d$-critical 
chart 
(see Definition~\ref{defi:slcy} and the diagram (\ref{dia:sCYchart})).
Therefore the local 
GV invariant $n_{g, \gamma}^{\rm{loc}} \in \mathbb{Z}$ is defined
as in Definition~\ref{def:locgv}, using 
a CY orientation data. 
\begin{prop}\label{prop:irred2}
Suppose that the above assumption
holds for all $n\ge 0$. Then 
Conjecture~\ref{GV:conj2} holds for 
$\gamma=[C] \in \Chow_{\beta}(X)$. 
\end{prop}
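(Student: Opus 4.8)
The plan is to deduce Conjecture~\ref{intro:conj} for $\gamma=[C]$ directly from the $K$-theoretic identity (\ref{id:phi}) by applying a single additive functional to both sides; since $C$ is integral, $\gamma$ is irreducible and Conjecture~\ref{GV:conj2} for $\gamma$ is equivalent to Conjecture~\ref{intro:conj}, so this suffices. Concretely, for a complex $E$ of constructible sheaves on $T$ set $\sigma_{\gamma}(E)\cneq\chi(E|_{i_T([C])})$, the Euler characteristic of the stalk complex at the point $i_T([C])\in T$. This is additive on distinguished triangles, hence descends to an additive map $K(\Perv(T))\to\mathbb{Z}$, which we extend $q$-linearly to $K(\Perv(T))(\!(q)\!)\to\mathbb{Z}(\!(q)\!)$. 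Throughout I write $h$ for the genus index in the Gopakumar-Vafa sum, to avoid clashing with the arithmetic genus $g$ of $C$.

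First I would compute $\sigma_\gamma$ of the left-hand side of (\ref{id:phi}). By the $d$-critical chart $(P_U,\cC^{[n]},f^{[n]},i_P)$ and Theorem~\ref{thm:BDJS}, the sheaf $\phi_{\pP}$ defining stable pair invariants restricts on $P_U$ to $\phi^{[n]}$ tensored by a rank-one local system; as the pointwise Euler characteristic is insensitive to such a twist, the Behrend function $\nu_P$ equals $p\mapsto\chi(\phi^{[n]}|_p)$ on $P_U$. Since $\phi^{[n]}$ is supported on $P_U^{\rm{red}}$, on which $\pi^{[n]}$ restricts to $i_T\circ\pi_P$, proper base change gives
\begin{align*}
\sigma_{\gamma}(\dR \pi_{\ast}^{[n]}\phi^{[n]})
=\int_{\pi_P^{-1}([C])}\chi(\phi^{[n]}|_p)\,de
=\int_{P_{n+1-g}^{\rm{loc}}(X,\gamma)}\nu_P\,de
=P_{n+1-g,\gamma}^{\rm{loc}}.
\end{align*}
Applying $\sigma_\gamma$ to the left-hand side of (\ref{id:phi}) therefore yields $\sum_{n\ge0}P_{n+1-g,\gamma}^{\rm{loc}}q^{n+1-g}$; since $C$ is integral, any stable pair with fundamental cycle $\gamma$ has $\chi\ge\chi(\oO_C)=1-g$, so $P_{m,\gamma}^{\rm{loc}}=0$ for $m<1-g$ and, after the substitution $m=n+1-g$, this series is exactly $\sum_{m\in\mathbb{Z}}P_{m,\gamma}^{\rm{loc}}q^m$.

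Next I would treat the right-hand side. The chart $(\Sh_U,\overline{J},f_J,i_{\Sh})$ is strictly CY, so its orientation local system is trivial and $\phi_{\sS h}|_{\Sh_U}\cong\phi_J$; by Lemma~\ref{lem:gvcompare} the invariant $n_{h,\gamma}^{\rm{loc}}$ does not depend on the choice of CY orientation. As $\phi_J$ is supported on $\Sh_U^{\rm{red}}$, on which $\pi_J=i_T\circ\pi_{\Sh}$, and since $i_{T\ast}$ is t-exact, over $U$ we have $\pH^i(\dR \pi_{J\ast}\phi_J)\cong i_{T\ast}\pH^i(\dR \pi_{\Sh\ast}\phi_{\sS h})$, whence $\sigma_\gamma(\pH^i(\dR \pi_{J\ast}\phi_J))=\chi(\pH^i(\dR \pi_{\Sh\ast}\phi_{\sS h})|_{\gamma})$. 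Using the defining identity of the local GV invariants (Lemma~\ref{lem:ngloc}, Definition~\ref{def:locgv}) gives
\begin{align*}
\sum_{i}\sigma_{\gamma}(\pH^i(\dR \pi_{J\ast}\phi_J))q^i
=\sum_{h\ge0}n_{h,\gamma}^{\rm{loc}}(q^{\frac{1}{2}}+q^{-\frac{1}{2}})^{2h}.
\end{align*}
Combining the two computations, applying $\sigma_\gamma$ to (\ref{id:phi}) produces
\begin{align*}
\sum_{m\in\mathbb{Z}}P_{m,\gamma}^{\rm{loc}}q^m
=\frac{q}{(1+q)^2}\sum_{h\ge0}n_{h,\gamma}^{\rm{loc}}(q^{\frac{1}{2}}+q^{-\frac{1}{2}})^{2h},
\end{align*}
and the elementary identity $(q^{\frac{1}{2}}+q^{-\frac{1}{2}})^2=(1+q)^2/q$ converts the right-hand side into $\sum_{h\ge0}n_{h,\gamma}^{\rm{loc}}(q^{\frac{1}{2}}+q^{-\frac{1}{2}})^{2h-2}$, which is precisely Conjecture~\ref{intro:conj}.

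I expect the main obstacle to lie not in this formal manipulation but in the orientation bookkeeping underlying the two comparisons $\phi_{\pP}|_{P_U}\cong\phi^{[n]}\otimes\lL_\xi$ and $\phi_{\sS h}|_{\Sh_U}\cong\phi_J$: one must check that the perverse sheaves produced by Theorem~\ref{thm:BDJS} restrict to the concrete vanishing cycle sheaves (\ref{vanish:phi}) with the correct perverse normalization, that the triviality of the $\pi_J$-relative canonical bundle of $\overline{J}$ from~\cite[Theorem~A]{MRV} indeed makes the relevant local system trivial, and that the invariant so computed agrees with Definition~\ref{def:locgv} independently of the CY orientation. Once these identifications are in place, the remaining ingredients---proper base change, t-exactness of the closed pushforward $i_{T\ast}$, and additivity of the stalkwise Euler characteristic---are routine.
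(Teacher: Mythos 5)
Your proof is correct and is essentially the paper's own argument: both amount to taking the stalkwise Euler characteristic of the identity (\ref{id:phi}) at $i_T(\gamma)\in T$, identifying the left side with $\sum_m P^{\rm{loc}}_{m,\gamma}q^m$ via the $d$-critical chart for $P_U$ and the right side with $\sum_h n^{\rm{loc}}_{h,\gamma}(q^{1/2}+q^{-1/2})^{2h}$ via the strictly CY chart for $\Sh_U$, then absorbing the factor $q/(1+q)^2$. The only difference is that you spell out details the paper leaves implicit (additivity of the stalk Euler characteristic on $K(\Perv(T))$, proper base change, $t$-exactness of $i_{T\ast}$, insensitivity of the Behrend function to the orientation local system, and the vanishing $P^{\rm{loc}}_{m,\gamma}=0$ for $m<1-g$), which is sound.
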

\begin{proof}
We set $\phi^{[n]}, \phi_J$ as in (\ref{vanish:phi}). 
By the definition of $P_{n, \gamma}^{\rm{loc}}$, we have 
\begin{align*}
\sum_{n\ge 0}
\chi(\dR \pi^{[n]}_{\ast} \phi^{[n]}|_{\gamma})q^{n+1-g} 
&=\sum_{n\ge 0} \chi(\phi^{[n]}|_{P_n^{\rm{loc}}(X, \gamma)}) q^{n+1-g} \\
&=\sum_{n \in \mathbb{Z}}P_{n, \gamma}^{\rm{loc}}q^n.
\end{align*}
Also by the definition of $n_{g, \gamma}^{\rm{loc}}$, 
we have 
\begin{align*}
\sum_{i \in \mathbb{Z}}
\chi(\pH^{i} (\dR \pi_{J\ast}\phi_{J})|_{\gamma}) q^{i}
=\sum_{g\ge 0} 
n_{g, \gamma}^{\rm{loc}} (q^{\frac{1}{2}}+q^{-\frac{1}{2}})^{2g}. 
\end{align*}
The proposition follows by taking the Euler 
characteristics of (\ref{id:phi})
at $i_T(\gamma) \in T$. 
\end{proof}

\section{GV formula for local surfaces}\label{sec:local}
Let $S$ be a smooth projective surface with 
$H^1(\oO_S)=0$.
In this section, we prove Theorem~\ref{intro:thm1}
for the non-compact CY 3-fold $X$
\begin{align}\label{X:total}
p \colon X \cneq \mathrm{Tot}(K_S) \to S. 
\end{align}
This will require a lengthy discussion on the 
natural obstruction theory for sheaves on $S$ and the $d$-critical structure
for sheaves on $X$.  Once these technical details are in place,
we will use them to apply the general approach of the last section.

\subsection{Overview of the proof}
Here we give an outline of the proof of Theorem~\ref{intro:thm1}. 
Let ${\Sh}_X$, $P_X$ be the moduli spaces of one dimensional 
stable sheaves, stable pairs on $X$ respectively. 
Our strategy is to find  
$\Chow(X)$-local $d$-critical charts
of ${\Sh}_X$, $P_X$ via versal deformations of 
irreducible curves with at worst planar singularities
as in the last section. 
We construct such $d$-critical charts 
by relating these moduli spaces with 
similar moduli spaces ${\Sh}_S$, $P_S$ on 
the \textit{surface} $S$. 

Namely we have the natural projections
\begin{align*}
p_{\ast} \colon {\Sh}_X \to {\Sh}_S, \ 
p_{\ast} \colon P_X \to P_S.
\end{align*}
The fiber of the morphism $p_{\ast} \colon {\Sh}_X \to {\Sh}_S$
 at each point $[F] \in {\Sh}_S$ is 
the vector space 
\begin{align*}
\Hom(F, F \otimes K_S)=\Ext_S^2(F, F)^{\vee}.
\end{align*}
If ${\Sh}_S$ is smooth so that $\Ext_S^2(F, F)$ is constant 
for $[F] \in {\Sh}_S$, then 
${\Sh}_X$ is just a vector bundle on ${\Sh}_S$. 
Indeed as $\Ext_S^2(F, F)$ is the obstruction space for the 
deformations of the sheaf $F$ inside $S$, 
the vector bundle ${\Sh}_X \to {\Sh}_S$ is  
nothing but the dual of the obstruction bundle on ${\Sh}_S$. 
In general, ${\Sh}_X$ is the 
\textit{dual obstruction cone}~\cite{JiaTho}
over ${\Sh}_S$, 
determined by a perfect obstruction theory of ${\Sh}_S$. 
The property of dual 
obstruction cones tells us how to 
write ${\Sh}_X$ as a critical locus. 
A similar argument also applies to 
$p_{\ast} \colon P_X \to P_S$. 

Let $H$ be the Chow variety on $S$
(which is smooth by our assumption $H^1(\oO_S)=0$)
with HC maps
\begin{align}\label{mor:PHM}
P_S \to H \leftarrow {\Sh}_S.
\end{align}
For each $[C] \in H$, we will 
take an embedding $H \subset T$
(locally on $C$), where $T$ is the 
base of a locally versal deformation 
$\cC \to T$ of $C$
(see Lemma~\ref{lem:versal}). 
Then we will see that 
the diagram
\begin{align}\label{over:CTJ}
\cC^{[n]} \to T \leftarrow \overline{J}
\end{align}
in the last section restricts to the diagram (\ref{mor:PHM})
on the closed subscheme $H \subset T$. 

As both of $H$, $T$ are smooth, 
locally near $[C] \in H$ we can 
realize $H$ as a zero section of a regular 
section $s$ of a vector bundle $E$. 
Then we can form the following diagram
\begin{align}\label{dia:overview:C}
\xymatrix{
\cC^{[n]} \times_T E^{\vee} \ar[r] \ar[rd]_-{f^{[n]}} & E^{\vee} \ar[d]_-{g} &
\ar[l] \ar[ld]^-{f_J} \overline{J} \times_T E^{\vee} \\
& \mathbb{A}^1
}
\end{align}
Here $g$ is the function on $E^{\vee}$
defined by
\begin{align*}
g(a, e)=(s(a), e), \ a \in T, e \in E^{\vee}|_{a}.
\end{align*}
We will show that, locally 
near $[C] \in H$ we have smooth surjections
\begin{align}\label{PM:surj}
P_X \twoheadrightarrow \{df^{[n]}=0\}, \ 
{\Sh}_X \twoheadrightarrow \{df_J=0\}, 
\ \Chow(X) \twoheadrightarrow Q \subset E^{\vee}
\end{align}
for some closed subset $Q \subset E^{\vee}$, 
compatible with the maps in (\ref{dia:overview:C})
and HC maps on $P_X$, ${\Sh}_X$.
The fibers of the maps in (\ref{PM:surj}) are 
$H^1(\oO_C(C))^{\vee}$, 
so they are isomorphisms if $H^1(\oO_C(C))=0$.
Then the same argument of Proposition~\ref{prop:irred2}
shows the desired result. 

The existence of smooth 
surjections (\ref{PM:surj})
is the most technical part in the proof. 
For this purpose, we will 
carefully compare the obstruction theories on ${\Sh}_S$, $P_S$
induced by their deformation theories
with those induced by the embedding of (\ref{mor:PHM})
into (\ref{over:CTJ}), and 
investigate the Chow variety on $X$.

\subsection{Chow variety on local surfaces}
Let us take an algebraic curve class 
$\beta \in H_2(S, \mathbb{Z})$. 
By the assumption $H^1(\oO_S)=0$,  
there is a unique $L_{\beta} \in \Pic(S)$
such that
$c_1(L_{\beta})=\beta$. 
The Chow variety on $S$ is just the linear system
\begin{align*}
\Chow_{\beta}(S)=\lvert L_{\beta} \rvert. 
\end{align*} 
In this case, $\Chow_{\beta}(S)$ is also 
identified with the Hilbert scheme of pure 
one-dimensional subschemes in $S$ with homology class $\beta$. 
We define the open subscheme
\begin{align*}
H_{\beta}
\subset \Chow_{\beta}(S)
\end{align*} 
consisting of 
irreducible one-cycles. 
Note that any curve in $H_{\beta}$ has 
arithmetic genus $g$ given by
\begin{align}\label{genus}
g=1+\frac{1}{2}\beta(K_S+\beta). 
\end{align}
Let $\cC_{\beta}$ be the universal curve
\begin{align*}
\pi_{H} \colon 
\cC_{\beta} \subset S \times H_{\beta} \to H_{\beta}. 
\end{align*} 
By~\cite[Appendix]{KoTho}, 
there is a perfect obstruction theory
on $H_{\beta}$
\begin{align*}
\uU_H^{\bullet} \cneq 
(\dR \pi_{H\ast} \oO_{\cC_{\beta}}(\cC_{\beta}))^{\vee}
\to \mathbb{L}_{H_{\beta}}. 
\end{align*}
As $H_{\beta}$ is smooth, 
we have the distinguished triangle
\begin{align*}
R^1 \pi_{H \ast} \oO_{\cC_{\beta}}(\cC_{\beta})^{\vee}[1] \to 
\uU_H^{\bullet} \to \mathbb{L}_{H_{\beta}}. 
\end{align*}

For the non-compact CY 3-fold (\ref{X:total}), 
we have the push-forward map
\begin{align}\label{map:p}
p_{\ast} \colon \Chow_{\beta}(X) \to \Chow_{\beta}(S). 
\end{align}
\begin{lem}\label{lem:chow}
For $[C] \in H_{\beta}$, 
the set of closed points 
of $(p_{\ast})^{-1}([C])$ is identified with 
$H^0(\nu_{\ast}\oO_{\widetilde{C}} \otimes K_S)$. 
Here $\nu \colon \widetilde{C} \to C$ is the normalization of $C$. 
\end{lem}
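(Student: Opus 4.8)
The plan is to analyze directly which compact one-cycles $\gamma$ on $X=\mathrm{Tot}(K_S)$ satisfy $p_\ast\gamma=[C]$, and then match each of them with a section of $K_S$ pulled back to the normalization of $C$.

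First I would reduce to a single reduced irreducible curve. Write $\gamma=\sum_i a_i[\Gamma_i]$ with the $\Gamma_i\subset X$ irreducible. No component $\Gamma_i$ can be contracted by $p$: the fibers of $p$ are affine lines $\mathbb{A}^1$, which contain no complete curves, so each $\Gamma_i$ maps generically finitely onto an irreducible curve $p(\Gamma_i)\subset S$ and $p_\ast\gamma=\sum_i a_i\deg(p|_{\Gamma_i})\,[p(\Gamma_i)]$. Since this must equal the reduced irreducible cycle $[C]$, every $p(\Gamma_i)$ equals $C$ and $\sum_i a_i\deg(p|_{\Gamma_i})=1$; as all summands are positive integers, $\gamma=[\Gamma]$ for a single reduced irreducible curve $\Gamma$ with $p|_\Gamma\colon\Gamma\to C$ birational.

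Next I would set up the bijection with sections. Let $\nu\colon\widetilde C\to C$ be the normalization. Given such a $\Gamma$, its normalization is again $\widetilde C$ (normalization is a birational invariant), and the induced morphism $\widetilde C\to X$ lifts $\nu$ along $p$, hence is a section of the line bundle $\nu^\ast K_S$ on $\widetilde C$, i.e. an element of $H^0(\widetilde C,\nu^\ast K_S)$. Conversely, any $t\in H^0(\widetilde C,\nu^\ast K_S)$ determines a morphism $\sigma_t\colon\widetilde C\to\mathrm{Tot}(\nu^\ast K_S)=X\times_S\widetilde C\to X$ over $\nu$, whose image $\Gamma_t$ is an irreducible curve mapping birationally onto $C$; since $\widetilde C$ is complete and $X$ is separated, $\Gamma_t$ is closed and proper, so $[\Gamma_t]$ is a compact one-cycle with $p_\ast[\Gamma_t]=[C]$. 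These constructions are mutually inverse: two sections agreeing on the dense locus where $\widetilde C\to\Gamma$ is an isomorphism must coincide, so $t$ is recovered from $\Gamma_t$, and conversely. This identifies the closed points of $(p_\ast)^{-1}([C])$ with $H^0(\widetilde C,\nu^\ast K_S)$.

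Finally I would rewrite the target by the projection formula for the finite map $\nu$, namely $\nu_\ast\oO_{\widetilde C}\otimes K_S\cong\nu_\ast(\nu^\ast K_S)$, which yields $H^0(\widetilde C,\nu^\ast K_S)\cong H^0(\nu_\ast\oO_{\widetilde C}\otimes K_S)$, the claimed description. The only genuinely delicate point is the curve–section correspondence of the third paragraph: one must check that a birational but non-isomorphic $\Gamma\to C$ (separating the branches at the singularities of $C$) is correctly encoded by a section over the \emph{normalization} rather than over $C$ itself, and that passing to the image cycle introduces no extra multiplicity. The remaining ingredients — the absence of $p$-vertical components and the projection-formula identity — are routine.
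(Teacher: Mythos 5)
Your proof is correct and takes essentially the same route as the paper's: both identify an irreducible curve $\Gamma\subset X$ mapping birationally to $C$ with a section of $\nu^{\ast}K_S$ on the normalization $\widetilde{C}$ (equivalently, of $\nu_{\ast}\oO_{\widetilde{C}}\otimes K_S$ on $C$), the converse being given by composing $\widetilde{C}\to\mathrm{Tot}(\nu^{\ast}K_S)\to X$ and taking the image. Your preliminary reduction ruling out $p$-vertical components and multiplicities, the mutual-inverse check, and the explicit projection-formula identification are details the paper leaves implicit, so your write-up is a fleshed-out version of the same argument.
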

\begin{proof}
Let $C' \subset X$ be an irreducible
curve with $p(C')=C$. 
By taking the diagonal map 
$C' \hookrightarrow X \times_S X=\mathrm{Tot}(p^{\ast}K_S)$, 
we obtain the 
section of $H^0(p^{\ast}K_S|_{C'})=
H^0(p_{\ast}\oO_{C'} \otimes K_S)$. 
Since the normalization $\nu$ factors through 
$p|_{C'}$
\begin{align*}
\nu \colon 
\widetilde{C} \to C' \stackrel{p|_{C'}}{\to} C
\end{align*} we
obtain the section of $\nu_{\ast}\oO_{\widetilde{C}} \otimes K_S$. 
Conversely a section of $\nu_{\ast}\oO_{\widetilde{C}} \otimes K_S$
gives a morphism $\widetilde{C} \to \mathrm{Tot}(\nu^{\ast}K_S)$.
By composing it with the projection 
$\mathrm{Tot}(\nu^{\ast}K_S) \to X$, 
we obtain the morphism $\widetilde{C} \to X$
whose image gives a point of $(p_{\ast})^{-1}([C])$. 
\end{proof}

\subsection{Moduli spaces of one-dimensional stable sheaves on surfaces}
Let $\Sh_{\beta}(S)$ be the moduli space of 
one-dimensional stable sheaves $F$ on $S$
satisfying $[F] =\beta$
and $\chi(F)=1$. 
We set 
\begin{align*}
{\Sh}_{S}
 \subset \Sh_{\beta}(S)
\end{align*}
 to be the open 
subscheme consisting of sheaves $F$
whose fundamental cycles are irreducible. 
Then we have the Hilbert-Chow morphism
\begin{align}\label{MnS}
\rho_{\Sh} \colon 
{\Sh}_{S} \to H_{\beta}
\end{align}
sending $F$ to its fundamental cycle. 
\begin{rmk}
Since $S$ is a surface the morphism (\ref{MnS})
is defined without taking the 
reduced part of ${\Sh}_S$. 
Indeed let $\fF \in \Coh(S \times T)$ be a flat family of 
objects in ${\Sh}_S$. 
Then there is an exact sequence
\begin{align*}
0 \to \fF^{-1} \to \fF^{0} \to \fF \to 0
\end{align*}
for vector bundles $\fF^i$. 
By taking the determinants, we obtain the global 
section of 
$\det(\fF^0) \otimes \det(\fF^{-1})^{-1}=L_{\beta} \boxtimes \oO_T$, giving 
the $T$-valued point of 
$H_{\beta}$. 
\end{rmk}
By Lemma~\ref{lem:versal},
 there is a locally versal deformation $\pi_T \colon \cC_{T} \to T$
and a closed embedding $i \colon H_{\beta} \hookrightarrow T$
such that we have the
Cartesian square
\begin{align}\label{dia:C}
\xymatrix{
\cC_{\beta} 
\ar@<-0.3ex>@{^{(}->}[r]
\ar[d]_{\pi_{H}}\ar@{}[dr]|\square &
 \cC_T \ar[d]^{\pi_{T}} \\
H_{\beta} \ar@<-0.3ex>@{^{(}->}[r]_{i} & T. 
}
\end{align}
Let 
\begin{align*}
\pi_{J} \colon 
\overline{J} \to T
\end{align*}
be the $\pi_T$-relative moduli space of 
rank one torsion free sheaves 
with Euler characteristic one, which 
is non-singular by Theorem~\ref{thm:versal}. 
We have the Cartesian square 
\begin{align}\label{dia:M}
\xymatrix{
{\Sh}_{S}
\ar@<-0.3ex>@{^{(}->}[r]
 \ar[d]_{\rho_{\Sh}}\ar@{}[dr]|\square & \overline{J}
\ar[d]^{\pi_{J}} \\
H_{\beta} \ar@<-0.3ex>@{^{(}->}[r]_{i} & T. 
}
\end{align}
Since $\pi_J$ is flat (see~\cite[Theorem~C (ii)]{MRV}),
the above diagram in particular
implies that 
${\Sh}_{S}$ has at worst
locally complete intersection singularities. 

We next discuss obstruction theories on ${\Sh}_S$. 
Let
$\cC_{\beta}'$
be the universal curve over ${\Sh}_S$
\begin{align*}
\cC_{\beta}'
 \cneq \cC_{\beta}\times_{H_{\beta}} {\Sh}_S
\subset S \times {\Sh}_S
\end{align*}
and $\mathbb{F}$ the universal sheaf
\begin{align*}
\mathbb{F} \in \Coh(\cC_{\beta}') \subset \Coh(S \times {\Sh}_S). 
\end{align*}
By a standard deformation theory of sheaves, we 
 have the perfect obstruction theory on ${\Sh}_S$
\begin{align}\label{obs:U}
\uU_{\Sh}^{\bullet} \cneq 
(\tau_{\ge 1}\dR \hH om_{p_{\Sh}}(\mathbb{F}, \mathbb{F}))^{\vee}[-1] \to 
\mathbb{L}_{{\Sh}_S}
\end{align}
where
$p_{\Sh} \colon {\Sh}_S \times S \to {\Sh}_S$ is the projection. 
\begin{prop}\label{prop:natU}
There is a natural distinguished triangle 
\begin{align}\label{nat:U}
\rho_{\Sh}^{\ast}
R^1 \pi_{H\ast} \oO_{\cC_{\beta}}(\cC_{\beta})^{\vee}[1] \to 
\uU_{\Sh}^{\bullet} \to \mathbb{L}_{{\Sh}_S}.
\end{align}
\end{prop}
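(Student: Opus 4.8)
The plan is to produce the triangle as the truncated dual of a canonical decomposition of $\dR\hH om_{p_{\Sh}}(\mathbb{F},\mathbb{F})$, the decomposition being governed by the trace map together with the Cartier divisor structure of the universal curve. Geometrically the statement asserts that the excess of the sheaf-theoretic obstruction theory $\uU_{\Sh}^{\bullet}$ over the intrinsic cotangent complex $\mathbb{L}_{{\Sh}_S}$ is pulled back from $H_{\beta}$ along $\rho_{\Sh}$: the fibre directions of $\rho_{\Sh}$ (deformations of the rank-one sheaf on a fixed curve) are unobstructed, which is exactly the smoothness of $\overline{J}$ in Theorem~\ref{thm:versal}, so the only obstruction beyond $\mathbb{L}_{{\Sh}_S}$ is the obstruction $R^1\pi_{H\ast}\oO_{\cC_\beta}(\cC_\beta)$ to moving the support curve inside $S$. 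This parallels the distinguished triangle on $H_{\beta}$ recalled just before the proposition.

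First I would globalise the local computation of the self-$\Ext$ of a pushforward from a divisor. Write $\mathbb{F}=j_{\ast}\mathbb{F}'$ for the universal rank-one sheaf $\mathbb{F}'$ on the universal curve $j\colon \cC_\beta'\hookrightarrow S\times {\Sh}_S$, a relative Cartier divisor with relative conormal bundle $\oO_{\cC_\beta'}(-\cC_\beta')$. Since $\dL j^{\ast}j_{\ast}\oO_{\cC_\beta'}\cong \oO_{\cC_\beta'}\oplus \oO_{\cC_\beta'}(-\cC_\beta')[1]$, adjunction gives a natural splitting
\[
\dR\hH om_{p_{\Sh}}(\mathbb{F},\mathbb{F})\cong \dR\hH om_{q}(\mathbb{F}',\mathbb{F}')\oplus \dR\hH om_{q}(\mathbb{F}',\mathbb{F}'\otimes \oO_{\cC_\beta'}(\cC_\beta'))[-1],
\]
where $q\colon \cC_\beta'\to {\Sh}_S$ is the projection. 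On the second summand the fibrewise trace $\hH om(\mathbb{F}',\mathbb{F}')\to \oO_{\cC_\beta'}$ induces a morphism to $\dR q_{\ast}\oO_{\cC_\beta'}(\cC_\beta')[-1]$, which by base change along $\cC_\beta'=\cC_\beta\times_{H_\beta}{\Sh}_S$ (using flatness of $\pi_H$) is identified with $\rho_{\Sh}^{\ast}\dR\pi_{H\ast}\oO_{\cC_\beta}(\cC_\beta)[-1]$.

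Second I would assemble the triangle by dualising. Applying $\tau_{\ge 1}$ removes only the summand $\oO_{{\Sh}_S}=\hH om_q(\mathbb{F}',\mathbb{F}')$ in degree $0$ (as $\Ext^2$ vanishes on curves), so after $(-)^{\vee}[-1]$ the dual trace, composed with the inclusion of the bottom cohomology of $\rho_{\Sh}^{\ast}(\dR\pi_{H\ast}\oO_{\cC_\beta}(\cC_\beta))^{\vee}$, produces the sought morphism $\alpha\colon \rho_{\Sh}^{\ast}R^1\pi_{H\ast}\oO_{\cC_\beta}(\cC_\beta)^{\vee}[1]\to \uU_{\Sh}^{\bullet}$. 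It then remains to identify $\Cone(\alpha)$ with $\mathbb{L}_{{\Sh}_S}$. Since $\alpha$ has no $h^{0}$, the comparison $\Cone(\alpha)\to\mathbb{L}_{{\Sh}_S}$ is automatically an isomorphism on $h^{0}=\Omega_{{\Sh}_S}$, and the content is concentrated in $h^{-1}$: I must check that the trace $\Ext^2_S(F,F)\to H^1(\oO_C(C))$ is surjective with kernel the intrinsic obstruction space of ${\Sh}_S$ at $[F]$. Surjectivity is immediate because the trace splits the inclusion $\oO_C\hookrightarrow \hH om(F,F)$ and $H^2$ of a curve vanishes.

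The main obstacle is the identification of $h^{-1}$, namely that the \emph{traceless} part $\ker(\Ext^2_S(F,F)\to H^1(\oO_C(C)))$ equals $h^{-1}(\mathbb{L}_{{\Sh}_S})^{\vee}$, over the singular curves allowed here: for a rank-one torsion-free $F'$ on a curve with planar singularities the endomorphism sheaf $\hH om(F',F')$ is in general strictly larger than $\oO_C$, so this cannot be read off from a naive fibrewise identity. I would instead extract it from the Cartesian square (\ref{dia:M}), in which ${\Sh}_S=\overline{J}\times_T H_\beta$ is a regular embedding into the smooth scheme $\overline{J}$ with conormal $\rho_{\Sh}^{\ast}N^{\vee}_{H_\beta/T}$, giving $\mathbb{L}_{{\Sh}_S}\cong[\rho_{\Sh}^{\ast}N^{\vee}_{H_\beta/T}\to \Omega_{\overline{J}}|_{{\Sh}_S}]$; matching its obstruction sheaf with the traceless self-$\Ext$ uses the Gorenstein property of planar singularities and the flatness/smoothness statements of~\cite{MRV} underlying Theorem~\ref{thm:versal}. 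Granting this, the short exact sequence of complexes (traceless) $\to$ (total) $\to$ (trace) dualises to the asserted triangle, and naturality follows from the naturality of the trace and of base change.
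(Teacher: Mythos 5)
Your proposal breaks at its first step, and it breaks exactly at the points where the Proposition has content. The claimed natural splitting
\begin{align*}
\dR\hH om_{p_{\Sh}}(\mathbb{F},\mathbb{F})\cong \dR\hH om_{q}(\mathbb{F}',\mathbb{F}')\oplus \dR\hH om_{q}(\mathbb{F}',\mathbb{F}'\otimes \oO_{\cC_\beta'}(\cC_\beta'))[-1]
\end{align*}
is false once the fibre $F$ of $\mathbb{F}$ fails to be locally free on its (singular) supporting curve $C$. The triangle $\mathbb{F}'(-\cC_\beta')[1]\to \dL j^{\ast}j_{\ast}\mathbb{F}'\to \mathbb{F}'$ exists, but it does not split: the quickest way to see the splitting is impossible is that $\dR\hH om_{p_{\Sh}}(\mathbb{F},\mathbb{F})$ is a perfect, in particular bounded, complex, whereas for a rank-one torsion-free, non-locally-free $F$ on a planar curve singularity the complex $\dR \Hom_C(F,F)$ is unbounded ($F$ is a non-free maximal Cohen--Macaulay module over a hypersurface singularity, so its self-Exts are $2$-periodic; at a node one computes $\Ext^{2i}_C(F,F)\neq 0$ for all $i\ge 1$), and a direct summand of a bounded complex cannot have unbounded cohomology. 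The nonzero connecting map of the triangle is precisely what cancels these unbounded Exts, so your morphism $\alpha$, defined by a summand projection followed by a trace, does not exist as described. The same phenomenon invalidates your surjectivity argument: there is no trace map splitting $\oO_C\hookrightarrow \hH om(F,F)$ when $F$ is not locally free, since $\hH om(F,F)$ is torsion-free while $\hH om(F,F)/\oO_C$ is torsion, so a splitting would force $\hH om(F,F)=\oO_C$. (The surjectivity of $\Ext^2_S(F,F)\to H^1(\oO_C(C))$ is nevertheless true; the paper proves the Serre-dual statement, namely injectivity of the multiplication map $H^0(K_S|_C)\to \Hom(F,F\otimes K_S)$, which uses only that $F$ is torsion-free.)

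The second gap is the step you yourself flag as ``the main obstacle'': you never actually identify $h^{-1}$ of the cone with that of $\mathbb{L}_{{\Sh}_S}$, only point to the square (\ref{dia:M}) and Gorenstein-ness of planar singularities. This is where the paper's proof does its real work, and it does so without ever forming derived Homs over the singular curve and without any fibrewise identification of obstruction spaces: the map $\rho_{\Sh}^{\ast}\uU_H^{\bullet}\to \uU_{\Sh}^{\bullet}$ is built from the unit map $\oO_{\cC_\beta'}\otimes p_S^{\ast}K_S\to \dR\hH om(\mathbb{F},\mathbb{F})\otimes p_S^{\ast}K_S$ plus Grothendieck duality; its cone $\gG_{\Sh}^{\bullet}$ is shown to be concentrated in $[-1,0]$ by the injectivity above; and then both $\gG_{\Sh}^{\bullet}\to \mathbb{L}_{{\Sh}_S/H_\beta}$ and $\id$ of $\mathbb{L}_{{\Sh}_S/H_\beta}=(\pi_J^{\ast}\Omega_T\to \Omega_{\overline{J}})|_{{\Sh}_S}$ (using smoothness of $\overline{J}$ and $T$ from Theorem \ref{thm:versal}) are $\rho_{\Sh}$-relative perfect obstruction theories of the same virtual dimension $g$, which forces the first to be a quasi-isomorphism; the triangle (\ref{nat:U}) then follows by taking cones in the diagram (\ref{tri:perf}). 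Finally, even granting your $\alpha$, the factorization of $\uU_{\Sh}^{\bullet}\to \mathbb{L}_{{\Sh}_S}$ through $\Cone(\alpha)$ requires the composite $\rho_{\Sh}^{\ast}R^1\pi_{H\ast}\oO_{\cC_\beta}(\cC_\beta)^{\vee}[1]\to \uU_{\Sh}^{\bullet}\to \mathbb{L}_{{\Sh}_S}$ to vanish; this is not automatic (the relevant Hom group is an $\mathbb{H}^{-1}$ of a two-term complex and need not be zero), and in the paper it is exactly the commutativity of (\ref{tri:perf}) that supplies this compatibility.
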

\begin{proof}
Let us consider the natural morphism in $D^b(S \times {\Sh}_S)$
\begin{align*}
\oO_{\cC_{\beta}'} \otimes p_S^{\ast}K_S
\to \dR \hH om_{S \times {\Sh}_S}(\mathbb{F}, \mathbb{F})
 \otimes p_S^{\ast}K_S. 
\end{align*}
By pushing forward to ${\Sh}_S$ and using the
Grothendieck duality, we obtain the morphism
in $D^b({\Sh}_S)$
\begin{align}\label{homF}
\dR \hH om_{p_{\Sh}}(\mathbb{F}, \mathbb{F})[1] \to 
\dR \hH om_{p_{\Sh}}(\oO_{\cC_{\beta}'}, \oO_{{\Sh}_S \times S})[1].
\end{align}
Note that the RHS is identified with
\begin{align*}
\dR p_{{\Sh}\ast} \oO_{\cC_{\beta}'}(\cC_{\beta}')
=
\rho_{\Sh}^{\ast}\dR \pi_{H\ast}\oO_{\cC_{\beta}}(\cC_{\beta}). 
\end{align*}
By taking the truncations $\tau_{\ge 1}$ of (\ref{homF}) and 
dualizing, we obtain the morphism 
\begin{align*}
\rho_{\Sh}^{\ast}\uU^{\bullet}_H \to \uU^{\bullet}_{\Sh}. 
\end{align*}
Let $\gG^{\bullet}_{\Sh}$ be the cone of the above morphism. 
Then there is a morphism 
$\gG_{\Sh}^{\bullet} \to \mathbb{L}_{{\Sh}_S/H_{\beta}}$ which fits into 
the morphism of distinguished triangles:
\begin{align}
\label{tri:perf}
\xymatrix{
\rho_{\Sh}^{\ast}\uU_H^{\bullet} \ar[r]\ar[d] & \uU_{\Sh}^{\bullet} \ar[r] \ar[d] &
\gG_{\Sh}^{\bullet} \ar[d] \\
\rho_{\Sh}^{\ast}\mathbb{L}_{H_{\beta}} \ar[r] & \mathbb{L}_{{\Sh}_S} \ar[r] &
\mathbb{L}_{{\Sh}_S/H_{\beta}}. 
}
\end{align}

We claim that 
the morphism
$\gG_{\Sh}^{\bullet} \to \mathbb{L}_{{\Sh}_S/H_{\beta}}$ is a quasi-isomorphism. 
First we check that $\gG_{\Sh}^{\bullet}$
is concentrated on $[-1, 0]$. 
For a sheaf $F$ giving a closed point $p\in {\Sh}_S$, 
let $C \subset S$ be the
support of $F$.  
By the top distinguished triangle in (\ref{tri:perf}), 
we have 
the exact sequence
\begin{align*}
0 \to \hH^{-2}(\gG_{\Sh}^{\bullet}|_{p}) \to 
H^1(\oO_C(C))^{\vee} \to \Ext_S^1(F, F)^{\vee}. 
\end{align*}
By the Serre duality, the right morphism is 
identified with the natural morphism 
\begin{align}\notag
H^0(K_S|_{C}) \to \Hom(F, F \otimes K_S)
\end{align} which is 
clearly injective as $F$ is 
a torsion free sheaf on $C$. Therefore 
$\hH^{-2}(\gG_{\Sh}^{\bullet}|_{p})=0$ and 
$\gG_{{\Sh}}^{\bullet}$ is
concentrated on $[-1, 0]$. 
Also note 
that the left and the middle morphisms in the diagram (\ref{tri:perf})
satisfy that $\hH^0(\ast)$ are isomorphisms and $\hH^{-1}(\ast)$ are
 surjective. 
Then an easy diagram chasing  
shows that $\gG_{\Sh}^{\bullet} \to \mathbb{L}_{{\Sh}_S/H_{\beta}}$ is a 
$\rho_{\Sh}$-relative 
perfect obstruction theory. 
Its virtual dimension is 
\begin{align*}
1-\chi(F, F)-\chi(\oO_C(C))=g
\end{align*} 
by the Riemann-Roch theorem, where 
$g$ is the arithmetic genus of $C$ 
given by (\ref{genus}). 

On the other hand, since $T$ and $\overline{J}$ 
in the diagram (\ref{dia:M})
are smooth, it follows that 
\begin{align*}
\mathbb{L}_{{\Sh}_S/H_{\beta}}=
(\pi_J^{\ast}\Omega_T \to \Omega_{\overline{J}})|_{{\Sh}_S}. 
\end{align*}
Therefore $\id \colon \mathbb{L}_{{\Sh}_S/H_{\beta}} 
\to \mathbb{L}_{{\Sh}_S/H_{\beta}}$ is also a $\rho_{\Sh}$-relative
perfect obstruction theory, 
whose virtual dimension is also $g$ as 
$\pi_J$ has relative dimension $g$.  
Therefore $\gG_{\Sh}^{\bullet} \to \mathbb{L}_{{\Sh}_S/H_{\beta}}$ must 
be a quasi-isomorphism.

By taking the cones of 
the diagram (\ref{tri:perf}), 
we obtain the distinguished triangle (\ref{nat:U}). 
\end{proof}

\subsection{Dual obstruction cone}\label{subsec:cone}
In general, let $M$ be a complex scheme with a 
perfect obstruction theory 
$\uU^{\bullet} \to \mathbb{L}_M$. The 
\textit{dual obstruction cone}
is a cone over $M$ defined by
\begin{align}\label{obs:cone}
\mathrm{Obs}^{\ast}(\uU^{\bullet}) 
\cneq \Spec_{\oO_{M}} \left(
\bigoplus_{i\ge 0}\Sym^i(\hH^1(\uU^{\bullet \vee})) \right).
\end{align}
By~\cite{JiaTho},
the dual obstruction cone (\ref{obs:cone}) is locally 
written as a critical locus
of some function on a smooth scheme. 
Suppose that $M$ 
is cut out by a section $s$ of a vector 
bundle $E \to A$ on a smooth scheme $A$, and 
the obstruction theory $\uU^{\bullet}$ is given by
\begin{align}\label{UAE}
\xymatrix{
\uU^{\bullet}= 
(E^{\vee}|_{M} \ar[r]^-{ds}\ar@<3.0ex>@{->>}[d]_{s}
 & \Omega_A|_{M} \ar@{=}[d]) \\
\mathbb{L}_M =  (I/I^2 \ar[r] & \Omega_A|_{M}).
}
\end{align}
Here $I \subset \oO_A$ is the ideal sheaf of $M$ in $A$. 
Then the dual obstruction cone (\ref{obs:cone}) is 
written as the critical locus of the function
\begin{align}\label{crit:f}
f \colon \mathrm{Tot}(E^{\vee}) \to \mathbb{A}^1, \ 
f(a, e)=(s(a), e)
\end{align}
where $a \in A$ and $e \in E^{\vee}|_{a}$
(see~\cite[Section~2]{JiaTho} for details). 
Since any perfect obstruction theory is 
locally of the form (\ref{UAE}), 
the dual obstruction cone (\ref{obs:cone}) is locally written as a 
critical locus. 

Suppose that $M$ has at worst locally complete
intersection singularities. 
Then $\id \colon \mathbb{L}_M \to \mathbb{L}_M$ is 
a perfect obstruction theory, 
and $\mathrm{Obs}^{\ast}(\mathbb{L}_M)$ 
carries a natural $d$-critical structure which locally
is given by the above construction.
Indeed, the dual obstruction cone is the underlying scheme 
of the $(-1)$-shifted cotangent derived scheme
(see~\cite[Definition~1.20]{PTVV})
$$\Omega_{M}^{\bullet}[-1] \to M$$
and the $d$-critical structure 
is induced by the natural $(-1)$-shifted symplectic structure.

In the above situation, suppose that 
$\uU^{\bullet} \to \mathbb{L}_M$ is a perfect obstruction theory. 
Then we have the distinguished triangle
\begin{align*}
\vV[1] \to \uU^{\bullet} \to \mathbb{L}_M
\end{align*}
for a vector bundle $\vV$ on $M$. 
By taking the dual and the long exact sequence of 
cohomology, we have the exact sequence
\begin{align*}
0 \to \hH^1(\mathbb{L}_M^{\vee}) \to \hH^1(\uU^{\bullet \vee}) \to 
\vV^{\vee} \to 0. 
\end{align*}
Therefore we have the smooth surjection
\begin{align}\notag
\mathrm{Obs}^{\ast}(\uU^{\bullet}) \twoheadrightarrow 
\mathrm{Obs}^{\ast}(\mathbb{L}_M). 
\end{align}

Assume the perfect obstruction theory on $M$ has a local model as given at the beginning of this section.
One can then show that the pullback of
the canonical $d$-critical structure
on $\mathrm{Obs}^{\ast}(\mathbb{L}_M)$ agrees
with the $d$-critical structure
\begin{align}\label{dcrit:U}
(\mathrm{Obs}^{\ast}(\uU^{\bullet}), s_{\uU}), \ 
s_{\uU} \in \Gamma(\sS_{\mathrm{Obs}^{\ast}(\uU^{\bullet})}^{0}). 
\end{align}
Indeed, since the underlying scheme structure on $M$ is lci, one can show the dg-scheme structure
is locally split on $M$, which implies the claim.
Locally, the $d$-critical structure can be defined by critical charts, as described in equation~\eqref{crit:f}.

\subsection{Moduli spaces of stable sheaves on local surfaces}

We apply the above dual obstruction cone construction to
the perfect obstruction theory 
$\uU_{\Sh}^{\bullet} \to \mathbb{L}_{{\Sh}_S}$
given in (\ref{obs:U}), and compare 
it with the moduli space of one-dimensional 
stable sheaves on $X=\mathrm{Tot}(K_S)$. 
Let $\Sh_{\beta}(X)$ be the moduli space of 
compactly supported 
one-dimensional stable sheaves
$E$ on $X$ with $[p_{\ast}E]=\beta$
and $\chi(E)=1$. 
Let
\begin{align*}
{\Sh}_{X} \subset \Sh_{\beta}(X)
\end{align*}
be the open 
subset corresponding to sheaves $E$ such that
the fundamental cycle of $p_{\ast}E$ is irreducible. 
\begin{lem}\label{lem:Mobs}
We have the following commutative diagram: 
\begin{align}\label{dia:MObs}
\xymatrix{
{\Sh}_X \ar[rr]^{\cong} \ar[dr]_{p_{\ast}}
  &  & \mathrm{Obs}^{\ast}(\uU_{\Sh}^{\bullet}) \ar[ld] \\
& {\Sh}_S. &
}
\end{align}
Here the top morphism is a
canonical isomorphism, 
$p_{\ast}$ is induced by the projection 
$p \colon X \to S$ and 
the right morphism is the natural morphism 
defined from the cone structure.
\end{lem}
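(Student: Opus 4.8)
The plan is to exhibit both ${\Sh}_X$ and $\mathrm{Obs}^{\ast}(\uU_{\Sh}^{\bullet})$ as fine moduli schemes over ${\Sh}_S$ representing one and the same functor — that of relative \emph{Higgs fields} on the universal sheaf — and then to observe that the two resulting structure morphisms to ${\Sh}_S$ are precisely $p_{\ast}$ and the cone projection. The first ingredient is the spectral correspondence. Since $p\colon X=\mathrm{Tot}(K_S)\to S$ is affine with $p_{\ast}\oO_X=\bigoplus_{n\ge 0}K_S^{-n}$, pushing forward along $p$ gives an exact, faithful equivalence between coherent sheaves on $X$ with compact support and pairs $(F,\phi)$, where $F\in\Coh(S)$ is compactly supported and $\phi\colon F\to F\otimes K_S$ records multiplication by the tautological section. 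This equivalence works in families, so for any base $B$ it identifies flat families of compactly supported sheaves on $X$ with flat families of such pairs on $S$.

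Next I would match the openness and stability conditions. For $E\in{\Sh}_X$ I would check that $F\cneq p_{\ast}E$ lies in ${\Sh}_S$: purity follows since $p$ restricted to the compact one-dimensional $\Supp E$ is finite, $\chi(F)=\chi(E)=1$ and $[F]=\beta$ since $p$ is affine, and $F$ is stable because its fundamental cycle is the integral curve $C=p(\Supp E)$, on which any pure rank-one sheaf with $\chi=1$ is automatically stable by Remark~\ref{rmk:omega}. Conversely, given $(F,\phi)$ with $F\in{\Sh}_S$ and \emph{arbitrary} $\phi\in\Hom(F,F\otimes K_S)$, the corresponding $E$ on $X$ is pure one-dimensional with $p_{\ast}E=F$, hence has irreducible fundamental cycle, and is stable: as $p_{\ast}$ is exact and faithful, any proper subsheaf $E'\subsetneq E$ satisfies $p_{\ast}E'\subsetneq F$, so $\chi(E')=\chi(p_{\ast}E')\le 0$. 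Thus ${\Sh}_X$ represents, over ${\Sh}_S$, the functor sending $b\colon B\to{\Sh}_S$ to the set of relative Higgs fields $\mathbb{F}_B\to\mathbb{F}_B\otimes K_S$ on the pullback of the universal sheaf $\mathbb{F}$, with structure morphism $p_{\ast}$.

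On the cone side I would identify $\mathrm{Obs}^{\ast}(\uU_{\Sh}^{\bullet})$ with the very same functor. By definition its $B$-points over $b\colon B\to{\Sh}_S$ are $\oO_B$-linear maps $b^{\ast}\hH^{1}(\uU_{\Sh}^{\bullet\vee})\to\oO_B$, and from (\ref{obs:U}) one computes $\hH^{1}(\uU_{\Sh}^{\bullet\vee})=\eE xt^{2}_{p_{\Sh}}(\mathbb{F},\mathbb{F})$. Relative Serre duality for the smooth morphism $p_{\Sh}\colon S\times{\Sh}_S\to{\Sh}_S$, twisted by $p_S^{\ast}K_S$, then identifies such maps with relative homomorphisms $\mathbb{F}_B\to\mathbb{F}_B\otimes K_S$, compatibly with base change; the tautological section of the cone produces the universal Higgs field realizing this identification. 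Comparing the two descriptions yields the canonical isomorphism of (\ref{dia:MObs}), manifestly commuting with the projections to ${\Sh}_S$.

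The hard part will be this last step: making the duality identification functorial in $B$ rather than merely fiberwise. Because $\eE xt^{2}_{p_{\Sh}}(\mathbb{F},\mathbb{F})$ need not be locally free where ${\Sh}_S$ is singular, I cannot simply dualize a vector bundle; instead I must combine the universal property of $\Spec_{\oO}\Sym^{\bullet}$ with relative Grothendieck–Serre duality and cohomology-and-base-change to produce a \emph{single} universal Higgs field over the entire cone. Verifying that this universal field, transported through the spectral correspondence in families, recovers exactly the universal family of $X$-sheaves — so that the map of functors is an isomorphism of schemes and not only a bijection on closed points — is the crux requiring care. The local critical-chart model of Subsection~\ref{subsec:cone}, after trivializing the obstruction theory as in (\ref{UAE}), gives a convenient way to carry out this final check.
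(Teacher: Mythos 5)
Your proposal is correct and follows essentially the same route as the paper: both identify the fiber of $p_{\ast}$ over $[F]\in{\Sh}_S$ with the space of $\oO_X$-module structures (Higgs fields) $\Hom(F,F\otimes K_S)$, identify the fiber of the cone projection with $\Ext^2_S(F,F)^{\vee}$, and match them via Serre duality. The only difference is one of thoroughness: where the paper states that extending the fiberwise identification to flat families is straightforward, you spell out that extension via the functor of points of $\Spec_{\oO}\Sym^{\bullet}$, relative Grothendieck--Serre duality, and base change for the top relative $\eE xt$ --- a legitimate filling-in of the details rather than a different argument.
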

\begin{proof}
For a closed point $[E] \in {\Sh}_X$, 
the sheaf $p_{\ast}E$ on $S$ 
is stable as it is pure and
has irreducible support. 
Therefore we have the morphism 
$p_{\ast} \colon {\Sh}_X \to {\Sh}_S$. 
The fiber of this morphism at $[F] \in {\Sh}_S$ is 
given by 
the $\oO_X$-module structures on $F$, 
that is 
$\Hom(F, F \otimes K_S)$. 
On the other hand, the fiber of 
the right morphism of (\ref{dia:MObs}) 
at $[F] \in {\Sh}_S$ is given by 
$\Ext_S^2(F, F)^{\vee}$ which is isomorphic 
to $\Hom(F, F \otimes K_S)$.
Therefore the fibers of left and right morphisms in (\ref{dia:MObs})
are canonically identified.  
It is straightforward to generalize this argument for 
flat families of sheaves in ${\Sh}_S$, which shows the 
existence of a canonical 
isomorphism in the diagram (\ref{dia:MObs}). 
\end{proof}
By the diagram (\ref{dia:M}), 
${\Sh}_S$ has at worst locally complete intersection 
singularities.
Therefore 
the construction in (\ref{dcrit:U})
yields the $d$-critical scheme
\begin{align}\label{MX:dcrit}
({\Sh}_X, s_{\Sh}), \ s_{\Sh} \in \Gamma(\sS_{{\Sh}_X}^{0}).
\end{align}
Let us take the dual and the long exact sequence of 
cohomologies of (\ref{nat:U}).
Then we obtain the exact 
sequence of sheaves
\begin{align}\label{ex:U}
0 \to \hH^{1}(\mathbb{L}_{{\Sh}_S}^{\vee}) \to \hH^1(\uU_{\Sh}^{\bullet \vee}) \to 
\rho_{\Sh}^{\ast}R^1 \pi_{H\ast} \oO_{\cC_{\beta}}(\cC_{\beta}) \to 0. 
\end{align}
We define $O_H$ to be the total space of the vector bundle 
\begin{align}\label{def:O}
O_{H} \cneq 
R^1 \pi_{H\ast} \oO_{\cC_{\beta}}(\cC_{\beta})^{\vee}
\to H_{\beta}
\end{align}
on $H_{\beta}$. 
By the exact sequence (\ref{ex:U})
and Lemma~\ref{lem:Mobs}, 
the $H_{\beta}$-group scheme
$O_H$ 
acts on ${\Sh}_X$ fiberwise over $H_{\beta}$ without fixed points. 
The quotient space is
\begin{align}\label{quot:M}
\eta_{\Sh} \colon 
{\Sh}_X \twoheadrightarrow 
{\Sh}_X/O_H=\Obs^{\ast}(\mathbb{L}_{{\Sh}_S}). 
\end{align} 
By the Riemann-Roch computation, the 
relative dimension $d$ of $\eta_{\Sh}$ is given by
\begin{align}\label{rel:dim}
d=\dim H_{\beta}-\frac{1}{2}\beta^2 +\frac{1}{2}K_S \cdot \beta. 
\end{align}
By the construction of (\ref{MX:dcrit}), 
the $d$-critical structure $s_{\Sh}$ is pulled 
back from the smooth surjection $\eta_{\Sh}$.

\begin{rmk}\label{rmk:compared}
One subtlety in this discussion
is the compatibility of 
the $d$-critical structure $s_{\Sh}$ in (\ref{MX:dcrit}) 
 with the $d$-critical structure $s_{\Sh}^{\rm{der}}$
induced by the derived deformation theory as in 
Remark~\ref{rmk:shifted}. 
In fact, we expect a stronger matching of $(-1)$-shifted symplectic structures
is known to experts; however,
since a reference is unavailable, we sketch a proof of the weaker
statement as follows.  

First, both $s_{\Sh}$ and $s_{\Sh}^{\rm{der}}$ 
are homogeneous with weight $1$ with
respect to the natural action of $\mathbb{C}^{*}$ on ${\Sh}_X$.
Using this, it suffices to show their formal completions
agree at sheaves $[E] \in {\Sh}_X$ which are pushed forward from $S$, i.e. of the 
form $E = i_{*}F$ for a sheaf $F$ on $S$
and the inclusion $i \colon S \hookrightarrow X$ by the zero section. 

At any such point, the $d$-critical structure $s_{\Sh}^{\rm{der}}$ 
is determined formal-locally by
the cyclic pairing on the $L_{\infty}$-algebra $L_{X,E} = \RHom_{X}(E,E)$ induced by
Serre duality.  In this case, since $[E]$ is $\mathbb{C}^*$-fixed, $L_{X,E}$ inherits 
an extra grading compatible with 
the higher operations and the cyclic structure; using this grading, we can identify
$$L_{X,E} = L_{S,F} \oplus L_{S,F}^{\vee}[1]$$
where $L_{S,F} = \RHom_{S}(F,F)$ and the right-hand side has a cyclic dgla
structure from the coadjoint action and the natural pairing.
If we use this decomposition to compute $s_{\Sh}^{\rm{der}}$ and compare with 
equation~\eqref{crit:f},  
we see that $s_{\Sh} = s_{\Sh}^{\rm{der}}$. 
\end{rmk}

\subsection{Hilbert-Chow map on ${\Sh}_X$}
We set
\begin{align*}
\wH_{\beta} \cneq p_{\ast}^{-1}(H_{\beta}) \subset \Chow_{\beta}(X)
\end{align*}
where $p_{\ast}$ is the morphism (\ref{map:p}). 
We have the Hilbert-Chow map 
$
\pi_{\Sh} \colon {\Sh}_X^{\rm{red}} \to \wH_{\beta}$
for $X$ and the commutative diagram
\begin{align}\label{HC:M}
\xymatrix{
{\Sh}_X^{\rm{red}} \ar[r]^-{p_{\ast}} \ar[d]_-{\pi_{\Sh}} \ar[rd] 
& {\Sh}_S \ar[d]^-{\rho_{\Sh}} \\
\wH_{\beta} \ar[r]_-{p_{\ast}} & H_{\beta}.
}
\end{align}
Below we fix a point
\begin{align}\label{fix:c}
c =[C] \in H_{\beta}
\end{align}
for an irreducible curve
$C \subset S$. 
 Let ${\Sh}_{X, c}^{\rm{red}}$ be the reduced 
fiber at $c$ of the 
morphism ${\Sh}_X^{\rm{red}} \to H_{\beta}$ in the 
diagram (\ref{HC:M}). 
Let $\nu \colon \widetilde{C} \to C$ be the normalization. 
By Lemma~\ref{lem:chow}, we obtain the morphism
\begin{align}\label{mor:reduced}
\pi_{{\Sh}, c} \colon 
{\Sh}_{X, c}^{\rm{red}} \to H^0(\nu_{\ast}\oO_{\widetilde{C}} \otimes K_S).
\end{align} 
Note that the fiber of (\ref{def:O}) at $c$
is 
\begin{align*}
H^1(\oO_C(C))^{\vee}=H^0(K_S|_{C}).
\end{align*}
The $O_H$-action on ${\Sh}_X$ induces the 
$H^0(K_S|_{C})$-action on ${\Sh}_{X, c}^{\rm{red}}$. 
On the other hand, 
$H^0(K_S|_{C})$ is contained in 
$H^0(\nu_{\ast}\oO_{\widetilde{C}} \otimes K_S)$, 
hence acts on it by the translation. 
\begin{lem}\label{lem:equiv}
The morphism (\ref{mor:reduced})
is equivariant with respect to the $H^0(K_S|_{C})$-action.  
\end{lem}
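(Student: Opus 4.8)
The plan is to prove the equivariance by unwinding the two $H^0(K_S|_C)$-actions to their common geometric source, namely the $\oO_X$-module structure on sheaves supported on $C$, and then checking that the identification in Lemma~\ref{lem:chow} intertwines them. First I would recall that a point of ${\Sh}_{X,c}^{\rm{red}}$ is a stable sheaf $E$ on $X$ with $p_{\ast}[E]$ supported on $C$; since $p_{\ast}E$ is a stable sheaf $F$ on $S$ supported on the irreducible curve $C$, specifying $E$ is the same as specifying an $\oO_X$-module structure on $F$, i.e.\ an element of $\Hom(F, F\otimes K_S)$ (as in Lemma~\ref{lem:Mobs}). The $O_H$-action, whose fiber at $c$ is $H^0(K_S|_C) = \Ext_S^2(F,F)^\vee$ via the exact sequence \eqref{ex:U}, acts by translating this $\oO_X$-module structure; concretely, an element $\theta \in H^0(K_S|_C)$ sends the structure map $\varphi\colon F \to F\otimes K_S$ to $\varphi + (\text{scalar multiplication by }\theta)$, using the map $H^0(K_S|_C)\to \Hom(F,F\otimes K_S)$ dual to the boundary map.

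Second I would describe the map \eqref{mor:reduced} explicitly along the lines of the proof of Lemma~\ref{lem:chow}: given $E$ supported on a curve $C'\subset X$ with $p(C')=C$, the fundamental cycle of $p_\ast E$ is $[C]$, and the diagonal embedding $C'\hookrightarrow X\times_S X = \mathrm{Tot}(p^\ast K_S)$ produces a section of $p_\ast\oO_{C'}\otimes K_S$, which pulls back under the normalization $\widetilde C\to C'$ to an element of $H^0(\nu_\ast\oO_{\widetilde C}\otimes K_S)$. The key point is that this section records exactly the ``Higgs field'' data of the $\oO_X$-module structure: moving $C'$ inside the fibers of $p$ corresponds precisely to adding a global section of $K_S|_C$. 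Thus I would verify that translating the $\oO_X$-module structure by $\theta\in H^0(K_S|_C)$ translates the associated section of $\nu_\ast\oO_{\widetilde C}\otimes K_S$ by the image of $\theta$ under the inclusion $H^0(K_S|_C)\hookrightarrow H^0(\nu_\ast\oO_{\widetilde C}\otimes K_S)$ (the inclusion coming from $\oO_C\hookrightarrow \nu_\ast\oO_{\widetilde C}$ tensored with $K_S$). This is essentially a compatibility of two descriptions of the same deformation: deforming the support cycle $C'$ normal to the zero section versus deforming the module structure.

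The main obstacle I expect is matching the two a priori different identifications of the tangent/translation data: on one side the $O_H$-action comes from the obstruction-cone description via \eqref{ex:U} and Lemma~\ref{lem:Mobs}, where $H^0(K_S|_C)$ appears as $\Ext_S^2(F,F)^\vee$ through Serre duality; on the other side the $H^0(K_S|_C)$-translation on $H^0(\nu_\ast\oO_{\widetilde C}\otimes K_S)$ comes from the geometric inclusion of sections. Reconciling these requires tracking through the natural map $H^0(K_S|_C)\to \Hom(F,F\otimes K_S)$ and checking it is compatible with taking the associated diagonal section; concretely one must confirm that the canonical map $\oO_C\hookrightarrow \nu_\ast\oO_{\widetilde C}$, used to define the section, is the same one dualizing the Serre-duality identification $H^0(K_S|_C)\cong\Ext_S^2(F,F)^\vee$ restricted to scalar endomorphisms. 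I would handle this by reducing to the generic point of $C$, where $F$ is locally free of rank one on $C$ and both actions visibly act by adding the same scalar-valued section, and then extending over the singular points by continuity and torsion-freeness of $F$.

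Once these identifications are in place, the equivariance follows formally: both maps in \eqref{mor:reduced} are defined functorially from the $\oO_X$-module structure, the $H^0(K_S|_C)$-action translates this structure on the source, and the target translation is its direct image. I would phrase the final check as the commutativity of a diagram relating the $O_H$-action on ${\Sh}_{X,c}^{\rm{red}}$ to the translation action on $H^0(\nu_\ast\oO_{\widetilde C}\otimes K_S)$, with the vertical maps being $\pi_{{\Sh},c}$, and conclude that the morphism is $H^0(K_S|_C)$-equivariant as claimed.
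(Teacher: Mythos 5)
Your proposal is correct and takes essentially the same approach as the paper: identify closed points of ${\Sh}_{X, c}^{\rm{red}}$ with pairs $(F, s)$, realize the $H^0(K_S|_{C})$-action as translation of $s \in \Hom(F, F \otimes K_S)$, realize the morphism (\ref{mor:reduced}) on such a point as the image of $s$ in $H^0(\nu_{\ast}\oO_{\widetilde{C}} \otimes K_S)$, and conclude formally. The only difference is bookkeeping: the paper establishes the needed compatibility via the explicit chain of inclusions $H^0(K_S|_{C}) \subset \Hom(F, F \otimes K_S) \subset H^0(\nu_{\ast}\oO_{\widetilde{C}} \otimes K_S)$ induced by $\oO_C \subset \eE nd(F) \subset \nu_{\ast}\oO_{\widetilde{C}}$ (the last inclusion using that $\nu^{\ast}F$ modulo its torsion is a line bundle on $\widetilde{C}$), whereas you verify the same compatibility by restricting to the generic point of $C$, where $F$ is a line bundle, and extending by torsion-freeness.
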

\begin{proof}
A closed point of ${\Sh}_{X, c}^{\rm{red}}$ consists of 
a pair $(F, s)$, where $F \in \Coh(S)$
gives a closed point of ${\Sh}_S$ 
with fundamental cycle $c$, 
and $s$ is a morphism 
$s \colon F \to F \otimes K_S$
(see the proof of Lemma~\ref{lem:Mobs}). 
Let $\widetilde{F}=\nu^{\ast}F/T$
where $T \subset \nu^{\ast}F$ is the torsion part. 
Since $\widetilde{C}$ is smooth, 
$\widetilde{F}$ is a line bundle on $\widetilde{C}$ and 
we have the embedding 
\begin{align}\label{inc:nu}
\eE nd(F) \subset \nu_{\ast} \eE nd(\widetilde{F})=\nu_{\ast}
 \oO_{\widetilde{C}}. 
\end{align}
Combined with the inclusion $\oO_{C} \subset \eE nd(F)$, we have 
\begin{align}\label{incs}
H^0(K_S|_{C}) \subset \Hom(F, F \otimes K_S)
\subset H^0(\nu_{\ast}\oO_{\widetilde{C}} \otimes K_S). 
\end{align}
The $H^0(K_S|_{C})$-action on 
${\Sh}_{X, c}^{\rm{red}}$ is given by 
the translation with respect to the
first embedding of (\ref{incs}). 
Also the morphism (\ref{mor:reduced}) is given by 
the second embedding of (\ref{incs}). 
Therefore (\ref{mor:reduced}) is $H^0(K_S|_{C})$-equivariant.  
\end{proof}
For the normalization $\nu \colon \widetilde{C} \to C$, we 
set $Q_c$ to be
\begin{align*}
Q_c \cneq \nu_{\ast}\oO_{\widetilde{C}}/\oO_C.
\end{align*}
\begin{lem}\label{lem:quot}
We have 
\begin{align*}
H^0(\nu_{\ast}\oO_{\widetilde{C}} \otimes K_S)/
H^0(K_S|_{C})=\overline{Q}_c \cneq \Ker\left(Q_c
\stackrel{u}{\to} 
\Omega_{H_{\beta}}|_{c} \right)
\end{align*}
for some morphism $u$. 
\end{lem}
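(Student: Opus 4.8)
The plan is to obtain the claimed description by tensoring the defining sequence of $Q_c$ with $K_S$ and reading off the associated long exact cohomology sequence. First I would start from the short exact sequence of sheaves supported on $C$
\begin{align*}
0 \to \oO_C \to \nu_{\ast}\oO_{\widetilde{C}} \to Q_c \to 0
\end{align*}
and tensor it with the line bundle $K_S$. Since $K_S$ is locally free the sequence stays exact, and using $\oO_C \otimes K_S = K_S|_{C}$ we obtain
\begin{align*}
0 \to K_S|_{C} \to \nu_{\ast}\oO_{\widetilde{C}} \otimes K_S \to Q_c \otimes K_S \to 0.
\end{align*}
Because $Q_c$ is a finite-length sheaf supported on the singular points of $C$, so is $Q_c \otimes K_S$; hence $H^0(Q_c \otimes K_S)=Q_c \otimes K_S$ and $H^{\ge 1}(Q_c \otimes K_S)=0$. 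Taking cohomology then yields the exact sequence
\begin{align*}
0 \to H^0(K_S|_{C}) \to H^0(\nu_{\ast}\oO_{\widetilde{C}}\otimes K_S) \to Q_c \otimes K_S \stackrel{\delta}{\to} H^1(K_S|_{C}),
\end{align*}
so that the quotient on the left-hand side of the lemma is \emph{canonically} identified with $\Ker(\delta)$. It then remains only to rewrite the target of $\delta$.

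For the target I would invoke Serre duality on the curve $C$. Since $C$ is a Cartier divisor in the smooth surface $S$ it is Gorenstein, with invertible dualizing sheaf $\omega_C$, so Serre duality gives $H^1(C, K_S|_{C})^{\vee} \cong H^0(C, \omega_C \otimes (K_S|_{C})^{-1})$. Combining this with the adjunction isomorphism $\omega_C = K_S|_{C} \otimes \oO_C(C)$ produces $H^1(K_S|_{C})^{\vee} \cong H^0(\oO_C(C))$, which is the dual of the identification $H^1(\oO_C(C))^{\vee}=H^0(K_S|_{C})$ already used in defining the bundle in \eqref{def:O}. Finally, the hypothesis $H^1(\oO_S)=0$ makes the sequence $0 \to \oO_S \to L_{\beta} \to \oO_C(C) \to 0$ identify $H^0(\oO_C(C))$ with the tangent space $T_c H_{\beta}$, so that $H^1(K_S|_{C}) \cong \Omega_{H_{\beta}}|_{c}$.

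The last step is purely local. Choosing a trivialization of $K_S$ near each singular point of $C$ identifies $Q_c \otimes K_S$ with $Q_c$; under this identification together with the Serre-duality isomorphism above, $\delta$ becomes a morphism $u\colon Q_c \to \Omega_{H_{\beta}}|_{c}$ with $\Ker(\delta)=\Ker(u)=\overline{Q}_c$, which is exactly the asserted equality (the quotient on the left being canonically $\Ker(\delta)$). I do not expect the precise form of $u$ to matter: changing the trivialization alters $u$ by an automorphism of $Q_c$ supported stalkwise at the singular points, so the isomorphism type of the kernel is unaffected, and $\overline{Q}_c$ is well-defined independently of the choices. The only genuinely delicate point is the Serre-duality/adjunction identification of $H^1(K_S|_{C})$ with the cotangent space $\Omega_{H_{\beta}}|_{c}$; once this is established the statement follows formally from the long exact sequence above.
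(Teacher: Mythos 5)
Your proof is correct and follows essentially the same route as the paper: tensor the normalization sequence $0 \to \oO_C \to \nu_{\ast}\oO_{\widetilde{C}} \to Q_c \to 0$ with $K_S$, take the long exact cohomology sequence, and identify $H^1(K_S|_{C})$ with $H^0(\oO_C(C))^{\vee}=\Omega_{H_{\beta}}|_{c}$. The only difference is that you spell out the Serre duality/adjunction step and the (harmless, since the lemma only asserts ``for some morphism $u$'') identification $Q_c \otimes K_S \cong Q_c$, which the paper leaves implicit.
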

\begin{proof}
By the exact sequence
\begin{align*}
0 \to K_S|_{C} \to \nu_{\ast}\oO_{\widetilde{C}} \otimes
K_S \to Q_c \to 0
\end{align*}
we obtain the long exact sequence
\begin{align*}
0 \to H^0(K_S|_{C}) \to H^0(\nu_{\ast}\oO_{\widetilde{C}} \otimes K_S)\to Q_c \to H^1(K_S|_{C}).
\end{align*}
Then the lemma follows from the identification 
\begin{align*}
H^1(K_S|_{C})=H^0(\oO_C(C))^{\vee}=\Omega_{H_{\beta}}|_{c}.
\end{align*}
\end{proof}
Let $\Obs^{\ast}(\bL_{{\Sh}_S})_c^{\rm{red}}$ be the 
reduced fiber of 
the composition at $c=[C] \in H_{\beta}$:
\begin{align*}
\Obs^{\ast}(\bL_{{\Sh}_S})^{\rm{red}} \to {\Sh}_S \stackrel{\rho_{\Sh}}{\to} H_{\beta}
\end{align*}
where the first morphism is the projection. 
By Lemma~\ref{lem:equiv}
and Lemma~\ref{lem:quot}, 
we obtain the Cartesian square
\begin{align}\label{car:LM}
\xymatrix{
{\Sh}_{X, c}^{\rm{red}} \ar[r] \ar[d]_{\pi_{{\Sh}, c}}
\ar@{}[dr]|\square
 &
\Obs^{\ast}(\bL_{{\Sh}_S})_c^{\rm{red}} \ar[d] \\
H^0(\nu_{\ast}\oO_{\widetilde{C}} \otimes K_S) \ar[r] &
\overline{Q}_c
}
\end{align} 
such that horizontal morphisms are smooth 
morphisms with fiber $H^0(K_S|_{C})$. 
\subsection{CY condition for ${\Sh}_X$}
We consider the 
embedding $i \colon H_{\beta} \hookrightarrow T$
in the 
 diagram (\ref{dia:C}). 
We take  
an open neighborhood $U$ in $T$
of the point (\ref{fix:c}), 
and a bundle $E$ on it with a section $s$
\begin{align}\label{etale:U}
c \in U \subset T, \  
E \to U, \ s \in \Gamma(U, E)
\end{align}
such that 
$s$ is a regular section 
which cuts out $H_{\beta}$, i.e. 
\begin{align*}
H_{U} \cneq H_{\beta} \cap U =(s=0) 
\subset U.
\end{align*} 
By taking the fiber 
products of the diagrams (\ref{dia:M}), (\ref{HC:M})
with $U$ over $T$, we obtain the diagrams
\begin{align}\label{dia:XS}
\xymatrix{
{\Sh}_{S, U}
\ar@<-0.3ex>@{^{(}->}[r]
 \ar[d]_-{\rho_{\Sh}}\ar@{}[dr]|\square & \overline{J}_U
\ar[d]^-{\pi_{J}} \\
H_{U} \ar@<-0.3ex>@{^{(}->}[r]_-{i} & U, 
}
\
\xymatrix{
{\Sh}_{X, U}^{\rm{red}} \ar[r]^-{p_{\ast}} \ar[d]_-{\pi_{\Sh}} \ar[rd] 
& {\Sh}_{S, U} \ar[d]^-{\rho_{\Sh}} \\
\wH_{U} \ar[r]_-{p_{\ast}} & H_{U}.
}
\end{align}
Since $s$ is a regular 
section of $E$, we
have the isomorphism
\begin{align}\notag
(\pi_J^{\ast}E^{\vee}|_{{\Sh}_{S, U}}
\stackrel{d(\pi_J^{\ast} s)}{\longrightarrow}
\Omega_{\overline{J}_U}|_{{\Sh}_{S, U}}) \stackrel{\cong}{\to} 
\mathbb{L}_{{\Sh}_{S, U}}. 
\end{align}
By the argument in Subsection~\ref{subsec:cone},  
we have the commutative diagram
\begin{align}\label{dia:MR2}
\xymatrix{
\mathrm{Obs}^{\ast}(\mathbb{L}_{{\Sh}_{S, U}}) \stackrel{\cong}{\to}
\{df_{J}=0\}
 \ar@<-0.3ex>@{^{(}->}[r]
& \overline{J}_U \times_U E^{\vee}
 \ar[rd]^-{f_J} \ar[d]_-{\pi_J} & \\
 & E^{\vee} \ar[r]^-{g} & \mathbb{A}^1
}
\end{align}
giving a $d$-critical chart of 
$\mathrm{Obs}^{\ast}(\mathbb{L}_{{\Sh}_{S, U}})$. 
Here $g$ is the function 
defined as in (\ref{crit:f}), i.e.
\begin{align*}
g(a, e) =(s(a), e), \ 
a \in U, \ 
e \in E_a^{\vee} \cneq E^{\vee}|_{a}.
\end{align*}
By shrinking $U$ if necessary, 
the canonical line bundle of 
 $\overline{J}_U$ is trivial
(see~\cite[Theorem~A]{MRV}).
Then
the above $d$-critical 
structure is strictly CY
and (\ref{dia:MR2}) 
is a  
CY $d$-critical chart (see Definition~\ref{defi:slcy}).


\begin{lem}
Let 
$\overline{J}_C$ be the 
fiber of $\pi_J \colon \overline{J} \to T$ at 
the point (\ref{fix:c}), and 
$E_c$ the fiber of $E \to U$ at 
$c \in H_{U} \subset U$. 
There is a closed embedding 
$\overline{Q}_c \subset E_c^{\vee}$ 
such that the following diagram commutes: 
\begin{align}\label{dia:TE}
\xymatrix{
\Obs^{\ast}(\bL_{{\Sh}_S})_c^{\rm{red}} 
\ar@<-0.3ex>@{^{(}->}[r]
\ar[d]
& \overline{J}_C \times E_c^{\vee} \ar[d] \\
\overline{Q}_c \ar@<-0.3ex>@{^{(}->}[r] & E_c^{\vee}. 
}
\end{align}
Here the top morphism is 
induced by the embedding (\ref{dia:MR2}), and
the left morphism is given in (\ref{car:LM}).
\end{lem}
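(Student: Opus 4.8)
The plan is to obtain the square (\ref{dia:TE}) by restricting the CY $d$-critical chart (\ref{dia:MR2}) to the fiber over the point $c=[C]$ of (\ref{fix:c}), and then to identify the resulting $E_c^{\vee}$-coordinate with the morphism to $\overline{Q}_c$ coming from (\ref{car:LM}).

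First I would restrict (\ref{dia:MR2}) over $c\in H_U\subset U$. Since $\pi_J\colon \overline{J}_U\to U$ has fiber $\overline{J}_C$ over $c$, and $E^{\vee}\to U$ has fiber $E_c^{\vee}$, the fiber of $\overline{J}_U\times_U E^{\vee}$ over $c$ is $\overline{J}_C\times E_c^{\vee}$. The critical locus $\{df_J=0\}\cong \Obs^{\ast}(\bL_{{\Sh}_{S,U}})$, whose map to $U$ factors through $\rho_{\Sh}$, restricts over $c$ to $\Obs^{\ast}(\bL_{{\Sh}_S})_c$. This produces the top horizontal embedding of (\ref{dia:TE}); composing with the projection $\overline{J}_C\times E_c^{\vee}\to E_c^{\vee}$ gives a morphism $\Obs^{\ast}(\bL_{{\Sh}_S})_c^{\rm{red}}\to E_c^{\vee}$, which will be the composite of the right column and the embedding to be constructed.

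The content is then to produce a closed embedding $\overline{Q}_c\hookrightarrow E_c^{\vee}$ through which this $E_c^{\vee}$-coordinate factors, compatibly with (\ref{car:LM}). I would verify this on closed points and then promote it. A closed point of $\Obs^{\ast}(\bL_{{\Sh}_S})_c^{\rm{red}}$ over $[F]\in \overline{J}_C$ is a covector in the cone fiber, which by the dual of (\ref{ex:U}) is $\Hom(F,F\otimes K_S)/H^0(K_S|_C)$; its image under the projection to $E_c^{\vee}$ is a covector $e\in E_c^{\vee}$, and its image under the left map of (\ref{car:LM}) is the class $[\varphi]\in\overline{Q}_c$ of a lift $\varphi\in\Hom(F,F\otimes K_S)$ viewed in $H^0(\nu_{\ast}\oO_{\widetilde{C}}\otimes K_S)$ via (\ref{incs}). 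The assertion is thus that $e$ depends only on $[\varphi]$, and that $[\varphi]\mapsto e$ is a closed embedding. Since the $E^{\vee}$-coordinate of the dual obstruction cone is the tautological covector on $\hH^1(\bL_{{\Sh}_S}^{\vee})$, the triangle (\ref{nat:U}) shows it annihilates $H^0(K_S|_C)=H^1(\oO_C(C))^{\vee}$; combined with the identification $H^1(K_S|_C)=\Omega_{H_{\beta}}|_c$ and the map $u$ of Lemma~\ref{lem:quot}, this places the image of $e$ exactly in $\Ker(Q_c\stackrel{u}{\to}\Omega_{H_{\beta}}|_c)=\overline{Q}_c$, independently of $[F]$.

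The hard part is precisely this last identification: the covector $e$ arises extrinsically, as a conormal direction $E_c^{\vee}\cong N^{\vee}_{H_{\beta}/T,c}\subset\Omega_{T,c}$ to $H_{\beta}$ inside the versal base $T$ cut out by $s$, whereas $\overline{Q}_c$ is defined intrinsically through the normalization $\nu\colon\widetilde{C}\to C$. Reconciling the two is exactly the role of the obstruction-theory comparison in Proposition~\ref{prop:natU}: the triangle (\ref{nat:U}) relates the cone fiber $\hH^1(\bL_{{\Sh}_S}^{\vee})^{\vee}=\Hom(F,F\otimes K_S)/H^0(K_S|_C)$ to the surface obstruction space $\Ext_S^2(F,F)$ and to $R^1\pi_{H\ast}\oO_{\cC_{\beta}}(\cC_{\beta})$, and dualizing, together with Lemma~\ref{lem:chow} and Lemma~\ref{lem:equiv}, translates the cone coordinate into the Chow datum $H^0(\nu_{\ast}\oO_{\widetilde{C}}\otimes K_S)$ modulo $H^0(K_S|_C)$. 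Once this is checked on closed points, I would conclude using that the square (\ref{car:LM}) is Cartesian with smooth horizontal arrows and that all morphisms are algebraic, so the closed-point matching upgrades to an identification of $\overline{Q}_c$ with a closed subscheme of $E_c^{\vee}$ and to the commutativity of (\ref{dia:TE}).
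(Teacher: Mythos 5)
Your reduction to the fiber over $c$ and your framing of what must be proved are both correct: one needs a single closed embedding $\overline{Q}_c\hookrightarrow E_c^{\vee}$ through which every cone fiber maps compatibly. But the proposal has a genuine gap exactly at the step you yourself call the hard part, and the mechanism you offer does not close it. The observation that the triangle (\ref{nat:U}) (equivalently the sequence (\ref{ex:U})) makes the tautological covector annihilate $H^0(K_S|_{C})$ only proves a \emph{within-$F$} statement: for a fixed sheaf $F$, the map $\Hom(F,F\otimes K_S)\to E_c^{\vee}$ factors through the quotient by $H^0(K_S|_{C})$, i.e. through the cone fiber at $[F]$ --- which is automatic, since $e$ is by definition determined by the cone point. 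What actually needs proof is the \emph{cross-$F$} statement: if cone points lying over two different sheaves $F$ and $F'$ have the same image $[\varphi]\in\overline{Q}_c$ under (\ref{incs}), then their images in $E_c^{\vee}$ coincide. Your ``independently of $[F]$'' is asserted, not argued; moreover, saying that $e$ ``lands in $\overline{Q}_c$'' is circular, because $\overline{Q}_c$ is not yet a subspace of $E_c^{\vee}$ --- producing that subspace is precisely the content of the lemma. Injectivity and closedness of $[\varphi]\mapsto e$ are likewise left unaddressed.

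The paper closes this gap with a device absent from your proposal: evaluation at the special sheaves $F=\nu_{\ast}L$ for $L$ a line bundle on $\widetilde{C}$. One first shows, analogously to Lemma~\ref{lem:quot}, that the cone fiber at $[F]$, viewed inside $E_c^{\vee}$ via (\ref{dia:MR2}), is identified with $\Ker(\eE nd(F)/\oO_C\to \Omega_{H_{\beta}}|_{c})$. By (\ref{inc:nu}) one has $\eE nd(F)\subset \nu_{\ast}\oO_{\widetilde{C}}$, with \emph{equality} exactly when $F=\nu_{\ast}L$; for such $F$ the cone fiber is therefore $\Ker(Q_c\to \Omega_{H_{\beta}}|_{c})=\overline{Q}_c$ itself, and this is what defines the closed embedding $\overline{Q}_c\subset E_c^{\vee}$. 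For a general $F$ the inclusion $\eE nd(F)\subset\nu_{\ast}\oO_{\widetilde{C}}$ then identifies its cone fiber with a subspace of $\overline{Q}_c$ compatibly with (\ref{incs}), which is what yields the commutativity of (\ref{dia:TE}). Without this evaluation at $\nu_{\ast}L$, or some substitute argument pinning down one fixed subspace of $E_c^{\vee}$ and verifying the cross-$F$ compatibility, your proof does not go through.
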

\begin{proof}
By the description (\ref{quot:M}), 
$\Obs^{\ast}(\bL_{{\Sh}_S})_c^{\rm{red}}$ is a cone
over $\overline{J}_C$ whose fiber at a closed point $[F] \in \overline{J}_C$ is the quotient of $\Hom(F, F\otimes K_S)$ by the 
action of $H^0(K_S|_{C})$.
Similarly to Lemma~\ref{lem:quot}, we see that 
\begin{align}\label{endF}
\Hom(F, F\otimes K_S)/H^0(K_S|_{C})=
\Ker\left( \eE nd(F)/\oO_C \to \Omega_{H_{\beta}}|_{c} \right)
\end{align}
which is a closed subspace of $E_c^{\vee}$
by the diagram (\ref{dia:MR2}). 
We have the embedding (\ref{inc:nu}), such that the 
equality holds if $F=\nu_{\ast}L$ for some line bundle $L$ on $\widetilde{C}$. 
Applying (\ref{endF}) for such $F$, we obtain the 
embedding $\overline{Q}_c \subset E_c^{\vee}$ 
which makes the diagram (\ref{dia:TE})
commutative. 
\end{proof}
We write $O_{H, U} =O_H \times_{H_{\beta}}H_U$, where 
$O_H$ is given by (\ref{def:O}). 
By the above lemma, 
we have the commutative diagram
\begin{align}\label{Car:closed}
\xymatrix{
{\Sh}_{X, U}^{\rm{red}} \ar[r]^(.4){\eta_{\Sh}} \ar[d]_{\pi_{\Sh}}
\ar@{}[dr]|\square & \Obs^{\ast}(\mathbb{L}_{{\Sh}_{S, U}})^{\rm{red}} \ar[d]
\ar@<-0.3ex>@{^{(}->}[r]
 & \overline{J}_U \times_U E^{\vee} \ar[d]^{\pi_J}  \\
\wH_{U} \ar[r]^(.4){\eta}
 & \wH_{U}/O_{H, U} \ar[r] & E^{\vee}.
} 
\end{align}
where the
left horizontal morphisms are 
quotient maps with respect to the free 
$O_{H, U}$-actions
and the right horizontal
morphisms are bijections onto their images. 
Combined with the diagram (\ref{dia:MR2}), we obtain the following: 
\begin{cor}\label{thm:CYloc}
The $d$-critical scheme
$({\Sh}_X, s_{\Sh})$ in 
 (\ref{MX:dcrit}) 
together with the morphism 
${\Sh}_X^{\rm{red}} \to \wH_{\beta}$ in (\ref{HC:M}) is a
CY fibration in the sense of Definition~\ref{loc:cy}. 
\end{cor}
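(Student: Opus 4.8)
The plan is to verify directly that the $d$-critical scheme $({\Sh}_X, s_{\Sh})$ of (\ref{MX:dcrit}), together with the Hilbert--Chow morphism ${\Sh}_X^{\rm{red}} \to \wH_{\beta}$ of (\ref{HC:M}), satisfies Definition~\ref{loc:cy}~(iv), i.e. that it is strictly CY (in the sense of Definition~\ref{defi:slcy}~(ii)) at every point of $\wH_{\beta}$. Since every member of $H_{\beta}$ is an integral curve on the smooth surface $S$, hence locally planar, all the structure assembled above --- the locally versal family $\cC_T \to T$, the Cartesian square (\ref{dia:M}), the smooth surjection $\eta_{\Sh}$ of (\ref{quot:M}), and the diagrams (\ref{dia:MR2}), (\ref{Car:closed}) --- is available near any chosen $c=[C]\in H_{\beta}$. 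So it suffices to produce, over a neighborhood $\wH_U\subset\wH_{\beta}$ of the fibre above such a $c$, a single global CY $d$-critical chart for ${\Sh}_{X,U}$.

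First I would recall that the previous subsection already provides the surface-level input: diagram (\ref{dia:MR2}) exhibits $\Obs^{\ast}(\mathbb{L}_{{\Sh}_{S,U}})$ as the critical locus $\{dg=0\}$ inside the smooth scheme $\overline{J}_U\times_U E^{\vee}$, and, after shrinking $U$ so that $K_{\overline{J}_U}\cong\oO$ by \cite[Theorem~A]{MRV}, this is a CY $d$-critical chart. The point is to transport it across $\eta_{\Sh}$. By Lemma~\ref{lem:Mobs} the space ${\Sh}_X$ is the dual obstruction cone $\Obs^{\ast}(\uU_{\Sh}^{\bullet})$, and by the construction in (\ref{MX:dcrit}) its $d$-critical structure is pulled back along $\eta_{\Sh}$ from $\Obs^{\ast}(\mathbb{L}_{{\Sh}_S})$. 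Using the triangle (\ref{nat:U}) I would choose, over $\overline{J}_U$, a local model $(E'^{\vee}\stackrel{ds'}{\to}\Omega_{\overline{J}_U})$ of $\uU_{\Sh}^{\bullet}$ with $E'=\pi_J^{\ast}E\oplus W$, where $W$ is the bundle underlying $\rho_{\Sh}^{\ast}O_H$ and $s'=(\pi_J^{\ast}s,0)$. The dual obstruction cone construction of \cite{JiaTho} then realises ${\Sh}_{X,U}$ as $\{df'=0\}$ inside $\mathrm{Tot}(E'^{\vee})$ with $f'=g\circ\mathrm{pr}$, compatibly with the fibrewise $O_{H,U}$-action recorded in (\ref{Car:closed}).

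The CY requirement is thereby reduced to checking that the ambient smooth scheme $\mathrm{Tot}(E'^{\vee})\to\overline{J}_U$ has trivial canonical bundle. Since the canonical bundle of the total space of a vector bundle is the pullback of $K_{\overline{J}_U}\otimes\det E'$, and $K_{\overline{J}_U}$ is already trivial, I would shrink $U$ once more so that the line bundles $\det(\pi_J^{\ast}E)$ and $\det W$ become trivial; both are pulled back from lower-dimensional bases and so trivialise on a small enough neighborhood of $c$. This yields $K_{\mathrm{Tot}(E'^{\vee})}\cong\oO$, so the chart is a genuine CY $d$-critical chart, and the induced orientation $\oO_{{\Sh}_{X,U}}$ is in particular trivial along the fibres of $\pi_{\Sh}$, as demanded by the CY condition. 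Running this over every $c\in H_{\beta}$ gives the asserted CY fibration structure.

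I expect the main obstacle to be this last step: ensuring the enlarged ambient space retains a trivial canonical bundle after passing from the surface-level chart to the threefold. This is exactly where \cite[Theorem~A]{MRV} (triviality of the relative canonical bundle of the compactified Jacobian) is indispensable, combined with the observation that the extra directions coming from the obstruction bundle $O_H$ contribute only determinant line bundles pulled back from the base, which localisation on $\wH_{\beta}$ kills. A secondary technical point --- already handled in Remark~\ref{rmk:compared} and the discussion around (\ref{dcrit:U}) --- is the identification of the Jiang--Thomas $d$-critical structure on $\Obs^{\ast}(\uU_{\Sh}^{\bullet})$ with the pullback structure $s_{\Sh}$, which guarantees that the transported chart genuinely represents $s_{\Sh}$ and not some other critical structure.
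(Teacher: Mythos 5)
Your reduction to the surface-level chart (\ref{dia:MR2}) and to the triviality of $K_{\overline{J}_U}$ via \cite[Theorem~A]{MRV} is exactly the paper's starting point, but the step where you transport this across $\eta_{\Sh}$ has a genuine gap. You realize ${\Sh}_{X,U}$ as $\{df'=0\}\subset\mathrm{Tot}(E'^{\vee})$ using the split model $E'=\pi_J^{\ast}E\oplus W$, $s'=(\pi_J^{\ast}s,0)$. Since $f'=f_J\circ\mathrm{pr}$ does not involve the $W$-directions, this critical locus is literally the fibre product $\Obs^{\ast}(\mathbb{L}_{{\Sh}_{S,U}})\times_{{\Sh}_{S,U}}\mathrm{Tot}(W^{\vee}|_{{\Sh}_{S,U}})$, i.e.\ the \emph{split} affine bundle. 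But by (\ref{ex:U}) and (\ref{quot:M}), ${\Sh}_{X,U}=\Obs^{\ast}(\uU_{\Sh}^{\bullet})$ is a priori only a torsor under $O_{H,U}$ over $\Obs^{\ast}(\mathbb{L}_{{\Sh}_{S,U}})$; identifying it with the split bundle amounts to splitting the triangle (\ref{nat:U}) (equivalently the extension (\ref{ex:U})) over all of ${\Sh}_{S,U}=\rho_{\Sh}^{-1}(H_U)$, for which you give no argument. This cannot be arranged by shrinking $U$: the CY-fibration condition of Definition~\ref{loc:cy} concerns preimages of opens in $\wH_{\beta}$, and ${\Sh}_{S,U}$ contains the complete fibres of $\rho_{\Sh}$ (compactified Jacobians), so it never becomes affine and the extension class need not die. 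If instead you only split Zariski-locally on ${\Sh}_{S,U}$ (where splitting is automatic), you obtain local triviality of $K_{{\Sh}_X,s_{\Sh}}$, which is vacuous and does not give the corollary. (Two smaller points: $W$ must first be extended from $H_U$ to $U$ to make sense on $\overline{J}_U$, and your $W$ is dualized incorrectly: the fibres of $\eta_{\Sh}$ are $H^0(K_S|_C)=H^1(\oO_C(C))^{\vee}$, the fibres of $O_H$ itself, so one needs $\mathrm{Tot}(W^{\vee})\cong\rho_{\Sh}^{\ast}O_H$, i.e.\ $W=\rho_{\Sh}^{\ast}R^1\pi_{H\ast}\oO_{\cC_{\beta}}(\cC_{\beta})$.)

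The paper's own proof deliberately avoids any such chart for ${\Sh}_{X,U}$: it never embeds ${\Sh}_{X,U}$ into a smooth ambient scheme. It uses only that $s_{\Sh}$ is the pullback of the canonical $d$-critical structure along the smooth surjection $\eta_{\Sh}$ (construction (\ref{MX:dcrit})), so by the smooth-pullback formula of \cite[Proposition~2.8]{JoyceD} one has $K_{{\Sh}_{X,U},s_{\Sh}}\cong\eta_{\Sh}^{\ast}K_{\Obs^{\ast}(\mathbb{L}_{{\Sh}_{S,U}}),s}\otimes(\det\Omega_{\eta_{\Sh}})^{\otimes 2}$; the first factor is trivial by the CY chart (\ref{dia:MR2}) after shrinking $U$, and $\Omega_{\eta_{\Sh}}$ is pulled back from $H_U$, hence also trivializable after shrinking. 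This yields triviality of the virtual canonical bundle over preimages of small opens in $\wH_{\beta}$, which is exactly what Definition~\ref{loc:cy}~(iv) asks. Note also that you conflate this condition with being \emph{strictly} CY at every point in the sense of Definition~\ref{defi:slcy}~(ii); the latter, which is what your global chart would prove, is strictly stronger, and the paper explicitly leaves it as a question (last remark of the Appendix). Your argument becomes correct if you replace the split chart by the torsor structure and run the observation in your final paragraph through this pullback formula, which is precisely the paper's route.
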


\subsection{GV invariants on local surfaces}
We keep the notation in the 
previous subsection. 
Let
\begin{align*}
\phi_{\sS h} \in \Perv({\Sh}_{X, U})
\end{align*}
be the perverse sheaf of vanishing cycles on ${\Sh}_{X, U}$
determined by its canonical $d$-critical structure
with a CY orientation data (see Corollary~\ref{thm:CYloc}). 
Let 
\begin{align*}
\phi_{J} \cneq 
\phi_{f_J}(\IC(\overline{J}_U \times_{U} E^{\vee}))
  \in \Perv(\mathrm{Obs}^{\ast}(\mathbb{L}_{{\Sh}_{S, U}}))
\end{align*}
be the perverse sheaf of vanishing cycles 
associated to the CY $d$-critical chart (\ref{dia:MR2}). 
\begin{lem}\label{lem:defphiM}
We can choose a CY orientation data on ${\Sh}_{X, U}$ such that 
 the following identity holds: 
\begin{align}\notag
\phi_{\sS h} = \eta_{\Sh}^{\ast} \phi_{J}[d]
\in \mathrm{Perv}({\Sh}_{X, U}). 
\end{align}
Here $\eta_{\Sh}$ is the quotient map (\ref{quot:M}), 
and $d$ is the relative dimension of $\eta_{\Sh}$
given by (\ref{rel:dim}).  
\end{lem}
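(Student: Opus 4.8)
The plan is to exploit the fact that the quotient map $\eta_{\Sh}$ in \eqref{quot:M} is smooth and that the $d$-critical structure $s_{\Sh}$ on ${\Sh}_{X,U}$ is, by the construction in Subsection~\ref{subsec:cone}, pulled back along $\eta_{\Sh}$ from $\Obs^{\ast}(\mathbb{L}_{{\Sh}_{S,U}})$; the whole lemma should then reduce to the compatibility of sheaves of vanishing cycles with smooth pullback. First I would record that, by the exact sequence \eqref{ex:U}, the map $\eta_{\Sh}$ is the total space of the vector bundle $\vV=\rho_{\Sh}^{\ast}R^1\pi_{H\ast}\oO_{\cC_{\beta}}(\cC_{\beta})^{\vee}$ over $\Obs^{\ast}(\mathbb{L}_{{\Sh}_{S,U}})$, hence is smooth of relative dimension $d$ with $\Omega_{\eta_{\Sh}}\cong\eta_{\Sh}^{\ast}\vV^{\vee}$. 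On the base, the chart \eqref{dia:MR2} presents $\Obs^{\ast}(\mathbb{L}_{{\Sh}_{S,U}})$ as $\Crit(f_J)\subset W\cneq\overline{J}_U\times_U E^{\vee}$; after shrinking $U$ so that $E$ (hence $\det E$) is trivial and $K_{\overline{J}_U}\cong\oO$ as in the preceding subsection, one computes $K_W\cong\oO$, so $(W,f_J)$ is a genuine CY chart and $\phi_J=\phi_{f_J}(\IC(W))$ is its vanishing-cycle sheaf for the trivial orientation.

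Next I would lift this chart. Extending $\vV$ to a vector bundle $\widetilde{\vV}$ on $W$ and setting $\widetilde{W}\cneq\mathrm{Tot}_W(\widetilde{\vV})$ with projection $p\colon\widetilde{W}\to W$ and $\widetilde{f}\cneq f_J\circ p$, the fibrewise constancy of $\widetilde{f}$ gives $\Crit(\widetilde{f})=p^{-1}(\Crit(f_J))$, which by the cone description of Subsection~\ref{subsec:cone} is canonically ${\Sh}_{X,U}$, with $p|_{\Crit}=\eta_{\Sh}$. Thus $(\widetilde{W},\widetilde{f})$ is a $d$-critical chart realizing $s_{\Sh}$, and $p$ is smooth of relative dimension $d$ with $\widetilde{f}=f_J\circ p$. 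Since $\IC(\widetilde{W})=p^{\ast}\IC(W)[d]$, the compatibility of the vanishing cycle functor with smooth pullback (see, e.g., \cite{Dimbook}) then yields
\[
\phi_{\widetilde{f}}(\IC(\widetilde{W}))\cong\eta_{\Sh}^{\ast}\phi_{f_J}(\IC(W))[d]=\eta_{\Sh}^{\ast}\phi_J[d].
\]

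Finally I would carry out the orientation bookkeeping, which I expect to be the main obstacle. From $K_W\cong\oO$ one gets $K_{\widetilde{W}}=p^{\ast}K_W\otimes\det\Omega_{\widetilde{W}/W}\cong\eta_{\Sh}^{\ast}\det\vV^{\vee}$ on ${\Sh}_{X,U}$, so $K_{{\Sh}_X}^{\mathrm{vir}}\cong(\eta_{\Sh}^{\ast}\det\vV^{\vee})^{\otimes2}$ and the line bundle $\eta_{\Sh}^{\ast}\det\vV^{\vee}$ is a canonical square root, which I would take as the orientation data $(K_{{\Sh}_X}^{\mathrm{vir}})^{1/2}$. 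Because $\det\vV^{\vee}$ is pulled back from $H_{\beta}$ and the fibres of the Hilbert--Chow map $\pi_{\Sh}$ lie over points of $H_{\beta}$, this square root is trivial along those fibres, i.e.\ it is a CY orientation in the sense of Definition~\ref{loc:cy}, compatibly with the CY fibration structure of Corollary~\ref{thm:CYloc}. With this choice the comparison isomorphism \eqref{isom:KK} between $((K^{\mathrm{vir}})^{1/2})^{\otimes2}$ and $K_{\widetilde{W}}^{\otimes2}$ is the identity, so the twisting local system $\lL_{\xi}$ of Theorem~\ref{thm:BDJS} is trivial, and hence $\phi_{\sS h}|_{{\Sh}_{X,U}}=\phi_{\widetilde{f}}(\IC(\widetilde{W}))=\eta_{\Sh}^{\ast}\phi_J[d]$. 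The delicate points are verifying that this induced orientation genuinely satisfies the CY (fibrewise-triviality) condition and that the trivialization of $\lL_{\xi}$ is global rather than merely chart-local, so that the gluing prescription of Theorem~\ref{thm:BDJS} produces the asserted equality on all of ${\Sh}_{X,U}$.
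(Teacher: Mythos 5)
Your underlying strategy---pull the $d$-critical structure, orientation, and vanishing-cycle sheaf back along the smooth quotient $\eta_{\Sh}$, then check that the induced square root is pulled back from $H_{\beta}$ and hence CY---is exactly the mechanism behind the paper's (one-line) proof, and your final orientation bookkeeping is correct. However, your implementation through a single global ambient chart $\widetilde{W}=\mathrm{Tot}_W(\widetilde{\vV})$ rests on two claims that are unjustified, and each is a genuine obstruction. First, you assert that $\eta_{\Sh}$ is \emph{the total space} of the vector bundle $\vV$ over $\Obs^{\ast}(\mathbb{L}_{{\Sh}_{S,U}})$. What the exact sequence (\ref{ex:U}) and the construction (\ref{quot:M}) give is only that ${\Sh}_{X,U}$ is a \emph{torsor} under the pullback of the bundle $O_H$ of (\ref{def:O}), i.e.\ the quotient map of a free fiberwise action. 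Trivializing this torsor requires a global section of $\eta_{\Sh}$, which amounts to splitting the triangle (\ref{nat:U}) of Proposition~\ref{prop:natU}---fiberwise, to choosing a retraction $\Hom(F,F\otimes K_S)\to H^0(K_S|_C)$ varying algebraically with $F$---and no such splitting exists or is constructed in the paper; shrinking $U$ does not help, since the obstruction is cohomological along the compactified-Jacobian and cone directions, not along $U$. Without this identification, $\Crit(\widetilde{f})=p^{-1}(\Crit(f_J))$ is not ${\Sh}_{X,U}$, so the chart you build computes the wrong space. Second, even granting triviality, you need $\vV$ to extend to a vector bundle $\widetilde{\vV}$ on $W=\overline{J}_U\times_U E^{\vee}$ (or a neighborhood of $\Crit(f_J)$). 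Since $\vV=\rho_{\Sh}^{\ast}O_H$, this means extending $O_H=R^1\pi_{H\ast}\oO_{\cC_{\beta}}(\cC_{\beta})^{\vee}$ from $H_U$ to $U$; but $\oO_{\cC_{\beta}}(\cC_{\beta})$ has no natural extension to the versal curve $\cC_T$, and extending a vector bundle across a closed subscheme is obstructed in general, so this step also fails as stated.

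Both difficulties vanish if you forgo the explicit chart and argue as the paper does: by Subsection~\ref{subsec:cone} one has $s_{\Sh}=\eta_{\Sh}^{\ast}s$ for the canonical $d$-critical structure $s$ on $\Obs^{\ast}(\mathbb{L}_{{\Sh}_{S,U}})$, whence $K^{\rm{vir}}_{{\Sh}_{X,U}}\cong \eta_{\Sh}^{\ast}K^{\rm{vir}}\otimes(\det\Omega_{\eta_{\Sh}})^{\otimes 2}$, and the perverse sheaves of Theorem~\ref{thm:BDJS} are compatible with smooth pullback: for a smooth morphism of relative dimension $d$ matching the $d$-critical structures and with orientations related by twisting by $\det\Omega_{\eta_{\Sh}}$, the vanishing-cycle sheaf pulls back with a shift $[d]$. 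This smooth-descent statement (the key input of~\cite{BBBJ}, where it is used to define the sheaf on stacks via atlases) applied to the trivial orientation on the CY chart (\ref{dia:MR2}) gives $\phi_{\sS h}=\eta_{\Sh}^{\ast}\phi_J[d]$ globally, with orientation $\det\Omega_{\eta_{\Sh}}$; since $\Omega_{\eta_{\Sh}}$ is pulled back from $H_{\beta}$, this orientation is CY, exactly as in your last paragraph. This also settles the chart-locality worry you flag at the end: the isomorphism is global because it comes from functoriality, not from gluing chart-by-chart identifications.
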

\begin{proof}
The lemma follows 
since the $d$-critical structure and the virtual 
canonical line bundle on ${\Sh}_{X, U}$ are pulled
back from $\mathrm{Obs}^{\ast}(\mathbb{L}_{{\Sh}_{S, U}})$.
\end{proof}  

Let us take a one-cycle $\gamma$ on $X$
 (see the diagram (\ref{HC:M}))
\begin{align}\label{take:gamma}
\gamma \in p_{\ast}^{-1}(c) \subset
\widehat{H}_U \subset
 \widehat{H}_{\beta}
\subset \Chow_{\beta}(X)
\end{align}
where $c=[C] \in H_{\beta}$ is 
the point (\ref{fix:c}). 
By Definition~\ref{def:locgv}, 
the local GV invariant
$n_{g, \gamma}^{\rm{loc}} \in \mathbb{Z}$ is given by 
\begin{align}\notag
\sum_{i\in \mathbb{Z}} \chi(\pH^i(\dR \pi_{{\Sh}\ast} 
\phi_{\sS h})|_{\gamma})
y^i =\sum_{g\ge 0} 
n_{g, \gamma}^{\rm{loc}}(y^{\frac{1}{2}}+y^{-\frac{1}{2}})^{2g}. 
\end{align}
The following lemma obviously follows from 
the diagram (\ref{Car:closed}) 
together with 
Lemma~\ref{lem:defphiM}. 
\begin{lem}\label{lem:relphi}
Let $\overline{\gamma} \in E^{\vee}$ be 
the image of $\gamma$ under the map 
$\widehat{H}_U \to E^{\vee}$ 
in the diagram (\ref{Car:closed}). 
Then 
we have the following identity (see the diagram (\ref{dia:MR2})): 
\begin{align}\notag
\sum_{i\in \mathbb{Z}}
\chi(\pH^i(\dR \pi_{J\ast} \phi_{J})|_{\overline{\gamma}})y^i 
=(-1)^{d}
\sum_{g\ge 0} n_{g, \gamma}^{\rm{loc}}
(y^{\frac{1}{2}}+y^{-\frac{1}{2}})^{2g}. 
\end{align}
\end{lem}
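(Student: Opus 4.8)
The plan is to push the identity of Lemma~\ref{lem:defphiM} forward along the two Hilbert--Chow maps and to keep careful track of the shift $[d]$. Introduce the projection $\overline{\pi} \colon \Obs^{\ast}(\mathbb{L}_{{\Sh}_{S,U}})^{\rm{red}} \to \wH_U/O_{H,U}$ appearing as the middle vertical arrow of (\ref{Car:closed}), together with the embedding $j \colon \wH_U/O_{H,U} \hookrightarrow E^{\vee}$ onto its image (which is closed, so $j$ is a closed embedding up to the reduced structure that $\mathbb{Q}$-constructible sheaves see). Since the vanishing cycle sheaf $\phi_J$ is supported on $\{df_J=0\}=\Obs^{\ast}(\mathbb{L}_{{\Sh}_{S,U}})$, the projection $\pi_J \colon \overline{J}_U \times_U E^{\vee} \to E^{\vee}$ restricted to this support factors, by commutativity of the right square of (\ref{Car:closed}), as $j \circ \overline{\pi}$; hence $\dR\pi_{J\ast}\phi_J \cong j_{\ast}\dR\overline{\pi}_{\ast}\phi_J$.

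First I would apply proper base change to the left Cartesian square of (\ref{Car:closed}). The map $\pi_{\Sh}$ is the Hilbert--Chow morphism, hence projective and so proper, while $\eta$ is the quotient by the free (fibrewise translation) $O_{H,U}$-action and is therefore faithfully flat; properness then descends to show that $\overline{\pi}$ is proper. Combining base change with the identity $\phi_{\sS h}=\eta_{\Sh}^{\ast}\phi_J[d]$ of Lemma~\ref{lem:defphiM}, I obtain
\begin{align*}
\dR\pi_{\Sh\ast}\phi_{\sS h}
=\dR\pi_{\Sh\ast}\eta_{\Sh}^{\ast}\phi_J[d]
\cong \eta^{\ast}(\dR\overline{\pi}_{\ast}\phi_J)[d].
\end{align*}

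Write $A\cneq \dR\overline{\pi}_{\ast}\phi_J$. Since $\eta$ is smooth of relative dimension $d$ (the relative dimension of $\eta_{\Sh}$ recorded in (\ref{rel:dim})), the functor $\eta^{\ast}[d]$ is t-exact for the perverse t-structure, so $\pH^i(\eta^{\ast}A[d])=\eta^{\ast}\pH^i(A)[d]$; likewise $j_{\ast}$ is t-exact, giving $\pH^i(j_{\ast}A)=j_{\ast}\pH^i(A)$. Setting $\overline{\gamma}=j(\eta(\gamma))$ and restricting to points, the stalk of $\eta^{\ast}\pH^i(A)$ at $\gamma$ is the stalk of $\pH^i(A)$ at $\eta(\gamma)$, which in turn is the stalk of $j_{\ast}\pH^i(A)$ at $\overline{\gamma}$. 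The only discrepancy is the shift $[d]$, which on Euler characteristics contributes the sign $(-1)^d$:
\begin{align*}
\chi\bigl(\pH^i(\dR\pi_{\Sh\ast}\phi_{\sS h})|_{\gamma}\bigr)
=(-1)^d\,\chi\bigl(\pH^i(\dR\pi_{J\ast}\phi_J)|_{\overline{\gamma}}\bigr).
\end{align*}

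Finally I would multiply by $y^i$, sum over $i\in\mathbb{Z}$, and invoke the defining identity for $n_{g,\gamma}^{\rm{loc}}$ recalled immediately before the statement: the left-hand side becomes $\sum_{g\ge 0}n_{g,\gamma}^{\rm{loc}}(y^{\frac{1}{2}}+y^{-\frac{1}{2}})^{2g}$, and the equality rearranges into the claim. The argument is essentially formal once (\ref{Car:closed}) is in place; I expect the only genuinely delicate points to be the justification that proper base change applies---namely the properness of $\overline{\pi}$ by flat descent along the surjection $\eta$---and the interchange of ``take $\pH^i$'' with ``restrict to a point'', which is legitimate precisely because $\eta^{\ast}[d]$ and $j_{\ast}$ are t-exact. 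It is this t-exactness that lets the perverse cohomology commute with the stalk functors and so pins down the sign $(-1)^d$ unambiguously.
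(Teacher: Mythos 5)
Your proof is correct and is essentially the paper's own argument: the paper disposes of this lemma in one line, asserting that it "obviously follows from the diagram (\ref{Car:closed}) together with Lemma~\ref{lem:defphiM}", and your proposal supplies exactly the details behind that assertion (base change along the left Cartesian square of (\ref{Car:closed}), perverse t-exactness of $\eta^{\ast}[d]$ and of pushforward along the bijection onto the image in $E^{\vee}$, and the sign $(-1)^{d}$ coming from the shift). The bookkeeping of the perverse degree $i$ and the final rearrangement against the defining identity of $n_{g,\gamma}^{\rm{loc}}$ are also handled correctly.
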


\subsection{Stable pairs on local surfaces}
Let $P_{n}(S, \beta)$ be the moduli space of 
stable pairs $(F, s)$ on $S$ with 
$[F]=\beta$, $\chi(F)=n$. 
For $n\ge 0$, we set
\begin{align*}
P_S \subset P_{n+1-g}(S, \beta)
\end{align*}
to be the open subscheme of 
stable pairs $(F, s)$ such that the 
fundamental cycle of $F$ is irreducible. 
Here $g$ is the arithmetic genus of 
curves in $H_{\beta}$
given by (\ref{genus}). 
Then we have the Hilbert-Chow map 
$\rho_P \colon P_S \to H_{\beta}$, 
and the Cartesian square by~\cite[Appendix]{PT3}
\begin{align}\label{dia:PS}
\xymatrix{
P_{S} \ar@<-0.3ex>@{^{(}->}[r] \ar[d]_{\rho_P}\ar@{}[dr]|\square & \cC_T^{[n]} 
\ar[d]^{\pi^{[n]}} \\
H_{\beta} \ar@<-0.3ex>@{^{(}->}[r]_{i} & T. 
}
\end{align}
Since $T$ is a base of a locally versal family, 
the relative Hilbert scheme 
$\cC_T^{[n]}$ is non-singular by Theorem~\ref{thm:versal}. 
As $\pi^{[n]}$ is flat (see~\cite[Lemma~2.6]{MY}), 
the diagram (\ref{dia:PS}) implies that 
$P_S$ has only complete 
intersection singularities. 
Let 
\begin{align*}
\mathbb{I}_{S} \cneq (\oO_{S\times P_S} \to \mathbb{F})
\in D^b(S \times P_S)
\end{align*}
be a universal pair on $S \times P_S$. 
By~\cite{KoTho}, we have the perfect obstruction theory
\begin{align*}
\uU_P^{\bullet} \cneq 
\dR \hH om_{p_P}(\mathbb{I}_S, \mathbb{F})^{\vee}
\to \mathbb{L}_{P_S}.
\end{align*}
Here $p_P \colon S \times P_S \to P_S$ is the projection. 
\begin{lem}\label{lem:Ptri}
There is a natural distinguished triangle
\begin{align}\label{tri:Pair}
\rho_P^{\ast}
R^1 \pi_{H\ast} \oO_{\cC_{\beta}}(\cC_{\beta})^{\vee}[1] \to 
\uU_P^{\bullet} \to \mathbb{L}_{P_S}. 
\end{align}
\end{lem}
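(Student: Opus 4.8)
The plan is to prove this as the stable-pairs analog of Proposition~\ref{prop:natU}, reproducing that argument with the universal sheaf $\mathbb{F}$ replaced by the universal pair $\mathbb{I}_S=(\oO_{S\times P_S}\to \mathbb{F})$ and $\uU_{\Sh}^{\bullet}$ replaced by $\uU_P^{\bullet}$. First I would construct a natural morphism $\rho_P^{\ast}\uU_H^{\bullet}\to \uU_P^{\bullet}$, equivalently (after dualizing) a morphism $\dR\hH om_{p_P}(\mathbb{I}_S,\mathbb{F})\to \rho_P^{\ast}\dR\pi_{H\ast}\oO_{\cC_{\beta}}(\cC_{\beta})$. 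The construction reuses the trace-type morphism of Proposition~\ref{prop:natU}: pushing forward $\oO_{\cC_{\beta}'}\otimes p_S^{\ast}K_S\to \dR\hH om_{S\times P_S}(\mathbb{F},\mathbb{F})\otimes p_S^{\ast}K_S$ by $\dR p_{P\ast}$ and applying Grothendieck duality gives a morphism $\dR\hH om_{p_P}(\mathbb{F},\mathbb{F})[1]\to \rho_P^{\ast}\dR\pi_{H\ast}\oO_{\cC_{\beta}}(\cC_{\beta})$. Applying $\dR\hH om_{p_P}(-,\mathbb{F})$ to the exact triangle $\mathbb{F}[-1]\to \mathbb{I}_S\to \oO_{S\times P_S}$ yields a morphism $\dR\hH om_{p_P}(\mathbb{I}_S,\mathbb{F})\to \dR\hH om_{p_P}(\mathbb{F},\mathbb{F})[1]$, and composing with the trace morphism produces the required map; the contribution $\dR p_{P\ast}\mathbb{F}$ of the section drops out in the relevant degrees. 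Dualizing and truncating by $\tau_{\ge 1}$ gives $\rho_P^{\ast}\uU_H^{\bullet}\to \uU_P^{\bullet}$.

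Next, let $\gG_P^{\bullet}$ be the cone of this morphism. As in Proposition~\ref{prop:natU}, I would produce a morphism $\gG_P^{\bullet}\to \mathbb{L}_{P_S/H_{\beta}}$ fitting into a morphism of distinguished triangles exactly parallel to (\ref{tri:perf}), with top row $\rho_P^{\ast}\uU_H^{\bullet}\to \uU_P^{\bullet}\to \gG_P^{\bullet}$ and bottom row $\rho_P^{\ast}\mathbb{L}_{H_{\beta}}\to \mathbb{L}_{P_S}\to \mathbb{L}_{P_S/H_{\beta}}$. The statement then reduces to showing that $\gG_P^{\bullet}\to \mathbb{L}_{P_S/H_{\beta}}$ is a quasi-isomorphism, after which taking cones of the vertical arrows yields (\ref{tri:Pair}): the relative cone vanishes, and the cone of $\rho_P^{\ast}\uU_H^{\bullet}\to \rho_P^{\ast}\mathbb{L}_{H_{\beta}}$ is $\rho_P^{\ast}R^1\pi_{H\ast}\oO_{\cC_{\beta}}(\cC_{\beta})^{\vee}[2]$ since $H_{\beta}$ is smooth.

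To prove the quasi-isomorphism I would argue exactly as in Proposition~\ref{prop:natU}: both sides are relative perfect obstruction theories for $\rho_P$, so it suffices to check that $\gG_P^{\bullet}$ is concentrated in $[-1,0]$ and that the two virtual dimensions agree. Using the Cartesian square (\ref{dia:PS}) together with the smoothness of $\cC_T^{[n]}$ from Theorem~\ref{thm:versal}, one has $\mathbb{L}_{P_S/H_{\beta}}=(\pi^{[n]\ast}\Omega_T\to \Omega_{\cC_T^{[n]}})|_{P_S}$, so the identity is a relative perfect obstruction theory of virtual dimension equal to the relative dimension $n$ of the flat morphism $\pi^{[n]}$ (flatness by~\cite[Lemma~2.6]{MY}); a Riemann-Roch computation gives $\gG_P^{\bullet}$ the same virtual dimension $n$. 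The concentration is the step I expect to be the main obstacle. At a point $p=[(F,s)]\in P_S$ with support $C$, the top triangle exhibits $\hH^{-2}(\gG_P^{\bullet}|_p)$ as the kernel of a natural map $H^1(\oO_C(C))^{\vee}\to \Ext_S^{1}(\mathbb{I}_S,\mathbb{F})^{\vee}|_p$, and one must prove this map injective. By Serre duality this reduces, as in the sheaf case, to the injectivity of a map out of $H^0(K_S|_C)$; here the faithfulness of the $\oO_C$-module structure on $F$ (purity of $F$) has to be combined with the hypothesis that the cokernel of the universal section $s$ is zero-dimensional, which is the new ingredient relative to Proposition~\ref{prop:natU}. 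Once this injectivity is in hand, the diagram chasing of Proposition~\ref{prop:natU} applies verbatim to give $\gG_P^{\bullet}\simto \mathbb{L}_{P_S/H_{\beta}}$, and taking cones produces the triangle (\ref{tri:Pair}).
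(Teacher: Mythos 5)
Your proposal is correct and follows essentially the same route as the paper's own proof, which is precisely this outline: obtain the natural morphism $\rho_P^{\ast}\uU_H^{\bullet}\to\uU_P^{\bullet}$ (the paper simply cites \cite[Appendix]{KoTho} for this and for the compatibility diagram, rather than building the trace map by hand), form the cone $\gG_P^{\bullet}$, show $\gG_P^{\bullet}\to\mathbb{L}_{P_S/H_{\beta}}$ is a quasi-isomorphism exactly as in Proposition~\ref{prop:natU} (concentration in $[-1,0]$ plus equality of relative virtual dimensions, both equal to $n$), and take cones. One small correction: since $\uU_P^{\bullet}=\dR\hH om_{p_P}(\mathbb{I}_S,\mathbb{F})^{\vee}$ is defined with no truncation (unlike $\uU_{\Sh}^{\bullet}$ in the sheaf case), you should not apply $\tau_{\ge 1}$ after dualizing, as that would discard the degree-zero (tangent) part of the pair obstruction theory.
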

\begin{proof}
The proof is similar to 
Proposition~\ref{prop:natU}, 
so we just give an outline. 
By~\cite[Appendix]{KoTho}, there is a natural morphism 
$\rho_P^{\ast}\uU_{H}^{\bullet} \to \uU_{P}^{\bullet}$
 such that its 
cone $\gG_P^{\bullet}$ fits into a 
distinguished triangle of perfect obstruction theories:
\begin{align}
\label{tri:perf2}
\xymatrix{
\rho_P^{\ast}\uU_H^{\bullet} \ar[r]\ar[d] & \uU_P^{\bullet} \ar[r] \ar[d] &
\gG_P^{\bullet} \ar[d] \\
\rho_P^{\ast}\mathbb{L}_{H_{\beta}} \ar[r] & \mathbb{L}_{P_S} \ar[r] &
\mathbb{L}_{P_S/H_{\beta}}. 
}
\end{align}
Similarly to the proof of Proposition~\ref{prop:natU}, 
the morphism  
$\gG_P^{\bullet} \to \mathbb{L}_{P_S/H_{\beta}}$ is 
shown to be a quasi-isomorphism. 
Therefore taking the 
cones of (\ref{tri:perf2})
gives the desired distinguished triangle (\ref{tri:Pair}).
\end{proof}
Let $P_n(X, \beta)$
be the moduli space of stable 
pairs $(E, s)$ on $X=\mathrm{Tot}(K_S)$
such that $[p_{\ast}E]=\beta$
and $\chi(E)=n$. Let
\begin{align*}
P_X \subset P_{n+1-g}(X, \beta)
\end{align*}
be the 
open subscheme of stable pairs $(E, s)$
such that the fundamental cycle of $p_{\ast}E$ is irreducible. 
We first observe the following
lemma, which is an analogue of Lemma~\ref{lem:Mobs}. 
\begin{lem}\label{lem:Pobs}
We have the following commutative diagram: 
\begin{align}\label{dia:PObs}
\xymatrix{
P_X \ar[rr]^{\cong} \ar[dr]_{p_{\ast}}
  &  & \mathrm{Obs}^{\ast}(\uU_P^{\bullet}) \ar[ld] \\
& P_S. &
}
\end{align}
Here the top morphism is a
canonical isomorphism, 
$p_{\ast}$ is induced by the projection 
$p \colon X \to S$ and 
the right morphism is the natural morphism 
defined from the cone structure.
\end{lem}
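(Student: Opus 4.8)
The plan is to mirror the proof of Lemma~\ref{lem:Mobs}, replacing the universal sheaf by the universal pair $\mathbb{I}_S = (\oO_{S \times P_S} \to \mathbb{F})$ and the obstruction theory $\uU_{\Sh}^{\bullet}$ by $\uU_P^{\bullet}$. First I would construct the morphism $p_{\ast} \colon P_X \to P_S$ sending a stable pair $(E, s)$ on $X$ to $(p_{\ast}E, (p_{\ast}s)|_{\oO_S})$. The first point to verify — which has no analogue in the sheaf case — is that this lands in $P_S$, i.e. that the pushed-forward section still has at most zero-dimensional cokernel. This is exactly where the irreducibility hypothesis enters: since the fundamental cycle of $p_{\ast}E$ is the integral curve $C$, the sheaf $p_{\ast}E=F$ has generic rank one along $C$, so the support $C' \subset X$ of $E$ maps finitely and birationally onto $C$. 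Consequently the $\oO_S$-submodule generated by $s(1)$ already agrees with $F$ away from finitely many points, and $(p_{\ast}E, (p_{\ast}s)|_{\oO_S})$ is a stable pair on $S$ supported on $C$.

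Next I would identify the two maps in the triangle (\ref{dia:PObs}) fiberwise over a point $(F, t) \in P_S$, just as in Lemma~\ref{lem:Mobs}. On one side, a point of $p_{\ast}^{-1}(F,t)$ is an $\oO_X$-module structure on $F$, that is an element of $\Hom(F, F\otimes K_S) = \Ext_S^2(F,F)^{\vee}$, the section being forced to equal $s(1) = t(1)$; the argument above shows that every such structure yields a stable pair on $X$, so the fiber is canonically $\Hom(F, F\otimes K_S)$. On the other side, the fiber of $\mathrm{Obs}^{\ast}(\uU_P^{\bullet}) \to P_S$ is the obstruction space $\hH^1(\uU_P^{\bullet\vee})^{\vee}$. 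To match the two I would dualize the distinguished triangle (\ref{tri:Pair}) of Lemma~\ref{lem:Ptri} and feed it into the general cone formalism of Subsection~\ref{subsec:cone}: this exhibits $\hH^1(\uU_P^{\bullet\vee})$ as an extension of $\rho_P^{\ast}R^1 \pi_{H\ast}\oO_{\cC_{\beta}}(\cC_{\beta})$ by $\hH^1(\mathbb{L}_{P_S}^{\vee})$, in perfect parallel with (\ref{ex:U}) for sheaves. This reduces the pair computation to the lci structure of $P_S$ coming from (\ref{dia:PS}) together with the ``genus'' bundle, after which Serre duality on $S$ identifies the obstruction space with $\Hom(F, F\otimes K_S)$ compatibly with the cone structure.

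Having matched the fibers, I would upgrade the pointwise bijection to a canonical isomorphism of schemes over $P_S$ by running the argument in families. The universal pair on $S \times P_S$ equips $\mathbb{F}$ with its tautological obstruction data, and the construction of~\cite{JiaTho} realizes $\mathrm{Obs}^{\ast}(\uU_P^{\bullet})$ as the relative spectrum whose $T$-points are exactly the $\oO_X$-module structures on $T$-flat families of stable pairs in $P_S$ — which is precisely the functor represented by $P_X$. This yields both the canonical isomorphism and the commutativity of (\ref{dia:PObs}).

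The main obstacle I anticipate is the bookkeeping of the section. In Lemma~\ref{lem:Mobs} the fiber identification was immediate because $\uU_{\Sh}^{\bullet}$ is built from the self-dual complex $\dR\hH om(\mathbb{F}, \mathbb{F})$, so that Serre duality converts $\Ext_S^2(F,F)$ directly into $\Hom(F, F\otimes K_S)$. Here the obstruction theory comes from the non-symmetric complex $\dR\hH om_{p_P}(\mathbb{I}_S, \mathbb{F})$, and the matching of $\hH^1(\uU_P^{\bullet\vee})^{\vee}$ with the module-structure space is no longer visible from Serre duality alone. Routing it through Lemma~\ref{lem:Ptri} — which isolates the deformations of $C$ in the bundle $R^1\pi_{H\ast}\oO_{\cC_{\beta}}(\cC_{\beta})$ and thereby reduces the pair case to the already-established sheaf case — is what makes this clean; the delicate step is checking that the reduction is compatible with the section $s$, and in particular that no spurious directions from deforming $s$ contribute, which is guaranteed by the degree-one spectral cover established in the first step.
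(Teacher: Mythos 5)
Your first step---the construction of $p_{\ast} \colon P_X \to P_S$ and the identification of its fiber at $(F,s)$ with $\Hom(F, F\otimes K_S)$, the section extending automatically by adjunction---is correct and agrees with the paper's argument; the paper obtains stability of $(p_{\ast}E, p_{\ast}s)$ even more directly from the purity of $p_{\ast}E$, the irreducibility of its fundamental cycle, and the non-vanishing of $p_{\ast}s$, without needing the spectral-cover picture.

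The genuine gap is in your second step, the identification of the fiber of $\mathrm{Obs}^{\ast}(\uU_P^{\bullet}) \to P_S$ at $(F,s)$. That fiber is $\Ext_S^1(I,F)^{\vee}$ with $I=(\oO_S \to F)$, and the whole content of the lemma is a canonical isomorphism $\Ext_S^1(I,F) \cong \Ext_S^2(F,F)$, after which Serre duality concludes exactly as in Lemma~\ref{lem:Mobs}. The paper produces this from the long exact sequence $\Ext_S^1(F,F) \to H^1(F) \to \Ext_S^1(I,F) \to \Ext_S^2(F,F) \to 0$ attached to the triangle $I \to \oO_S \to F$, together with the surjectivity of $\Ext_S^1(F,F) \to H^1(F)$ proved in \cite[Proposition~C.2]{PT3}; that surjectivity is precisely the statement that "no spurious directions from deforming $s$ contribute." Your routing through Lemma~\ref{lem:Ptri} cannot substitute for this: the triangle (\ref{tri:Pair}) only exhibits $\hH^1(\uU_P^{\bullet\vee})$ as an extension of $\rho_P^{\ast}R^1\pi_{H\ast}\oO_{\cC_{\beta}}(\cC_{\beta})$ by the intrinsic sheaf $\hH^1(\mathbb{L}_{P_S}^{\vee})$, and knowing that two vector spaces sit in extensions with matching outer terms gives no canonical isomorphism between them---you would still need to identify $(\hH^1(\mathbb{L}_{P_S}^{\vee})|_{(F,s)})^{\vee}$ with $\Hom(F,F\otimes K_S)/H^0(K_S|_{C})$ and match the extension classes, which is the same problem over again. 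Worse, the comparison your argument implicitly relies on, namely $P_X/O_H \cong \Obs^{\ast}(\mathbb{L}_{P_S})$, is Lemma~\ref{lem:relP} in the paper and is deduced there \emph{from} Lemma~\ref{lem:Pobs}, so running the implication in your direction is circular. Finally, your closing assertion that the compatibility is "guaranteed by the degree-one spectral cover established in the first step" is not a proof: the required surjectivity is a cohomological fact about the pair $(F,s)$ on the surface and is a genuine external input from \cite{PT3}, not a consequence of the support being generically reduced.
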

\begin{proof}
For a stable pair $(E, s)$ giving a closed point of $P_X$, 
we have the morphism 
$p_{\ast}s \colon \oO_S \to p_{\ast}E$
by the adjunction. 
Note that $p_{\ast}E$ is a pure one dimensional sheaf on $S$. 
Since the fundamental cycle of $p_{\ast}E$ is irreducible and 
$p_{\ast}s$ is non-zero,  
the pair $(p_{\ast}E, p_{\ast}s)$
is a stable pair on $S$. 
This argument can be generalized to families of stable pairs, 
so we obtain the morphism 
$p_{\ast} \colon P_X \to P_S$. 
For a closed point $(F, s)$ in ${\Sh}_S$, 
the fiber of $p_{\ast} \colon P_X \to P_S$ 
at $(F, s)$ consists of $\oO_X$-module structures of $F$, 
i.e. $\Hom(F, F\otimes K_S)$. 
Indeed if an $\oO_X$-module structure of $F$ is given, 
the morphism $s \colon \oO_S \to F$ 
is extended to an $\oO_X$-module homomorphism 
$\oO_X \to F$ by the adjunction. On the other hand, 
the fiber of the right morphism in (\ref{dia:PObs})
at $(F, s)$ consists of $\Ext_S^1(I, F)^{\vee}$, 
where $I=(\oO_S \stackrel{s}{\to} F)$
is the two term complex. 
We have the exact sequence
\begin{align*}
\Ext_S^1(F, F) \to H^1(F) \to \Ext_S^1(I, F) \to \Ext_S^2(F, F) \to 0.
\end{align*}
In~\cite[Proposition~C.2]{PT3},
the morphism $\Ext_S^1(F, F) \to H^1(F)$ is 
shown to be surjective. 
Hence we have the isomorphism 
$\Ext_S^1(I, F) \stackrel{\cong}{\to} \Ext_S^2(F, F)$, 
and the fiber of the right morphism 
in (\ref{dia:PObs}) is identified with 
$\Ext_S^2(F, F)^{\vee} \cong \Hom(F, F\otimes K_S)$. 
Then similarly to Lemma~\ref{lem:Mobs}, we obtain the 
desired isomorphism in the diagram (\ref{dia:PObs}).   
\end{proof}
Similarly to (\ref{HC:M}), we have the commutative 
diagram 
\begin{align}\notag
\xymatrix{
P_X^{\rm{red}} \ar[r]^{p_{\ast}} \ar[d]_{\pi_P} \ar[rd] 
& P_S \ar[d]^{\rho_P} \\
\wH_{\beta} \ar[r]_{p_{\ast}} & H_{\beta}
}
\end{align}
where the vertical morphisms are Hilbert-Chow maps. 
Let us take $\gamma \in \widehat{H}_{\beta}$ 
as in (\ref{take:gamma}). 
The local stable pair invariant is given by
\begin{align*}
P_{n+1-g, \gamma}^{\rm{loc}}=\int_{\pi_P^{-1}(\gamma)} \nu_P \ de
\end{align*}
where $\nu_P$ is the Behrend function on $P_X$. 

We describe the local stable pair invariant
in terms of data (\ref{etale:U}). 
By the diagram (\ref{dia:PS}), 
we have the isomorphism
\begin{align*}
(\pi^{[n]\ast} E^{\vee}|_{P_S} \stackrel{ds}{\to}
\Omega_{\cC_T^{[n]}}|_{P_S}) \stackrel{\cong}{\to} \mathbb{L}_{P_S}. 
\end{align*}
Similarly to (\ref{dia:MR2}),
we have the commutative diagram
\begin{align}\notag
\xymatrix{
\mathrm{Obs}^{\ast}(\mathbb{L}_{P_S}) \stackrel{\cong}{\to}
\{df^{[n]}=0\}
 \ar@<-0.3ex>@{^{(}->}[r]
& \cC_T^{[n]} \times_T E^{\vee}
 \ar[rd]^{f^{[n]}} \ar[d]_{\pi^{[n]}} & \\
 & E^{\vee} \ar[r]^{g} & \mathbb{A}^1. 
}
\end{align}
Let $\phi^{[n]}$ be the 
associated perverse sheaf of vanishing cycles
\begin{align*}
\phi^{[n]} \cneq \phi_{f^{[n]}}
(\IC(\cC_T^{[n]} \times_T E^{\vee}))
\in \Perv(\mathrm{Obs}^{\ast}(\mathbb{L}_{P_S})). 
\end{align*}

\begin{lem}\label{lem:relP}
By taking $d$ and 
$\overline{\gamma} \in E^{\vee}$ as in Lemma~\ref{lem:relphi},
we have the identity: 
\begin{align}\label{id:pairs}
P_{n+1-g, \gamma}^{\rm{loc}}=
(-1)^{d}
\chi(\dR \pi^{[n]}_{\ast} \phi^{[n]}|_{\overline{\gamma}}). 
\end{align}
\end{lem}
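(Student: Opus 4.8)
The plan is to run the argument of Lemma~\ref{lem:relphi} for stable pairs, but only at the level of Behrend-weighted Euler characteristics: the local invariant $P_{n+1-g,\gamma}^{\rm loc}$ is an integral of the Behrend function rather than a perverse-filtration quantity, so no decomposition as in Lemma~\ref{lem:relphi} is needed. The two inputs are the identification $P_X \cong \mathrm{Obs}^{\ast}(\uU_P^{\bullet})$ of Lemma~\ref{lem:Pobs} and the distinguished triangle (\ref{tri:Pair}) of Lemma~\ref{lem:Ptri}. Together they yield a smooth surjection $\eta_P \colon P_X \twoheadrightarrow \mathrm{Obs}^{\ast}(\mathbb{L}_{P_S})$, the pair-analogue of the quotient map (\ref{quot:M}), along which I will compare Behrend functions.

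First I would dualize (\ref{tri:Pair}) and take cohomology sheaves to get the short exact sequence
\begin{align*}
0 \to \hH^1(\mathbb{L}_{P_S}^{\vee}) \to \hH^1(\uU_P^{\bullet\vee}) \to \rho_P^{\ast}R^1\pi_{H\ast}\oO_{\cC_{\beta}}(\cC_{\beta}) \to 0,
\end{align*}
exactly as in (\ref{ex:U}). This exhibits the group scheme $O_H$ of (\ref{def:O}) acting freely and fiberwise over $H_{\beta}$ on $\mathrm{Obs}^{\ast}(\uU_P^{\bullet}) = P_X$, with quotient $\eta_P$ smooth of relative dimension $d$; here $d$ is the rank $h^1(\oO_C(C)) = h^0(K_S|_C)$ of $O_H$, which is the same bundle entering the sheaf triangle (\ref{nat:U}), so the value of $d$ agrees with (\ref{rel:dim}). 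Adapting the computations of Lemma~\ref{lem:chow}, Lemma~\ref{lem:equiv} and Lemma~\ref{lem:quot} to the pair $(F,s)$ (using the pushforward pair $(p_{\ast}E, p_{\ast}s)$ of Lemma~\ref{lem:Pobs}), I would produce the pair-version of the Cartesian square (\ref{car:LM}) and hence, as in (\ref{Car:closed}), a smooth surjection
\begin{align*}
\pi_P^{-1}(\gamma) \longrightarrow (\pi^{[n]})^{-1}(\overline{\gamma}) \cap \mathrm{Obs}^{\ast}(\mathbb{L}_{P_S})
\end{align*}
with fibers the affine spaces $H^0(K_S|_C) \cong \mathbb{A}^d$, where $\overline{\gamma} \in E^{\vee}$ is the image of $\gamma$ as in Lemma~\ref{lem:relphi}.

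Next I would compare Behrend functions. Since the Behrend function depends only on the scheme structure~\cite{Beh}, $\nu_P$ is the Behrend function of the scheme $P_X = \mathrm{Obs}^{\ast}(\uU_P^{\bullet})$, while the pointwise Euler characteristic $\nu_{\phi^{[n]}}(q) = \chi(\phi^{[n]}|_q)$ is the Behrend function of $\mathrm{Obs}^{\ast}(\mathbb{L}_{P_S})$ (independence of orientation makes the CY chart (\ref{dia:MR2})-analogue harmless). The smooth-pullback rule for Behrend functions along $\eta_P$ then gives $\nu_P = (-1)^d\eta_P^{\ast}\nu_{\phi^{[n]}}$.

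Finally I would assemble the weighted Euler characteristics. Because $\pi^{[n]}$ is proper and $\phi^{[n]}$ is supported on $\mathrm{Obs}^{\ast}(\mathbb{L}_{P_S})$, proper base change together with $\chi(-)=\int\nu_{(-)}\,de$ gives
\begin{align*}
\chi(\dR\pi^{[n]}_{\ast}\phi^{[n]}|_{\overline{\gamma}}) = \int_{(\pi^{[n]})^{-1}(\overline{\gamma}) \cap \mathrm{Obs}^{\ast}(\mathbb{L}_{P_S})}\nu_{\phi^{[n]}}\,de.
\end{align*}
On the other hand, the fibration of the previous paragraph has fibers $\mathbb{A}^d$ of topological Euler characteristic $1$, so $\nu_P = (-1)^d\eta_P^{\ast}\nu_{\phi^{[n]}}$ yields
\begin{align*}
P_{n+1-g,\gamma}^{\rm loc} = \int_{\pi_P^{-1}(\gamma)}\nu_P\,de = (-1)^d\int_{(\pi^{[n]})^{-1}(\overline{\gamma}) \cap \mathrm{Obs}^{\ast}(\mathbb{L}_{P_S})}\nu_{\phi^{[n]}}\,de,
\end{align*}
and the two displays combine to (\ref{id:pairs}). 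I expect the main obstacle to be the second paragraph: checking that the pair-analogue of (\ref{car:LM}) is genuinely Cartesian with affine-space fibers, i.e. that the Hilbert--Chow fibration for pairs on $X$ is compatible with the dual obstruction cone over $P_S$. This requires adapting the surjectivity input $\Ext_S^1(F,F)\to H^1(F)$ of Lemma~\ref{lem:Pobs} and the normalization argument of Lemma~\ref{lem:equiv} to account for the section $s$.
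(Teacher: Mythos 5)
Your proposal follows the paper's own route: dualize (\ref{tri:Pair}), use Lemma~\ref{lem:Pobs} to exhibit $P_X=\Obs^{\ast}(\uU_P^{\bullet})$ as a free $O_H$-space over $H_{\beta}$ with smooth quotient $\eta_P\colon P_X\twoheadrightarrow\Obs^{\ast}(\mathbb{L}_{P_S})$ of relative dimension $d$, form the pairs analogue of (\ref{Car:closed}), and extract the sign from the smoothness of $\eta_P$. The paper phrases the last step by setting $\phi_P\cneq\eta_P^{\ast}\phi^{[n]}[d]$ and using $\int_{\pi_P^{-1}(\gamma)}\nu_P\,de=\chi(\dR\pi_{P\ast}\phi_P|_{\gamma})$; your identity $\nu_P=(-1)^{d}\eta_P^{\ast}\nu_{\phi^{[n]}}$ is the constructible-function version of the same statement, and working only with Behrend functions (which depend on the scheme structure alone) is a legitimate small simplification, since it removes the need to match the $d$-critical structure on $P_X$ with the pulled-back one, for which the paper appeals to Remark~\ref{rmk:compared}.

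There is, however, one false claim in your second paragraph: the map $\pi_P^{-1}(\gamma)\to(\pi^{[n]})^{-1}(\overline{\gamma})\cap\Obs^{\ast}(\mathbb{L}_{P_S})$ induced by $\eta_P$ is \emph{not} a smooth surjection with fibers $\mathbb{A}^{d}$; it is a bijection. The $O_H$-action does not preserve Hilbert--Chow fibers: acting by $a\in H^0(K_S|_{C})$ translates the support cycle inside $p_{\ast}^{-1}(c)\subset\wH_{\beta}$ (this is the content of Lemma~\ref{lem:equiv}, and is exactly why the bottom row of (\ref{Car:closed}) divides the Chow variety itself by $O_{H,U}$). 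Since the left square of (\ref{Car:closed}) --- and of its pairs analogue, diagram (\ref{Car:pair}) in the paper's proof --- is Cartesian over $\wH_U\to\wH_U/O_{H,U}$, the $\mathbb{A}^{d}$-directions are absorbed entirely into the Chow direction, and $\eta_P$ restricted to a single Hilbert--Chow fiber is injective, identifying $\pi_P^{-1}(\gamma)$ with $(\pi^{[n]})^{-1}(\overline{\gamma})\cap\Obs^{\ast}(\mathbb{L}_{P_S})$. A dimension count already rules out your version: for $S$ a K3 surface and $C\subset S$ smooth of genus $g$, one has $d=1$, $\eta_{\Sh}\colon{\Sh}_X\to{\Sh}_S=\Obs^{\ast}(\mathbb{L}_{{\Sh}_S})$ is an $\mathbb{A}^1$-bundle, and both $\pi_{\Sh}^{-1}(\gamma)$ (for any $\gamma\in p_{\ast}^{-1}(c)$) and $\rho_{\Sh}^{-1}(c)$ are $\mathrm{Pic}^1(C)$, of the same dimension $g$; the analogous statement for pairs has $\mathrm{Sym}^n(C)$ on both sides. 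The slip is numerically invisible --- pushing a constructible function down an $\mathbb{A}^{d}$-fibration multiplies integrals by $e(\mathbb{A}^{d})=1$, exactly as transporting it along a bijection does --- so your final display and hence (\ref{id:pairs}) come out correct; but the justification should be that the Cartesian square gives a bijection of fibers, with the entire factor $(-1)^{d}$ coming from the smooth-pullback rule for $\nu_P$, not from any fiberwise integration.
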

\begin{proof}
By taking the dual and the long exact sequence of 
cohomologies of (\ref{tri:Pair}), we obtain the exact 
sequence of sheaves
\begin{align*}
0 \to \hH^{-1}(\mathbb{L}_{P_S}^{\vee}) \to \hH^1(\uU_P^{\bullet \vee}) \to 
\rho_P^{\ast}R^1 \pi_{H\ast} \oO_{\cC_{\beta}}(\cC_{\beta}) \to 0. 
\end{align*}
By Lemma~\ref{lem:Pobs}, the 
 total space 
of the vector bundle (\ref{def:O})
acts on $P_X$ fiberwise over $H_{\beta}$ without fixed points. 
The quotient space is
\begin{align*}
P_X/O_H=\Obs^{\ast}(\mathbb{L}_{P_S}). 
\end{align*}
The canonical $d$-critical structure on 
the RHS as in Subsection~\ref{subsec:cone} is pulled-back to the $d$-critical 
structure on $P_X$ (also see Remark~\ref{rmk:compared}). 
Similarly to (\ref{Car:closed}), we have the commutative diagram
\begin{align}\label{Car:pair}
\xymatrix{
P_{X, U}^{\rm{red}} \ar[r]^(.4){\eta_P} \ar[d]_{\pi_P}
\ar@{}[dr]|\square & \Obs^{\ast}(\mathbb{L}_{P_{S, U}})^{\rm{red}} 
\ar@<-0.3ex>@{^{(}->}[r]
\ar[d]
 & \cC_{U}^{[n]} \times_U E^{\vee} \ar[d]^{\pi^{[n]}} 
\\
\wH_{U} \ar[r]^(.4){\eta}
 & \wH_{U}/O_{H, U} \ar[r] & E^{\vee}
} 
\end{align}
giving a $d$-critical chart of 
$\mathrm{Obs}^{\ast}(\mathbb{L}_{P_{S, U}})$. 
Here $-_{U}$ refers to the
pull-back by the open 
immersion (\ref{etale:U}), 
and the right horizontal morphisms 
are bijections onto their images. 
Let $\phi_{P}$ be the perverse sheaf on $P_{X, U}$ given by 
\begin{align*}
\phi_{P} \cneq \eta_P^{\ast} \phi^{[n]}[d]
\in \Perv(P_{X, U}). 
\end{align*}
By the definition of the Behrend function, 
we have 
\begin{align*}
\int_{\pi_P^{-1}(\gamma)} \nu_P \ de=
\chi(\dR \pi_{P\ast} \phi_{P} |_{\gamma}). 
\end{align*}
Therefore the identity (\ref{id:pairs}) follows from the 
commutative diagram (\ref{Car:pair}). 
\end{proof}

\subsection{Proof of Theorem~\ref{intro:thm1}}
By combining the arguments so far, 
Theorem~\ref{intro:thm1} now immediately follows:
\begin{thm}\label{thm:locsur}
Suppose that 
$C \subset S$ be an irreducible curve. 
Then for $\gamma \in \Chow_{\beta}(X)$
with $p_{\ast}\gamma=[C]$, 
we have the identity
\begin{align*}
\sum_{n \in \mathbb{Z}}
P_{n, \gamma}^{\rm{loc}} q^n=
\sum_{g\ge 0} n_{g, \gamma}^{\rm{loc}}
(q^{\frac{1}{2}}+q^{-\frac{1}{2}})^{2g-2}. 
\end{align*}
\end{thm}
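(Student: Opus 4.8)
The plan is to combine the two local comparison lemmas (Lemma~\ref{lem:relphi} and Lemma~\ref{lem:relP}) with the versal-deformation identity (Lemma~\ref{id:phi}), exactly following the template of Proposition~\ref{prop:irred2}. The key point already established is that both moduli spaces $\Sh_X$ and $P_X$ have been realized, locally over $\widehat{H}_\beta$ near a chosen cycle, as pullbacks (via the smooth surjections $\eta_{\Sh}$ and $\eta_P$ with common fiber $H^0(K_S|_C)$) of the dual obstruction cones $\Obs^\ast(\mathbb{L}_{\Sh_S})$ and $\Obs^\ast(\mathbb{L}_{P_S})$, which in turn embed as critical loci $\{df_J=0\}$ and $\{df^{[n]}=0\}$ inside $\overline{J}_U\times_U E^\vee$ and $\cC_U^{[n]}\times_U E^\vee$ over the common base $E^\vee$ with its function $g$.

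\emph{First} I would fix the one-cycle $\gamma\in p_\ast^{-1}(c)$ and let $\overline{\gamma}\in E^\vee$ be its image under $\widehat{H}_U\to E^\vee$ from diagram~(\ref{Car:closed}). The two lemmas then read:
\begin{align*}
\sum_{i\in\mathbb{Z}}\chi(\pH^i(\dR\pi_{J\ast}\phi_J)|_{\overline{\gamma}})y^i
&=(-1)^d\sum_{g\ge 0}n_{g,\gamma}^{\rm{loc}}(y^{\frac{1}{2}}+y^{-\frac{1}{2}})^{2g},\\
P_{n+1-g,\gamma}^{\rm{loc}}
&=(-1)^d\chi(\dR\pi^{[n]}_\ast\phi^{[n]}|_{\overline{\gamma}}).
\end{align*}
\emph{Next} I would apply Lemma~\ref{id:phi} — the vanishing-cycle version of the generalized Macdonald formula of~\cite{MY2, MS} — after replacing $T$ by an \'etale cover if needed, noting that the relevant functions $f_J$ and $f^{[n]}$ here factor through $g$ on $E^\vee$ precisely as in that lemma, with the base $T$ of Section~\ref{sec:PT/GV} replaced by $E^\vee$. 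Taking Euler characteristics of both sides of~(\ref{id:phi}) at the point $\overline{\gamma}\in E^\vee$ and substituting the two displayed identities, the common sign $(-1)^d$ cancels, yielding
\begin{align*}
\sum_{n\ge 0}P_{n+1-g,\gamma}^{\rm{loc}}q^{n+1-g}
=\frac{q}{(1+q)^2}\sum_{g'\ge 0}n_{g',\gamma}^{\rm{loc}}
(q^{\frac{1}{2}}+q^{-\frac{1}{2}})^{2g'}q^{\text{(shift)}}.
\end{align*}
A short bookkeeping step converts $\frac{q}{(1+q)^2}(q^{1/2}+q^{-1/2})^{2g'}=(q^{1/2}+q^{-1/2})^{2g'-2}$, which reassembles the right-hand side into the claimed form $\sum_{g'\ge 0}n_{g',\gamma}^{\rm{loc}}(q^{1/2}+q^{-1/2})^{2g'-2}$ and reindexes the left to $\sum_{n\in\mathbb{Z}}P_{n,\gamma}^{\rm{loc}}q^n$.

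\emph{The main obstacle} is not the final algebraic manipulation but ensuring that all the structural inputs genuinely align: that the CY orientation data chosen in Lemma~\ref{lem:defphiM} is used consistently so that $\phi_{\sS h}=\eta_{\Sh}^\ast\phi_J[d]$ and $\phi_P=\eta_P^\ast\phi^{[n]}[d]$ hold simultaneously over the \emph{same} $E^\vee$ and the \emph{same} base change to $\overline{\gamma}$, and that the smooth-pullback shifts $[d]$ produce matching signs $(-1)^d$ on both sides (which is why they cancel). All of this technical compatibility has in fact been arranged in the preceding subsections — the shared diagram~(\ref{dia:overview:C}), the common quotient $\widehat{H}_U/O_{H,U}\hookrightarrow E^\vee$, and the strict-CY chart of~(\ref{dia:MR2}) — so the proof is genuinely immediate once those are invoked. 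Consequently I would write the proof as a direct assembly: cite Lemma~\ref{lem:relphi}, Lemma~\ref{lem:relP}, and Lemma~\ref{id:phi}, substitute at $\overline{\gamma}$, and simplify, with the cancellation of $(-1)^d$ being the one point worth remarking explicitly.
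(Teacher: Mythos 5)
Your proposal is correct and follows essentially the same route as the paper: the paper's proof of Theorem~\ref{thm:locsur} is exactly "by Lemma~\ref{lem:relphi} and Lemma~\ref{lem:relP}, the result follows from the argument of Proposition~\ref{prop:irred2}," i.e.\ pull back the Macdonald-type identity (\ref{id:phi}) to the base $E^{\vee}$, take Euler characteristics at $\overline{\gamma}$, and cancel the common sign $(-1)^d$. Your additional bookkeeping, including the identity $\frac{q}{(1+q)^2}(q^{1/2}+q^{-1/2})^{2g}=(q^{1/2}+q^{-1/2})^{2g-2}$ and the remark that the two shifts $[d]$ produce matching signs, is exactly the content the paper leaves implicit.
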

\begin{proof}
By Lemma~\ref{lem:relphi}
and Lemma~\ref{lem:relP}, 
the result follows from the argument of Proposition~\ref{prop:irred2}. 
\end{proof}
By taking the integration over $\Chow_{\beta}(S)$, we 
also obtain the following: 
\begin{cor}
Conjecture~\ref{GV:conj1} holds for 
$X=\mathrm{Tot}(K_S)$ with irreducible 
curve class $\beta \in H_2(X, \mathbb{Z})=H_2(S, \mathbb{Z})$. 
\end{cor}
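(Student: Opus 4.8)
The plan is to deduce the global equality $n_{g,\beta}^{P}=n_{g,\beta}$ from the local statement of Theorem~\ref{thm:locsur} by integrating over the Chow variety, following the implication ``Conjecture~\ref{GV:conj2} $\Rightarrow$ Conjecture~\ref{GV:conj1}'' recorded after Conjecture~\ref{GV:conj2}. The starting observation is that since $\beta$ is an irreducible class it cannot be written as a sum of two nonzero effective classes; in particular $\beta$ is primitive, so every effective one-cycle $\gamma \in \Chow_{\beta}(X)$ satisfies $p_{\ast}\gamma=[C]$ for an irreducible curve $C \subset S$. Hence Theorem~\ref{thm:locsur} applies at every point of $\Chow_{\beta}(X)$, giving
\begin{align*}
\sum_{n \in \mathbb{Z}} P_{n, \gamma}^{\rm{loc}} q^{n}
= \sum_{g\ge 0} n_{g, \gamma}^{\rm{loc}}(q^{\frac{1}{2}}+q^{-\frac{1}{2}})^{2g-2}
\end{align*}
for all $\gamma$.

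First I would integrate this identity of constructible functions over $\Chow_{\beta}(X)$. Using $P_{n, \beta}=\int_{\Chow_{\beta}(X)} P_{n, -}^{\rm{loc}} \ de$, which follows by stratifying $P_n(X,\beta)$ by fundamental cycle, together with the formula $n_{g, \beta}=\int_{\Chow_{\beta}(X)} n_{g, -}^{\rm{loc}} \ de$ of Remark~\ref{def:global} and \eqref{int:id} (available here since Corollary~\ref{thm:CYloc} shows the Hilbert-Chow map is a CY fibration), the linearity of the weighted Euler characteristic yields
\begin{align*}
\sum_{n \in \mathbb{Z}} P_{n, \beta} q^{n}
= \sum_{g\ge 0} n_{g, \beta}(q^{\frac{1}{2}}+q^{-\frac{1}{2}})^{2g-2}.
\end{align*}
It then remains to match this with the defining relation \eqref{PT/GV:form} for $n_{g,\beta}^{P}$. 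This is where irreducibility does the real work: because $\beta$ is primitive and not a sum of effective classes, the coefficient of $t^{\beta}$ in the logarithm on the left of \eqref{PT/GV:form} receives no contribution from products $t^{\beta_1}t^{\beta_2}$, while the right-hand side receives a contribution only from $k=1$. The bookkeeping therefore collapses to the single genus-graded identity
\begin{align*}
\sum_{n \in \mathbb{Z}} P_{n, \beta}(-q)^{n}
= \sum_{g} n_{g, \beta}^{P}(-1)^{g-1}(q^{\frac{1}{2}}-q^{-\frac{1}{2}})^{2g-2}.
\end{align*}

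Finally I would reconcile the two normalizations. Substituting $q \mapsto -q$ in the integrated identity and using $(-q)^{\pm 1/2}=\pm i\, q^{\pm 1/2}$, so that $((-q)^{\frac{1}{2}}+(-q)^{-\frac{1}{2}})^{2g-2}=(-1)^{g-1}(q^{\frac{1}{2}}-q^{-\frac{1}{2}})^{2g-2}$, converts it into exactly the displayed relation above but with $n_{g,\beta}$ in place of $n_{g,\beta}^{P}$. The uniqueness of the integers $n_{g,\beta}^{P}$ from \cite[Lemma~3.1]{PT} then forces $n_{g,\beta}^{P}=n_{g,\beta}$ for all $g$, which is Conjecture~\ref{GV:conj1}. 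The main obstacle, though purely formal, is keeping the two sign conventions straight and confirming that for an irreducible, hence primitive, class every multiple-cover and cross-term in the log genuinely drops out, so that the global correspondence reduces cleanly to the already-established local one.
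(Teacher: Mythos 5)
Your proposal is correct and follows essentially the same route as the paper: the paper's own proof is the one-line observation that the corollary follows by integrating Theorem~\ref{thm:locsur} over the Chow variety, with the collapse of the logarithm and multiple-cover terms for an irreducible class (i.e.\ the implication from Conjecture~\ref{GV:conj2} to Conjecture~\ref{GV:conj1}) left implicit. Your write-up merely makes explicit the sign bookkeeping between $(q^{\frac{1}{2}}+q^{-\frac{1}{2}})$ and $(q^{\frac{1}{2}}-q^{-\frac{1}{2}})$, the fact that irreducibility of $\beta$ forces every cycle in $\Chow_{\beta}(X)$ to have irreducible pushforward, and the CY-fibration input from Corollary~\ref{thm:CYloc} needed to define $n_{g,\beta}$ as an integral of local invariants.
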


\subsection{Examples from rigid singular rational curves}\label{ex:enriques}
The result of Theorem~\ref{thm:locsur} includes many 
examples where the moduli space ${\Sh}_X$ is singular.
We describe what the above argument looks like in some examples arising from 
rigid singular rational curves on surfaces. 

Let $S$ be a smooth 
projective surface with $H^1(\oO_S)=0$ and 
\begin{align*}
C \subset S
\end{align*}
an irreducible curve whose normal bundle 
$\oO_C(C)$ is 
a non-trivial degree zero line bundle on $C$. 
We assume $C$ is either a nodal rational 
curve with one node, or a cuspidal rational 
curve. 
For example, we can construct such an example as follows: 
we first embed $C \subset \mathbb{P}^2$ 
as a cubic curve and blow-up $\mathbb{P}^2$ at 
9-points on $C$ in a general position. Then 
the resulting blow-up $S \to \mathbb{P}^2$ and 
the strict transform of $C$ to $S$ give such an example. 

Let $\beta \in H_2(S, \mathbb{Z})$ denote the homology class of $C$. 
Since $\oO_C(C)$ is non-trivial, 
we have 
\begin{align}\label{vanish:H}
H^0(\oO_C(C))=H^1(\oO_C(C))=0. 
\end{align}
In particular, the Chow variety
 $\Chow_{\beta}(S)$ on $S$ is one point.
Moreover we have the isomorphism
\begin{align*}
C \stackrel{\cong}{\to}{\Sh}_S, \ x \mapsto I_x^{\vee}. 
\end{align*}
Here $I_x \subset \oO_C$ is the ideal sheaf of $x$ in $C$ and $I_x^{\vee}$
is its (non-derived) dual on $C$, viewed as a sheaf on $S$.

Let $\nu \colon \mathbb{P}^1 \to C$ be the normalization. 
Since $K_S|_{C}=\oO_C(-C)$ is degree zero, 
we have 
\begin{align*}
H^0(\nu^{\ast}K_S)=H^0(\oO_{\mathbb{P}^1})=\mathbb{C},
\end{align*}
where this section does not descend to $C$.
By Lemma~\ref{lem:chow}, we see that 
$\Chow_{\beta}(X)=\mathbb{A}^1$, 
where $X=\mathrm{Tot}(K_S)$. 
The origin $0 \in \mathbb{A}^1$ corresponds to 
$C \subset S \subset X$, 
and $0\neq a \in \mathbb{A}^1$ corresponds to 
the embedding 
\begin{align*}
i_a \colon 
\mathbb{P}^1 \hookrightarrow X
\end{align*}
whose projection to $S$ is $C$. 
The moduli space ${\Sh}_X$ is
(set theoretically) identified with 
the union of ${\Sh}_S=C$ and 
$\mathbb{A}^1$. 
Here $a \in \mathbb{A}^1 \setminus \{0\}$ 
corresponds to the sheaf 
$i_{a\ast}\oO_{\mathbb{P}^1}$, 
and $\mathbb{A}^1$ intersects with $C$
at the singular point $p \in C$. 
We have the Hilbert-Chow map
\begin{align*}
\pi_{\Sh} \colon 
{\Sh}_X^{\rm{red}}=C \cup \mathbb{A}^1 \to \Chow_{\beta}(X)=\mathbb{A}^1
\end{align*}
which sends $C$ to the origin and 
restricts to $\id$ on $\mathbb{A}^1$. 
Below we describe ${\Sh}_X$ as critical locus, 
and compute the local GV invariants
\begin{align}\label{locgv:ex}
n_{g, -}^{\rm{loc}} \colon \Chow_{\beta}(X)=\mathbb{A}^1 \to \mathbb{Z}.
\end{align}

Let $\pi_T \colon \cC \to T$ be a versal deformation of 
$C$, 
with a point $0 \in T$ such that $\pi_T^{-1}(0)=C$. 
We first treat the case that $p \in C$ is a nodal singularity. 
 In this case, 
we can take $T$ to be a sufficiently small 
open neighborhood of $0 \in \mathbb{A}^1$, 
and 
\begin{align*}
\pi_T \colon 
\cC=\{zy^2=x^3+zx^2+tz^3\} \subset \mathbb{P}^2 \times T
\to T
\end{align*}
where $[x:y:z]$ is the homogeneous coordinate of
$\mathbb{P}^2$, $t$ is the coordinate of $T$
and the right arrow is the projection.  
The generic 
fiber of $\pi_T$ is a smooth elliptic curve, and 
the $\pi_T$-relative compactified Jacobian is 
isomorphic to 
$\cC$ itself. 
Since ${\Sh}_S$ is cut out by $t=0$ in $\cC$, 
by the diagram (\ref{dia:MR2})
and the vanishing (\ref{vanish:H}), we have the commutative diagram
\begin{align*}
\xymatrix{
{\Sh}_X \stackrel{\cong}{\to}
\{df=0\}
 \ar@<-0.3ex>@{^{(}->}[r]
& \cC \times \mathbb{A}^1
 \ar[rd]^{f} \ar[d]_{\pi_T \times \id} & \\
 & T \times \mathbb{A}^1 \ar[r]^{g} & \mathbb{A}^1. 
}
\end{align*}
Here $g$ is defined by
\begin{align*}
g(t, u)=tu.
\end{align*}
The moduli space ${\Sh}_X$ is singular at the node $p \in C$. 
We have the following affine open neighborhood 
of $p \in {\Sh}_X$:
\begin{align*}
\Spec 
\mathbb{C}[x, y, u]/(y^2-x^3-x^2, yu, (3x^2+2x)u). 
\end{align*}
This is the critical locus of the following function
\begin{align*}
f \colon \mathbb{A}^3 \to \mathbb{A}, \ 
(x, y, u) \mapsto u(y^2-x^3-x^2). 
\end{align*}
After taking the completion at
$0$ and coordinate change, 
the singularity at $p$ is simplified as 
\begin{align*}
\widehat{\oO}_{{\Sh}_X, p} \cong 
\mathbb{C}[[x, y, u]]/(xy, yu, ux). 
\end{align*}
This is the critical locus of the
super potential $xyu \in \mathbb{C}[[x, y, u]]$. 

As for the local GV invariants (\ref{locgv:ex}), 
the result is as follows: 
\begin{align}\label{gvcom:nodal}
n_{0, -}^{\rm{loc}} \equiv -1, \ 
n_{1, -}^{\rm{loc}} =\delta_0, \ 
n_{\ge 2, -}^{\rm{loc}} \equiv 0.
\end{align}
Here $\delta_0(t)=1$ for $t=0$ and 
$\delta_0(t)=0$ for $t\neq 0$. 
Indeed we have the perverse decomposition
\begin{align*}
\dR 
(\pi_T \times \id)_{\ast}
\IC(\cC \times \mathbb{A}^1)
=\IC(T \times \mathbb{A}^1)[1] \oplus V
 \oplus \IC(T \times \mathbb{A}^1)[-1]
\end{align*}
where $V=R^1(\pi_T \times \id)_{\ast}\mathbb{Q}[2]$ is a 
constructible sheaf on $T \times \mathbb{A}^1$. 
Applying $\phi_g$, we obtain
\begin{align*}
\dR \pi_{{\Sh}\ast}\phi_f
&=\phi_g(\dR 
(\pi_T \times \id)_{\ast}
\IC(\cC \times \mathbb{A}^1)) \\
&=\mathbb{Q}_0[1] \oplus \phi_g(V) \oplus 
\mathbb{Q}_0[-1]. 
\end{align*}
The above perverse decomposition immediately implies 
(\ref{gvcom:nodal}) for $n_{\ge 1, -}^{\rm{loc}}$. 
The computation of $n_{0, -}^{\rm{loc}}$
easily follows by the computation of the Behrend function. 
Note that we don't have to compute $\phi_g(V)$ in
the above computation. 

We next treat the case that $p\in C$ is a cusp. 
In this case, we can take 
$T$ to be a sufficiently small 
open neighborhood of $0 \in \mathbb{A}^2$, and 
\begin{align*}
\pi_T \colon \cC=\{zy^2=x^3+t_1 xz+t_2 z^3\} \subset
\mathbb{P}^2 \times T \to T.
\end{align*} 
Here $(t_1, t_2)$ is the coordinate of $T$. 
Similarly to the nodal case, we have the commutative diagram
\begin{align*}
\xymatrix{
{\Sh}_X \stackrel{\cong}{\to}
\{df=0\}
 \ar@<-0.3ex>@{^{(}->}[r]
& \cC \times \mathbb{A}^2
 \ar[rd]^{f} \ar[d]_{\pi_T \times \id} & \\
 & T \times \mathbb{A}^2 \ar[r]^{g} & \mathbb{A}^1. 
}
\end{align*}
Here $g$ is defined by
\begin{align*}
g(t_1, t_2, u_1, u_2)=t_1 u_1+t_2 u_2.
\end{align*}
We have the following affine
open neighborhood at $p\in {\Sh}_X$:
\begin{align*}
\Spec \mathbb{C}[x, y, u]/(x^2 u, yu, y^2-x^3). 
\end{align*}
In this case, 
$\mathbb{A}^1$ is a double line in ${\Sh}_X$. 
The above affine open neighborhood
 is the critical locus of the following function:
\begin{align*}
f \colon \mathbb{A}^5 \to \mathbb{A}, \ 
(x, y, t_1, u_1, u_2) \mapsto u_1 t_1+u_2(y^2-x^3-t_1x). 
\end{align*}
This is simplified as the critical locus of 
$u(y^2-x^3) \in \mathbb{C}[x, y, z]$. 
Similarly to the nodal case, the 
local GV invariants (\ref{locgv:ex}), 
are computed as follows: 
\begin{align}\notag
n_{0, -}^{\rm{loc}} \equiv -2, \ 
n_{1, -}^{\rm{loc}} =\delta_0, \ 
n_{\ge 2, -}^{\rm{loc}} \equiv 0.
\end{align}

\section{Smooth curve case}\label{sec:smooth}

Let $X$ be a smooth quasi-projective CY 3-fold 
and $C \subset X$ a smooth projective curve
with homology class $\beta$
and genus $g$. 
Note that we no longer are assuming that $X$ is a local surface.

In this section, we 
apply Proposition~\ref{prop:irred2} to 
prove Conjecture~\ref{GV:conj2}
at the point 
\begin{align*}
\gamma=[C] \in \Chow_{\beta}(X).
\end{align*}

\subsection{Moduli space at a smooth one-cycle}
We set
\begin{align*}
{\Sh}=\Sh_{\beta}(X), \ 
\eE \in \Coh(X \times {\Sh})
\end{align*}
where $\eE$ is a universal sheaf. 
We will use the following lemma: 
\begin{lem}\label{prop:isomJM}
Let ${\Sh}' \subset {\Sh}$ be the 
subset consisting of 
sheaves of the form $j_{\ast}L$
for 
a smooth curve 
$j \colon Z \hookrightarrow X$
and $L \in \Pic(Z)$. 
Let $\zZ \subset X \times {\Sh}$ be the 
closed subscheme defined by the following ideal 
sheaf $\iI_{\zZ}$ 
\begin{align}\label{nat:mor}
\iI_{\zZ} \cneq \Ker \left(
\oO_{X \times {\Sh}} \to \eE nd_{\oO_{X \times {\Sh}}}(\eE) \right). 
\end{align}
Then $\zZ$ is flat over ${\Sh}$ at any point in ${\Sh}'$. 
\end{lem}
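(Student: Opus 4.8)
The plan is to recognize $\zZ$ as the relative scheme-theoretic support of the universal sheaf, and then to show that near any point of ${\Sh}'$ the universal sheaf $\eE$ is \emph{itself} a cyclic (indeed invertible) module over $\oO_{\zZ}$. Once this is established, flatness transfers for free from $\eE$ to $\oO_{\zZ}$. Concretely, by definition the ideal $\iI_{\zZ}=\Ker(\oO_{X\times {\Sh}}\to \eE nd_{\oO_{X\times {\Sh}}}(\eE))$ is exactly the annihilator of $\eE$, so $\oO_{\zZ}=\oO_{X\times {\Sh}}/\mathrm{Ann}(\eE)$ and $\eE$ is naturally a faithful $\oO_{\zZ}$-module. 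Since $\eE$ is flat over ${\Sh}$ (universal sheaf on a fine moduli space), it suffices, for each $s_0=[E_0]\in {\Sh}'$ with $E_0=j_\ast L$, to check flatness of $\oO_{\zZ}$ over ${\Sh}$ at every closed point $(x,s_0)$ of $\zZ$ over $s_0$; over points of $X$ outside $\Supp E_0=Z$ there is nothing to prove because $\zZ_{s_0}=Z$.

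First I would fix such $s_0$ and set $A=\oO_{{\Sh},s_0}$, $B=\oO_{X\times {\Sh},(x,s_0)}$ for a point $x\in Z$, and $M=\eE_{(x,s_0)}$, a finite $B$-module that is flat over $A$. The fiber is $M\otimes_A k(s_0)=(E_0)_x$. Because $Z$ is smooth and $L$ is invertible, $L$ is locally trivial along $Z$, so $(E_0)_x\cong \oO_{Z,x}$ as $\oO_{X,x}$-modules, with annihilator the ideal $\iI_{Z,x}$ of the smooth curve. Since $\bar B\cneq B/\mathfrak{m}_A B=\oO_{X,x}$ surjects onto $\oO_{Z,x}$, reducing one further step modulo $\mathfrak{m}_B$ gives $\dim_k\bigl(M\otimes_B k\bigr)=\dim_k\bigl(\oO_{Z,x}\otimes_{\oO_{X,x}}k(x)\bigr)=1$.

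This dimension count is the heart of the matter, and it is exactly where the hypotheses are used. With it in hand, Nakayama's lemma (applied to the finite module $M$ over the local ring $B$) shows that $M$ is cyclic, $M\cong B/\mathrm{Ann}_B(M)$. As $M$ is cyclic, $\mathrm{Ann}_B(M)$ coincides with the stalk of $\iI_{\zZ}$, so $M\cong \oO_{\zZ,(x,s_0)}$ as $B$-modules. Finally, since $M=\eE_{(x,s_0)}$ is flat over $A$ and $\oO_{\zZ,(x,s_0)}\cong M$ as $A$-modules, the local ring $\oO_{\zZ,(x,s_0)}$ is $A$-flat; that is, $\zZ\to {\Sh}$ is flat at $(x,s_0)$. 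Letting $x$ range over $Z$ and $s_0$ range over ${\Sh}'$ (checking at closed points suffices for a finite-type morphism) yields flatness at every point over ${\Sh}'$.

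The \emph{main obstacle} is precisely verifying $\dim_k(M\otimes_B k)=1$, i.e. that the scheme-theoretic support of $E_0$ is the smooth curve $Z$ and that $\eE$ is cyclic there. This is genuinely special to points of ${\Sh}'$: for a sheaf supported on a singular or non-reduced curve, or one that is not locally free on its support, the fiber $M\otimes_B k$ can have dimension greater than $1$, the cyclicity argument breaks, and $\zZ$ may indeed fail to be flat over ${\Sh}$. Thus the smoothness of $Z$ and invertibility of $L$ are used in an essential way, and no deformation-theoretic input beyond flatness of $\eE$ is needed.
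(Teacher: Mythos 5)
Your proof is correct and follows essentially the same route as the paper: both arguments show that the stalk $\eE_z$ of the universal sheaf at a point $z=(x,y)$ over ${\Sh}'$ is cyclic (using $(j_{\ast}L)_x\cong\oO_{Z,x}$ for the smooth curve $Z$ and the line bundle $L$), identify $\oO_{\zZ,z}\cong\eE_z$ via the annihilator ideal, and then transfer flatness from $\eE$ to $\oO_{\zZ}$. The only cosmetic difference is that you produce the generator locally by the one-dimensional fiber count plus Nakayama's lemma, whereas the paper produces it as a global section of $\eE\otimes p_X^{\ast}\lL$ for a sufficiently ample $\lL$ and passes through the isomorphism $\eE nd_{\oO_{X\times{\Sh},z}}(\eE_z)\cong\eE_z$.
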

\begin{proof}
Let us take a point $y \in {\Sh}'$
corresponding to $j_{\ast}L$
for a smooth curve $j \colon Z \hookrightarrow X$
and $L \in \Pic(Z)$. 
We also take a point $x \in X$ and set
\begin{align*}
z=(x, y) \in X \times {\Sh}'.
\end{align*}
We need to show that $\oO_{\zZ, z}$ is a flat 
$\oO_{{\Sh}, y}$-module. 
Since $\eE$ is a universal sheaf, 
we have an isomorphism 
$\eE|_{X \times \{y\}} \cong j_{\ast}L$. 
Since $(j_{\ast}L)_{x}$ is generated by 
one element as an $\oO_{X, x}$-module, 
for a sufficiently ample line 
bundle $\lL$ on $X$ there is a morphism
\begin{align*}
s \colon \oO_{X \times {\Sh}} \to \eE \otimes p_X^{\ast} \lL
\end{align*}
which is surjective at $z$. 
Here $p_X \colon X \times {\Sh} \to X$ is the projection. 
Therefore 
the morphism $s$ induces
an isomorphism $\eE nd_{\oO_{X \times {\Sh}, z}}(\eE_{z})\cong \eE_{z}$
as $\oO_{X \times {\Sh}, z}$-modules. 
Since we have the factorization
\begin{align*}
s_z \colon
\oO_{X \times {\Sh}, z} \twoheadrightarrow \oO_{\zZ, z}
\hookrightarrow \eE nd_{\oO_{X \times {\Sh}, z}}(\eE_{z}) \cong 
\eE_{z}
\end{align*}
and the above composition is surjective, 
we see that $\oO_{\zZ, z} \cong \eE_z$
as $\oO_{X \times {\Sh}, z}$-modules. 
Since $\eE$ is flat over ${\Sh}$, it follows that 
$\oO_{\zZ, z}$ is a flat $\oO_{{\Sh}, y}$-module.  
\end{proof}
Let $\Hilb(X)$ be the Hilbert scheme of 
compactly supported
closed subschemes in $X$. 
We  
take the open subscheme 
 \begin{align*}
[C] \in H \subset \Hilb(X)
\end{align*}
consisting
of smooth subschemes 
$Z \subset X$ with $\dim Z=1$, 
homology class $\beta$ and arithmetic genus $g$. 
Let
\begin{align}\label{dia:univC}
\xymatrix{
\cC_H 
\ar@<-0.3ex>@{^{(}->}[r] \ar[d]_-{\pi_{H}} & X \times H \ar[ld]^-{p_H} \\
H  &
}
\end{align}
be the universal curve. 
We denote by $J_H \to H$ the $\pi_{H}$-relative 
moduli space of line bundles with Euler characteristic one, 
which is a smooth abelian fibration with 
relative dimension $g$. We 
take the universal line bundle
\begin{align*}
\lL \in \Pic(\cC_H \times_H J_H).
\end{align*}
Let $i \colon \cC_H \times_H J_H \hookrightarrow X \times J_H$
be the closed
embedding induced by the diagram (\ref{dia:univC}).  
The object
\begin{align}\label{fam/J}
i_{\ast}\lL \in \Coh(X \times J_H)
\end{align}
is a $J_H$-flat family of one dimensional 
stable sheaves on $X$. 
The object (\ref{fam/J})
determines the morphism of schemes
\begin{align}\notag
h \colon J_H \to {\Sh}.
\end{align}

\begin{lem}\label{lem:isomJM2}
The subset ${\Sh}' \subset {\Sh}$ in Lemma~\ref{prop:isomJM} is open, and 
the morphism $h$ 
gives an isomorphism 
of schemes 
$h \colon J_H \stackrel{\cong}{\to} {\Sh}'$. 
\end{lem}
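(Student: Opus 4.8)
The plan is to construct an explicit two-sided inverse to $h$ over ${\Sh}'$, after first checking that ${\Sh}'$ is open. The key input throughout is Lemma~\ref{prop:isomJM}, which not only gives flatness of $\zZ$ over ${\Sh}$ along ${\Sh}'$ but, in its proof, produces local isomorphisms $\oO_{\zZ, z} \cong \eE_z$; I will read the latter as saying that $\eE$ is an invertible $\oO_{\zZ}$-module near the points lying over ${\Sh}'$.

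For openness, I would fix $y \in {\Sh}'$ with $\eE|_{X \times \{y\}} \cong j_{\ast}L$ for a smooth curve $j \colon Z \hookrightarrow X$ and $L \in \Pic(Z)$. By Lemma~\ref{prop:isomJM} the subscheme $\zZ \subset X \times {\Sh}$ cut out by $\iI_{\zZ}$ is flat over ${\Sh}$ at every point over $y$, and $\eE$ is an invertible $\oO_{\zZ}$-module near the fiber over $y$. Since the sheaves have proper support, $\zZ \to {\Sh}$ is proper, so the flat locus is open in ${\Sh}$; on it the fibers of $\zZ$ are the scheme-theoretic supports of the corresponding sheaves, with $\zZ_y = Z$ smooth. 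As smoothness of fibers in a flat family and invertibility of $\eE$ as an $\oO_{\zZ}$-module are open conditions, every sheaf parametrized near $y$ is again an invertible sheaf on a smooth curve of class $\beta$ and arithmetic genus $g$, hence lies in ${\Sh}'$. This shows ${\Sh}'$ is open.

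For the inverse I would work over the open scheme ${\Sh}'$ and set $\zZ' \cneq \zZ \times_{{\Sh}} {\Sh}'$, which by the above is flat over ${\Sh}'$ with fibers smooth curves of class $\beta$ and genus $g$. The universal property of $\Hilb(X)$ then produces a morphism $q \colon {\Sh}' \to \Hilb(X)$, which factors through the open subscheme $H$ because the fibers are smooth of the prescribed type, together with a canonical identification $\zZ' \cong \cC_H \times_H {\Sh}'$. On $\zZ'$ the universal sheaf $\eE|_{\zZ'}$ is an invertible $\oO_{\zZ'}$-module of relative Euler characteristic one, so the universal property of the relative Jacobian $J_H \to H$ lifts $q$ to a morphism $k \colon {\Sh}' \to J_H$.

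Finally I would verify that $k$ and $h$ are mutually inverse. The composite $k \circ h$ is the identity of $J_H$, because over $J_H$ the family $i_{\ast}\lL$ has scheme-theoretic support $\cC_H \times_H J_H$, recovering the classifying map to $H$, and restricts to $\lL$ on it, recovering the fiber coordinate; conversely $h \circ k = \id_{{\Sh}'}$, since applying $h$ to the pair $(\text{support}, \eE|_{\zZ'})$ reconstructs $\eE$ by pushforward along the closed embedding $\zZ' \hookrightarrow X \times {\Sh}'$. I expect the main obstacle to be scheme-theoretic rather than set-theoretic: one must use the identification $\eE \cong \oO_{\zZ}$ from Lemma~\ref{prop:isomJM} to guarantee that $\eE|_{\zZ'}$ is genuinely a family of line bundles over the Hilbert-scheme family $\zZ'$, so that both $q$ and its lift $k$ exist as morphisms of schemes. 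Once flatness and invertibility are secured, the inverse property follows formally from the universal properties defining $H$, $J_H$, and ${\Sh}$.
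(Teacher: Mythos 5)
Your proposal is correct and follows essentially the same route as the paper: openness of ${\Sh}'$ from the flatness in Lemma~\ref{prop:isomJM} together with openness of the smooth-fiber (and invertibility) conditions, and then the inverse morphism ${\Sh}' \to J_H$ built from the universal properties of $\Hilb(X)$ and the relative Jacobian applied to $\zZ'$ and $\eE|_{\zZ'}$. The only cosmetic difference is that you track invertibility of $\eE$ as an $\oO_{\zZ}$-module as a separate open condition, whereas the paper identifies ${\Sh}'$ directly with the locus where $\zZ$ is flat with smooth fibers and lets the line-bundle property follow from purity on a smooth support; both are sound.
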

\begin{proof}
By Lemma~\ref{prop:isomJM}, 
the subset ${\Sh}' \subset {\Sh}$
coincides with 
the set of points $y \in {\Sh}$ 
such that $\zZ$ is flat over ${\Sh}$ at $y$
and $\zZ_y \cneq \zZ|_{X \times y}$
is smooth. 
Since the latter conditions are open conditions, 
the subset ${\Sh}' \subset {\Sh}$ is open. 

By Lemma~\ref{prop:isomJM}, 
the subscheme 
\begin{align}\notag
\zZ' \cneq \zZ|_{X \times {\Sh}'} \subset X \times {\Sh}'
\end{align}
is flat over ${\Sh}'$. 
Hence it 
defines the morphism 
\begin{align}\label{M'H}
\pi_{\Sh} \colon {\Sh}' \to H
\end{align}
such that $\zZ'=\cC_H \times_{H} {\Sh}'$. 
Then by the definition of $\zZ$ in (\ref{nat:mor}), 
we have 
\begin{align*}
\eE|_{X \times {\Sh}'}  \in \Coh(\zZ')=\Coh(\cC_H \times_H {\Sh}'). 
\end{align*}
The above object is a ${\Sh}'$-flat family of line bundles on 
the fibers of $\cC_H \to H$, 
hence 
determines the morphism 
$h' \colon {\Sh}' \to J_H$. 
The morphism $h'$ gives an inverse of $h$, 
hence $h$ is an isomorphism. 
\end{proof}

\subsection{CY condition for ${\Sh}$}
Here we prove the CY condition for ${\Sh}$: 
\begin{prop}\label{lem:M:CY}
The $d$-critical scheme 
\begin{align*}
({\Sh}=\Sh_{\beta}(X), s_{\Sh})
\end{align*}
in Theorem~\ref{thm:CYM}
is strictly CY at 
the point $\gamma=[C] \in \Chow_{\beta}(X)$ 
for a smooth projective curve $C \subset X$
(see Definition~\ref{defi:slcy}). 
\end{prop}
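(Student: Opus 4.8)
The plan is to produce, on a neighborhood of $\gamma=[C]$, a Calabi--Yau $d$-critical chart for $({\Sh},s_{\Sh})$ in the sense of Definition~\ref{defi:slcy}~(i), by exploiting the fact that the relevant open locus is a Jacobian fibration. By Lemma~\ref{lem:isomJM2} the open subset ${\Sh}'\subset{\Sh}$ is isomorphic to the relative Jacobian $J_H\to H$, where $H\subset\Hilb(X)$ is the open locus of smooth connected curves of class $\beta$ and genus $g$. Taking $U\subset\Chow_\beta(X)$ to be the (open) locus of smooth irreducible cycles, which contains $[C]$, we have ${\Sh}_U={\Sh}'$, so it suffices to exhibit a chart $({\Sh}',V,f,\iota)$ with $V$ smooth, $K_V\cong\oO_V$, and inducing the canonical $d$-critical structure $s_{\Sh}$ of Theorem~\ref{thm:CYM}.

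First I would realize the base $H$ as a critical locus. Since $H$ is a moduli space of ideal sheaves on the Calabi--Yau threefold $X$, the results of~\cite{BBBJ} (equivalently, the $(-1)$-shifted symplectic structure on the derived Hilbert scheme, cf. Theorem~\ref{thm:CYM}) endow $H$ with a canonical $d$-critical structure. Concretely, the deformation and obstruction spaces $H^0(N_{C/X})$ and $H^1(N_{C/X})$ are Serre dual, using $K_X\cong\oO_X$ and the adjunction isomorphism $\det N_{C/X}\cong\omega_C$, so the obstruction theory of $H$ is symmetric and, after shrinking, $H=\Crit(g)$ for a regular function $g$ on a smooth scheme $A$ with $g|_{H^{\rm{red}}}=0$.

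Next I would extend the abelian fibration across this thickening and pull back $g$. Because abelian schemes are unobstructed and deform uniquely over thickenings, the fibration $J_H\to H$ (and its $\mathrm{Pic}^0$) extends, \'etale-locally around $[C]$, to a smooth abelian fibration $\pi\colon J_A\to A$ restricting to $J_H\to H$ over $H=\Crit(g)\subset A$. I then set $V=J_A$ and $f=g\circ\pi$. As $\pi$ is smooth we have $\Crit(f)=\pi^{-1}(\Crit(g))=J_H\cong{\Sh}'$ scheme-theoretically, and $f|_{{\Sh}'^{\rm{red}}}=0$, so $({\Sh}',V,f,\iota)$ is a $d$-critical chart. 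The fibers of $\pi$ are abelian varieties with trivial canonical bundle, so $K_{J_A}$ is pulled back from $A$; shrinking $A$ (hence $U$) to a small neighborhood of $[C]$ trivializes it, giving $K_V\cong\oO_V$, exactly as in the passage to a strictly CY chart used for $\overline{J}$ in Section~\ref{sec:local}.

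The main obstacle is to verify that this chart induces the \emph{canonical} $d$-critical structure $s_{\Sh}$ from the moduli of sheaves, rather than some other structure on the same underlying scheme. For this I would compare the two perfect obstruction theories on ${\Sh}'$: the sheaf-theoretic one in~(\ref{obs:U}), computed through the local-to-global spectral sequence for $\eE xt^q_X(j_\ast L,j_\ast L)=j_\ast\wedge^q N_{C/X}$, and the one induced by $f$ on $V$, which by smoothness of $\pi$ is the pullback of the $\Crit(g)$-obstruction theory of $H$ together with the (unobstructed) Jacobian directions. Matching these is a relative statement over $H$ in the spirit of Proposition~\ref{prop:natU}, identifying the Hilbert-scheme obstruction theory of $H$ with its critical structure and the relative tangent of $J_H/H$ with $H^1(\oO_C)$; as in Remark~\ref{rmk:compared}, the cleanest formulation compares the underlying $(-1)$-shifted symplectic structures via the cyclic $L_\infty$-pairing on $\RHom_X(j_\ast L,j_\ast L)$ induced by Serre duality. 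Establishing this compatibility, together with the \'etale-local extension of the abelian scheme over $A$, are the two points that require genuine care.
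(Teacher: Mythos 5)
Your starting point (Lemma~\ref{lem:isomJM2}, the identification ${\Sh}_U'\cong J_H\to H$, and the goal of a global chart whose ambient space is an abelian fibration with trivial canonical bundle) matches the paper, but from there you take a different route, and it has a genuine gap at exactly the step you flag as the ``main obstacle.'' You manufacture the function $g$ from the Hilbert scheme's \emph{own} DT-theoretic $d$-critical structure (via \cite{BBBJ} applied to ideal sheaves), and you must then prove that the chart $f=g\circ\pi$ induces the canonical structure $s_{\Sh}$ of Theorem~\ref{thm:CYM}. Your proposed verification is to match the two perfect obstruction theories in the spirit of Proposition~\ref{prop:natU}, or to compare $(-1)$-shifted symplectic structures via cyclic $L_\infty$-pairings; neither is carried out, and the first would not suffice even if completed: a $d$-critical structure is strictly finer data than a symmetric perfect obstruction theory, so identifying the obstruction theories on ${\Sh}'$ does not identify $s_{\Sh}$ with the structure your chart induces. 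The paper itself stresses how delicate such comparisons are (Remark~\ref{rmk:compared}): even the analogous weaker statement in the local-surface case requires a $\mathbb{C}^*$-homogeneity and formal-local cyclic $L_\infty$ argument for which no reference exists. Since everything else in your construction is routine, the unproven compatibility \emph{is} the proposition.

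The paper's proof avoids any such comparison, and that is its key idea: rather than building a second $d$-critical structure on $H$ and matching, it \emph{descends} the given $s_{\Sh}$ along the smooth surjection $\pi_{\Sh}\colon{\Sh}'\to H$. Because the Jacobian fibration has compact connected fibers, $\pi_{J\ast}(\oO_J/\pi_J^{\ast}I^2)\cong\oO_T/I^2$ and $\pi_J^{\ast}\Omega_T/\pi_J^{\ast}I\cdot\pi_J^{\ast}\Omega_T\hookrightarrow\Omega_J/\pi_J^{\ast}I\cdot\Omega_J$, which forces $H^0(\sS_{{\Sh}'})\cong H^0(\sS_H)$ and hence $s_{\Sh}|_{{\Sh}'}=\pi_{\Sh}^{\ast}s_H$ for a unique section $s_H$; then \cite[Propositions~2.7, 2.8]{JoyceD} make $s_H$ a $d$-critical structure on $H$ and produce a chart $(H_U,U,g,j)$ with $U$ open in the versal base $T$, and pulling back along the smooth $\pi_J$ is a chart for $s_{\Sh}$ \emph{by construction}. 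Two further problems with your version: the ambient space $J\to T$ already exists globally over the versal base containing $H$, so no extension of the abelian fibration across a thickening is needed, whereas your justification for that extension --- ``abelian schemes deform uniquely over thickenings'' --- is false (the deformation space of a $g$-dimensional abelian variety has dimension $g^2$), and even mere existence of the extension over an honest neighborhood requires unobstructedness of curve families plus an algebraization step; moreover your extension is only \'etale-local, while Definition~\ref{defi:slcy} asks for a genuine closed embedding of ${\Sh}_U$ for a Zariski-open $U$.
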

\begin{proof}
For the universal curve $\pi_H \colon \cC_H \to H$
in (\ref{dia:univC}),  
by Lemma~\ref{lem:versal}
we can take 
a locally versal family 
and a closed embedding
\begin{align*}
\pi_T \colon \cC \to T, \ 
j \colon H \hookrightarrow T
\end{align*}
such that 
$\cC_H=\cC \times_T H$.
Let $\pi_{J} \colon J \to T$ be the 
$\pi_T$-relative 
moduli space of line bundles with Euler characteristic one. 
By Lemma~\ref{lem:isomJM2}, 
we have the Cartesian squares: 
\begin{align}\label{Cart:M}
\xymatrix{
{\Sh}^{\rm{red}} \ar[d] \ar@{}[dr]|\square \ar@<-0.3ex>@{<-^{)}}[r]& 
{\Sh}^{' \rm{red}} \ar[r]\ar@{}[dr]|\square
 \ar[d] & {\Sh}' \ar@<-0.3ex>@{^{(}->}[r]^{i} \ar[d]_{\pi_{\Sh}}
\ar@{}[dr]|\square &
J \ar[d]^{\pi_{J}} \\
\Chow_{\beta}(X) \ar@<-0.3ex>@{<-^{)}}[r] &
H^{\rm{red}} \ar[r] & H  \ar@<-0.3ex>@{^{(}->}[r]^{j} &
T. 
}
\end{align}
Here $\pi_{\Sh}$ is given in (\ref{M'H}), 
 the left bottom arrow is
an injection induced by the cycle 
$\cC_H \times_H H^{\rm{red}}$, 
and the left top arrow is an open immersion. 
By the diagram (\ref{Cart:M}), 
it is enough to show the following: 
there is an open neighborhood $\gamma =[C] \in U \subset T$
and a regular function 
$g \colon U \to \mathbb{A}^1$
such that 
by setting the commutative diagram 
\begin{align*}
\xymatrix{
{\Sh}_U' \ar@{}[dr]|\square \ar@<-0.3ex>@{^{(}->}[r]^{i} \ar[d]_{\pi_{\Sh}}  & 
 J_U \ar[d]^{\pi_J} \ar[rd]^{f_J}  &   \\
H_U \ar@<-0.3ex>@{^{(}->}[r]^{j} &  U \ar[r]^{g}  & \mathbb{A}^1
}
\end{align*}
where $-_U$ refers to the pull-back by 
$U \subset T$, 
the data
\begin{align}\label{data:critical}
({\Sh}_U', J_U, f_{J}, i) 
\end{align}
is a $d$-critical chart of $({\Sh}_U', s_{\Sh}|_{{\Sh}_U'})$. 

Let $I \subset \oO_{T}$ be the ideal sheaf which 
defines the subscheme $H \subset T$. 
Since $\pi_{J}$ is smooth, the ideal sheaf
$\pi^{\ast}_J I \subset \oO_{J}$ defines 
the subscheme 
${\Sh}' \subset J$. 
By the property of the sheaves 
$\sS_{{\Sh}'}$ and $\sS_H$ (see (\ref{S:property})), 
we have the commutative diagram
\begin{align}\label{exact:S}
\xymatrix{
0 \ar[r] & H^0(\sS_H) \ar[r] \ar[d]^{\pi_{\Sh}^{\ast}} & H^0(\oO_T/I^2) \ar[r] 
\ar[d]^{\pi_J^{\ast}} & 
H^0(\Omega_{T}/I \cdot \Omega_T) \ar[d]^{\pi_J^{\ast}} \\
0 \ar[r] & H^0(\sS_{{\Sh}'}) \ar[r] & 
H^0(\oO_{J}/\pi_J^{\ast}I^2) \ar[r] & 
H^0(\Omega_{J}/\pi_J^{\ast} I \cdot \Omega_J). 
}
\end{align}
Here the horizontal arrows are exact sequences of 
vector spaces. 
By the derived base change, we have
\begin{align*}
\dR \pi_{J\ast} \oO_J \dotimes \oO_T/I^2 \cong
\dR \pi_{J\ast}(\oO_J/\pi_J^{\ast}I^2). 
\end{align*}
Since each $R^i \pi_{J\ast} \oO_{J}$ is locally free, 
it follows that $\pi_{J\ast} (\oO_{J}/\pi_{J}^{\ast}I^2) \cong \oO_T/I^2$, 
and the middle vertical arrow of (\ref{exact:S}) is an isomorphism. 
Since 
we have the exact sequence of locally free sheaves
\begin{align*}
0 \to \pi_J^{\ast} \Omega_T \to \Omega_{J} \to \Omega_{J/T} \to 0
\end{align*}
we have the injection 
\begin{align*}
\pi_J^{\ast}\Omega_T/\pi_J^{\ast} I \cdot \pi_J^{\ast} \Omega_T
 \hookrightarrow
\Omega_J/\pi_J^{\ast}I \cdot \Omega_J.
\end{align*} 
Therefore the right vertical arrow of (\ref{exact:S}) is an injection, 
hence the left vertical arrow of (\ref{exact:S}) is an isomorphism. 
It follows that 
there exists $s_H \in H^0(\sS_H^0)$ such that 
the identity
$s_{{\Sh}}|_{{\Sh}'}=\pi_{\Sh}^{\ast} s_H$ holds.  

Since $\pi_{\Sh}$ is a smooth surjective morphism, 
by~\cite[Proposition~2.8]{JoyceD},
the section $s_H$ is a $d$-critical structure of $H$. 
By~\cite[Proposition~2.7]{JoyceD}, 
we can find an open neighborhood 
$\gamma \in U \subset T$
and 
a regular function $g \colon U \to \mathbb{A}^1$
such that 
$
(H_U, U, g, j)
$
is a $d$-critical chart of $(H_U, s_H|_{H_{U}})$. 
Since 
$\pi_J$ is a smooth morphism
and $s_{\Sh}|_{{\Sh}'}=\pi_{\Sh}^{\ast}s_H$,  
the data (\ref{data:critical}) 
obviously gives a desired $d$-critical chart. 
\end{proof}

\subsection{Proof of Theorem~\ref{intro:thm2}}
The following is the main result of this section, 
which proves Theorem~\ref{intro:thm2}: 
\begin{thm}\label{thm:smooth}
For a smooth projective curve $C \subset X$ with genus $g$, 
Conjecture~\ref{GV:conj2} is true for $\gamma=[C] \in \Chow_{\beta}(X)$. 
In this case, we have
\begin{align}\label{smooth:GV}
n_{h, \gamma}^{\rm{loc}}=\left\{ \begin{array}{cc}
(-1)^g \nu_{{\Sh}}(\oO_C), & h=g, \\
0, & h \neq g.
\end{array} \right. 
\end{align}
Here $\nu_{\Sh}$ is the Behrend function on ${\Sh}=\Sh_{\beta}(X)$. 
\end{thm}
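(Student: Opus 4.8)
The plan is to deduce the statement from Proposition~\ref{prop:irred2}: it suffices to produce $\Chow$-local $d$-critical charts for both $\Sh=\Sh_\beta(X)$ and the stable pairs space $P=P_{n+1-g}(X,\beta)$ near $\gamma=[C]$, compatible with their Hilbert--Chow maps, and then to read off the explicit values on the GV side. The key simplification is that, since $C$ is smooth, every member of a small deformation stays smooth; hence in the locally versal family $\pi_T\colon\cC\to T$ of Lemma~\ref{lem:versal} the fibres over a neighbourhood of $0\in T$ are smooth, so the relative compactified Jacobian $\overline J=J$ and the relative Hilbert scheme of points $\cC^{[n]}$ are honest smooth fibrations (Theorem~\ref{thm:versal}), and the generalized Macdonald formula reduces to the classical one for a smooth curve.

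First I would record the sheaf side, which is exactly Proposition~\ref{lem:M:CY}: near $\gamma$ one has the strictly CY $d$-critical chart $(\Sh_U,J_U,f_J,i_{\Sh})$ with $f_J=g\circ\pi_J$ for the function $g\colon U\to\mathbb{A}^1$ cutting out $H=\Crit(g)$, where $\Sh_U=\Sh'_U\cong\pi_J^{-1}(H_U)$ by Lemma~\ref{lem:isomJM2}. The main step is to establish the analogous chart for stable pairs. Here I would prove a pairs-analogue of Lemma~\ref{lem:isomJM2}: a stable pair $(E,s)$ on $X$ whose support lies on a smooth curve is the pushforward of the data of that curve together with the vanishing divisor of its section, so the open locus $P'\subset P$ of such pairs (which contains all pairs near $[C]$) is canonically identified with the relative Hilbert scheme of points $\cC_H^{[n]}$ of the universal smooth curve over $H$. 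Via the embedding $\cC_H^{[n]}\hookrightarrow\cC^{[n]}$ into the smooth space of Theorem~\ref{thm:versal}, and using that $\pi^{[n]}\colon\cC^{[n]}\to T$ is smooth, one gets $P_U=(\pi^{[n]})^{-1}(H_U)=\Crit(g\circ\pi^{[n]})$.

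The content of this step is to show that the canonical $d$-critical structure $s_P$ on $P$ agrees, near $\gamma$, with the pullback under $\pi^{[n]}$ of the $d$-critical structure $s_H$ on $H=\Crit(g)$. This runs exactly parallel to the proof of Proposition~\ref{lem:M:CY}: compare the section sheaves $\sS_{P'}$ and $\sS_H$ through the smooth map $\pi^{[n]}$, using that $\pi^{[n]}_{\ast}\oO_{\cC^{[n]}}=\oO_T$ with each $R^i\pi^{[n]}_{\ast}\oO_{\cC^{[n]}}$ locally free to match the relevant $\oO/I^2$-terms, and conclude $s_P|_{P'}=\pi^{[n]\ast}s_H$, so that $(P_U,\cC^{[n]}_U,f^{[n]},i_P)$ with $f^{[n]}=g\circ\pi^{[n]}$ is a $d$-critical chart compatible with the sheaf chart (both sit over the same $U\hookrightarrow T$ and the same $g$). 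Unlike the sheaf chart, no Calabi--Yau orientation is needed on $P$, since the local PT invariants are computed by the orientation-independent Behrend function. I expect this verification --- essentially the pairs version of Proposition~\ref{lem:M:CY} --- to be the main obstacle, though it is routine given the smoothness of $\cC^{[n]}$. With both charts in place, Proposition~\ref{prop:irred2} applies and yields Conjecture~\ref{GV:conj2} at $\gamma=[C]$.

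Finally I would compute the values (\ref{smooth:GV}) directly on the GV side. By the strictly CY chart, $\phi_{\sS h}$ is, up to the trivial CY orientation, $\phi_{f_J}(\IC(J))$, so by compatibility of vanishing cycles with proper pushforward $\dR\pi_{\Sh\ast}\phi_{\sS h}=\phi_g(\dR\pi_{J\ast}\IC(J))$. Near $\gamma$ the map $\pi_J$ is a smooth abelian fibration of relative dimension $g$, so the decomposition theorem gives $\pH^i(\dR\pi_{J\ast}\IC(J))=R^{g+i}\pi_{J\ast}\mathbb{Q}_J[\dim T]$, a local system of rank $\binom{2g}{g+i}$, for $-g\le i\le g$. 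Since $\phi_g$ commutes with $\pH^i$ and multiplies stalkwise Euler characteristics by the rank, restricting at $\gamma$ yields $\sum_i\chi(\pH^i(\dR\pi_{\Sh\ast}\phi_{\sS h})|_\gamma)y^i=\nu_H(\gamma)\,(y^{\frac12}+y^{-\frac12})^{2g}$, where $\nu_H(\gamma)=\chi(\phi_g(\IC(U))|_\gamma)$ is the Behrend function of $H=\Crit(g)$. By uniqueness of the expansion (Lemma~\ref{lem:ngloc}) this gives $n_{h,\gamma}^{\rm loc}=0$ for $h\neq g$ and $n_{g,\gamma}^{\rm loc}=\nu_H(\gamma)$; the smooth-pullback rule $\nu_{\Sh}=(-1)^g\pi_{\Sh}^{\ast}\nu_H$ along the rank-$g$ fibration $\Sh'\to H$ then gives $n_{g,\gamma}^{\rm loc}=(-1)^g\nu_{\Sh}(\oO_C)$, as claimed.
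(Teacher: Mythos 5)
Your proposal is correct and, for the heart of the theorem --- verifying the hypotheses of Proposition~\ref{prop:irred2} at $\gamma=[C]$ --- it follows the paper's proof exactly: the sheaf side is Proposition~\ref{lem:M:CY}, and the pairs side is handled by identifying the open locus $P'$ of pairs with smooth support with the relative Hilbert scheme of points (the pairs analogue of Lemmas~\ref{prop:isomJM} and~\ref{lem:isomJM2}) and then running the descent argument of Proposition~\ref{lem:M:CY} through the smooth map $\pi^{[n]}$. The paper compresses this step into one sentence, while you supply the same details and, usefully, make explicit a point the paper leaves implicit: both $d$-critical charts must sit over the same embedding $U\hookrightarrow T$ and the same function $g$, i.e.\ $s_{\Sh}$ and $s_P$ must descend to the \emph{same} section $s_H$ on $H$ (a compatibility that is asserted rather than proved in both your write-up and the paper). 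The only genuine divergence is the derivation of the explicit values (\ref{smooth:GV}): the paper obtains them by quoting the stable-pairs computation of \cite[Proposition~3.6]{PT3} and transferring across the just-proved correspondence, whereas you compute the GV side directly, using that near $\gamma$ the map $\pi_J$ is a smooth abelian fibration of relative dimension $g$, so that $\pH^i(\dR \pi_{J\ast}\IC(\overline{J}))$ is a local system of rank $\binom{2g}{g+i}$ placed in perverse degree $i$, and then applying $\phi_g$ together with the smooth-pullback rule $\nu_{\Sh}=(-1)^g\pi^{\ast}\nu_H$ for Behrend functions. That computation is correct --- the binomial sum reproduces $(y^{\frac{1}{2}}+y^{-\frac{1}{2}})^{2g}$ and the pullback rule supplies the sign $(-1)^g$ --- and it buys a self-contained, purely sheaf-theoretic proof of the formula; the paper's route is shorter but outsources the numerics to the PT side.
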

\begin{proof}
By the proof of Proposition~\ref{lem:M:CY}, 
the moduli space ${\Sh}$
satisfies
the assumption in Proposition~\ref{prop:irred2}
at $\gamma$. 
As for the moduli space of stable pairs 
let
\begin{align*}
P' \subset P=P_{n+1-g}(X, \beta)
\end{align*}
 be the 
subset consisting of stable pairs $(F, s)$
such that 
$F=j_{\ast}L$ for a smooth subscheme 
$j \colon Z \hookrightarrow X$ and $L \in \Pic(Z)$. 
Then the same argument of Lemma~\ref{prop:isomJM}
shows that $P'$ is an open subset of $P$, 
and isomorphic to the relative Hilbert scheme of $n$-points 
of $\pi_H \colon \cC_H \to H$. 
Then similarly to Lemma~\ref{lem:M:CY}, 
one can show 
show that $P$ also satisfies the assumption in Proposition~\ref{prop:irred2}
at $\gamma$. 
Therefore applying Proposition~\ref{prop:irred2}, 
Conjecture~\ref{GV:conj2}
is true in this case. 
The formula (\ref{smooth:GV})
for $n_{h, [C]}^{\rm{loc}}$ 
follows from the GV formula for stable pairs
given in~\cite[Proposition~3.6]{PT3}. 
\end{proof}

\section{Comparison with the former definitions}\label{sec:compare}
In this section, we review the previous 
definitions of GV invariants~\cite{HST, KL} using 
$sl_2 \times sl_2$-actions,
and compare them with our definition. 
We also give an example that the previous 
definitions do not match with the predicted answer.

\subsection{$sl_2 \times sl_2$-action}\label{subsec:KL}
Let $X$ be a smooth projective CY 3-fold. 
As in Subsection~\ref{subsec:defgv}, 
we take 
the moduli space of one-dimensional stable sheaves on $X$
with its canonical $d$-critical structure
\begin{align}\label{dcrit:sl}
(\Sh_{\beta}(X), s_{\Sh})
\end{align}
and the Hilbert-Chow map
\begin{align}\label{HC:sl} 
\pi \colon 
{\Sh}_{\beta}^{\rm{red}}(X) \to \Chow_{\beta}(X). 
\end{align}

Let 
$\mathrm{MHM}(\Sh_{\beta}(X))$
be the abelian category 
of polarized mixed Hodge modules on $\Sh_{\beta}(X)$, 
whose basics we refer to~\cite{MSaito, Schnell}. 
We take a perverse sheaf 
 $\phi$ on $\Sh_{\beta}(X)$ 
which underlies a polarized mixed Hodge module, 
i.e. there is an object $\phi^H \in \mathrm{MHM}(\Sh_{\beta}(X))$
 such that $\mathrm{rat}(\phi^H)=\phi$
for the forgetful functor
\begin{align}\notag
\mathrm{rat} \colon 
\mathrm{MHM}(\Sh_{\beta}(X)) \to \Perv(\Sh_{\beta}(X)). 
\end{align} 
We say that $\phi$ is \textit{pure} if $\phi^H$ is a pure 
Hodge module. 
If $\phi$ is pure, then 
we have the BBD decomposition theorem~\cite{BBD}
\begin{align*}
\dR \pi_{\ast}\phi \cong 
\bigoplus_{i\in \mathbb{Z}}
\pH^i(\dR \pi_{\ast} \phi)[-i]. 
\end{align*}
Then the hypercohomology of $\phi$ decomposes into
\begin{align*}
H^{\ast}(\Sh_{\beta}(X), \phi)=\bigoplus_{i, j}H^{i, j}, \ 
H^{i, j} \cneq 
H^j(\Chow_{\beta}(X), \pH^i(\dR \pi_{\ast}\phi)). 
\end{align*}
Let $\omega_L$ be a $\pi$-ample divisor on $\Sh_{\beta}^{\rm{red}}(X)$ and 
$\omega_R$ an ample divisor on $\Chow_{\beta}(X)$. 
Since each 
$\pH^{i}(\dR \pi_{\ast} \phi)$ is also 
pure (see~\cite[Theorem~16.1]{Schnell}), we have the 
Hard-Lefschetz isomorphisms
\begin{align*}
\omega_L^{i} \colon H^{-i, j} \stackrel{\cong}{\to}
H^{i, j}, \ 
\omega_R^{j} \colon H^{i, -j} \stackrel{\cong}{\to}
H^{i, j}. 
\end{align*}
The above isomorphisms 
define the $sl_2 \times sl_2$-action 
on $H^{\ast}(\Sh_{\beta}(X), \phi)$. 
The multiplication by $\omega_L$ defines the left $sl_2$-action, 
and the multiplication by $\omega_R$ defines the right 
$sl_2$-action. 

Let $I_g$ be the $sl_2$-representation given by 
\begin{align*}
I_g=IH^{\ast}(A, \mathbb{Q})
\end{align*}
where $A$ is a $g$-dimensional 
abelian variety with its $sl_2$-action
given by the Hard-Lefschetz theorem. 
For $2j \in \mathbb{Z}$, 
let $(j)$ be the unique 
irreducible $sl_2$-representation with dimension $2j+1$. 
The $sl_2$-representation $I_g$ is written as 
\begin{align*}
I_g=\left( \left(\frac{1}{2}\right) \oplus 2(0)  \right)^{\otimes g}. 
\end{align*}
By the Clebsch-Gordan rule, one can write
\begin{align}\label{HIR}
H^{\ast}(\Sh_{\beta}(X), \phi) 
=\bigoplus_{g\ge 0} (I_g)_L \otimes (R_{g})_R
\end{align}
for some virtual right $sl_2$-representation $R_g$.
Here $-_L$, $-_R$ refer to left, right $sl_2$-representations
respectively. 
We can write $R_g$ as 
\begin{align*}
R_{g}=\sum_{2j \in \mathbb{Z}} R_{g, j} \otimes (j), \ 
R_{g, j} \in K(\mathrm{Vect}(\mathbb{Q})). 
\end{align*}
Following the previous works~\cite{GV, HST}, 
we define
\begin{align}\label{phinv}
n_{g, \beta}(\phi) \cneq 
\sum_{2j\in \mathbb{Z}}
(-1)^{2j}(2j+1) \cdot \dim R_{g, j}. 
\end{align}
The invariants (\ref{phinv}) are 
characterized by the character formula, 
as in Lemma~\ref{lem:ng}: 
\begin{lem}\label{lem:KLid}
We have the identity
\begin{align}\label{KL:id}
\sum_{i\in \mathbb{Z}} \chi(\pH^i(\dR \pi_{\ast}\phi))
y^i
=\sum_{g\ge 0} n_{g, \beta}(\phi)
(y^{\frac{1}{2}}+y^{-\frac{1}{2}})^{2g}. 
\end{align}
\begin{proof}
By taking the characters of the 
identity (\ref{HIR}), 
we have the following identity in 
$K(\mathrm{Vect}(\mathbb{Q}))[x^{\pm 1}, y^{\pm 1}]$:
\begin{align*}
\sum_{i, j \in \mathbb{Z}} H^{i, j}
x^j y^i =
\sum_{g\ge 0, 2j \in \mathbb{Z}}
R_{g, j} (x^{-2j}+x^{-2j+2}+ \cdots + x^{2j})
(y^{\frac{1}{2}}+y^{-\frac{1}{2}})^{2g}. 
\end{align*} 
By substituting $x=-1$, we obtain the identity (\ref{KL:id}). 
\end{proof}
\end{lem}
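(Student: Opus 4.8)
The plan is to compute the full two-variable character of the $sl_2 \times sl_2$-representation on $H^{\ast}(\Sh_{\beta}(X), \phi)$ provided by the decomposition (\ref{HIR}), and then specialize one variable to $-1$. Introduce a variable $y$ recording the left grading (the perverse degree $i$, on which the left $sl_2$ acts through $\omega_L$) and a variable $x$ recording the right grading (the base cohomological degree $j$, on which the right $sl_2$ acts through $\omega_R$). The bigraded Poincaré series of the left-hand side is then $\sum_{i,j}[H^{i,j}]\,x^j y^i$, and the whole proof is to rewrite this using (\ref{HIR}) and evaluate at $x=-1$.

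First I would record the characters of the two tensor factors. On the right, the irreducible $sl_2$-representation $(j)$ of dimension $2j+1$ has character $x^{-2j}+x^{-2j+2}+\cdots+x^{2j}$, so $R_g=\sum_{2j}R_{g,j}\otimes (j)$ contributes $\sum_{2j}[R_{g,j}](x^{-2j}+\cdots+x^{2j})$. On the left, using $I_g=I_1^{\otimes g}$ with $I_1=(1/2)\oplus 2(0)$, whose character is $y+y^{-1}+2=(y^{1/2}+y^{-1/2})^2$, the factor $(I_g)_L$ has character exactly $(y^{1/2}+y^{-1/2})^{2g}$. Substituting both into (\ref{HIR}) produces the two-variable identity
\[
\sum_{i,j}[H^{i,j}]\,x^j y^i=\sum_{g\ge 0,\,2j}[R_{g,j}]\,(x^{-2j}+\cdots+x^{2j})\,(y^{1/2}+y^{-1/2})^{2g}.
\]

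Then I would specialize $x=-1$. On the left-hand side, for each fixed perverse degree $i$ the alternating sum $\sum_j(-1)^j\dim H^j(\Chow_{\beta}(X),\pH^i(\dR \pi_{\ast}\phi))$ is by definition the Euler characteristic $\chi(\pH^i(\dR \pi_{\ast}\phi))$, so the left-hand side becomes $\sum_i\chi(\pH^i(\dR \pi_{\ast}\phi))\,y^i$, which is the left-hand side of (\ref{KL:id}). On the right-hand side, the key observation is that every exponent in $x^{-2j}+\cdots+x^{2j}$ shares the parity of $2j$, so the specialization collapses this to the signed dimension $(x^{-2j}+\cdots+x^{2j})|_{x=-1}=(-1)^{2j}(2j+1)$; summing against $\dim R_{g,j}$ reconstructs precisely the defining weight $n_{g,\beta}(\phi)=\sum_{2j}(-1)^{2j}(2j+1)\dim R_{g,j}$ in front of $(y^{1/2}+y^{-1/2})^{2g}$, yielding (\ref{KL:id}).

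Once purity of $\phi$ and the resulting decomposition (\ref{HIR}) are in hand, this is essentially bookkeeping, so I do not expect a serious obstacle; the non-formal points are (a) the identification of the left factor's character as $(y^{1/2}+y^{-1/2})^{2g}$, equivalently that $I_g$ is the Lefschetz $sl_2$-module $IH^{\ast}$ of a $g$-dimensional abelian variety, and (b) the sign bookkeeping in the $x=-1$ specialization, where one must confirm that the alternating sum over the base degree genuinely computes $\chi(\pH^i(\dR \pi_{\ast}\phi))$ and that the centering convention on the right grading $j$ introduces no stray global sign. The one conceptual input being \emph{assumed} here is purity (of $\phi$ and of each $\pH^i(\dR \pi_{\ast}\phi)$), which is what legitimizes both the hard Lefschetz $sl_2\times sl_2$-action and the decomposition underlying (\ref{HIR}).
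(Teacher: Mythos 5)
Your proposal is correct and takes essentially the same route as the paper's own proof: taking characters of the decomposition (\ref{HIR}) to obtain the two-variable identity in $x$ and $y$, then substituting $x=-1$. The additional details you supply, namely that the character of $I_g$ equals $(y^{\frac{1}{2}}+y^{-\frac{1}{2}})^{2g}$ and that the parity argument collapses the character of $(j)$ to $(-1)^{2j}(2j+1)$, are exactly the bookkeeping the paper leaves implicit.
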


\subsection{HST and KL definitions}
Both of HST~\cite{HST} and KL~\cite{KL} definitions
are 
given by (\ref{phinv}) for some pure perverse sheaf $\phi$. 
Let us take the normalization and
the 
intersection complex 
\begin{align*}
\nu \colon \widetilde{\Sh}_{\beta}(X) \to \Sh_{\beta}(X), \
\phi=\nu_{\ast}\IC(\widetilde{\Sh}_{\beta}(X)).
\end{align*}
The HST definition is given by 
\begin{align*}
n_{g, \beta}^{\rm{HST}} \cneq 
n_{g, \beta}(\phi=\nu_{\ast}\IC(\widetilde{\Sh}_{\beta}(X))). 
\end{align*}

The KL definition uses sheaves of vanishing cycles.  
Let us choose an orientation 
$(K_{\Sh}^{\rm{vir}})^{1/2}$
for the $d$-critical scheme (\ref{dcrit:sl}), 
and set 
\begin{align*}
\sS h=(\Sh_{\beta}(X), s_{\Sh}, (K_{\Sh}^{\rm{vir}})^{1/2}).
\end{align*}
Let $\phi_{\sS h}$ be the gluing of 
local sheaves of vanishing cycles as in Theorem~\ref{thm:BDJS}.
By~\cite[Theorem~6.9]{BDJS}, 
the perverse sheaf $\phi_{\sS h}$ underlies a polarized mixed Hodge 
module, but it is not pure in general. 
Let
\begin{align*}
\gr_W^{\bullet}(\phi_{\sS h})  \in \Perv(\Sh_{\beta}(X))
\end{align*} 
be the associated graded sheaf with respect to the weight 
filtration in $\mathrm{MHM}(\Sh_{\beta}(X))$, which is now a pure perverse sheaf. 
The KL definition is given by 
\begin{align}\label{KLinv}
n_{g, \beta}^{\rm{KL}} \cneq 
n_{g, \beta}(\phi=\gr_W^{\bullet}(\phi_{\sS h})).
\end{align} 

\begin{rmk}\label{rmk:HSTKL}
By Lemma~\ref{lem:KLid}, 
the HST and KL definitions 
are also given by 
substituting 
\begin{align*}
\phi=\nu_{\ast}\IC(\widetilde{\Sh}_{\beta}(X)), \ 
\phi=\gr_W^{\bullet}(\phi_{\sS h})
\end{align*}
to the formula (\ref{KL:id}) respectively. 
On the other hand, the character formula (\ref{KL:id}) 
makes sense even if $\phi$ is not pure. 
As in Lemma~\ref{lem:ng}, if $\phi$ is self-dual 
$\mathbb{D}(\phi)=\phi$, the invariant 
$n_{g, \beta}(\phi)$ is uniquely determined by the identity (\ref{KL:id}). 
This is our point of view defining GV invariants in Definition~\ref{intro:defi}. \end{rmk}

\begin{rmk}
In~\cite{KL}, Kiem-Li used the semi-normalization of 
(\ref{HC:sl}) as the definition of HC map, following the convention of 
the Chow variety in~\cite{Ko}. Since taking the semi-normalization is a 
homeomorphism, 
this step does not affect the definition of the GV invariants. 
\end{rmk}

\subsection{Dependence on orientation data of KL definition}\label{ex:depend}
In Kiem-Li's paper~\cite{KL}, they did not specify 
how to choose an orientation data. 
Indeed as the following example shows, the 
KL invariant (\ref{KLinv}) depends on a choice of an 
orientation data. 

Let $E$ be an elliptic curve, 
which is embedded into a CY 3-fold
\begin{align*}
i \colon E \hookrightarrow X
\end{align*}
whose normal bundle is written as 
\begin{align*}
N_{E/X}=L \oplus L^{-1}, \ L \in \Pic^0(E) \setminus \{\oO_E\}.
\end{align*}
Then $E$ is rigid inside $X$. 
Let us take the homology class
\begin{align*}
\beta=[E] \in H_2(X, \mathbb{Z}).
\end{align*} 
Suppose that $E$ is the unique curve in $X$ with homology class
$\beta$, i.e. the Chow variety is a point
\begin{align*}
\Chow_{\beta}(X)=\{[E]\}. 
\end{align*} 
Then we have the isomorphism (see~\cite[Proposition~4.4]{HST})
\begin{align*}
E \stackrel{\cong}{\to} \Sh_{\beta}(X), \ 
x \mapsto i_{\ast}\oO_E(x). 
\end{align*}
Since $E$ is smooth, we have 
$K_{{\Sh}}^{\rm{vir}}=\oO_E$. 
Then an orientation $(K_{{\Sh}}^{\rm{vir}})^{1/2}$
of $(\Sh_{\beta}(X), s_{\Sh})$
is 
a 2-torsion element 
of $\Pic^0(E)$. 
For the oriented $d$-critical scheme 
$\sS h=(\Sh_{\beta}(X), s_{\Sh}, (K_{{\Sh}}^{\rm{vir}})^{1/2})$, 
we have
\begin{align*}
\phi_{\sS h}=\lL[1]
\end{align*}
for 
a rank one local system $\lL$ on $E$
such that $\lL^{\otimes 2} \cong \mathbb{Q}_E$. 
The local system $\lL$ is trivial if and only if 
$(K_{{\Sh}}^{\rm{vir}})^{1/2}=\oO_E$. 
Since $H^{\ast}(E, \lL)=0$ 
if $\lL$ is non-trivial, we obtain
$n_{g, \beta}^{\rm{KL}}=0$ for $g\neq 1$ and 
\begin{align*}
n_{1, \beta}^{\rm{KL}}=\left\{
\begin{array}{cc}
1, & (K_{{\Sh}}^{\rm{vir}})^{1/2}=\oO_E, \\
0, & (K_{{\Sh}}^{\rm{vir}})^{1/2}\neq \oO_E. 
\end{array}
\right. 
\end{align*}
The expected answer is $n_{1, \beta}=1$, 
so should choose an 
orientation data to be CY, i.e. 
$(K_{{\Sh}}^{\rm{vir}})^{1/2}=\oO_{E}$. 

\begin{rmk}
Suppose that ${\Sh}_{\beta}(X)$ is non-singular 
and
the usual canonical line bundle $K_{\Sh}$ 
on ${\Sh}_{\beta}(X)$ is pulled back 
from the 
 Hilbert-Chow map (\ref{HC:sl}). 
Then as $K_{{\Sh}}^{\rm{vir}}=K_{\Sh}^{\otimes 2}$, 
we can take the oriented 
$d$-critical scheme 
\begin{align*}
\sS h=({\Sh}_{\beta}(X), s_{\Sh}=0, K_{\Sh})
\end{align*}
which is a CY fibration over $\Chow_{\beta}(X)$. 
In this case, we have 
$
\phi_{\sS h}=\mathrm{IC}({\Sh}_{\beta}(X))$
and 
all the definitions agree:  
\begin{align*}
n_{g, \beta}^{\rm{HST}}=
n_{g, \beta}^{\rm{KL}}=n_{g, \beta}. 
\end{align*}
\end{rmk}
Even if we choose a CY orientation data, 
our definition may not agree with KL definition. 
In general, there is a 
\textit{weight spectral sequence}
\begin{align}\label{weight:spectral}
E_1^{i, j}=\pH^{i+j} (\dR\pi_{\ast}
\mathrm{gr}_{W}^{-i}(\phi_{\sS h}))\Rightarrow
\pH^{i+j}(\dR\pi_{\ast}\phi_{\sS h})
\end{align}
which always degenerates at $E_2$ by considering weights
(see~\cite[Section~17]{Schnell} for the similar spectral sequence for 
nearby cycles). 
It is easy to see that, for a choice of CY orientation data, 
our definition agrees with KL definition 
if the spectral sequence (\ref{weight:spectral}) degenerates at $E_1$. 
In the next subsection, we will see an example where (\ref{weight:spectral})
does not degenerate, which also gives a counter-example to the conjectures
of Kiem-Li.

\subsection{Counter-example to Kiem-Li conjecture}\label{subsec:counter}
In this subsection, we prove Proposition~\ref{prop:counter}. 
Let $S$ be an Enriques surface. It always admits 
an elliptic fibration 
\begin{align}\label{h:fib}
h \colon S \to \mathbb{P}^1. 
\end{align}
Let $\sigma \colon \widetilde{S} \to S$ be 
a K3 cover and $E$ an elliptic curve. 
We set
\begin{align*}
X=(\widetilde{S} \times E)/\langle \tau \rangle. 
\end{align*}
Here $\tau$ 
is an involution on $\widetilde{S} \times E$ which acts 
on $\widetilde{S}$
as a covering involution of $\sigma$, 
and 
acts on $E$ by $x \mapsto -x$. 
The 3-fold $X$ is a smooth projective CY 3-fold, 
and its GW invariants were studied in~\cite{MPnew}. 

We first describe the geometry of $X$ via 
fibrations over $\mathbb{P}^1$ and $S$. 
The projections from 
$\widetilde{S} \times E$
onto each factors 
 induce the fibrations
\begin{align}\label{fibration}
p \colon X \to E/\langle \tau \rangle =\mathbb{P}^1, \ 
\widehat{p} \colon X \to S. 
\end{align}
We denote by
\begin{align*}
e_1, e_2, e_3, e_4 \in E, \ 
q_1, q_2, q_3, q_4 \in \mathbb{P}^1
\end{align*}
the 2-torsion points of $E$
and their images under the 
quotient map $E \to \mathbb{P}^1$
respectively. 
The fiber of $p$ at $x \in \mathbb{P}^1$ is $\widetilde{S}$
for $x \neq q_i$, and 
the fiber at $q_i$ is a double fiber $2S$. 
The map $\widehat{p}$ is a smooth fibration with fiber $E$. 
\begin{rmk}\label{rmk:TotKS}
The CY 3-fold $X$ is closely related to the
non-compact CY 3-fold 
\begin{align*}
X'=\mathrm{Tot}(K_S).
\end{align*}
Indeed 
$X'$ is given by the quotient of 
$\widetilde{S} \times \mathbb{A}^1$ by 
$\tau$, 
where $\tau$ acts on $\mathbb{A}^1$ by $x \mapsto -x$. 
Similarly to (\ref{fibration}), we have the fibration 
\begin{align*}
p' \colon 
X' \to \mathbb{A}^1/\tau=\mathbb{A}^1. 
\end{align*}
Then there
exist analytic open neighborhoods
$q_i \in U_i \subset \mathbb{P}^1$, 
$0\in U \subset \mathbb{A}^1$
such that $p^{-1}(U_i)$ is isomorphic to 
$p^{'-1}(U)$. 
\end{rmk}
Let $2C$ be a double fiber the elliptic fibration (\ref{h:fib}), 
and set 
$X_C \cneq \widehat{p}^{-1}(C)$, 
$\widetilde{C} \cneq \sigma^{-1}(C) \subset
\widetilde{S}$. 
Note that we have
\begin{align*}
X_C=(\widetilde{C} \times E)/\langle \tau \rangle.
\end{align*}
Here if $\widetilde{C}$ is smooth, $\tau$ acts on 
$\widetilde{C} \times E$ by 
$(x, y) \mapsto (x+t, -y)$
where $t$ is a two torsion point in 
$\widetilde{C}$. 
We will use the following diagram
\begin{align}\label{dia:F}
\xymatrix{
X_C \ar[r]^{p|_{X_C}} 
\ar[d]_{\widehat{p}|_{X_C}} & \mathbb{P}^1 \\
C.   &  
}
\end{align}
Let $C_i$
be the reduced fiber of $p|_{X_C}$ at $q_i$ 
for $1\le i\le 4$, giving 
four sections of $\widehat{p}|_{X_C}$. 
Note that
we have
\begin{align}\label{Ci=C}
C_i=C \subset 2S=p^{-1}(q_i).
\end{align}
Also we have  
\begin{align*}
p|_{X_C}^{-1}(x)=\widetilde{C} \subset p^{-1}(x)=
\widetilde{S}, \ x \neq q_i. 
\end{align*}
Using the K\"unneth formula, 
we see that
\begin{align}\label{H2}
H_2(X, \mathbb{Z})=H_2(S, \mathbb{Z}) \oplus \mathbb{Z}[E]
\end{align}
where $[E]$ is the fiber class of the 
projection 
$\widehat{p} \colon X \to S$. 
Let $\beta$ be the homology class given by 
\begin{align*}
\beta=([C], 0) \in H_2(X, \mathbb{Z})
\end{align*}
under the decomposition (\ref{H2}). 
By (\ref{Ci=C}), each $C_i$ 
has homology class $\beta$.
The computations in this subsection 
are summarized below
(which is also stated in Proposition~\ref{prop:counter}):  
\begin{prop}\label{prop:counter2}
Suppose that $C$ is of type $I_n$ for $n\ge 2$, i.e. 
$C$ is a circle of $\mathbb{P}^1$ with $n$-irreducible components.  
Then the HST, KL, our definitions, 
and the expected 
answers (from GW or PT theory)
are given in the following table: 
\begin{align*}
\begin{tabular}{|c|c|c|c|c|c|c|} \hline
 & \rm{HST} & \rm{KL} & \rm{ours} & \rm{expected}  \\ \hline
$n_{0, \beta}$  & $-8n$  &  $0$ &  $0$ & $0$   \\ \hline
$n_{1, \beta}$  & $4n$  &  $4n$ &  $4$ & $4$  \\ \hline
$n_{\ge 2, \beta}$  & $0$  &  $0$ & $0$ & $0$ \\ \hline
\end{tabular}
\end{align*}
\end{prop}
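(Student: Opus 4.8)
```latex
The plan is to compute each of the four moduli-theoretic quantities for the class $\beta=([C],0)$ and read off the invariants via the character formula. The key geometric input is the structure of $\Sh_{\beta}(X)$ and of the Hilbert-Chow map $\pi$, which I expect to analyze by exploiting the two fibrations in \eqref{fibration}. First I would describe $\Chow_{\beta}(X)$: since $\beta$ restricts to the double-fiber class on $S$ and $[E]$-component is zero, the supporting curves must be contained in the fibers of $\widehat{p}$ or project to $C$; using \eqref{Ci=C} the four sections $C_1,\dots,C_4$ each represent $\beta$, and together with the sheaves supported on $\widetilde C\subset\widetilde S$-fibers this should give the full picture. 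The analysis near each $q_i$ reduces, via the local isomorphism $p^{-1}(U_i)\cong p'^{-1}(U)$ of Remark~\ref{rmk:TotKS}, to the \emph{local surface} situation $X'=\mathrm{Tot}(K_S)$ studied in Section~\ref{sec:local}, so I can import the local computations there for a type $I_n$ fiber.

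The main structural step is to identify $\Sh_{\beta}(X)$ globally. Because $C$ is of type $I_n$ (a cycle of $n$ rational curves), each $C_i\cong C$ contributes a copy of the compactified Jacobian of a genus-one nodal configuration, and the $n$ nodes produce the combinatorial factor of $n$ appearing in the table. The $sl_2\times sl_2$-formalism of Subsection~\ref{subsec:KL} then gives the HST and KL numbers: for HST one takes $\phi=\nu_{\ast}\IC(\widetilde{\Sh}_{\beta}(X))$ and computes the left/right Lefschetz decomposition \eqref{HIR}, which over an $I_n$ curve yields the stated $-8n$ in genus $0$ and $4n$ in genus $1$; the sign and coefficient come from the Euler characteristics of the normalization, which is insensitive to the virtual structure. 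For KL one instead uses $\gr_W^{\bullet}(\phi_{\sS h})$; the genus-one answer remains $4n$ for \emph{any} orientation because the relevant cohomology is controlled by $H^1$ of the abelian part, and I would verify that the weight-graded rearrangement does not alter this count. Our definition uses $\dR\pi_{\ast}\phi_{\sS h}$ directly via Lemma~\ref{lem:ng}; here the crucial point is that the weight spectral sequence \eqref{weight:spectral} \emph{fails} to degenerate at $E_1$, so that the perverse cohomology of the honest vanishing-cycle pushforward differs from its weight-graded version, collapsing the $4n$ down to $4$.

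The hard part will be establishing this non-degeneration and carrying out the perverse-sheaf computation for our invariant. Concretely, I would analyze $\dR\pi_{\ast}\phi_{\sS h}$ over $\Chow_{\beta}(X)$ by combining the local surface model at each $q_i$ with the behavior over the generic locus where $p|_{X_C}^{-1}(x)=\widetilde C$. Using the CY orientation guaranteed by the local-surface analysis (Corollary~\ref{thm:CYloc} applied through Remark~\ref{rmk:TotKS}), the local contributions at the four points $q_i$ should assemble so that the genus-one term picks up a contribution $4$ rather than $4n$: the extra factor of $n$ present in the pure/graded computation is precisely the discrepancy measured by the $E_2$-differential in \eqref{weight:spectral}. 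I would make this precise by computing the two sides of \eqref{weight:spectral} for the model singularity and observing that the $n$-fold multiplicity in $\gr_W$ is not seen by the ungraded perverse cohomology. Finally, the genus-zero entry $0$ follows from Lemma~\ref{lem:g=0}: it equals the Behrend-weighted Euler characteristic $\int_{\Sh_{\beta}(X)}\nu_M\,de$, which the local computations (as in \eqref{gvcom:nodal}) show cancels to $0$ for the type $I_n$ configuration, matching the expected PT answer. The agreement of our column with the expected column then certifies the deformation-invariance claimed in the discussion, while the KL column's $4n$ furnishes the promised counterexample.
```
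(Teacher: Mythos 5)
Your overall narrative is on target --- genus zero reduces to a Behrend-weighted Euler characteristic, the KL answer is $4n$ for every orientation, and the gap between KL and our invariant is exactly the failure of the weight spectral sequence (\ref{weight:spectral}) to degenerate at $E_1$ --- but the geometry you propose to compute with is wrong, and without the correct geometry none of the nontrivial columns can actually be computed. For type $I_n$ with $n\ge 2$ the Chow variety is \emph{not} four points plus fiber classes: since $\widehat{p}|_{X_C}$ is a trivial $E$-bundle over each rational component $C^{(j)}$, the $n$ components of a cycle translate independently, so $\Chow_{\beta}(X)=E^{\times n}$ (equation (\ref{Chow:I2})) --- this dimension jump is the whole point of the example --- the image of the Hilbert--Chow map is a union of $n$ elliptic curves $\Gamma_1,\dots,\Gamma_n$ meeting at the four points $y_i$, and $\Sh_{\beta}(X)$ has, besides the four compactified Jacobians $C_i=\pi^{-1}(y_i)$, $n$ further components $M_1,\dots,M_n$ lying over the $\Gamma_i$ (Lemma~\ref{moduli:pi}). (The cycles on $\widetilde{C}\subset\widetilde{S}$-fibers that you include are in fact excluded, since $\widehat{p}_{\ast}\widetilde{C}=2C\neq C$.) The correct mechanism is then: for KL, $\gr_W^{0}(\phi_{\sS h})$ contains the $4n$ intersection complexes $\IC(C_i^{(j)})$ of the rational components (Lemma~\ref{lem:gr}), each pushing forward to $\mathbb{Q}_{y_i}[1]\oplus\mathbb{Q}_{y_i}[-1]$, whence $4n$ in genus one --- not ``$H^1$ of the abelian part'', which is the wrong source; for ours, the nonzero differential is the $d_1$-differential $E_1^{0,-1}\to E_1^{1,-1}$ (not an $E_2$-differential; the sequence degenerates at $E_2$), computed at each $y_i$ as the map on global sections of $0\to\mathbb{Q}_{C_i}\to\nu_{i\ast}\mathbb{Q}_{\widetilde{C}_i}\to\bigoplus_{j}\mathbb{Q}_{p_i^{(j)}}\to 0$, which has rank $n-1$, so $E_2^{0,-1}=\bigoplus_{i=1}^{4}\mathbb{Q}_{y_i}$ and $n_{1,\beta}=4$ (Lemma~\ref{lem:match}). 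You assert this collapse but supply no mechanism; the mechanism lives on the components and nodes that your model lacks.

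A second concrete gap: your proposed reduction to Section~\ref{sec:local} (``import the local computations there for a type $I_n$ fiber'') is not available. Theorem~\ref{thm:locsur} requires $p_{\ast}\gamma$ to be \emph{irreducible}, and an $I_n$ curve with $n\ge 2$ is reducible --- this is precisely why the example is not covered by the theorems of the paper and must be done by hand. What Remark~\ref{rmk:TotKS} legitimately supplies, and what the paper actually uses, is only the local statement that near each node $p_i^{(j)}$ the moduli space is analytically the critical locus of $xyz$, obtained by running the versal-deformation argument of Subsection~\ref{ex:enriques} for the reducible curve $C$ itself; this yields the weight-graded pieces via (\ref{local:gr}) and shows the Behrend function is identically $-1$, so that $n_{0,\beta}=-e(\Sh_{\beta}(X))=-\bigl(4e(C)+n(e(E)-4)\bigr)=0$ --- a count which again needs the component structure, not the irreducible-cycle formula (\ref{gvcom:nodal}). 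Finally, the spectral-sequence computation takes place at the points $y_i\in E^{\times n}$, not at $q_i\in\mathbb{P}^1$, which you conflate; and your HST entry, while numerically right ($-8n=-2\cdot 4n$ coming from the $4n$ rational components), likewise presupposes knowing those components.
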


As before, we consider the 
moduli space $\Sh_{\beta}(X)$ and the 
Hilbert-Chow map 
\begin{align}\label{HC:KL}
\pi \colon {\Sh}_{\beta}^{\rm{red}}(X) \to \Chow_{\beta}(X).
\end{align}
We have the following lemma on the 
Chow variety $\Chow_{\beta}(X)$:
\begin{lem}\label{lem:chow2}
A closed point of $\Chow_{\beta}(X)$ 
corresponds to a one cycle $\gamma$ on 
$X_C$
in the diagram (\ref{dia:F}) 
satisfying 
$\widehat{p}_{\ast}\gamma=C$
and $p_{\ast}\gamma=0$. 
\end{lem}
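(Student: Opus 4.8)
The plan is to characterise effective one-cycles of class $\beta$ by pushing them forward along the two fibrations $p\colon X\to\mathbb{P}^1$ and $\widehat{p}\colon X\to S$ of (\ref{fibration}), and then to use that the half-fibre $C$ is rigid on the Enriques surface $S$. The statement is an equality of sets of closed points, so I would prove two inclusions.

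First I would record the numerical constraints coming from $[\gamma]=\beta=([C],0)$ in the decomposition (\ref{H2}). Applying $\widehat{p}_{\ast}$, which annihilates the fibre class $[E]$ and restricts to the identity on the $H_2(S,\mathbb{Z})$-summand, gives $\widehat{p}_{\ast}[\gamma]=[C]$; applying $p_{\ast}$, which annihilates the $H_2(S)$-summand (the class $([C],0)$ is represented by the $p$-contracted curve $C_i$ of (\ref{Ci=C})) while sending $[E]$ to $2[\mathbb{P}^1]$, gives $p_{\ast}[\gamma]=0$ in $H_2(\mathbb{P}^1,\mathbb{Z})\cong\mathbb{Z}$. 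Since $\gamma$ is effective and each component contributes a nonnegative multiple of $[\mathbb{P}^1]$ to $p_{\ast}\gamma$, the vanishing forces $p_{\ast}\gamma=0$ as a cycle: every component of $\gamma$ is contracted by $p$ and hence lies in a fibre of $p$ (a copy of $\widetilde{S}$, or of $S$ over the points $q_i$). In particular no component is a whole $\widehat{p}$-fibre $E$, since such a curve has $p_{\ast}[E]=2[\mathbb{P}^1]\neq0$.

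Next I would pin down $\widehat{p}_{\ast}\gamma$ as the cycle $C$. As $\gamma$ is contracted by $p$, each component lies in a fibre surface of $p$, on which $\widehat{p}$ is the finite map $\sigma$ (or the identity) and so contracts no curve; hence $\widehat{p}_{\ast}\gamma$ is an effective divisor $D$ on $S$ with $[D]=[C]$ in $H_2(S,\mathbb{Z})$. Here lies the one nontrivial point: on the Enriques surface the exponential sequence gives an isomorphism $\Pic(S)\xrightarrow{\cong}H^2(S,\mathbb{Z})$ (using $H^1(\oO_S)=H^2(\oO_S)=0$), so homological and linear equivalence coincide; together with $h^0(\oO_S(C))=1$ for the half-fibre $C$ (which follows from $\chi(\oO_S(C))=1$ by Riemann--Roch and the rigidity of the multiple fibre), this yields $|C|=\{C\}$ and hence $D=C$. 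This is the step that genuinely uses that $C$ is the half-fibre of the elliptic pencil, and I expect the real content to lie here; I would emphasise that integral homology is essential, since the other half-fibre $C'=C+K_S$ is numerically but not homologically equal to $C$.

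Finally I would assemble the inclusions. From $\widehat{p}_{\ast}\gamma=C$ every component of $\gamma$ maps into $\Supp(C)$, so $\gamma\subset\widehat{p}^{-1}(C)=X_C$, and the conditions $\widehat{p}_{\ast}\gamma=C$, $p_{\ast}\gamma=0$ hold, giving one direction. For the converse, let $\gamma$ be an effective one-cycle on $X_C$ with $\widehat{p}_{\ast}\gamma=C$ and $p_{\ast}\gamma=0$. Relative to (\ref{H2}) the homomorphism $(\widehat{p}_{\ast},p_{\ast})$ on $H_2(X,\mathbb{Z})$ is block-diagonal, equal to the identity on the $H_2(S)$-summand and to multiplication by $2$ on $\mathbb{Z}[E]$, hence injective; therefore $[\gamma]$ is determined by its two images $([C],0)$ and equals $\beta$, so $\gamma\in\Chow_{\beta}(X)$. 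This proves the reverse inclusion and the claimed description of the closed points of $\Chow_{\beta}(X)$.
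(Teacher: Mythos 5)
Your proof is correct and follows essentially the same route as the paper: push the cycle forward to $S$, invoke the uniqueness of $C$ as the effective one-cycle in its integral homology class $[C]$ to get $\widehat{p}_{\ast}\gamma=C$ and hence $\gamma\subset X_C$, and deduce $p_{\ast}\gamma=0$ from effectivity together with $p_{\ast}\beta=0$. The only difference is one of detail, not of method: the paper simply asserts the uniqueness of $C$, whereas you justify it via $\Pic(S)\cong H^2(S,\mathbb{Z})$ and the rigidity of the half-fibre (correctly noting that integral, not numerical, homology is needed to exclude the other half-fibre $C+K_S$), and you also spell out the exclusion of $\widehat{p}$-contracted components and the converse inclusion, which the paper treats as obvious.
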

\begin{proof}
For a one cycle $\gamma \in \Chow_{\beta}(X)$, the cycle 
$\widehat{p}_{\ast}\gamma$ is a one cycle 
on $S$ with homology class $[C]$.  
Then we have $\widehat{p}_{\ast}\gamma=C$ as $C$ is the 
unique effective one cycle on $S$
 with homology class 
$[C]$. 
In particular, $\gamma$ is supported on $X_C$. 
The last statement $p_{\ast}\gamma=0$ is obvious from 
$p_{\ast}\beta=0$. 
\end{proof}
We first assume that $C$ is of type $I_0$, i.e.  
$C$ is a smooth elliptic curve. 
In this case, we have the following: 
\begin{lem}\label{lem:I0}
If $C$ is of type $I_0$, then (\ref{HC:KL}) is
\begin{align*}
{\Sh}_{\beta}^{\rm{red}}(X) & \to \Chow_{\beta}(X) \\
\amalg_{i=1}^4 C_i & \to 
\{q_1, q_2, q_3, q_4\} \\
x \in C_i & \mapsto q_i.
\end{align*}
In particular, 
(\ref{HC:KL}) is a CY fibration with a CY orientation data. 
\end{lem}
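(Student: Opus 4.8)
The plan is to understand the geometry of the moduli space $\Sh_\beta(X)$ when $C$ is a smooth elliptic curve, by using the product structure $X_C = (\widetilde{C} \times E)/\langle \tau\rangle$ from the diagram~(\ref{dia:F}). First I would use Lemma~\ref{lem:chow2} to restrict attention to one-cycles $\gamma$ supported on $X_C$ with $\widehat{p}_\ast\gamma = C$ and $p_\ast\gamma = 0$; since a stable sheaf $E$ with $[E]=\beta$ is supported on such a cycle, its support must be one of the curves on which $p$ is constant. The key point is that the fibers of $p|_{X_C}$ over the non-torsion points $x \neq q_i$ are copies of the connected curve $\widetilde{C}$, while the fibers over $q_1,\dots,q_4$ are the \emph{reduced} double fibers $C_i = C$ by~(\ref{Ci=C}). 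A sheaf supported set-theoretically on a fiber $\widetilde{C} \subset \widetilde{S} = p^{-1}(x)$ for $x\neq q_i$ would have $\widehat p_\ast$ of its cycle equal to $\sigma_\ast[\widetilde C] = 2[C] \neq [C]$, which is inconsistent with the required class; so the only allowed supports are the four sections $C_1,\dots,C_4$.

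Next I would identify the sheaves themselves. Each $C_i \cong C$ is a smooth elliptic curve embedded as a section of $\widehat p|_{X_C}$, and I would check that $C_i$ is rigid in $X$ with the appropriate normal bundle, so that the only stable sheaves with support $C_i$ and $\chi = 1$ are the line bundles $\oO_{C_i}(x)$ of degree one, parametrized by the points $x \in C_i$. This gives the set-theoretic identification ${\Sh}_\beta^{\rm red}(X) = \amalg_{i=1}^4 C_i$, and the Hilbert-Chow map sends all of $C_i$ to the single cycle class $q_i \in \Chow_\beta(X)$, since the underlying one-cycle of $\oO_{C_i}(x)$ is just $C_i$ sitting over $q_i$ (independent of $x$). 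I would verify that $\Chow_\beta(X) = \{q_1,q_2,q_3,q_4\}$ consists of exactly these four reduced points, again using Lemma~\ref{lem:chow2} together with the fact that $C$ is the unique effective cycle in its class on $S$.

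For the CY fibration claim, I would invoke the argument from Theorem~\ref{thm:smooth} / Proposition~\ref{lem:M:CY}: since each $C_i$ is a smooth projective curve in $X$, the $d$-critical scheme $\Sh_\beta(X)$ is strictly CY at each $\gamma = q_i$ by Proposition~\ref{lem:M:CY}, and strictly CY implies the existence of a CY orientation data (as noted in the Remark following Definition~\ref{defi:slcy}). Since the map is a CY fibration locally over each point $q_i$ of the (discrete) Chow variety, it is a CY fibration in the sense of Definition~\ref{loc:cy}~(iv).

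The main obstacle will be ruling out sheaves whose support jumps to the generic fiber $\widetilde{C}$ or to non-reduced thickenings, and confirming that no strictly semistable or non-locally-free sheaves appear; this requires a careful comparison of homology classes under $\widehat p_\ast$ and $p_\ast$ and the purity/stability of the candidate sheaves. Once the support is pinned to the four sections, the remaining identification with the Jacobians (here just the curves $C_i$ themselves, via degree-one line bundles) and the verification of rigidity are routine given the product structure. I would therefore concentrate the care on the support analysis and the claim that $\Chow_\beta(X)$ is the four reduced points $\{q_1,q_2,q_3,q_4\}$.
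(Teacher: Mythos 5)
Your treatment of the first half of the lemma follows the paper's proof essentially verbatim: the cycle must be irreducible, hence contracted by $p$ into a single fiber of $p|_{X_C}$; fibers over $x\neq q_i$ are excluded because $\widehat{p}_{\ast}[\widetilde{C}]=\sigma_{\ast}[\widetilde{C}]=2[C]\neq[C]$; and the sheaves over each $q_i$ are the degree-one line bundles, giving $\Chow_{\beta}(X)=\{q_1,\ldots,q_4\}$ and $\Sh^{\rm{red}}_{\beta}(X)=\amalg_i\Pic^1(C_i)\cong\amalg_i C_i$. Where you genuinely diverge is the CY claim. The paper establishes it through an explicit computation: from the exact sequence
\begin{align*}
0 \to H^1(\oO_{C_i}) \to \Ext_X^1(\oO_{C_i},\oO_{C_i}) \to H^0(N_{C_i/X})
\end{align*}
and the fact that $N_{C_i/X}$ is an extension of non-trivial $2$-torsion line bundles (so $H^0(N_{C_i/X})=0$), it concludes that $\amalg_i\Pic^1(C_i)\hookrightarrow\Sh_{\beta}(X)$ is an isomorphism of \emph{schemes}; smoothness of $\Sh_{\beta}(X)$ then pins down the $d$-critical structure, so $K_{\Sh}^{\rm{vir}}=K_{C_i}^{\otimes 2}=\oO_{C_i}$ on each component as in Subsection~\ref{ex:depend}. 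You instead invoke Proposition~\ref{lem:M:CY} for each smooth projective curve $C_i\subset X$, obtaining strictly CY at each $q_i$; since $\Chow_{\beta}(X)$ is a finite discrete set, this gives the CY fibration property and, gluing the trivial square roots over the four points, a global CY orientation datum. This is a legitimate and arguably cleaner shortcut — Proposition~\ref{lem:M:CY} precedes this lemma and its hypotheses hold here — and it lets you bypass the normal bundle computation entirely. What the paper's computation buys, and what your route does not deliver, is smoothness of the moduli scheme $\Sh_{\beta}(X)$ itself (not merely of its reduction); that stronger fact is what the paper uses afterwards to write $\phi_{\sS h}=\lL[1]$ for a rank-one local system and carry out the invariant computations in Corollary~\ref{cor:gv1} and the comparison with the Kiem--Li definition.

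Two small points of care. First, rigidity is not what makes stable sheaves supported on $C_i$ into line bundles: that follows from smoothness of $C_i$ together with the fundamental cycle having multiplicity one; the vanishing $H^0(N_{C_i/X})=0$ is instead what controls the scheme structure of $\Sh_{\beta}(X)$, and in your route it is not actually needed anywhere. Second, to upgrade the set-theoretic identification to $\Sh^{\rm{red}}_{\beta}(X)\cong\amalg_i C_i$ as schemes, you should observe that $\amalg_i\Pic^1(C_i)\to\Sh_{\beta}(X)$ is a closed embedding (it is proper, injective on closed points, and injective on tangent vectors via $H^1(\oO_{C_i})\hookrightarrow\Ext^1_X(L,L)$), so its reduced image, being bijective onto the closed points, is all of $\Sh^{\rm{red}}_{\beta}(X)$.
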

\begin{proof}
Let $\gamma$ be a one cycle on 
$X_C$ satisfying the conditions in Lemma~\ref{lem:chow2}. 
Since $C$ is irreducible, the cycle $\gamma$ must be irreducible as well. 
Then $p(\gamma) \in \mathbb{P}^1$ is a one point, 
so $\gamma$ is 
supported on either on $C_i$ for $1\le i\le 4$ or 
$\widetilde{C} \subset p^{-1}(x)$ for $x \neq q_i$. 
Since $\widehat{p}_{\ast}\widetilde{C}=\sigma_{\ast}\widetilde{C}=2C$,
the latter possibility is excluded. It follows that  
\begin{align*}
\Chow_{\beta}(X)=\{[C_1], [C_2], [C_3], [C_4]\}. 
\end{align*}
Let $\Pic^1(C_i) \cong C_i$ be the 
moduli space of line bundles on $C_i$ of degree $1$.
We have the natural closed embedding
\begin{align}\label{Pic:emb}
\amalg_{i=1}^4 \Pic^1(C_i) \hookrightarrow {\Sh}_{\beta}(X)
\end{align}
which is bijective on closed points as $C_i$ is smooth. 
On the other hand, we have the exact sequence
\begin{align*}
0 \to H^1(\oO_{C_i}) \to \Ext_X^1(\oO_{C_i}, \oO_{C_i})
\to H^0(N_{C_i/X}). 
\end{align*}
Since $N_{C_i/X}$ is a rank two vector bundle 
given by an extension of non-trivial 2-torsion line bundles, 
we have $H^0(N_{C_i/X})=0$ and the second arrow 
of the above sequence is an isomorphism.
This shows that 
(\ref{Pic:emb}) induces isomorphisms on tangent spaces. 
As $\Pic^1(C_i)$ is smooth, the embedding (\ref{Pic:emb}) 
is an isomorphism. 
\end{proof}

By the above lemma, we can define
$n_{g, \beta} \in \mathbb{Z}$ 
as in Definition~\ref{def:GV}. 
Together with the argument
in Subsection~\ref{ex:depend}, we have the following: 
\begin{cor}\label{cor:gv1}
If $C$ is of type $I_0$, we have the identity:   
\begin{align}\label{gv1}
n_{g, \beta}=
\left\{  
\begin{array}{cc}
4, & g=1, \\
0, & g\neq 1. 
\end{array}\right. 
\end{align}
The same identity holds for $n_{g, \beta}^{\rm{KL}}$ if and only if 
we take a CY orientation data. 
\end{cor}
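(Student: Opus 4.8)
The plan is to combine the explicit description of the moduli space in Lemma~\ref{lem:I0} with the orientation analysis of Subsection~\ref{ex:depend}. When $C$ is of type $I_0$, Lemma~\ref{lem:I0} identifies $\Sh_{\beta}^{\rm{red}}(X)=\coprod_{i=1}^4 C_i$ with each $C_i$ a smooth elliptic curve, identifies $\Chow_{\beta}(X)$ with the four reduced points $\{q_1,\dots,q_4\}$, and shows that the Hilbert-Chow map $\pi$ contracts each $C_i$ to $q_i$. Since every $C_i$ is smooth, the canonical $d$-critical structure $s_{\Sh}$ vanishes on each component, so the local $d$-critical chart is $(C_i,C_i,0,\id)$ and $K_{\Sh}^{\rm{vir}}|_{C_i}\cong K_{C_i}^{\otimes 2}\cong \oO_{C_i}$ because $C_i$ is elliptic. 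Thus first I would record this trivialization, which reduces everything to a computation on four disjoint elliptic curves.

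Next I would identify the vanishing cycle sheaf using exactly the discussion of Subsection~\ref{ex:depend}. There an orientation is a choice of $2$-torsion element of $\Pic^0(C_i)$, and it produces $\phi_{\sS h}|_{C_i}\cong \lL_i[1]$ for a rank one local system $\lL_i$ with $\lL_i^{\otimes 2}\cong \mathbb{Q}_{C_i}$; the CY orientation, being trivial along the fibers of $\pi$ and hence along all of $C_i$, is precisely the one forcing each $\lL_i$ to be trivial. For the CY orientation one therefore has $\phi_{\sS h}|_{C_i}\cong \mathbb{Q}_{C_i}[1]=\IC(C_i)$, and I would use this as the input for the character formula.

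Then I would compute the left-hand side of the defining identity in Definition~\ref{def:GV}. Because $\pi$ contracts each $C_i$ to a point, $\dR\pi_{\ast}\phi_{\sS h}$ is the sum over $i$ of $\dR\Gamma(C_i,\mathbb{Q})[1]$ placed at $q_i$, and on a point the perverse cohomology is ordinary cohomology. Using $H^{\ast}(C_i,\mathbb{Q})=(\mathbb{Q},\mathbb{Q}^2,\mathbb{Q})$ in degrees $0,1,2$ yields
\[
\sum_{i\in\mathbb{Z}}\chi\bigl(\pH^i(\dR\pi_{\ast}\phi_{\sS h})\bigr)y^i=4\,(y^{-1}+2+y)=4\,(y^{\frac12}+y^{-\frac12})^2 ,
\]
from which one reads off $n_{1,\beta}=4$ and $n_{g,\beta}=0$ for $g\neq 1$, establishing (\ref{gv1}).

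Finally, for the KL comparison I would observe that $\phi_{\sS h}|_{C_i}=\lL_i[1]$ is a shifted rank one (unitary) local system on a smooth curve, hence already a pure Hodge module, so $\gr_W^{\bullet}(\phi_{\sS h})=\phi_{\sS h}$ and $n_{g,\beta}^{\rm{KL}}$ is computed by the same character formula via Lemma~\ref{lem:KLid}. For the CY orientation this reproduces (\ref{gv1}), while for any non-CY choice some $\lL_i$ is a nontrivial $2$-torsion local system, whence $H^{\ast}(C_i,\lL_i)=0$ and that component contributes nothing, forcing $n_{1,\beta}^{\rm{KL}}<4$; this gives the ``if and only if''. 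I expect the only delicate points to be the identification of the CY orientation with the trivial local system and the purity claim allowing one to replace $\gr_W^{\bullet}(\phi_{\sS h})$ by $\phi_{\sS h}$, and both of these follow directly from smoothness of the $C_i$ together with the analysis already carried out in Subsection~\ref{ex:depend}.
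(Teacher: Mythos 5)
Your proposal is correct and follows essentially the same route as the paper: the paper's proof of this corollary is precisely the combination of Lemma~\ref{lem:I0} (identifying $\Sh_{\beta}^{\rm{red}}(X)$ with four smooth elliptic curves contracted to four points) with the orientation/local-system analysis of Subsection~\ref{ex:depend}, which is exactly what you carry out. Your explicit computation of $\dR\pi_{\ast}\phi_{\sS h}$ and the purity observation that makes $\gr_W^{\bullet}(\phi_{\sS h})=\phi_{\sS h}$ (so that the KL invariant is computed by the same character formula) simply spell out the details the paper leaves implicit.
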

\begin{rmk}
As in the discussion of Subsection~\ref{cor:gv1}, 
the answer (\ref{gv1}) matches with the expected one
(see~\cite[Section~4]{HST}). 
\end{rmk}
We next consider the case 
that $C$ is of 
type $I_n$ for $n\ge 2$, i.e.
$C$ is a nodal curve 
of a circle of $n$ smooth rational curves. 
Such an Enriques surface exists
when $2\le n\le 9$
by~\cite[Theorem~5.7.5]{Dolbook}. 
We denote the irreducible components of $C$
as 
\begin{align*}
C=C^{(1)} \cup \cdots \cup C^{(n)}, \ 
C^{(j)}=\mathbb{P}^1.
\end{align*}
The nodal points are denoted as
\begin{align*}
p^{(j)}=C^{(j)} \cap C^{(j+1)}, \ 
j \in \mathbb{Z}/n\mathbb{Z}.  
\end{align*}
In this case, the Chow variety is 
described as follows: 
\begin{lem}
If $C$ is of type $I_n$
for $n\ge 2$,  
we have 
\begin{align}\label{Chow:I2}
\Chow_{\beta}(X)=E^{\times n}.
\end{align}
For $1\le i\le n$, let 
 $\Gamma_i \subset E^{\times n}$ be the closed
subvariety defined by 
\begin{align*}
&\Gamma_1=\{(x_1, \ldots, x_n) \in E^{\times n} : 
x_1=\cdots=x_n \}, \\
&\Gamma_i=\{(x_1, \ldots, x_n) \in E^{\times n} : 
x_i=\cdots=x_n=-x_1=\cdots=-x_{i-1}\}, \ 
i\ge 2. 
\end{align*}
Then the image of (\ref{HC:KL})
is identified with 
\begin{align}\notag
\Imm \pi =
\bigcup_{i=1}^{n} \Gamma_i \subset E^{\times n}.
\end{align}
The cycle $[C_i] \in \Chow_{\beta}(X)$ corresponds to 
the point $y_i=(e_i, \ldots, e_i)$. 
\end{lem}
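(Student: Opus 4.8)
The plan is to reduce the entire statement to an explicit analysis of one-cycles on the threefold $X_C = (\widetilde{C} \times E)/\langle \tau \rangle$ via Lemma~\ref{lem:chow2}, which identifies closed points of $\Chow_{\beta}(X)$ with one-cycles $\gamma$ on $X_C$ satisfying $\widehat{p}_{\ast}\gamma = C$ and $p_{\ast}\gamma = 0$. The first step is to understand $\widetilde{C} = \sigma^{-1}(C)$: since $C$ is of type $I_n$ and $\sigma$ is \'etale of degree two, $\widetilde{C}$ is a cycle of $2n$ rational curves $\widetilde{C}^{(1)}, \ldots, \widetilde{C}^{(2n)}$ on which the covering involution acts as the rotation $\widetilde{C}^{(k)} \mapsto \widetilde{C}^{(k+n)}$, simultaneously with $x \mapsto -x$ on $E$. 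I would fix representatives so that $\widetilde{C}^{(j)}$ maps to $C^{(j)}$ for $1 \le j \le n$.

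To compute $\Chow_{\beta}(X)$, I would observe that any such $\gamma$ decomposes as $\sum_{j=1}^{n} \gamma^{(j)}$ with $\gamma^{(j)}$ a degree-one section of $\widehat{p}$ over $C^{(j)} \cong \mathbb{P}^1$. Since $\widehat{p}^{-1}(C^{(j)}) \cong \mathbb{P}^1 \times E$ is a trivial $E$-bundle (identify it by restricting the cover to the single component $\widetilde{C}^{(j)}$, the quotient by $\tau$ collapsing the two preimages to one copy), and every morphism $\mathbb{P}^1 \to E$ is constant, each $\gamma^{(j)}$ is determined by a single point $x_j \in E$. This sets up the bijection $\Chow_{\beta}(X) \cong E^{\times n}$, $\gamma \mapsto (x_1, \ldots, x_n)$; that this is an isomorphism of reduced varieties follows since the sections vary in an algebraic family over $E^{\times n}$.

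The key step is to determine $\Imm\pi$, i.e. which cycles $\gamma$ support a stable sheaf. As a stable sheaf is indecomposable its support must be connected, and conversely a connected configuration of the $\gamma^{(j)}$ carries a stable sheaf of Euler characteristic one; hence $\Imm\pi$ is exactly the locus where $\bigcup_j \gamma^{(j)}$ is connected. To read off the gluing conditions I would pass to the cover and work with $\tau$-invariant height functions $a_k \in E$ over $\widetilde{C}^{(k)}$, normalized by $a_j = x_j$ and $a_{j+n} = -x_j$ for $1 \le j \le n$. Two adjacent components meet over the node $p^{(j)}$ precisely when the heights agree across the corresponding node of $\widetilde{C}$, which gives $x_j = x_{j+1}$ for $1 \le j \le n-1$ together with the twisted condition $x_n = -x_1$ at the closing node $p^{(n)} = C^{(n)} \cap C^{(1)}$. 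Because the components form a cyclic chain, connectivity holds iff at most one of these $n$ conditions fails; spelling out the case where the condition at index $i-1$ (with the convention that index $0$ means the closing node) is the one allowed to fail yields exactly $\Gamma_i$, so $\Imm\pi = \bigcup_{i=1}^{n} \Gamma_i$.

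The hardest and most error-prone point will be correctly tracking the sign twist at the closing node: traversing the cyclic chain once returns with the $\tau$-induced involution $x \mapsto -x$, which is precisely what forces the last relation to be $x_n = -x_1$ rather than $x_n = x_1$, and hence what splits the image into $n$ distinct translates $\Gamma_i$ instead of a single diagonal. Finally, to identify $[C_i]$ I would use $C_i = C \subset 2S = p^{-1}(q_i)$ from \eqref{Ci=C}: the point $q_i \in \mathbb{P}^1$ is the image of the $2$-torsion point $e_i \in E$, so the section over each $C^{(j)}$ sits at height $e_i$, giving $x_1 = \cdots = x_n = e_i$, i.e. $y_i = (e_i, \ldots, e_i)$; as $e_i$ is $2$-torsion this point lies on every $\Gamma_i$, consistent with $C_i$ being the connected curve $C$.
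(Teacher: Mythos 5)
Your proposal is correct and follows essentially the same route as the paper's proof: reduce via Lemma~\ref{lem:chow2} to cycles on $X_C$, use triviality of $\widehat{p}$ over each $C^{(j)}\cong\mathbb{P}^1$ and constancy of maps $\mathbb{P}^1\to E$ to get $\Chow_{\beta}(X)=E^{\times n}$, characterize $\Imm\pi$ as the connectedness locus using the monodromy $x\mapsto -x$, and identify $[C_i]$ with $y_i$. The only difference is one of explicitness: you spell out the node-by-node gluing conditions and the cyclic-graph argument (connected iff at most one of the $n$ matching conditions fails), which the paper compresses into the single assertion that $\gamma$ is connected iff $(x_1,\ldots,x_n)\in\Gamma_i$ for some $i$.
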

\begin{proof}
By Lemma~\ref{lem:chow2}, 
giving a point of $\Chow_{\beta}(X)$ is 
equivalent to giving a 
one cycle $\gamma$ on $X_C$ written as 
\begin{align*}
\gamma=\gamma_1+\cdots +\gamma_n, \ 
\widehat{p}_{\ast}\gamma_j=C^{(j)}.
\end{align*}
Since $\mathbb{P}^1$ is simply connected, 
the fibration $\widehat{p}$ is trivial over 
each irreducible component $C^{(j)} \subset C$. 
It follows that a choice of $\gamma_j$
is equivalent to a choice of 
a section of the
 trivial bundle $E \times C^{(j)} \to C^{(j)}$. 
As there is no non-constant morphism $\mathbb{P}^1 \to E$, 
such a section is determined by a point in 
$E$. 
Therefore
the set of choices of $\gamma$ is identified 
with $E^{\times n}$, and (\ref{Chow:I2}) holds.   

For a stable sheaf $[F] \in {\Sh}_{\beta}(X)$, 
its support must be connected. 
Suppose that a one cycle $\gamma \in \Chow_{\beta}(X)$
corresponds to a point $(x_1, \ldots, x_n) \in E^{\times n}$. 
Since the monodromy of $\widehat{p}|_{X_C}$ around
the generator of 
$\pi_1(C)=\mathbb{Z}$ is given by $\tau \colon E \to E$
sending $x$ to $-x$,  
the cycle $\gamma$ is connected if and only if 
$(x_1, \ldots, x_n) \in \Gamma_i$ for some $i$. 
Conversely if $\gamma$ is connected, then its 
underlying curve is either 
$C$ or a partial normalization of $C$ at 
one of $p^{(j)}$. 
In each case, there exist 
line bundles on it
giving closed points of ${\Sh}_{\beta}(X)$, 
so $\gamma \in \Imm \pi$
holds.  
The case $\gamma=[C_i]$ is only possible 
when $x_1=x_2=\cdots=x_n=-x_1$, so it corresponds to $y_i$. 
\end{proof}
For $1\le i\le 4$ and $1\le j\le n$, we 
denote by $p^{(j)}_{i}$ the 
nodal point in $C_i$
corresponding to $p^{(j)} \in C$. 
We have the following lemma
describing (\ref{HC:KL})
(see Figure~2 for $n=2$ case):
\begin{lem}\label{moduli:pi}
For $y \in 
\cup_{i=1}^n \Gamma_i$, we have 
(set theoretically)
\begin{align*}
\pi^{-1}(y)=\left\{ \begin{array}{cc}
C_i, & y=y_i, \\
\mbox{one point}, & y \neq y_i. 
\end{array}  \right. 
\end{align*}
Moreover the moduli space $\Sh_{\beta}(X)$ is non-singular 
except points 
$p^{(j)}_i$
in $\pi^{-1}(y_i)=C_i$.  
At $p^{(j)}_i$, the singularity of ${\Sh}_{\beta}(X)$ 
is analytically isomorphic 
to the critical locus of
\begin{align}\label{crit:f2}
f \colon 
\mathbb{A}^3 \to \mathbb{A}, \ 
(x, y, z) \mapsto xyz
\end{align}
at the origin $0\in \mathbb{A}^3$. 
\end{lem}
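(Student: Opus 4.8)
The plan is to treat the fibers of $\pi$ and the local geometry of $\Sh_\beta(X)$ over the four points $y_i$ separately from the remaining points of $\bigcup_i \Gamma_i$, exploiting the two fibrations $p,\widehat p$ of (\ref{fibration}). For the fibers, I would use Lemma~\ref{lem:chow2} together with the previous lemma: a point $y\in\bigcup_i\Gamma_i$ corresponds to a connected one-cycle $\gamma$ whose underlying curve is the full $I_n$ curve $C_i$ when $y=y_i$, and is a partial normalization of $C$ at a single node, hence a chain of rational curves of arithmetic genus $0$, when $y\neq y_i$. Since $\pi^{-1}(y)$ is the moduli of stable sheaves with $\chi=1$ supported on $\gamma$, it is the compactified Jacobian of the underlying curve: a single reduced point in the genus-$0$ case, and, because the compactified Jacobian of an $I_n$ curve is again an $I_n$ curve, a copy of $C_i$ in the genus-$1$ case, giving $\pi^{-1}(y_i)=C_i$. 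Under this identification the $n$ non-locally free sheaves on $C_i$ are exactly the nodes $p^{(j)}_i$ of $\pi^{-1}(y_i)=C_i$.

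For the local structure near $y_i$, note that $p^{-1}(q_i)=2S$ is a double fiber, so by Remark~\ref{rmk:TotKS} a neighborhood of $C_i$ in $X$ is analytically identified with one in $X'=\mathrm{Tot}(K_S)$, and the relevant sheaves are supported on $C\subset S$. As in Lemma~\ref{lem:Mobs} and Subsection~\ref{subsec:cone}, $\Sh_\beta(X)$ is then locally the dual obstruction cone $\Obs^{\ast}(\uU^{\bullet})$ over the compactified Jacobian of $C$, with cone direction $\Ext^2_S(F,F)^{\vee}\cong\Hom(F,F\otimes K_S)$. Using that $K_S|_C$ is a nontrivial $2$-torsion element of $\Pic^0(C)$ of degree $0$ on each component, I would compute $\Hom(F,F\otimes K_S)=H^0(C,K_S|_C)=0$ for a line bundle $F$, whereas for $F$ non-locally free at one node the pullback of $K_S|_C$ to the tree-like partial normalization is trivial, so $\Hom(F,F\otimes K_S)=\mathbb{C}$. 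Hence the cone is trivial along the line-bundle locus and $\Sh_\beta(X)$ is smooth there, while at $p^{(j)}_i$ it acquires a one-dimensional cone direction and a $3$-dimensional Zariski tangent space (consistent with $\Ext^1_{X'}(F,F)=\Ext^1_S(F,F)\oplus\Hom(F,F\otimes K_S)$ of dimension $2+1$). Realizing the node of the compactified Jacobian analytically as the zero locus $\{uv=0\}\subset\mathbb{A}^2$ of a section $s=uv$ of a rank-one bundle on the smooth surface $\mathbb{A}^2$, the critical-locus formula (\ref{crit:f}) identifies $\Sh_\beta(X)$ near $p^{(j)}_i$ analytically with $\Crit(f)$ for $f(u,v,w)=uvw$, the asserted model.

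To see that the remaining point fibers are smooth, observe that for $y\neq y_i$ the sheaf $F$ is a line bundle on a genus-$0$ chain $Z$ contained in a \emph{smooth} K3 fiber $\widetilde S=p^{-1}(t)$ with $t\neq q_i$, whose normal bundle in $X$ is trivial. The Mukai vector $v=(0,[Z],1)$ satisfies $\langle v,v\rangle=[Z]^2=-2$, so $F$ is rigid on $\widetilde S$, i.e. $\Ext^1_{\widetilde S}(F,F)=0$; triviality of $N_{\widetilde S/X}$ then forces $\Ext^1_X(F,F)=\Hom_{\widetilde S}(F,F)=\mathbb{C}$, matching the single direction along $\Gamma_i$. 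Thus these are smooth points, and combining with the previous step the singular locus of $\Sh_\beta(X)$ is exactly the finite set $\{p^{(j)}_i\}$.

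The step I expect to be hardest is pinning the singularity at $p^{(j)}_i$ down to $\Crit(uvw)$ on the nose rather than only up to higher-order corrections: this rests on the fact that an ordinary node is analytically exactly $\{uv=0\}$ and that the obstruction bundle is genuinely trivial of rank one, so that the pairing in (\ref{crit:f}) produces precisely $uvw$. A secondary technical point is that Lemma~\ref{lem:Mobs} and the cone formalism of Subsection~\ref{subsec:cone} were phrased for irreducible cycles, so I would first record that they are purely local statements about the sheaf obstruction theory on $S$ and remain valid for the reducible curve $C$.
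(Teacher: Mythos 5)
Your outline is correct, and for the crucial statement (the singularity type at $p_i^{(j)}$) it runs on the same engine as the paper: reduce to $X'=\mathrm{Tot}(K_S)$ via Remark~\ref{rmk:TotKS}, regard the moduli space as the dual obstruction cone over ${\Sh}_S$ as in Lemma~\ref{lem:Mobs} and Subsection~\ref{subsec:cone}, and exhibit it as a critical locus. The fiber description is identical to the paper's (compactified Jacobian of the $I_n$ curve over $y_i$, a single point over $y\neq y_i$). Where you genuinely diverge is the smoothness claim over $y\neq y_i$: the paper simply reruns the normal-bundle argument of Lemma~\ref{lem:I0} (extensions of non-trivial $2$-torsion line bundles), whereas you place the genus-zero chain inside a smooth K3 fiber, use $\langle v,v\rangle=[Z]^2=-2$ to get rigidity, and then triviality of $N_{\widetilde{S}/X}$ to get $\Ext^1_X(F,F)=\mathbb{C}$; this is valid and arguably more self-contained. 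For the node itself the two routes differ in execution: the paper never argues pointwise, but builds the global critical chart (\ref{def:M'}) from a versal deformation $\cC\to T$ of $C$ with $g'(t,u)=\sum_i t_iu_i$, exactly as in Subsection~\ref{ex:enriques}, and reads off the local form $xt_1u_1+t_2u_2+\cdots+t_nu_n$, whose critical locus is that of (\ref{crit:f2}). That versal chart buys, in one stroke, the two things your pointwise argument must supply by hand: the scheme structure of ${\Sh}_S$ at the non-locally-free point and the presentation of its obstruction theory.

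This is where your one soft spot lies. The dual obstruction cone depends on the obstruction theory $\uU^{\bullet}_{\Sh}$, not merely on the underlying scheme and the numbers $\ext^1,\ext^2$; so to invoke (\ref{crit:f}) you must know that near $p_i^{(j)}$ the theory $\uU^{\bullet}_{\Sh}$ is presented as in (\ref{UAE}) by $(A,E,s)=(\mathbb{A}^2,\oO,uv)$. Your justification --- ``the node of the compactified Jacobian is analytically $\{uv=0\}$'' --- silently identifies the surface moduli problem ${\Sh}_S$ with the compactified Jacobian of $C$ including scheme structures and obstruction theories; that identification is the content of Proposition~\ref{prop:natU} and diagram (\ref{dia:M}), which the paper states only for irreducible cycles (you flag the reducibility issue, correctly, but it is precisely here that the real work sits, for the paper as well as for you). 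The gap is patchable without that identification: any perfect obstruction theory admits, locally at a point, a minimal presentation (\ref{UAE}) with $\dim A=\ext^1=2$ and $\rank E=\ext^2=1$; since $\{s=0\}$ has the node as its reduction, unique factorization gives $s=u^av^b$ up to a unit; and your own set-theoretic computation of the cone fibers (zero at line bundles, $\mathbb{C}$ at the node) shows the critical locus of $s\cdot w$ is one-dimensional, which forces $a=b=1$ --- if, say, $a\ge 2$, the entire plane $\{u=0\}$ would be critical. With that supplement your proof closes, by a route that is locally minimal where the paper's is globally versal.
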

\begin{proof}
It is well-known that the moduli of rank one stable 
sheaves on $C$ with Euler characteristic one 
is isomorphic to $C$ itself, 
by the map
$x \mapsto I_{x}^{\vee}$.  
Therefore $\pi^{-1}(y_i)=C_i$
follows. 
For $y \neq y_i$, 
$\pi^{-1}(y)$ consists of rank one 
stable sheaves $L$ on a 
partial normalization $C' \to C$
at one of $p^{(j)}$
with $\chi(L)=1$. So it
consists of a one point $\{\oO_{C'}\}$. 
For a point in ${\Sh}_{\beta}(X)$ except nodal
points in $\pi^{-1}(y_i)$, the corresponding sheaf is 
a line bundle on the underlying curve. 
Therefore the same argument of Lemma~\ref{lem:I0}
shows that ${\Sh}_{\beta}(X)$ is smooth except points $p_i^{(j)}$. 

By Remark~\ref{rmk:TotKS}, we can also 
describe the 
singularities 
of ${\Sh}_{\beta}(X)$ at $p_i^{(j)}$ by the same argument as 
in Subsection~\ref{ex:enriques}. 
Namely let $\pi_T \colon \cC \to T$ be a versal deformation 
of $C$, with $0\in T$ such that $C=\pi_T^{-1}(C)$. 
We can take $T$ to be a sufficiently small 
open neighborhood of $0 \in \mathbb{A}^n$. 
We define $B \subset \cC \times \mathbb{A}^n$ 
by the commutative diagram
\begin{align}\label{def:M'}
\xymatrix{
B =
\{df'=0\}
 \ar@<-0.3ex>@{^{(}->}[r]
\ar[rd]_{\pi'}
& \cC \times \mathbb{A}^n
 \ar[rd]^{f'} \ar[d]_{\pi_T \times \id} & \\
 & T \times \mathbb{A}^n \ar[r]^{g'} & \mathbb{A}^1. 
}
\end{align}
Here $g'$ is defined by
\begin{align*}
g'(t_1, \ldots, t_n, u_1, \ldots, u_n)=t_1 u_1+\cdots+t_n u_n.
\end{align*}
Then there exist analytic open neighborhoods 
$y_i \in V_i \subset \Chow_{\beta}(X)$, 
$0 \in V \subset T \times \mathbb{A}^n$ such that 
$\pi^{-1}(V_i)$ is isomorphic to $\pi^{'-1}(V)$\footnote{Indeed 
the image of $\pi'$ lies in ${0}\times \mathbb{A}^n$
so one can instead take an open neighborhood
$0 \in V \subset \mathbb{A}^n$}. 

It is easy to see that $B$ is only singular at 
$(p^{(j)}, 0) \in C \times \{0\} \subset \cC \times T$
for $1\le j\le n$, 
and near these points
 $f'$ is analytically isomorphic to
\begin{align}\label{write:f'}
f' \colon 
(x, t_1, \ldots, t_n, u_1, \ldots, u_n) \mapsto
xt_1u_1+t_2u_2+\cdots + t_n u_n
\end{align}
at the origin. 
The critical locus of the above function is isomorphic
to the critical locus of (\ref{crit:f2}). 
\end{proof}

\begin{lem}\label{lem:MCY}
In the above situation, the 
morphism (\ref{HC:KL}) is a CY fibration, and 
one can take a CY orientation data of ${\Sh}_{\beta}(X)$.  
\end{lem}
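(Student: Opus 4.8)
The plan is to exhibit a global Calabi–Yau $d$-critical chart for $\Sh_{\beta}(X)$ near each point of $\Chow_{\beta}(X)$ and verify that the virtual canonical bundle is trivial along the fibers of the Hilbert–Chow map, so that Definition~\ref{loc:cy} is satisfied. Since $\Imm\pi=\bigcup_{i=1}^n\Gamma_i$ is covered by the open loci where $y\neq y_i$ together with analytic neighborhoods of the four special points $y_i$, the argument splits into these two regimes, and I would treat them separately.

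First I would handle the generic locus. Away from the points $y_i$, Lemma~\ref{moduli:pi} tells us that $\Sh_{\beta}(X)$ is non-singular and each fiber of $\pi$ is a single reduced point. On the smooth locus the $d$-critical structure $s_{\Sh}$ vanishes, so $\phi_{\sS h}$ is just (a shift of) the constant sheaf twisted by the orientation local system, and $K_{\Sh}^{\rm vir}=K_{\Sh}^{\otimes 2}$ is an honest square. Here the CY condition amounts to choosing the square root $(K_{\Sh}^{\rm vir})^{1/2}=K_{\Sh}$ and checking that this is trivial along the (zero-dimensional) fibers of $\pi$, which is automatic. So over this locus $\pi$ is a CY fibration essentially for free, as in the remark following Subsection~\ref{ex:depend}.

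The main content is the local analysis at the four special points $y_i$, and this is where I expect the real work to lie. By Remark~\ref{rmk:TotKS}, an analytic neighborhood of $\pi^{-1}(y_i)=C_i$ inside $X$ is isomorphic to the corresponding neighborhood in the local surface $X'=\mathrm{Tot}(K_S)$, and Lemma~\ref{moduli:pi} produces an explicit model: $\pi^{-1}(V_i)\cong \pi'^{-1}(V)$ where $\pi'\colon B=\{df'=0\}\to T\times\mathbb{A}^n$ arises from the potential $g'(t,u)=\sum_j t_ju_j$ on $\cC\times\mathbb{A}^n$ as in the diagram~\eqref{def:M'}. The plan is to invoke exactly the machinery of Section~\ref{sec:local}: the relative compactified Jacobian $\overline{J}\to T$ has trivial relative canonical bundle by~\cite[Theorem~A]{MRV}, so the diagram~\eqref{dia:MR2} realizes $B$ (hence $\Sh_{\beta}(X)$ near $y_i$) as a strictly CY $d$-critical chart in the sense of Definition~\ref{defi:slcy}. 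This gives a CY orientation data locally on $\Chow_{\beta}(X)$ near each $y_i$.

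The remaining obstacle is gluing: I must check that the locally-defined CY orientations on the generic locus and on the neighborhoods of the $y_i$ are compatible, so that they patch to a genuine CY orientation data on all of $\Sh_{\beta}(X)$ (or, following Remark~\ref{def:global}, to verify that $\pi$ is a CY fibration and invoke the integral formula there). Since on the overlaps both descriptions are pulled back from the base $\Chow_{\beta}(X)$ along smooth morphisms with the virtual canonical bundle trivial along fibers, the square roots differ by a $\mathbb{Z}/2\mathbb{Z}$-torsor that is itself pulled back from the base; the compatibility then follows from the fact that the Stein factorization and the affinization in~\eqref{dia:sCYchart} identify the relevant line bundles up to the base. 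The cleanest route is probably to observe that, by Definition~\ref{loc:cy}~(iv), it suffices to verify the strictly CY condition locally at every $\gamma\in\Chow_{\beta}(X)$, which the two cases above accomplish, so no global gluing of orientations is strictly required for the statement that $\pi$ is a CY fibration.
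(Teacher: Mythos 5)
Your proposal is correct and is essentially the paper's own argument: the paper's entire proof consists of the single observation that the canonical line bundle of $\cC \times \mathbb{A}^n$ in the local model (\ref{def:M'}) is trivial, which is precisely the key point of your analysis near the points $y_i$ (your appeal to \cite[Theorem~A]{MRV} amounts to the same fact here, since for this genus-one versal family the relative compactified Jacobian is $\cC$ itself). Your extra discussion of the smooth locus and of gluing is harmless but, as you yourself note, not needed beyond the local statement: the CY condition of Definition~\ref{loc:cy} is checked locally on $\Chow_{\beta}(X)$, and near each $y_i$ the chart already carries the trivial square root.
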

\begin{proof}
The lemma follows since 
the canonical line bundle of $\cC \times \mathbb{A}^n$
in the diagram (\ref{def:M'}) is trivial. 
\end{proof}

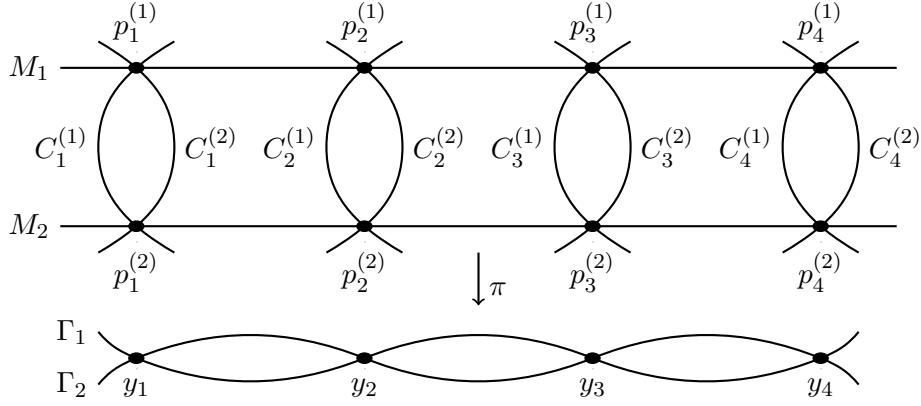
\begin{figure}
\begin{tikzpicture}[yscale=0.7]
\draw[thick] (0, 0) to [out=90, in=220] (1, 2);
\draw[thick] (0, 0) to [out=-90, in=-220] (1, -2);
\draw[thick] (1, 0) to [out=90, in=-40] (0, 2);
\draw[thick] (1, 0) to [out=-90, in=40] (0, -2);
\draw[fill] (0.5, 1.5) circle [radius=0.1] ;
\draw[fill] (0.5, 1.8) circle [radius=0] node[above]{$p_{1}^{(1)}$} ;
\draw[fill] (0.5, -1.5) circle [radius=0.1];
\draw[fill] (0.5, -1.8) circle [radius=0] node[below]{$p_{1}^{(2)}$} ;
\draw[thick] (3, 0) to [out=90, in=220] (4, 2);
\draw[thick] (3, 0) to [out=-90, in=-220] (4, -2);
\draw[thick] (4, 0) to [out=90, in=-40] (3, 2);
\draw[thick] (4, 0) to [out=-90, in=40] (3, -2);
\draw[fill] (3.5, 1.5) circle [radius=0.1];
\draw[fill] (3.5, 1.8) circle [radius=0] node[above]{$p_{2}^{(1)}$} ;
\draw[fill] (3.5, -1.5) circle [radius=0.1];
\draw[fill] (3.5, -1.8) circle [radius=0] node[below]{$p_{2}^{(2)}$} ;
\draw[thick] (6, 0) to [out=90, in=220] (7, 2);
\draw[thick] (6, 0) to [out=-90, in=-220] (7, -2);
\draw[thick] (7, 0) to [out=90, in=-40] (6, 2);
\draw[thick] (7, 0) to [out=-90, in=40] (6, -2);
\draw[fill] (6.5, 1.5) circle [radius=0.1];
\draw[fill] (6.5, 1.8) circle [radius=0] node[above]{$p_{3}^{(1)}$} ;
\draw[fill] (6.5, -1.5) circle [radius=0.1];
\draw[fill] (6.5, -1.8) circle [radius=0] node[below]{$p_{3}^{(2)}$} ;
\draw[thick] (9, 0) to [out=90, in=220] (10, 2);
\draw[thick] (9, 0) to [out=-90, in=-220] (10, -2);
\draw[thick] (10, 0) to [out=90, in=-40] (9, 2);
\draw[thick] (10, 0) to [out=-90, in=40] (9, -2);
\draw[fill] (9.5, 1.5) circle [radius=0.1];
\draw[fill] (9.5, 1.8) circle [radius=0] node[above]{$p_{4}^{(1)}$} ;
\draw[fill] (9.5, -1.5) circle [radius=0.1];
\draw[fill] (9.5, -1.8) circle [radius=0] node[below]{$p_{4}^{(2)}$} ;
\draw[thick] (-0.5, 1.5) node[left]{$M_1$} -- (10.5, 1.5);
\draw[thick] (-0.5, -1.5) node[left]{$M_2$}  -- (10.5, -1.5);
\draw[->][thick] (5, -2) -- (5, -3) node[above right]{$\pi$};

\draw[fill] (0, 0) circle [radius=0] node[left]{$C_{1}^{(1)}$} ;
\draw[fill] (1, 0) circle [radius=0] node[right]{$C_{1}^{(2)}$} ;
\draw[fill] (3, 0) circle [radius=0] node[left]{$C_{2}^{(1)}$} ;
\draw[fill] (4, 0) circle [radius=0] node[right]{$C_{2}^{(2)}$} ;
\draw[fill] (6, 0) circle [radius=0] node[left]{$C_{3}^{(1)}$} ;
\draw[fill] (7, 0) circle [radius=0] node[right]{$C_{3}^{(2)}$} ;
\draw[fill] (9, 0) circle [radius=0] node[left]{$C_{4}^{(1)}$} ;
\draw[fill] (10, 0) circle [radius=0] node[right]{$C_{4}^{(2)}$} ;

\draw[thick] (0.5, -4) to [out=30, in=150] (3.5, -4);
\draw[thick] (3.5, -4) to [out=30, in=150] (6.5, -4);
\draw[thick] (6.5, -4) to [out=30, in=150] (9.5, -4);

\draw[thick] (0.5, -4) to [out=-30, in=210] (3.5, -4);
\draw[thick] (3.5, -4) to [out=-30, in=210] (6.5, -4);
\draw[thick] (6.5, -4) to [out=-30, in=210] (9.5, -4);

\draw[fill] (0.5, -4) circle [radius=0.1];
\draw[fill] (0.5, -4.2) circle [radius=0] node[below]{$y_1$};
\draw[fill] (3.5, -4) circle [radius=0.1];
\draw[fill] (3.5, -4.2) circle [radius=0] node[below]{$y_2$};
\draw[fill] (6.5, -4) circle [radius=0.1];
\draw[fill] (6.5, -4.2) circle [radius=0] node[below]{$y_3$};
\draw[fill] (9.5, -4) circle [radius=0.1];
\draw[fill] (9.5, -4.2) circle [radius=0] node[below]{$y_4$};

\draw[thick] (0.5, -4) to [out=210, in=60] (0, -4.5) node[left]{$\Gamma_2$};
\draw[thick] (0.5, -4) to [out=150, in=-60] (0, -3.5) node[left]{$\Gamma_1$};
\draw[thick] (9.5, -4) to [out=30, in=-120] (10.0, -3.5);
\draw[thick] (9.5, -4) to [out=-30, in=120] (10.0, -4.5);
\end{tikzpicture}
\caption{Picture of HC map for $n=2$}
\end{figure}

Now we compute the KL definition 
for type $I_n$ case with $n\ge 2$, 
and see that it does not match with the 
predicted answer. 
Let us take a (not necessary CY)
orientation data on ${\Sh}_{\beta}(X)$, 
and
\begin{align*}
\phi_{\sS h} \in \Perv({\Sh}_{\beta}(X))
\end{align*}
the perverse sheaf as in Theorem~\ref{thm:BDJS}. 
Let 
\begin{align}\label{irred:M}
M_1, \ldots, M_n \subset {\Sh}_{\beta}(X)
\end{align}
be
the irreducible components 
of ${\Sh}_{\beta}(X)$ which are mapped to 
$\Gamma_i$ by the map (\ref{HC:KL}). 
We denote by 
\begin{align}\label{irred:C}
C_{i}^{(1)}, \ldots, C_i^{(n)} \subset 
\pi^{-1}(y_i)=C_i \subset {\Sh}_{\beta}(X)
\end{align}
the irreducible 
components of $C_i$. 
By Lemma~\ref{moduli:pi}, 
the subvarieties
(\ref{irred:M}), (\ref{irred:C})
form 
the set of irreducible components of ${\Sh}_{\beta}(X)$. 
We also denote by ${\Sh}^{\rm{sm}} \subset {\Sh}_{\beta}(X)$
the smooth locus of ${\Sh}_{\beta}(X)$, whose complement is 
the points $p_i^{(j)}$
by Lemma~\ref{moduli:pi}. 
We first compute the associated graded sheaf
of the weight filtration of $\phi_{\sS h}$. 
\begin{lem}\label{lem:gr}
There exist rank one local systems $\lL_k$
on $M_k \cap {\Sh}^{\rm{sm}}$ 
for $1\le k\le n$ such that $\gr_W^i(\phi_{\sS h})$ is written as
\begin{align}\label{grW}
&\mathrm{gr}_W^0(\phi_{\sS h})=
\bigoplus_{i=1}^n \mathrm{IC}(\lL_i) 
\oplus 
\bigoplus_{i=1}^{4} \bigoplus_{j=1}^n
\mathrm{IC}(C_{i}^{(j)}), \\ 
\notag
&\mathrm{gr}^{\pm 1}_W(\phi_{\sS h})=\bigoplus_{i=1}^{4} \bigoplus_{j=1}^n
\mathbb{Q}_{p_{i}^{(j)}}, \ 
\mathrm{gr}^{k}_W(\phi_{\sS h})=0, \ \lvert k \rvert \ge 2.  
\end{align}
\end{lem}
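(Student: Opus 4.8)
The plan is to determine $\gr_W^{\bullet}(\phi_{\sS h})$ by separating the contribution of the smooth locus of ${\Sh}_{\beta}(X)$ from that of the finitely many singular points $p_i^{(j)}$. Since each $\gr_W^{k}(\phi_{\sS h})$ is a pure, hence semisimple, perverse sheaf, it is a direct sum of intersection complexes of subvarieties (with local systems) together with skyscrapers; the whole content of the lemma is to identify these summands and the weights in which they sit. I would first compute $\phi_{\sS h}$ on the smooth locus ${\Sh}^{\rm sm}$ and then analyze it analytically-locally at the points $p_i^{(j)}$ via the model $\Crit(xyz)$ furnished by Lemma~\ref{moduli:pi}.

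On ${\Sh}^{\rm sm}$, which has dimension one, the sheaf $\phi_{\sS h}|_{{\Sh}^{\rm sm}}$ is a shifted rank-one local system $\lL[1]$ determined by the orientation, and it is pure of weight $0$. On $C_{i}^{(j)} \cap {\Sh}^{\rm sm}$ this system is trivial because $C_i^{(j)} \cong \mathbb{P}^1$ is simply connected; on $M_k \cap {\Sh}^{\rm sm}$, which dominates $\Gamma_i \cong E$, it may be a nontrivial $2$-torsion local system $\lL_k$, exactly as in the elliptic example of Subsection~\ref{ex:depend}. Taking intersection-complex extensions across the singular points produces the weight-$0$ summands $\IC(\lL_i)$ and $\IC(C_i^{(j)})$ appearing in \eqref{grW}.

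It remains to compute the local contribution at each $p_i^{(j)}$. By Lemma~\ref{moduli:pi}, near such a point $\phi_{\sS h}$ agrees with the mixed Hodge module of vanishing cycles $\phi_{xyz}(\IC(\mathbb{A}^3))$ of the trivalent singularity \eqref{crit:f2}, whose critical locus is the three coordinate axes. I would compute this directly: the Milnor fibre of $xyz$ at the origin is $(\mathbb{C}^*)^2$, the monodromy is unipotent, and the associated monodromy weight filtration yields $\gr_W^{0}$ equal to the sum of the intersection complexes of the three branches and $\gr_W^{\pm 1} = \mathbb{Q}_0$, one skyscraper in each of weights $+1$ and $-1$. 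As a consistency check, the stalk Euler characteristic of this answer at the origin is $-3 + 2 = -1$, which matches the Behrend function value $\nu(0) = (-1)^{3}\bigl(1 - \chi((\mathbb{C}^*)^2)\bigr) = -1$.

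Finally I would glue: the three branches through $p_i^{(j)}$ are $C_i^{(j)}$, $C_i^{(j+1)}$ and one component $M_k$, whose intersection complexes already account for the local $\gr_W^{0}$, so the genuinely new contribution at each node is precisely the pair of skyscrapers $\mathbb{Q}_{p_i^{(j)}}$ in weights $\pm 1$; summing over all $i,j$ gives \eqref{grW}. The Verdier self-duality $\mathbb{D}(\phi_{\sS h}) \cong \phi_{\sS h}$ of Theorem~\ref{thm:BDJS} forces $\gr_W^{k} \cong \mathbb{D}\,\gr_W^{-k}$, which both confirms the symmetry between weights $+1$ and $-1$ and gives the vanishing $\gr_W^{k} = 0$ for $\lvert k \rvert \ge 2$. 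I expect the main obstacle to be the precise mixed-Hodge-module weight computation at the triple point: checking that the extra classes really are skyscrapers placed in weights $\pm 1$ (and do not instead alter the local systems on the branches or land in $\gr_W^{0}$), and that these analytic-local computations patch into the asserted global statement.
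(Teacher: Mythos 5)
Your overall route is the same as the paper's: the shifted local system $\lL[1]$ of weight zero on the smooth locus, semisimplicity of the pure graded pieces into intersection complexes plus skyscrapers, the analytic-local model $\Crit(xyz)$ at the points $p_i^{(j)}$ with its weight filtration, and gluing. Two steps, however, have genuine gaps as written.

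First, the triviality of $\lL$ on $C_i^{(j)} \cap {\Sh}^{\rm{sm}}$ cannot be deduced from simple connectedness of $\mathbb{P}^1$: the local system $\lL$ exists only on the smooth locus, and $C_i^{(j)} \cap {\Sh}^{\rm{sm}} \cong \mathbb{C}^{\ast}$ (the component punctured at its two nodes), whose fundamental group is $\mathbb{Z}$, so a $2$-torsion rank-one local system there can perfectly well be nontrivial. This is not cosmetic: whether the $C_i^{(j)}$-summands of $\gr_W^0$ carry \emph{constant} coefficients (as opposed to the $M_k$-summands, where nontrivial $\lL_k$ are allowed) is exactly what the computation $n_{1,\beta}=4$ in Lemma~\ref{lem:match} relies on --- a nontrivial rank-one local system on $\mathbb{C}^{\ast}\subset\mathbb{P}^1$ has vanishing $\IC$-cohomology, as in the elliptic example of Subsection~\ref{ex:depend}. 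The paper closes this gap using the very local computation you perform: since the weight-zero part of the vanishing cycle sheaf of $xyz$ is the direct sum of the intersection complexes of the three branches with constant coefficients, the monodromy of $\lL$ around each node must be trivial, and a rank-one local system on $\mathbb{C}^{\ast}$ is determined by that monodromy. Your gluing paragraph instead presupposes that the untwisted $\IC(C_i^{(j)})$ matches the local picture, which is circular: monodromy triviality must be deduced from the local model first, and only then can one extend by $\IC$.

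Second, the lemma is asserted for an arbitrary, not necessarily CY, orientation data, and the identification of $\phi_{\sS h}$ near $p_i^{(j)}$ with the vanishing cycle sheaf of \eqref{crit:f2} depends on that choice; your proposal never addresses this. The paper argues in two stages: for a CY orientation, the global critical chart coming from the versal family ($B=\{df'=0\}$ inside $\cC\times\mathbb{A}^n$) together with the Thom--Sebastiani theorem gives the untwisted local model; then for any other orientation one has $\phi_{\sS h'}=\phi_{\sS h}\otimes\lL'$ for a \emph{global} rank-one local system $\lL'$ on ${\Sh}_{\beta}(X)$, and since $\lL'$ is defined on all of $C_i^{(j)}\cong\mathbb{P}^1$ (nodes included), it restricts trivially there --- this is where the $\pi_1(\mathbb{P}^1)$ argument is legitimately used --- so the form \eqref{grW} persists, with the $\lL_k$ possibly changing. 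A final small point: Verdier self-duality gives only the symmetry $\gr_W^{k}\cong\mathbb{D}\,\gr_W^{-k}$; the vanishing for $\lvert k\rvert\ge 2$ does not follow from duality but from purity on the smooth locus combined with the local computation at the nodes.
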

\begin{proof}
Note that $\phi_{\sS h}|_{{\Sh}^{\rm{sm}}}$ is  
written as $\lL[1]$ for a rank one local system 
$\lL$ on ${\Sh}^{\rm{sm}}$, which is of weight zero by 
our weight convention as in~\cite{BDJS}. 
Note that
any perverse sheaf
on ${\Sh}_{\beta}(X)$
which underlines
a pure Hodge module is a direct sum of 
intersection complexes of some local systems on
dense open subsets of closed irreducible
subvarieties
in ${\Sh}_{\beta}(X)$. 
Therefore, $\gr_W^0(\phi_{\sS h})$ is written as 
\begin{align*}
\mathrm{gr}_W^0(\phi_{\sS h})=
\bigoplus_{i=1}^n \mathrm{IC}(\lL|_{M_i \cap {\Sh}^{\rm{sm}}}) 
\oplus 
\bigoplus_{i=1}^{4} \bigoplus_{j=1}^n
\mathrm{IC}(\lL|_{C_{i}^{(j)} \cap {\Sh}^{\rm{sm}}})
\oplus Q.
\end{align*}
Here $Q$ 
and $\mathrm{gr}_W^k(\phi_{\sS h})$ for $k\neq 0$
are 
supported on points
$p_{i}^{(j)}$. 

Let us first take a CY orientation data of ${\Sh}_{\beta}(X)$
and show that 
$\lL|_{C_{i}^{(j)} \cap {\Sh}^{\rm{sm}}}$ is a trivial local system. 
In the diagram (\ref{def:M'}), 
we set 
\begin{align*}
\phi_{B}=\phi_{f'}(\IC(\cC \times \mathbb{A}^n)) \in \Perv(B). 
\end{align*}
By the description of $f'$ in (\ref{write:f'}) and 
the Thom-Sebastiani theorem, 
$\phi_{B}$ is locally near $(p^{(j)}, 0)$ 
calculated as the vanishing cycle sheaf of (\ref{crit:f2}). 
Let $N$ be the critical locus of (\ref{crit:f2})
and $N_i$ for $1\le i\le 3$ the irreducible 
components of $N$. 
For $\phi_N=\phi_{f}(\IC(\mathbb{A}^3))$, 
its weight filtration is easily computed 
(for example see
the last part of Section~6 in~\cite{Efi})
\begin{align}\label{local:gr}
\mathrm{gr}^0_{W}(\phi_N)=\bigoplus_{i=1}^3 \mathrm{IC}(N_i), 
\ \mathrm{gr}^{\pm 1}_W(\phi_N)=\mathbb{Q}_0, \ 
\mathrm{gr}^i_W(\phi_N)=0, \ \lvert i \rvert \ge 2. 
\end{align}
The above local 
calculation implies that 
the monodromy of 
$\lL|_{C_{i}^{(j)} \cap {\Sh}^{\rm{sm}}}$
around $p_i^{(j)}$ is trivial. 
Since $C_i^{(j)} \cap {\Sh}^{\rm{sm}}=\mathbb{C}^{\ast}$, 
the monodromy around $p_i^{(j)}$ determines the local 
system on it. Therefore 
$\lL|_{C_{i}^{(j)} \cap {\Sh}^{\rm{sm}}}$ is trivial. 
The computation (\ref{local:gr}) also 
shows that $Q=0$ and 
gives the result for 
$\mathrm{gr}_W^k(\phi_{\sS h})$ with $k\neq 0$.
Therefore 
we obtain the identities (\ref{grW}) 
for a CY orientation data of ${\Sh}_{\beta}(X)$. 

Next let us take an orientation data
of ${\Sh}_{\beta}(X)$ which is not necessary CY, and 
denote by $\phi_{\sS h'}$ the resulting 
perverse sheaf on ${\Sh}_{\beta}(X)$. 
Then by Theorem~\ref{thm:BDJS}, 
$\phi_{\sS h'}=\phi_{\sS h} \otimes \lL'$ for some
rank one local system $\lL'$ on ${\Sh}_{\beta}(X)$. 
Since $\otimes \lL'$ preserves the purity, and each 
$\lL'|_{C_i^{(j)}}$ is trivial 
as $C_i^{(j)}=\mathbb{P}^1$ is simply connected, 
the associated graded sheaf 
$\gr_W^{\bullet}(\phi_{\sS h'})$ is still of the form (\ref{grW}). 
\end{proof}

\begin{lem}\label{n1=8}
Suppose that
 $C$ is of type $I_n$ for $n\ge 2$.
Then for any choice of orientation data of ${\Sh}_{\beta}(X)$,
we 
have $n_{1, \beta}^{\rm{KL}}=4n$. 
\end{lem}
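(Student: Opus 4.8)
The plan is to compute the left-hand side of the character formula in Lemma~\ref{lem:KLid} for the pure perverse sheaf $\phi=\gr_W^{\bullet}(\phi_{\sS h})$, whose explicit description is furnished by Lemma~\ref{lem:gr}, and then simply read off $n_{1,\beta}^{\rm{KL}}=n_{1,\beta}(\gr_W^{\bullet}(\phi_{\sS h}))$ from the resulting Laurent polynomial in $y^{\pm 1}$. Since $\dR\pi_{\ast}$, the perverse cohomology functors $\pH^i(-)$, and $\chi(-)$ are all additive over direct sums, I would push forward each summand in the decomposition (\ref{grW}) separately and add the contributions to $\sum_{i}\chi(\pH^i(\dR\pi_{\ast}\phi))y^i$.

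First I would dispose of the summands $\IC(\lL_k)$ supported on the components $M_k$. By Lemma~\ref{moduli:pi} the restriction $\pi|_{M_k}\colon M_k\to\Gamma_k$ is a bijective proper morphism between smooth curves, each isomorphic to $E$, hence an isomorphism; moreover the local model (\ref{local:gr}) shows that $\lL_k$ has trivial monodromy around the nodes of $\Sh_{\beta}(X)$ lying on $M_k$, so $\IC(\lL_k)$ is the shift of a genuine rank one local system on $M_k\cong E$. Consequently $\dR\pi_{\ast}\IC(\lL_k)$ is perverse, concentrated in degree $0$, and has vanishing Euler characteristic since $\chi(E,\lL_k)=0$ for any rank one local system on an elliptic curve. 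These summands therefore contribute $0$.

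Next I would treat the summands $\IC(C_i^{(j)})$ with $C_i^{(j)}\cong\mathbb{P}^1$, together with the weight $\pm1$ skyscrapers $\mathbb{Q}_{p_i^{(j)}}$. By Lemma~\ref{moduli:pi} each $C_i^{(j)}$ is contracted by $\pi$ to the point $y_i$, so $\dR\pi_{\ast}\IC(C_i^{(j)})=\dR\Gamma(\mathbb{P}^1,\mathbb{Q}_{\mathbb{P}^1}[1])$ placed at $y_i$; as the perverse t-structure at a point is the standard one, this has $\pH^{-1}=\pH^{1}=\mathbb{Q}_{y_i}$ and $\pH^0=0$, contributing $y+y^{-1}$. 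Each skyscraper $\mathbb{Q}_{p_i^{(j)}}$ pushes forward to $\mathbb{Q}_{y_i}$ in perverse degree $0$, and there are two of them per node (one from $\gr_W^{+1}$ and one from $\gr_W^{-1}$), contributing $2$ to the $y^0$ coefficient.

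Assembling these, for each fixed $i\in\{1,2,3,4\}$ the fiber $C_i$ has $n$ rational components and $n$ nodes, so its total contribution is
\begin{align*}
n(y+y^{-1})+2n=n\left(y^{\frac{1}{2}}+y^{-\frac{1}{2}}\right)^{2},
\end{align*}
and summing over the four points $y_1,\dots,y_4$ gives
\begin{align*}
\sum_{i\in\mathbb{Z}}\chi\!\left(\pH^i(\dR\pi_{\ast}\gr_W^{\bullet}(\phi_{\sS h}))\right)y^i=4n\left(y^{\frac{1}{2}}+y^{-\frac{1}{2}}\right)^{2}.
\end{align*}
Comparing with the right-hand side of (\ref{KL:id}) yields $n_{1,\beta}^{\rm{KL}}=4n$ (and incidentally $n_{0,\beta}^{\rm{KL}}=0$, $n_{\ge 2,\beta}^{\rm{KL}}=0$). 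Since Lemma~\ref{lem:gr} establishes the decomposition (\ref{grW}) for an arbitrary orientation data—only the local systems $\lL_k$ may change, and these do not affect the vanishing $\chi(\IC(\lL_k))=0$—the value $4n$ is independent of the orientation, as asserted. The one point demanding care is the clean interaction between the contracted $\mathbb{P}^1$-components, which alone contribute $y+y^{-1}$, and the weight $\pm1$ skyscrapers supplying precisely the missing $+2$, so that each nodal fiber produces a genuine multiple of $(y^{1/2}+y^{-1/2})^2$; it is exactly the linear growth of this multiple in $n$ that makes the KL answer $4n$ fail to match the expected value $4$ once $n\ge 2$.
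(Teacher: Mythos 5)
Your proof is correct and follows essentially the same route as the paper: both expand the left-hand side of the character formula (\ref{KL:id}) using the decomposition (\ref{grW}) of Lemma~\ref{lem:gr}, push forward each summand, and read off the coefficient of $y^{\pm 1}$. The only difference is that the paper treats the $\IC(\lL_k)$ summands as an unspecified constant (since $\pi|_{M_k}$ is finite onto $\Gamma_k$, their pushforwards are perverse and contribute only to $y^0$, which does not affect $n_{1,\beta}^{\rm{KL}}$), whereas you additionally show via the trivial-monodromy argument that this constant vanishes, thereby also recovering $n_{0,\beta}^{\rm{KL}}=0$.
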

\begin{proof}
By Lemma~\ref{lem:gr}, we 
have 
\begin{align*}
\sum_{i\in \mathbb{Z}} 
\chi(\pH^i(\dR \pi_{\ast}\gr_{W}^{\bullet}(\phi_{\sS h})))
y^i
=4n(y^{\frac{1}{2}}+y^{-\frac{1}{2}})^{2}+(\mathrm{constant}). 
\end{align*}
Then the lemma follows by Remark~\ref{rmk:HSTKL}.  
\end{proof}
Since an Enriques surface with 
an $I_n$ type
double fiber can 
be deformed to a one with an $I_0$ type 
double fiber, by Lemma~\ref{lem:I0}, \ref{n1=8}, 
we have the following:
\begin{cor}
The Kiem-Li invariant $n_{g, \beta}^{\rm{KL}}$ is not deformation invariant. 
\end{cor}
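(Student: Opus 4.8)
The plan is to exhibit two deformation-equivalent Calabi-Yau $3$-folds of the form $X=(\widetilde{S}\times E)/\langle\tau\rangle$ on which the invariant $n_{1,\beta}^{\mathrm{KL}}$ takes different values, so that deformation invariance must fail. First I would recall that all Enriques surfaces are deformation equivalent, and that within this moduli one can deform an $S$ whose elliptic fibration $h\colon S\to\mathbb{P}^1$ carries an $I_n$ double fiber with $n\ge 2$ to a surface whose corresponding double fiber is a smooth elliptic curve, i.e.\ of type $I_0$. Carrying $\widetilde{S}$, $E$, and the involution $\tau$ along this deformation produces a deformation of the total space $X$, under which the curve class $\beta=([C],0)\in H_2(X,\mathbb{Z})$ is preserved (it is built from the class of the half-fiber, which is locally constant in the family). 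Thus the two ends of the deformation are geometries of the required type sharing the same $\beta$, and comparing $n_{1,\beta}^{\mathrm{KL}}$ across them detects failure of deformation invariance.

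Next I would read off the two values from the results already established. At the $I_0$ end, Lemma~\ref{lem:I0} shows that $\Sh_{\beta}(X)$ is smooth with Hilbert-Chow map a disjoint union of four copies of $C_i$ over the points $q_i$, and Corollary~\ref{cor:gv1} then gives $n_{1,\beta}^{\mathrm{KL}}=4$ for the CY orientation, which is the only choice matching the expected answer. At the $I_n$ end with $n\ge 2$, Lemma~\ref{n1=8} gives $n_{1,\beta}^{\mathrm{KL}}=4n$ for \emph{every} choice of orientation data. Since $4n\ne 4$ whenever $n\ge 2$, the value of $n_{1,\beta}^{\mathrm{KL}}$ jumps along the deformation, which yields the claim.

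The only point requiring genuine care is the dependence on orientation data, since a priori one might hope to restore deformation invariance by varying the orientation within the family. This possibility is exactly what must be excluded, and it is: Lemma~\ref{n1=8} produces $4n$ \emph{independently} of the orientation on the $I_n$ side, while on the $I_0$ side even the most favorable (CY) orientation yields only $4$. Consequently no compatible choice of orientation across the deformation can reconcile the two values, and this is the crux of the argument; once the orientation-independence on the $I_n$ side is in hand, the remaining steps are immediate from the cited lemmas.
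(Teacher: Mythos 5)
Your proof is correct and takes essentially the same route as the paper: the paper's own (very terse) argument is precisely that an Enriques surface with an $I_n$ double fiber ($n\ge 2$) deforms to one with an $I_0$ double fiber, and then compares Lemma~\ref{lem:I0}/Corollary~\ref{cor:gv1} (value $4$, attained only for CY orientation data) with Lemma~\ref{n1=8} (value $4n$ for \emph{every} orientation data). Your explicit observation that orientation-independence on the $I_n$ side is what rules out reconciling the two values by re-choosing orientations is exactly the point the paper's citation of Lemma~\ref{n1=8} is meant to supply.
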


Now we show that our invariants
agree with 
the result in Corollary~\ref{cor:gv1}. 
So we can observe that our invariants are deformation invariant in this example 
despite the fact that the Chow variety jumps in dimension.
Below we fix a CY orientation data of ${\Sh}_{\beta}(X)$.  

\begin{lem}\label{lem:match}
Suppose that $C$ is of type $I_n$
for $n\ge 2$.
Then 
we have $n_{1, \beta}=4$. 
\end{lem}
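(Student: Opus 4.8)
The plan is to compute the local invariant $n_{1,-}^{\rm{loc}}$ as a constructible function on $\Chow_{\beta}(X)=E^{\times n}$ and then integrate it by the identity (\ref{int:id}). First I would localize the genus-one part. By Lemma~\ref{moduli:pi}, over any $\gamma\in\Imm\pi$ with $\gamma\neq y_i$ the fibre $\pi^{-1}(\gamma)$ is a single reduced point and $\pi$ is there an isomorphism onto an open part of some $\Gamma_k$; hence $\dR\pi_{\ast}\phi_{\sS h}$ is locally a shifted rank-one local system in a single perverse degree, so that $n_{g,\gamma}^{\rm{loc}}=0$ for $g\ge 1$ (indeed $n_{0,\gamma}^{\rm{loc}}=-1$). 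Since the fibre is empty for $\gamma\notin\Imm\pi$, the function $n_{1,-}^{\rm{loc}}$ is supported on $\{y_1,\dots,y_4\}$, whence $n_{1,\beta}=\sum_{i=1}^4 n_{1,y_i}^{\rm{loc}}$. By the symmetry of the four points it then suffices to prove $n_{1,y_i}^{\rm{loc}}=1$.

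The local computation at $y_i$ is precisely where the weight spectral sequence (\ref{weight:spectral}) fails to degenerate at $E_1$, which is what distinguishes our invariant from the Kiem-Li one of Lemma~\ref{n1=8}. I would run (\ref{weight:spectral}) fibrewise at $y_i$, inserting the graded pieces of Lemma~\ref{lem:gr}. After taking stalks at $y_i$, the three weight columns contribute: the rational components $C_i^{(j)}$ give rank-$n$ spaces in perverse degrees $\pm 1$ (their $H^0$ and $H^2$), the skyscrapers $\mathbb{Q}_{p_i^{(j)}}$ of weight $\pm 1$ give rank-$n$ spaces in perverse degree $0$, and the elliptic components $M_k$ contribute $\bigoplus_k\lL_k[1]$ with stalk Euler characteristic $-n$ in perverse degree $0$. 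Only two differentials can be nonzero, namely $d_1\colon E_1^{-1,1}\to E_1^{0,1}$ and $d_1\colon E_1^{0,-1}\to E_1^{1,-1}$, while the $M_k$-column survives untouched. The key point is to identify each $d_1$ with the node-component incidence operator of the $n$-gon $C_i$: using the Thom-Sebastiani model of $\phi_{\sS h}$ near each triple point $p_i^{(j)}=C_i^{(j)}\cap C_i^{(j+1)}$ as the vanishing cycle sheaf of $xyz$ (Lemma~\ref{moduli:pi}) and the local weights (\ref{local:gr}), the skyscraper at $p_i^{(j)}$ couples exactly to the two adjacent components $C_i^{(j)}$ and $C_i^{(j+1)}$. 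Thus each $d_1$ is, up to sign, the boundary map $\mathbb{Q}^n\to\mathbb{Q}^n$ of the cycle graph, of rank $n-1$ with one-dimensional kernel and cokernel.

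Passing to $E_2=E_{\infty}$, the surviving stalks then give Euler characteristics $c_{-1}=1$, $c_0=(1)+(1)+(-n)=2-n$ and $c_1=1$, so that $\sum_m c_m y^m=(y^{1/2}+y^{-1/2})^2-n$; reading off the coefficients yields $n_{1,y_i}^{\rm{loc}}=1$ and $n_{0,y_i}^{\rm{loc}}=-n$. Summing over the four points gives $n_{1,\beta}=4$, matching Corollary~\ref{cor:gv1}. The main obstacle is the identification of the two $d_1$-differentials with the boundary operator of the $I_n$-cycle and the verification that its rank is $n-1$; the rest is bookkeeping of weights and perverse degrees. As a consistency check, combining $n_{0,y_i}^{\rm{loc}}=-n$ with $n_{0,\gamma}^{\rm{loc}}=-1$ on $\bigcup_k\Gamma_k\setminus\{y_1,\dots,y_4\}$ and computing Euler characteristics on $\Chow_{\beta}(X)$ gives $n_{0,\beta}=0$, in agreement with the expected answer.
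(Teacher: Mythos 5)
Your proposal is correct and is essentially the paper's own argument: both run the weight spectral sequence (\ref{weight:spectral}) with the graded pieces from Lemma~\ref{lem:gr}, use the local $xyz$-model (\ref{local:gr}) to identify the only nontrivial $d_1$ differentials with the node--component incidence map of the $n$-gon (equivalently, the global sections of the normalization sequence $0 \to \mathbb{Q}_{C_i} \to \nu_{i\ast}\mathbb{Q}_{\widetilde{C}_i} \to \bigoplus_j \mathbb{Q}_{p_i^{(j)}} \to 0$), of rank $n-1$, and then read off the coefficient of $(y^{1/2}+y^{-1/2})^2$. The only cosmetic difference is that you compute stalk-wise at each $y_i$ and integrate via (\ref{int:id}), whereas the paper computes $\pH^{\pm 1}(\dR\pi_{\ast}\phi_{\sS h})$ globally; since all the relevant $E_1$-terms are skyscrapers at the four points $y_i$, the two computations coincide.
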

\begin{proof}
Let us consider the spectral sequence
(\ref{weight:spectral}).
The differential $E_1^{0, -1} \to E_1^{1, -1}$ is 
induced by taking 
the $\pH^0( \dR \pi_{\ast}(-))$ of the canonical 
morphism
\begin{align}\label{can:weight}
\mathrm{gr}^0_{W}(\phi_{\sS h})[-1] \to \mathrm{gr}_W^{-1}(\phi_{\sS h}).
\end{align}
Let $\nu_i$ be the normalization of $C_i$
\begin{align*}
\nu_i \colon \widetilde{C}_i \cneq \coprod_{j=1}^n C_i^{(j)}
\to C_i.
\end{align*}
By Lemma~\ref{lem:gr}, we have 
\begin{align}\notag
E_1^{0, -1}=
\bigoplus_{i=1}^4 
H^0(\widetilde{C}_i, \mathbb{Q})
\otimes \mathbb{Q}_{y_i}, \ 
E_1^{1, -1}= 
\bigoplus_{i=1}^4 \bigoplus_{j=1}^n
H^0(p_i^{(j)}, \mathbb{Q}_{p_{i}^{(j)}} )
\otimes \mathbb{Q}_{y_i}.
\end{align} 
The differential 
$E_1^{0, -1} \to E_1^{1, -1}$ at $y_i$ 
is induced by taking the global sections of
the exact sequences 
\begin{align*}
0 \to \mathbb{Q}_{C_i} \to 
\nu_{i\ast}\mathbb{Q}_{\widetilde{C}_i}
\to \bigoplus_{j=1}^{n}
\mathbb{Q}_{p_i^{(j)}} \to 0. 
\end{align*}
Indeed this follows from the local calculation 
of the function $(x, y, z) \mapsto xyz$
as in (\ref{local:gr}), where we can easily see that the canonical 
morphism
\begin{align*}
\bigoplus_{i=1}^3 \mathbb{Q}_{N_i}
=\mathrm{gr}_W^0(\phi_N)[-1]
\to 
\mathrm{gr}_W^{-1}(\phi_N)=\mathbb{Q}_0
\end{align*}
is the surjection of sheaves
which is non-zero on each component $N_i$. 

Therefore $E_1^{0, -1} \to E_1^{1, -1}$ is rank $n-1$ at $y_i$, 
and the $E_2$ term is
\begin{align*}
E_2^{0, -1}=\bigoplus_{i=1}^4 \mathbb{Q}_{y_i}. 
\end{align*}
Since $E_2^{0, -1}$ is the only term 
which contributes to $\pH^{-1}(\dR \pi_{\ast}\phi_{\sS h})$, 
and the spectral sequence does not contribute 
to $\pH^{i}(\dR \pi_{\ast}\phi_{\sS h})$
for $\lvert i \rvert \ge 2$, we have
\begin{align*}
\sum_{i\in \mathbb{Z}} \chi(\pH^i(\dR \pi_{\ast}\phi_{\sS h}))
y^i
=4(y^{\frac{1}{2}}+y^{-\frac{1}{2}})^{2}+(\mathrm{constant}). 
\end{align*}
Therefore the lemma follows. 
\end{proof}

\begin{rmk}
In type $I_n$ case with $n\ge 2$, the genus zero 
invariant $n_{0, \beta}$ can 
be directly checked to be zero as follows. 
As a singularity of ${\Sh}_{\beta}(X)$ 
is the critical locus of (\ref{crit:f2}), 
the Behrend function on ${\Sh}_{\beta}(X)$ is constant $-1$, 
and $n_{0, \beta}=-e({\Sh}_{\beta}(X))$. 
By Lemma~\ref{moduli:pi},
we have
\begin{align*}
e({\Sh}_{\beta}(X))=4 \cdot e(C) + n \cdot (e(E)-4)=0
\end{align*} 
as expected. 
\end{rmk}

\begin{rmk}
Contrary to Lemma~\ref{n1=8}, 
our GV invariant in Lemma~\ref{lem:match}
gives a different answer if we take a non-CY 
orientation data, as it affects
the map $E_1^{0, -1} \to E_1^{1, -1}$ in the proof of 
Lemma~\ref{lem:match}.  
\end{rmk}

The HST invariants are similarly computed, 
which we leave the readers for details. 
The computations in this subsection
are summarized in the table in Proposition~\ref{prop:counter2}.

\section{Non-reduced examples from 3-fold flops}\label{sec:non-red}
In this section, using the results of the previous sections and 
derived equivalences under 3-fold flops, we 
give some examples where Conjecture~\ref{GV:conj2} holds for 
non-reduced, non-planar one cycles. 
\subsection{3-fold flops}
Let $X$, $X^{\dag}$ be smooth quasi-projective CY 3-folds
which are connected by a flop
\begin{align}\label{flop:dia}
\xymatrix{
X \ar[dr]_{f}  \ar@{-->}[rr]^{\phi} &  & X^{\dag} \ar[dl]^{f^{\dag}} \\
&  Y.   &
}
\end{align}
This means that $f, f^{\dag}$ are birational morphisms 
which are isomorphic in codimension one
with relative Picard number one,  
$Y$ has only Gorenstein singularities, and 
$\phi$ is a non-isomorphic birational map.  
The exceptional loci of $f, f^{\dag}$ are 
chains of smooth rational curves.
By the result of Bridgeland~\cite{Br1}, 
there is an equivalence of derived categories
\begin{align}\label{D:equiv0}
\Phi \colon D^b(\Coh(X)) \stackrel{\sim}{\to}
 D^b(\Coh(X^{\dag}))
\end{align}
given by the Fourier-Mukai transform 
whose kernel is  
$\oO_{X \times_Y X^{\dag}}$. 
The above equivalence 
restricts to the equivalence of 
triangulated subcategories 
(see~\cite[Proposition~5.2]{ToBPS})
\begin{align}\label{D:equiv}
\Phi \colon D^b(\Coh_{\le 1}(X)) \stackrel{\sim}{\to}
 D^b(\Coh_{\le 1}(X^{\dag}))
\end{align}
The equivalence (\ref{D:equiv}) preserves the
hearts of perverse t-structures. 
Namely there exist hearts of bounded t-structures
defined in~\cite[Section~3]{Br1}
\begin{align}\label{p:heart}
&\pPPer_{\le 1}(X/Y) \\ \notag
&\cneq \left\{ E \in D^b (\Coh_{\le 1}(X)) \colon 
\begin{array}{c}
\dR f_{\ast} E \in \Coh(Y) \\
\Hom^{<-p}(E, \cC_X)=\Hom^{<p}(\cC_X, E)=0 
\end{array} \right\}
\end{align}
where $\cC_X$ is defined by
\begin{align*}
\cC_X \cneq \{ F \in \Coh(X) : \dR f_{\ast} F=0\}.
\end{align*}
Below we always take $p\in \{-1, 0\}$. 
The equivalence (\ref{D:equiv}) 
restricts to the equivalence
\begin{align}\label{equiv:Per}
\Phi \colon \oPPer_{\le 1}(X/Y) \stackrel{\sim}{\to}
 \iPPer_{\le 1}(X^{\dag}/Y). 
\end{align}
We can describe the hearts (\ref{p:heart})
in terms of tilting. 
Let $\Coh(X/Y)$ be the subcategory of 
$\Coh_{\le 1}(X)$ consisting 
of sheaves supported on the exceptional locus of $f$. 
For an ample divisor $\omega$ on $X$, we set 
\begin{align*}
\oF& \cneq 
\langle E \in \Coh(X/Y) : E \mbox{ is }\omega
\mbox{-semistable with }
\mu_{\omega}(E) < 0 \rangle \\
\iF& \cneq 
\langle E \in \Coh(X/Y) : E \mbox{ is } \omega 
\mbox{-semistable with }
\mu_{\omega}(E) \le 0 \rangle. 
\end{align*}
Here $\langle \ast \rangle$ is the 
smallest extension closed subcategory which contains $\ast$. 
Let $\pT$ be the orthogonal complement of $\pF$
\begin{align*}
\pT \cneq \{ E \in \Coh_{\le 1}(X) \colon 
\Hom(E, \pF)=0\}. 
\end{align*}
\begin{lem}\label{lem:per/tilt}
We have the identity
\begin{align*}
\pPPer(X/Y)=\langle \pF[1], \pT \rangle. 
\end{align*}
\end{lem}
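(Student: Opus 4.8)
The plan is to realize $\langle \pF[1], \pT\rangle$ as the tilt of $\Coh_{\le 1}(X)$ at the torsion pair $(\pT, \pF)$, and then to identify this tilted heart with the perverse heart by comparing hearts of bounded t-structures on $\dD \cneq D^b(\Coh_{\le 1}(X))$. I would use the elementary fact that two nested hearts of bounded t-structures on the same triangulated category coincide. Since $\pPPer_{\le 1}(X/Y)$ is a heart by \cite[Section~3]{Br1}, restricted to $\dD$ via \cite[Proposition~5.2]{ToBPS}, it suffices to prove that $\langle \pF[1], \pT\rangle$ is also such a heart and that it is contained in $\pPPer_{\le 1}(X/Y)$.

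First I would verify that $(\pT, \pF)$ is a torsion pair on $\Coh_{\le 1}(X)$. The orthogonality $\Hom(\pT, \pF)=0$ is immediate from the definition of $\pT$. Since $\Coh(X/Y)$ is a Serre subcategory of $\Coh_{\le 1}(X)$ on which $\mu_{\omega}$ admits Harder--Narasimhan filtrations, and $\pF$ consists by construction of the fiber sheaves all of whose HN factors have slope $<0$ (resp. $\le 0$), the class $\pF$ is closed under subobjects, finite direct sums, and extensions in the Noetherian category $\Coh_{\le 1}(X)$. For any $E$ I would then produce the torsion sequence by taking the minimal subobject $T \subseteq E$ with $E/T \in \pF$; such a minimal kernel exists because the quotients of $E$ lying in $\pF$ have bounded numerical invariants and their kernels are closed under intersection (using that $\pF$ is closed under subobjects and direct sums). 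Closure of $\pF$ under extensions then forces $T \in \pT$: a nonzero map $T \to F'$ with $F' \in \pF$ would enlarge the $\pF$-quotient and contradict minimality. Tilting $\Coh_{\le 1}(X)$ at this torsion pair yields the heart $\langle \pF[1], \pT\rangle$.

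The remaining, and main, step is the inclusion $\langle \pF[1], \pT\rangle \subseteq \pPPer_{\le 1}(X/Y)$, which by extension-closedness of the perverse heart reduces to $\pT \subseteq \pPPer_{\le 1}(X/Y)$ and $\pF[1] \subseteq \pPPer_{\le 1}(X/Y)$. Here I would invoke the defining conditions $\dR f_{\ast}E \in \Coh(Y)$ and $\Hom^{<-p}(E,\cC_X)=\Hom^{<p}(\cC_X,E)=0$. The technical core is a concrete description of $\cC_X \cap \Coh_{\le 1}(X)$ as the $\mu_{\omega}$-semistable fiber sheaves of slope $0$: crepancy of the flop gives $\omega_{X/Y}=\oO_X$, hence a relative Grothendieck duality on the one-dimensional fibers exchanging $f_{\ast}$ and $R^1 f_{\ast}$ and reversing the sign of the slope, from which slope-$0$ semistable fiber sheaves are seen to be $f$-acyclic. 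Using this dictionary I would establish the key identity $\oT = \{E : R^1 f_{\ast}E = 0\}$ (both inclusions following from duality together with the HN filtration: a nonzero map to $\oF$ surjects $R^1 f_{\ast}E$ onto something nonzero, and conversely $R^1 f_{\ast}E \neq 0$ produces a slope-$<0$ fiber quotient). For $p=0$ this gives $\oT \subseteq \oPPer_{\le 1}(X/Y)$ since the $\Hom^{<0}$ conditions are vacuous for sheaves, and for $F \in \oF$ one checks $f_{\ast}F=0$ together with the semistability comparisons $\Hom(F,\cC_X)=\Hom(\cC_X,F)=0$, so that $F[1]\in\oPPer_{\le 1}(X/Y)$.

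The hard part will be exactly this last translation: pinning down $\cC_X$ and matching Bridgeland's cohomological vanishing conditions with the slope inequalities defining $\pF$ and $\pT$, carried out carefully and separately for the two values $p\in\{-1,0\}$, which are interchanged by Verdier duality together with the slope sign. Once these identities are in place, the two hearts are nested and therefore equal, completing the proof.
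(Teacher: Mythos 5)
The paper offers no argument for this lemma at all: its proof is the single citation to \cite[Lemma~2.5]{TodS}, which in turn goes back to Van den Bergh's description of the perverse hearts as tilts of the standard heart at torsion pairs defined via $f_{*}$, $R^1f_{*}$ and orthogonality against $\cC_X$. Your plan --- construct the torsion pair $(\pT,\pF)$ on $\Coh_{\le 1}(X)$ directly from the slope data, tilt, prove the inclusion $\langle \pF[1], \pT\rangle \subseteq \pPPer_{\le 1}(X/Y)$, and conclude by the principle that nested hearts of bounded t-structures coincide --- is essentially that standard argument, and its load-bearing steps are sound: the minimal-kernel construction of the torsion sequence can be made rigorous as you indicate (there are only finitely many possible exceptional support classes for quotients of a fixed $E$, and $\pF$ is closed under subobjects, finite sums and extensions); the identification of $\cC_X\cap\Coh_{\le 1}(X)$ with the slope-zero $\mu_{\omega}$-semistable fiber sheaves is correct; and the implication $\Hom(E,\oF)=0 \Rightarrow R^1f_{*}E=0$, proved via duality on the fibers together with the Harder--Narasimhan filtration, is exactly the one nontrivial input needed for $\pT\subseteq\pPPer_{\le 1}(X/Y)$.

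There is, however, one false assertion. For $p=0$ you claim that every $F\in\oF$ satisfies both $\Hom(F,\cC_X)=0$ and $\Hom(\cC_X,F)=0$. The second is true, since the image of such a map would simultaneously be a quotient of a slope-zero semistable sheaf (hence of slope $\ge 0$) and a subsheaf of a sheaf with $\mu_{\max}<0$. But the first fails: for a flopping contraction of a single $(-1,-1)$-curve $C$ one has $\oO_C(-2)\in\oF$ and $\oO_C(-1)\in\cC_X$, yet $\Hom(\oO_C(-2),\oO_C(-1))=H^0(\oO_C(1))\neq 0$. Your proof survives only because this vanishing is never needed: unwinding the shifts in the definition of $\oPPer_{\le 1}(X/Y)$, the condition $\Hom^{<0}(F[1],\cC_X)=0$ involves only $\Ext^{i}(F,\cC_X)$ with $i\le -2$, hence is vacuous for sheaves, and the unique nontrivial orthogonality imposed on $\oF[1]$ is $\Hom(\cC_X,F)=0$, which you do verify. (Similarly, for $p=-1$ the nontrivial conditions reduce to $f_{*}F=0$ for $F\in\iF$, and $R^1f_{*}T=0$ together with $\Hom(T,\cC_X)=0$ for $T\in\iT$, the latter being automatic since $\cC_X\subseteq\iF$.) So the error is removable, but it sits precisely in the degree bookkeeping that you yourself single out as the technical core; that bookkeeping has to be carried out correctly, separately for $p\in\{-1,0\}$, before the nested-hearts conclusion can be invoked.
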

\begin{proof}
See~\cite[Lemma~2.5]{TodS}. 
\end{proof}

\begin{rmk}\label{rmk:fact}
Below, we will use the 
fact that  
the equivalence $\Phi$ commutes with 
$\dR f_{\ast}$ and $\dR f^{\dag}_{\ast}$
(see~\cite{Br1}). 
By the definition of $\pPPer_{\le 1}(X/Y)$, we have
\begin{align*}
\cC_X[-p]=\{E \in \pPPer_{\le 1}(X/Y) : \dR f_{\ast}E=0\}. 
\end{align*}
In particular, 
$\Phi$ restricts to the equivalence 
of $\cC_{X}$ and $\cC_{X^{\dag}}[1]$. 
\end{rmk}

\subsection{Isomorphism of moduli spaces}
Let us consider the moduli space of 
one dimensional stable sheaves 
$\Sh_{\beta}(X)$ as in (\ref{moduli:M}). 
We have the following lemma: 
\begin{lem}\label{lem:Eper}
For $[E] \in \Sh_{\beta}(X)$, we have
\begin{align}\label{EPCoh}
E \in \pT =\pPPer_{\le 1}(X/Y) \cap \Coh_{\le 1}(X).
\end{align}
\end{lem}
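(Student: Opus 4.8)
The statement has two parts: the set-theoretic identity $\pT = \pPPer_{\le 1}(X/Y) \cap \Coh_{\le 1}(X)$, and the membership $E \in \pT$ for a stable sheaf. The plan is to dispatch the first part formally from the tilting description and to reduce the second to a slope comparison.

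For the identity, I would invoke Lemma~\ref{lem:per/tilt}, which presents the heart as the tilt $\langle \pF[1], \pT\rangle$ of the torsion pair $(\pT, \pF)$ in $\Coh_{\le 1}(X)$. Thus an object $G$ of the tilted heart has $H^{-1}(G) \in \pF$ and $H^0(G) \in \pT$, with all other cohomology sheaves vanishing. If moreover $G \in \Coh_{\le 1}(X)$, i.e. $G$ is concentrated in degree $0$, then $H^{-1}(G) = 0$ and $G = H^0(G) \in \pT$; conversely $\pT$ already sits inside the heart and consists of genuine sheaves. This gives both inclusions.

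For the membership, since $E$ is a sheaf it suffices to check $\Hom(E, \pF) = 0$. As $\Hom(E, -)$ is left exact and $\pF$ is by definition the extension closure of the $\omega$-semistable sheaves $F \in \Coh(X/Y)$ with $\mu_\omega(F) < 0$ (for $p=0$) resp. $\mu_\omega(F) \le 0$ (for $p=-1$), a short induction over extensions reduces the claim to $\Hom(E, F) = 0$ for each such generator $F$. First I would observe that such an $F$ is pure one-dimensional: a nonzero zero-dimensional subsheaf would have slope $+\infty$, violating semistability at the finite slope $\mu_\omega(F)$. Given a hypothetical nonzero $\psi \colon E \to F$, its image $I = \im\psi$ is then a pure one-dimensional sheaf, simultaneously a quotient $E/\ker\psi$ and a subsheaf of $F$, so $\omega \cdot [I] > 0$. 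Semistability of $F$ forces $\mu_\omega(I) \le \mu_\omega(F)$, hence $\chi(I) \le 0$ (indeed $\chi(I) < 0$ when $p = 0$). On the other hand, stability of $E$ with $\chi(E) = 1$ together with Remark~\ref{rmk:omega} gives $\chi(\ker\psi) \le 0$ for the proper subsheaf $\ker\psi \subsetneq E$ (and $\chi(\ker\psi) = 0$ if $\ker\psi = 0$), whence $\chi(I) = 1 - \chi(\ker\psi) \ge 1$. This contradicts $\chi(I) \le 0$, so no nonzero $\psi$ exists.

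The only genuinely geometric inputs are the purity of the generators $F$ and the positivity $\omega\cdot[I]>0$ for the one-dimensional image; everything else is bookkeeping of Euler characteristics through the torsion pair. I do not anticipate a serious obstacle, since the slope inequalities coming from stability of $E$ and semistability of $F$ point in opposite directions. The one point to handle with care is that $E$ need not be supported on $\Ex(f)$, but this never enters the argument, because it only uses that the image $I$ lands inside the exceptional-locus sheaf $F$.
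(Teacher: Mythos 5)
Your proof is correct and takes essentially the same route as the paper: the paper's (much terser) argument likewise deduces $E \in \pT$ from $\Hom(E, \pF)=0$ via the stability of $E$, and cites Lemma~\ref{lem:per/tilt} for the identity $\pT = \pPPer_{\le 1}(X/Y) \cap \Coh_{\le 1}(X)$. Your reduction to the semistable generators of $\pF$ and the Euler-characteristic comparison simply supply the details the paper leaves implicit.
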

\begin{proof}
By the $h$-stability of $E$, 
we have $\Hom(E, \pF)=0$, 
hence $E \in \pT$ follows. 
The right identity of (\ref{EPCoh})
 is due to Lemma~\ref{lem:per/tilt}. 
\end{proof}
We define the open subscheme
\begin{align*}
\Sh_{\beta}^{\circ}(X)
\subset \Sh_{\beta}(X)
\end{align*}
to 
be 
consisting of sheaves $E$ 
whose supports are irreducible 
and not contained in $\Ex(f)$.  
For the birational map $\phi$ in (\ref{flop:dia}),
let
\begin{align*}
\phi_{\ast} \colon H_2(X, \mathbb{Z}) \stackrel{\cong}{\to}
H_2(X^{\dag}, \mathbb{Z})
\end{align*} 
be the induced map on homology groups. 
\begin{lem}\label{lem:Phi(E)}
For $[E] \in \Sh_{\beta}^{\circ}(X)$, we have 
$[\Phi(E)] \in 
\Sh_{\phi_{\ast}\beta}(X^{\dag})$. 
\end{lem}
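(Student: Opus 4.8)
The plan is to upgrade the membership $\Phi(E)\in\iPPer_{\le 1}(X^{\dag}/Y)$ --- which is immediate, since Lemma~\ref{lem:Eper} gives $E\in\oT\subset\oPPer_{\le 1}(X/Y)$ and the perverse equivalence (\ref{equiv:Per}) carries this heart to $\iPPer_{\le 1}(X^{\dag}/Y)$ --- to the assertion that $\Phi(E)$ is an honest $\omega^{\dag}$-stable one-dimensional sheaf with $\chi=1$ and $[\Phi(E)]=\phi_{\ast}\beta$. By the tilting description of Lemma~\ref{lem:per/tilt}, the complex $\Phi(E)$ has cohomology sheaves $\hH^{-1}(\Phi(E))\in\iF$ and $\hH^0(\Phi(E))\in\iT$ and no others, so the first task is to show $\hH^{-1}(\Phi(E))=0$.

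For sheaf-ness I would argue as follows. Set $A\cneq\Phi^{-1}(\hH^{-1}(\Phi(E))[1])$. Since $\hH^{-1}(\Phi(E))\in\iF$ we have $\hH^{-1}(\Phi(E))[1]\in\iPPer_{\le 1}(X^{\dag}/Y)$, and $\hH^0(\Phi(E))\in\iT$ lies in this heart as well; applying $\Phi^{-1}$ to the filtration triangle $\hH^{-1}(\Phi(E))[1]\to\Phi(E)\to\hH^0(\Phi(E))$ therefore produces a short exact sequence in $\oPPer_{\le 1}(X/Y)$ exhibiting $A$ as a subobject of $E$. Now $\dR f_{\ast}$ is exact on $\oPPer_{\le 1}(X/Y)$ with values in $\Coh(Y)$, and because $\Supp(E)$ is irreducible and not contained in $\Ex(f)$ the morphism $f$ is finite on it, so $\dR f_{\ast}E=f_{\ast}E$ is pure one-dimensional. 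Using $\dR f_{\ast}\circ\Phi^{-1}\cong\dR f^{\dag}_{\ast}$ (Remark~\ref{rmk:fact}) together with the fact that $\hH^{-1}(\Phi(E))$ is supported on $\Ex(f^{\dag})$, the sheaf $\dR f_{\ast}A$ has zero-dimensional support; being a subsheaf of the pure sheaf $f_{\ast}E$ it vanishes. Hence $\dR f_{\ast}A=0$, so $A\in\cC_X$ is a genuine sheaf supported on $\Ex(f)$, and the perverse monomorphism $A\hookrightarrow E$ is an ordinary sheaf map whose image is supported on $\Supp(E)\cap\Ex(f)$, a zero-dimensional set; purity of $E$ forces this map to be zero, so $A=0$ and $\hH^{-1}(\Phi(E))=0$. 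Thus $\Phi(E)=\hH^0(\Phi(E))\in\Coh_{\le 1}(X^{\dag})$ is a sheaf.

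Next I would pin down the numerical invariants. The Fourier--Mukai kernel $\oO_{X\times_Y X^{\dag}}$ induces $\phi_{\ast}$ on $H_2$, so once $\Phi(E)$ is known to be a sheaf we obtain $[\Phi(E)]=\phi_{\ast}\beta$; in particular this class is nonzero, forcing $\Phi(E)$ to be genuinely one-dimensional with irreducible support the strict transform of $\Supp(E)$. Moreover $\chi$ is preserved: $\chi(\Phi(E))=\chi(\dR f^{\dag}_{\ast}\Phi(E))=\chi(\dR f_{\ast}E)=\chi(E)=1$.

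The remaining, and hardest, point is $\omega^{\dag}$-stability of $\Phi(E)$. By Remark~\ref{rmk:omega} this is equivalent to $\chi(A')\le 0$ for every proper nonzero subsheaf $A'\subsetneq\Phi(E)$. The obstacle is that $\iT$ is only closed under quotients and extensions, so such a subsheaf $A'$ need not lie in the perverse heart and cannot be transported directly by $\Phi^{-1}$. The plan is to separate the contribution of $A'$ into the part supported on $\Ex(f^{\dag})$, which lies in $\iF$ and hence has $\mu_{\omega}\le 0$ and contributes non-positively to $\chi$, and the complementary part, which can be realized as a perverse subobject of $E$ whose Euler characteristic is then bounded by the stability of $E$ exactly as in the vanishing argument for $\hH^{-1}$ above. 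Disentangling these two contributions --- bookkeeping the exceptional curves, which is precisely where the flop acts nontrivially --- is the technical heart of the proof, and I expect it to be the main obstacle; the irreducibility of $\Supp(E)$ and the hypothesis that it avoids $\Ex(f)$ are what guarantee that the exceptional corrections carry the correct sign.
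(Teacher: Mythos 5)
Your argument that $\Phi(E)$ is a sheaf is correct and is essentially the paper's: both proofs push the filtration triangle of $\Phi(E)$ down to $Y$, use purity of $f_{\ast}E$ to force $\hH^{-1}(\Phi(E))$ into $\cC_{X^{\dag}}$, and then contradict the hypothesis that $\Supp(E)$ is irreducible and not contained in $\Ex(f)$ via a non-zero map from an object of $\cC_X$ into $E$ (you apply $\Phi^{-1}$ before pushing forward, the paper pushes forward first; the difference is immaterial). The identification of $[\Phi(E)]=\phi_{\ast}\beta$ and $\chi(\Phi(E))=1$ is also fine.

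The genuine gap is stability: you do not prove it, you only sketch a plan (split a subsheaf $A'\subsetneq\Phi(E)$ into an exceptional part in $\iF$ and a complementary part) and explicitly defer its execution as ``the technical heart.'' Moreover, the difficulty you flag --- that a subsheaf of $\Phi(E)$ need not lie in the perverse heart and so cannot be moved by $\Phi^{-1}$ --- is real, but the paper's proof never confronts it: no decomposition is needed and the subsheaf is never transported back to $X$. Given an exact sequence $0 \to P \to \Phi(E) \to Q \to 0$ in $\Coh_{\le 1}(X^{\dag})$ with $P,Q\neq 0$, the paper pushes it forward to $Y$, obtaining $0 \to f^{\dag}_{\ast}P \to f_{\ast}E \to f^{\dag}_{\ast}Q \to R^1 f^{\dag}_{\ast}P$. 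Since $\dR f_{\ast}E = f_{\ast}E$ is stable with $\chi=1$, one gets $\chi(f^{\dag}_{\ast}P)\le 1$, and since $R^1 f^{\dag}_{\ast}P$ is zero-dimensional, $\chi(P)\le\chi(f^{\dag}_{\ast}P)\le 1$. By Remark~\ref{rmk:omega} it remains only to exclude $\chi(P)=1$; in that boundary case $f^{\dag}_{\ast}P=f_{\ast}E$ and $R^1 f^{\dag}_{\ast}P=0$, so $\dR f^{\dag}_{\ast}Q=0$, i.e.\ $Q\in\cC_{X^{\dag}}$, whence $\Phi^{-1}(Q)\in\cC_X[-1]$; the resulting non-zero morphism $E\to\Phi^{-1}(Q)$ contradicts $\Hom(E,\cC_X[-1])=0$, which holds simply because negative-degree morphisms between sheaves vanish. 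So the only categorical input for stability is a Hom-vanishing applied on the quotient side, dual in flavor to the one you used for sheafness; as written, your proposal is incomplete exactly at the step you identified as the main obstacle, and the route you propose would be substantially harder than the argument that is actually needed.
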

\begin{proof}
We have $\Phi(E) \in \iPPer_{\le 1}(X^{\dag}/Y)$
by Lemma~\ref{lem:Eper}
and the equivalence (\ref{equiv:Per}). 
Let us set $A=\hH^{-1}(\Phi(E))$ and $B=\hH^0(\Phi(E))$. 
We have the exact sequence
\begin{align*}
0 \to A[1] \to \Phi(E) \to B \to 0
\end{align*}
in $\iPPer_{\le 1}(X^{\dag}/Y)$. 
By applying $\dR f^{\dag}_{\ast}$ and noting 
Remark~\ref{rmk:fact}, we 
obtain the exact sequence of sheaves
\begin{align*}
0 \to \dR f_{\ast}^{\dag}(A[1]) \to f_{\ast}E \to 
\dR f_{\ast}^{\dag}B \to 0. 
\end{align*}
Since $f_{\ast}E$ is a pure one dimensional 
sheaf on $Y$, we have $\dR f_{\ast}^{\dag}(A[1])=0$, i.e. 
$A \in \cC_{X^{\dag}}$. 
Suppose that $A \neq 0$. 
Then 
$\Phi^{-1}(A[1])$ is a non-zero object in 
$\cC_{X}$ (see Remark~\ref{rmk:fact}), which admits a non-zero morphism to 
$E$. This contradicts to the assumption that 
the support of $E$ does not contain irreducible components 
in $\Ex(f)$. Hence $A=0$, and 
$\Phi(E) \in \Coh_{\le 1}(X^{\dag})$ follows. 

It remains to show $\Phi(E)$ is 
a stable sheaf.  
Let 
\begin{align*}
0 \to P \to \Phi(E) \to Q \to 0
\end{align*}
be an exact sequence in $\Coh_{\le 1}(X^{\dag})$
with $P, Q \neq 0$. 
By pushing forward to $Y$, we obtain the exact 
sequence of sheaves
\begin{align*}
0 \to f_{\ast}^{\dag} P \to f_{\ast}E \to 
f_{\ast}^{\dag}Q \to R^1 f_{\ast}^{\dag}P. 
\end{align*}
Since $\dR f_{\ast}E=f_{\ast}E$ is a stable sheaf
with Euler characteristic one,  
we 
have 
$\chi(f_{\ast}^{\dag}P) \le 1$. 
Since $R^1 f_{\ast}^{\dag}P$ is a zero dimensional sheaf, we have
\begin{align*}
\chi(P)=\chi(f_{\ast}^{\dag}P) -\chi(R^1 f_{\ast}^{\dag}P) \le 1. 
\end{align*}
In order to conclude 
the stability of 
 $\Phi(E)$, 
we need to exclude the case of $\chi(P)=1$. In this case, 
$\dR f^{\dag}_{\ast}Q=0$, i.e. 
$Q \in \cC_{X^{\dag}}$. 
Since $\Phi^{-1}(Q) \in \cC_{X}[-1]$, 
and $\Hom(E, \cC_X[-1])=0$, we obtain the contradiction. 
\end{proof}
We define the open subscheme 
\begin{align*}
\Sh_{\phi_{\ast}\beta}^{\circ}(X^{\dag}) \subset
\Sh_{\phi_{\ast}\beta}(X^{\dag})
\end{align*}
to be consisting of sheaves $E$
such that the support of $\Phi^{-1}(E)$
is irreducible and not contained in $\Ex(f)$.  
\begin{prop}\label{prop:Phisom}
The equivalence $\Phi$ induces the isomorphism
\begin{align}\label{isom:Phi}
\Phi_{\ast} \colon 
\Sh_{\beta}(X) \stackrel{\cong}{\to}
\Sh_{\phi_{\ast}\beta}(X^{\dag}). 
\end{align}
\end{prop}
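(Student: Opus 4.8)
The plan is to realize $\Phi_{\ast}$ and its inverse directly from the Fourier--Mukai kernel $\oO_{X \times_Y X^{\dag}}$ at the level of universal families, reducing the scheme isomorphism (\ref{isom:Phi}) to the object-level statements already in hand. First I would record the symmetric counterpart of Lemma~\ref{lem:Phi(E)}. By the definition of $\Sh_{\phi_{\ast}\beta}^{\circ}(X^{\dag})$ and the equivalence of perverse hearts (\ref{equiv:Per}), the inverse transform $\Phi^{-1}$ carries a stable sheaf $[E'] \in \Sh_{\phi_{\ast}\beta}(X^{\dag})$, which lies in $\iPPer_{\le 1}(X^{\dag}/Y)$, into $\oPPer_{\le 1}(X/Y)$; running the pushforward/purity argument of Lemma~\ref{lem:Phi(E)} verbatim with $\dR f_{\ast}$ in place of $\dR f^{\dag}_{\ast}$, using that the contracted cohomology sheaf $\hH^{-1}(\Phi^{-1}(E'))$ lies in $\cC_X$ and that $f^{\dag}_{\ast}E'$ is pure one-dimensional, shows $\Phi^{-1}(E')$ is again a stable sheaf of class $\beta$ with $\chi=1$. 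Thus $\Phi$ and $\Phi^{-1}$ define mutually inverse bijections on closed points, and the remaining task is to give these a scheme structure.

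For the scheme-theoretic statement I would feed the universal sheaf $\eE \in \Coh(X \times \Sh_{\beta}(X))$ into the relative Fourier--Mukai functor $\Phi$ acting along the $X$-factor and fixing the base $S \cneq \Sh_{\beta}(X)$. By Lemma~\ref{lem:Eper} every fibre $\eE|_{X \times s}$ lies in $\pT = \oPPer_{\le 1}(X/Y) \cap \Coh_{\le 1}(X)$, so $\eE$ is a family of objects of the Bridgeland heart; since by Lemma~\ref{lem:per/tilt} this heart is a tilt of $\Coh_{\le 1}(X)$, it is stable under flat base change, and the relative transform $\Phi(\eE)$ lands in the relative heart $\iPPer_{\le 1}(X^{\dag}/Y)$. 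The crucial point is then that $\Phi(\eE)$ is a genuine coherent sheaf on $X^{\dag} \times S$, flat over $S$: fibrewise the object-level statement gives $\hH^{i}(\Phi(\eE)|_{X^{\dag}\times s})=0$ for $i \neq 0$, and cohomology-and-base-change, applied to the two a priori nonzero relative cohomology sheaves $\hH^{-1}(\Phi(\eE))$ and $\hH^{0}(\Phi(\eE))$, upgrades this pointwise concentration in degree $0$ to the assertion that $\Phi(\eE)$ is a single $S$-flat sheaf with stable fibres.

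This flat family of stable sheaves on $X^{\dag}$ is classified by a morphism $\Phi_{\ast} \colon \Sh_{\beta}(X) \to \Sh_{\phi_{\ast}\beta}(X^{\dag})$; applying the identical construction to the universal sheaf on $X^{\dag}$ with the inverse kernel produces $\Psi_{\ast} \colon \Sh_{\phi_{\ast}\beta}(X^{\dag}) \to \Sh_{\beta}(X)$. Since $\Phi^{-1}\circ\Phi \cong \id$ as Fourier--Mukai functors, the two relative transforms compose to the identity on universal families, so $\Psi_{\ast}\circ\Phi_{\ast}=\id$ and $\Phi_{\ast}\circ\Psi_{\ast}=\id$ by the universal property of fine moduli, giving (\ref{isom:Phi}).

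The step I expect to be the main obstacle is the sheaf-ness and flatness of $\Phi(\eE)$ over the \emph{entire} base $S$. The pointwise vanishing $\hH^{-1}(\Phi(\eE)|_{s})=0$ follows from the purity argument of Lemma~\ref{lem:Phi(E)} only once one knows $\Hom(\cC_X, E)=0$, i.e. that the stable sheaf $E$ admits no subsheaf contracted by $f$; this is automatic on the open locus $\Sh_{\beta}^{\circ}(X)$ but must be propagated to all of $\Sh_{\beta}(X)$, and it is precisely here that one invokes the standing hypothesis on $\beta$ (inherited from the setting $\beta=[C]$ with $C$ irreducible and not contained in $\Ex(\phi)$), which forces the support of every stable sheaf of class $\beta$ to meet $\Ex(f)$ only in dimension zero so that purity kills any map from $\cC_X$. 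Once this pointwise control is in place over all of $S$, the remaining semicontinuity and base-change bookkeeping for the two relative cohomology sheaves is routine, as is the verification that $\Phi_{\ast}$ and $\Psi_{\ast}$ are inverse.
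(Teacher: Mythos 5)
Your two-layer architecture---transport of objects first, then a scheme-level upgrade by applying the relative Fourier--Mukai transform to the universal family $\eE$ and invoking the standard cohomology-and-base-change lemma (derived fibres concentrated in degree zero force the transform to be an $S$-flat sheaf)---is sound, and it is in fact more explicit than the paper, whose written proof works entirely at the level of objects and leaves the scheme structure implicit. But your closing paragraph misdiagnoses the domain of the statement, and this is a genuine gap. No standing hypothesis on $\beta$ is in force in this section, and none could rescue a full-moduli statement: a stable sheaf of class $\beta$ supported on $\Ex(f)$ (when such exist) never transforms to a sheaf, since $\Phi$ identifies $\cC_X$ with $\cC_{X^{\dag}}[1]$ (Remark~\ref{rmk:fact}). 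Read against its own proof, the proposition is the isomorphism $\Sh_{\beta}^{\circ}(X) \cong \Sh_{\phi_{\ast}\beta}^{\circ}(X^{\dag})$ between the open loci (the circles are simply dropped in the display), where crucially $\Sh_{\phi_{\ast}\beta}^{\circ}(X^{\dag})$ is defined by a condition on $\Phi^{-1}(E)$, not on the support of $E$ itself. Your proposed hypothesis---that every stable sheaf of class $\beta$ meets $\Ex(f)$ only in dimension zero---would moreover exclude the intended applications: after the flop the relevant cycles have the form $\overline{C}+mZ^{\dag}$ (Corollary~\ref{cor:blowup}), so on the $X^{\dag}$ side the supports of exactly the sheaves being classified contain the exceptional curve with multiplicity. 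The correct move is simply to take the base of your relative transform to be $\Sh_{\beta}^{\circ}(X)$, over which Lemma~\ref{lem:Phi(E)} already gives the fibrewise sheafness your base-change argument needs; no propagation beyond that locus is required, and none is possible.

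The second problem is the word ``verbatim''. The inverse direction is the real content of the paper's proof, and it does not follow by exchanging $f$ and $f^{\dag}$ in Lemma~\ref{lem:Phi(E)}: that lemma kills $A=\hH^{-1}$ by producing a nonzero map from an object of $\cC_X$ to $E$ and contradicting the support hypothesis, and this step has no analogue on the $X^{\dag}$ side, precisely because supports there may contain $\Ex(f^{\dag})$. The paper instead argues: (a) $f^{\dag}_{\ast}E$ is pure---itself nontrivial, proved by adjoining a torsion section $\oO_y \hookrightarrow f^{\dag}_{\ast}E$ to a nonzero map $f^{\dag\ast}\oO_y \to E$ from a stable sheaf with $\chi=1$ and contradicting stability of $E$; you list this purity among inputs ``already in hand'', but it is one of the things to be proved; (b) purity forces $\dR f_{\ast}(A[1])=0$, hence $A \in \cC_X$, whereupon $A$ and $A[1]$ would both lie in the heart $\oPPer_{\le 1}(X/Y)$ (note $\cC_X \subset \oPPer_{\le 1}(X/Y)$ for $p=0$), forcing $A=0$; (c) purity/stability of $\Phi^{-1}(E)$ again uses the purity of $f^{\dag}_{\ast}E$ together with $\Hom(\cC_{X^{\dag}}[1],E)=0$. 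With the restriction to the open loci fixed as above and these three steps supplied in place of ``verbatim'', your proposal becomes a correct---and on the scheme-theoretic side more complete---version of the paper's argument.
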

\begin{proof}
By the definition of 
$\Sh_{\phi_{\ast}\beta}^{\circ}(X^{\dag})$
and Lemma~\ref{lem:Phi(E)},
we have the well-defined morphism (\ref{isom:Phi}). 
Let us take an object
 $[E] \in \Sh_{\phi_{\ast}\beta}^{\circ}(X^{\dag})$. 
It is enough to show that $\Phi^{-1}(E)$ is an object in 
$\Sh_{\beta}^{\circ}(X)$. 
We first note that $\dR f^{\dag}_{\ast}E=f^{\dag}_{\ast}E$
is a pure sheaf on $Y$. Indeed otherwise, 
there is $y\in Y$ and a non-zero morphism 
$\oO_y \to f^{\dag}_{\ast}E$. 
By the adjunction, 
we have the non-zero morphism 
$f^{\dag \ast}\oO_y \to E$. 
Since $f^{\dag \ast}\oO_y$ is a stable sheaf 
(see the proof of~\cite[Lemma~3.2]{Katz})
with 
Euler characteristic one
supported on $\Ex(f^{\dag})$, this contradicts to the stability of $E$. 
Therefore $f_{\ast}^{\dag}E$ is a pure sheaf. 

Next we show that $\Phi^{-1}(E)$ is a coherent sheaf. 
By Lemma~\ref{lem:Eper},
we have $\Phi^{-1}(E) \in \oPPer_{\le 1}(X/Y)$. 
We set $A=\hH^{-1}(\Phi^{-1}(E))$
and 
$B=\hH^0(\Phi^{-1}(E))$. We have the exact sequence in 
$\oPPer_{\le 1}(X/Y)$
\begin{align}\label{exact:AEB}
0 \to A[1] \to \Phi^{-1}(E) \to B \to 0. 
\end{align}
Suppose that $A \neq 0$. 
Then $\dR f_{\ast}(A[1])$ is 
a subsheaf of $f^{\dag}_{\ast}E$ which is at most 
zero dimensional. 
By the purity of $f^{\dag}_{\ast}E$, it follows that 
$\dR f_{\ast}(A[1])=0$, i.e. 
$A \in \cC_{X}$. 
This implies $A \in \oPPer_{\le 1}(X/Y)$, 
which contradicts to 
that (\ref{exact:AEB}) is an exact sequence in 
$\oPPer_{\le 1}(X/Y)$. 
Hence
$A=0$
and $\Phi^{-1}(E) \in \Coh_{\le 1}(X)$
follows. 

It remains to show that $\Phi^{-1}(E)$ is
a stable sheaf, or equivalently 
a 
pure sheaf as the support of $\Phi^{-1}(E)$ is irreducible. 
If otherwise, there is $x \in X$ and
a non-zero morphism $\oO_x \to \Phi^{-1}(E)$. 
Let $P$ be its image in $\oPPer_{\le 1}(X/Y)$. 
Then $\dR f_{\ast}P$ is a subsheaf of $f_{\ast}^{\dag}E$
which is at most zero dimensional, hence $\dR f_{\ast}P=0$
by the purity of $f_{\ast}^{\dag}E$. 
Then $P \in \cC_{X}$, and 
$\Phi(P) \in \cC_{X^{\dag}}[1]$
is a subobject of $E$ in $\iPPer_{\le 1}(X^{\dag}/Y)$. 
Since $\Hom(\cC_{X^{\dag}}[1], E)=0$
as $E$ is a sheaf, this is a contradiction. 
\end{proof}

\begin{rmk}\label{rmk:compare:dcrit}
The equivalence of derived categories (\ref{D:equiv0})
induces the isomorphism
\begin{align}\label{isom:HK}
H^0(X, K_X) \stackrel{\cong}{\to} H^0(X^{\dag}, K_{X^{\dag}})
\end{align}
by taking the induced isomorphism on Hochschild homologies. 
Therefore 
the trivialization (\ref{isom:omega})
induces the trivialization 
$\oO_{X^{\dag}} \stackrel{\cong}{\to}
K_{X^{\dag}}$, hence
a $d$-critical stricture on $\Sh_{\phi_{\ast}\beta}(X^{\dag})$
by Theorem~\ref{thm:CYM}. 

The isomorphism (\ref{isom:Phi})
can be proved to preserve the $d$-critical 
structures of both sides. 
Indeed the 
Fourier-Mukai equivalence (\ref{D:equiv0})
lifts to a 
dg quasi-functor 
between the enhancements of both sides of
(\ref{D:equiv0}), therefore 
the derived moduli 
schemes 
$\widehat{\Sh}_{\beta}(X)$ and 
$\widehat{\Sh}_{\phi_{\ast}\beta}(X^{\dag})$ in 
Remark~\ref{rmk:shifted}
are equivalent. 
On the other hand 
by the announced work~\cite[Theorem~1.2]{CBTD}, 
the $(-1)$-shifted 
symplectic structures on the above 
derived schemes are canonically constructed
from the CY dg enhancements 
(see~\cite[Theorem~1.2]{CBTD}), 
where the CY structures
are given by non-zero elements in (\ref{isom:HK})
by~\cite[Lemma~5.11]{CBTD}.
By taking the truncations of $\widehat{\Sh}_{\beta}(X)$ and 
$\widehat{\Sh}_{\phi_{\ast}\beta}(X^{\dag})$, we 
have the matching of $d$-critical structures of both sides of 
(\ref{isom:Phi}).
\end{rmk}


\subsection{Comparison of GV invariants}
We define the open subset
\begin{align*}
U_{\beta} \subset \Chow_{\beta}(X)
\end{align*}
to be consisting of irreducible one 
cycles which do not contain 
irreducible components of $\Ex(f)$. 
For $\gamma \in U_{\beta}$, 
by taking pull-back and push-forward along with 
the morphisms
\begin{align*}
X \leftarrow X \times_Y X^{\dag} \rightarrow X^{\dag}
\end{align*}
we have the map
\begin{align*}
\phi_{\ast} \colon 
U_{\beta} \to \Chow_{\phi_{\ast}\beta}(X^{\dag}). 
\end{align*}
The above map is obviously injective. 
Let $U_{\phi_{\ast}\beta}$ be the image of the above map. 
We have the commutative diagram
\begin{align*}
\xymatrix{
\Sh_{\beta}^{\rm{red}}(X)
 \ar[r]^{\Phi_{\ast}} \ar[d] &  
\Sh_{\phi_{\ast}\beta}^{\rm{red}}(X^{\dag})
\ar[d] \\
U_{\beta} \ar[r]_{\phi_{\ast}} & U_{\phi_{\ast}\beta}.
}
\end{align*}
Here the vertical arrows are 
Hilbert-Chow maps. 
Then from Proposition~\ref{prop:Phisom}, we 
obtain the following: 
\begin{cor}\label{cor:ng}
For $\gamma \in U_{\beta}$, the 
$d$-critical scheme 
$(\Sh_{\beta}(X), s_{\Sh})$ is CY at $\gamma$ if and only if 
$(\Sh_{\phi_{\ast}\beta}(X^{\dag}), s_{\Sh})$ is 
CY at $\phi_{\ast}\gamma$. In this case, we have the identity
$n_{g, \gamma}^{\rm{loc}}=n_{g, \phi_{\ast}\gamma}^{\rm{loc}}$. 
\end{cor}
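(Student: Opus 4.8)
The plan is to upgrade the scheme isomorphism of Proposition~\ref{prop:Phisom} to an isomorphism of \emph{oriented} $d$-critical schemes compatible with the Hilbert--Chow maps, and then simply transport all the data entering Definition~\ref{def:locgv} across it. First I would record that, over $U_{\beta}$, the Fourier--Mukai functor $\Phi$ induces the scheme isomorphism $\Phi_{\ast}$ of Proposition~\ref{prop:Phisom} fitting into the commutative square with the Hilbert--Chow maps $\pi_{\Sh}$ to $U_{\beta}$ and $U_{\phi_{\ast}\beta}$ displayed just before the statement, whose bottom arrow $\phi_{\ast}\colon U_{\beta}\stackrel{\cong}{\to}U_{\phi_{\ast}\beta}$ sends $\gamma$ to $\phi_{\ast}\gamma$. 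By Remark~\ref{rmk:compare:dcrit}, $\Phi_{\ast}$ moreover matches the canonical $d$-critical structures $s_{\Sh}$ on the two sides, and hence identifies the virtual canonical line bundles, $\Phi_{\ast}^{\ast}K_{\Sh}^{\rm{vir}}(X^{\dag})\cong K_{\Sh}^{\rm{vir}}(X)$, compatibly with the natural isomorphisms (\ref{nat:K}) coming from local critical charts.

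With this in place the equivalence of the CY conditions is formal. A CY orientation near $\gamma$ in the sense of Definition~\ref{loc:cy} is a square root of $K_{\Sh}^{\rm{vir}}$ on the relevant open locus which is trivial along the fibers of $\pi_{\Sh}$. Since $\Phi_{\ast}$ carries $K_{\Sh}^{\rm{vir}}(X)$ to $K_{\Sh}^{\rm{vir}}(X^{\dag})$ and carries the fibers of $\pi_{\Sh}$ over $U_{\beta}$ isomorphically onto those over $U_{\phi_{\ast}\beta}$, pulling back a square root identifies CY orientations near $\phi_{\ast}\gamma$ with CY orientations near $\gamma$. In particular $(\Sh_{\beta}(X),s_{\Sh})$ is CY at $\gamma$ precisely when $(\Sh_{\phi_{\ast}\beta}(X^{\dag}),s_{\Sh})$ is CY at $\phi_{\ast}\gamma$, which is the first assertion.

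Assuming both are CY, I would then choose CY orientations corresponding under $\Phi_{\ast}$ and invoke the naturality of the gluing construction of Theorem~\ref{thm:BDJS}: an isomorphism of oriented $d$-critical schemes takes the glued perverse sheaf of vanishing cycles to the glued perverse sheaf of vanishing cycles, so $\phi_{\sS h}^{X}\cong \Phi_{\ast}^{\ast}\phi_{\sS h}^{X^{\dag}}$. Because $\Phi_{\ast}$ intertwines the Hilbert--Chow maps, $\dR\pi_{\Sh\ast}\phi_{\sS h}^{X}$ on $U_{\beta}$ is identified with the $\phi_{\ast}$-pullback of $\dR\pi_{\Sh\ast}\phi_{\sS h}^{X^{\dag}}$ on $U_{\phi_{\ast}\beta}$; applying $\pH^{i}(-)$, restricting to the point $\gamma$ respectively $\phi_{\ast}\gamma$, and comparing Euler characteristics in the defining identity of Lemma~\ref{lem:ngloc} yields $n_{g,\gamma}^{\rm{loc}}=n_{g,\phi_{\ast}\gamma}^{\rm{loc}}$ (the common value being independent of the chosen CY orientation, as noted after Definition~\ref{def:GV}).

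The only genuinely substantive input is the matching of $d$-critical structures under $\Phi_{\ast}$, which is exactly the content of Remark~\ref{rmk:compare:dcrit} and rests on the construction of the $(-1)$-shifted symplectic form from the Calabi--Yau dg-enhancement in the cited work; granting that, the main point I expect to require care is the compatibility with the Hilbert--Chow maps, namely verifying that the identification of virtual canonical bundles respects the fibration structure so that the fiberwise-triviality defining the CY condition is transported through the commutative square above. Once this compatibility is pinned down, the remainder is a routine transport of structure along an isomorphism.
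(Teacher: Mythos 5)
Your proposal is correct and takes essentially the same route as the paper: there the corollary is stated as an immediate consequence of Proposition~\ref{prop:Phisom}, the commutative square of Hilbert--Chow maps, and the matching of $d$-critical structures in Remark~\ref{rmk:compare:dcrit}, which is precisely the transport-of-structure argument you spell out. The additional ingredients you invoke --- naturality of the gluing in Theorem~\ref{thm:BDJS} and independence of the choice of CY orientation (Lemma~\ref{lem:gvcompare}) --- are exactly what the paper implicitly relies on when it declares the statement to follow.
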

As for stable pairs, 
let us set
\begin{align*}
&\PT(X)\cneq 1+\sum_{n\in \mathbb{Z}, \beta>0}P_{n, \beta}q^n t^{\beta}, \\ 
&\PT(X/Y) \cneq 1+\sum_{n\in \mathbb{Z}, f_{\ast}\beta=0}
P_{n, \beta}q^n t^{\beta}. 
\end{align*}
The following result is proved using 
wall-crossing formulas in the derived category:
\begin{thm}\emph{(\cite{Tcurve2, Cala})}
We have the following identity:
\begin{align}\label{PT:flop}
\phi_{\ast}
\frac{\PT(X)}{\PT(X/Y)}
=\frac{\PT(X^{\dag})}{\PT(X^{\dag}/Y)}. 
\end{align}
Here $\phi_{\ast}$ is the variable change 
$t^{\beta} \mapsto t^{\phi_{\ast}\beta}$. 
\end{thm}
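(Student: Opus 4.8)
The plan is to lift the whole identity to the motivic Hall algebra of the category $\Coh_{\le 1}$ and to transport the resulting formal identities across the flop using the derived equivalence $\Phi$ of (\ref{D:equiv}). First I would recall that, after applying a suitable integration map, the series $\PT(X)$ is the image of a canonical element of the motivic Hall algebra assembled from the moduli of stable pairs, and that $\PT(X/Y)$ is the image of the analogous element built only from pairs whose underlying one-cycle is contracted by $f$, i.e. is supported on $\Ex(f)$ and therefore lies in the null category $\cC_X$. The quotient $\PT(X)/\PT(X/Y)$ should then be interpreted as isolating the contribution of pairs whose support meets $X$ generically, with the purely exceptional contributions stripped off; the goal is to show this reduced series is intrinsic to $D^b(\Coh_{\le 1})$ and hence preserved by $\Phi$ up to the variable change $\phi_{\ast}$.

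Next I would exploit the facts recalled in Remark~\ref{rmk:fact}: that $\Phi$ commutes with $\dR f_{\ast}$ and $\dR f^{\dag}_{\ast}$ and restricts to an equivalence $\cC_X \simeq \cC_{X^{\dag}}[1]$, together with the equivalence of perverse hearts (\ref{equiv:Per}). Since $\Phi$ is an equivalence of triangulated categories, it induces an isomorphism of the associated motivic Hall algebras, and under this isomorphism the element defining the exceptional series $\PT(X/Y)$ is carried to the one defining $\PT(X^{\dag}/Y)$, up to the shift encoded in $\cC_X \simeq \cC_{X^{\dag}}[1]$ and the variable change $\phi_{\ast}$ on homology classes. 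The heart of the argument is that the two series attached to the full category differ precisely by these exceptional factors, so that dividing by $\PT(X/Y)$ and $\PT(X^{\dag}/Y)$ respectively yields quantities genuinely matched by $\Phi_{\ast}$; the sign/shift bookkeeping in the Euler-characteristic variable $q$ introduced by the shift $[1]$ is exactly what forces the formula to take the quotient form rather than a naive equality.

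The main technical obstacle is that $\Phi$ does not preserve the standard heart $\Coh_{\le 1}$: a stable pair on $X$ is sent to an object of the perverse heart $\iPPer_{\le 1}(X^{\dag}/Y)$ rather than to a stable pair on $X^{\dag}$. To overcome this I would introduce the intermediate moduli of stable pairs, or stable perverse coherent systems, in the perverse hearts $\pPPer_{\le 1}(X/Y)$ for $p \in \{-1,0\}$, and perform a wall-crossing in the Hall algebra between the genuine PT chamber and the perverse chamber on each side of the flop. The hard part will be to identify these wall-crossing contributions with exactly the exceptional factors $\PT(X/Y)$ and $\PT(X^{\dag}/Y)$, which requires controlling the Behrend-function weighting across the wall and verifying the no-pole and integrality properties of the integration map, so that the formal manipulations in $\mathbb{Q}(\!(q)\!)[\![t^{\beta}]\!]$ are legitimate. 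Once the perverse PT series on $X$ and on $X^{\dag}$ are identified through $\Phi_{\ast}$, and the two wall-crossing factors are matched with the respective local series, the desired flop formula (\ref{PT:flop}) follows by assembling these identities.
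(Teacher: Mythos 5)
Your proposal reconstructs essentially the same argument as the paper's source for this statement: the paper itself gives no proof but cites \cite{Tcurve2, Cala}, describing the method exactly as ``wall-crossing formulas in the derived category,'' and those proofs proceed precisely as you outline --- a motivic Hall algebra wall-crossing between the genuine PT chamber and a perverse-pair chamber in the hearts $\pPPer_{\le 1}(X/Y)$, with the wall-crossing factors identified with $\PT(X/Y)$ and $\PT(X^{\dag}/Y)$, and with Bridgeland's equivalence $\Phi$ matching the perverse pair series on the two sides up to the variable change $\phi_{\ast}$. Your identification of the key obstacle (that $\Phi$ sends stable pairs on $X$ into $\iPPer_{\le 1}(X^{\dag}/Y)$ rather than to stable pairs on $X^{\dag}$) and of the quotient structure $\PT(X)/\PT(X/Y)$ as the $\Phi$-invariant quantity is exactly the mechanism of the cited proofs.
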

By taking the logarithm of 
both sides of (\ref{PT:flop}) and comparing the coefficient at 
$t^{\beta}$, we have
\begin{align*}
n_{g, \beta}^P=n_{g, \phi_{\ast}\beta}^P, 
\ \beta \in H_2(X, \mathbb{Z}), 
 \ f_{\ast}\beta >0. 
\end{align*}
The arguments of~\cite{Tcurve2, Cala} 
also apply to the local version, 
which give 
\begin{align*}
n_{g, \gamma}^{P, \rm{loc}}
=n_{g, \phi_{\ast}\gamma}^{P, \rm{loc}}, \ \gamma \in \Chow_{\beta}(X), \ 
f_{\ast}\gamma>0. 
\end{align*}
By combining with Corollary~\ref{cor:ng}, we obtain the following:

\begin{cor}\label{cor:ng2}
Under the situation of Corollary~\ref{cor:ng}, 
for $\gamma \in U_{\beta}$ 
we have $n_{g, \gamma}^{P, \rm{loc}}=n_{g, \gamma}^{\rm{loc}}$
if and only if $n_{g, \phi_{\ast}\gamma}^{P, \rm{loc}}
=n_{g, \phi_{\ast}\gamma}^{\rm{loc}}$. 
\end{cor}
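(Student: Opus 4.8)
The plan is to combine two separate flop-invariance statements, one on the Gopakumar-Vafa side and one on the Pandharipande-Thomas side, and then observe that the claimed equivalence follows by direct substitution. The substantive content has already been assembled in Corollary~\ref{cor:ng} and in the cited wall-crossing results, so the deduction itself will be very short; the difficulty is concentrated entirely in those earlier inputs.

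First I would record the local GV flop invariance. By Corollary~\ref{cor:ng}, for $\gamma \in U_{\beta}$ the $d$-critical scheme $(\Sh_{\beta}(X), s_{\Sh})$ is CY at $\gamma$ precisely when $(\Sh_{\phi_{\ast}\beta}(X^{\dag}), s_{\Sh})$ is CY at $\phi_{\ast}\gamma$, and in that case the local invariants agree,
\begin{align*}
n_{g, \gamma}^{\rm{loc}}=n_{g, \phi_{\ast}\gamma}^{\rm{loc}}.
\end{align*}
This rests in turn on the isomorphism $\Phi_{\ast} \colon \Sh_{\beta}(X) \stackrel{\cong}{\to} \Sh_{\phi_{\ast}\beta}(X^{\dag})$ of Proposition~\ref{prop:Phisom}, together with the compatibility of $d$-critical structures under the Fourier-Mukai equivalence recorded in Remark~\ref{rmk:compare:dcrit}; these ensure that the vanishing cycle sheaves, their pushforwards along the Hilbert-Chow maps, and hence the perverse filtrations defining the local invariants all correspond under $\Phi_{\ast}$. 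Next I would invoke the local PT flop invariance: the wall-crossing results of \cite{Tcurve2, Cala}, in the local form stated just above the corollary, give
\begin{align*}
n_{g, \gamma}^{P, \rm{loc}}=n_{g, \phi_{\ast}\gamma}^{P, \rm{loc}}
\end{align*}
for $\gamma \in \Chow_{\beta}(X)$ with $f_{\ast}\gamma > 0$, which in particular covers every $\gamma \in U_{\beta}$.

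With both invariants matched across the flop, the two sides of the claimed equivalence become literally the same statement: $n_{g, \gamma}^{P, \rm{loc}}=n_{g, \gamma}^{\rm{loc}}$ holds if and only if $n_{g, \phi_{\ast}\gamma}^{P, \rm{loc}}=n_{g, \phi_{\ast}\gamma}^{\rm{loc}}$, since the PT members on the two sides agree by the local PT flop invariance and the GV members agree by Corollary~\ref{cor:ng}. There is no genuine obstacle at this final step. If anything, the only point requiring care is to confirm that the hypotheses of the two inputs are simultaneously available for $\gamma \in U_{\beta}$ — namely that $U_{\beta}$ sits inside both the locus where Corollary~\ref{cor:ng} applies and the locus $f_{\ast}\gamma > 0$ where \cite{Tcurve2, Cala} applies — which is immediate from the definition of $U_{\beta}$ as irreducible cycles not contained in $\Ex(f)$.
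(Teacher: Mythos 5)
Your proposal is correct and is exactly the paper's argument: the paper derives this corollary by combining the local PT flop invariance $n_{g,\gamma}^{P,\rm{loc}}=n_{g,\phi_{\ast}\gamma}^{P,\rm{loc}}$ from the wall-crossing results of \cite{Tcurve2, Cala} with the local GV flop invariance of Corollary~\ref{cor:ng}, after which the equivalence is immediate. Your added check that $U_{\beta}$ lies in the domain of validity of both inputs (in particular $f_{\ast}\gamma>0$ for irreducible cycles not contained in $\Ex(f)$) is a point the paper leaves implicit, and it is handled correctly.
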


\subsection{Examples via flops}
For $\gamma \in U_{\beta}$, the 
one cycle $\phi_{\ast}\gamma$ 
is not a reduced cycle if it intersects with 
$\Ex(f)$ with multiplicity bigger than or equal to two. 
So if we know 
$n_{g, \gamma}^{P, \rm{loc}}=n_{g, \gamma}^{\rm{loc}}$, 
by Corollary~\ref{cor:ng2}
we obtain examples 
of non-reduced one cycles $\gamma'$
where $n_{g, \gamma'}^{P, \rm{loc}}=n_{g, \gamma'}^{\rm{loc}}$ holds. 
We give two examples 
where such an argument applies. 

First, the following corollary obviously 
follows from Theorem~\ref{thm:smooth}
and Corollary~\ref{cor:ng2}:  
\begin{cor}
Let $\phi \colon X \dashrightarrow X^{\dag}$ 
be a flop as in (\ref{flop:dia}), and 
$C \subset X$ a smooth curve 
which is not contained in (but may intersect with)
the exceptional locus of $\phi$. 
Then Conjecture~\ref{GV:conj2}
holds for the one cycle $\phi_{\ast}[C]$ on $X^{\dag}$. 
\end{cor}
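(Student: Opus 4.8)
The plan is to realize this corollary as the formal composition of the smooth-curve case, Theorem~\ref{thm:smooth}, with the flop-invariance of the local PT/GV correspondence packaged in Corollary~\ref{cor:ng2}. First I would set $\beta = [C] \in H_2(X, \mathbb{Z})$ and regard the fundamental cycle $\gamma = [C] \in \Chow_{\beta}(X)$. Since $C$ is a smooth projective curve that is not contained in the exceptional locus of $\phi$, the cycle $\gamma$ is irreducible and none of its components lie in $\Ex(f)$; hence $\gamma \in U_{\beta}$ and we are in the situation of Corollary~\ref{cor:ng}, so that the flop equivalence $\Phi_{\ast}$ of Proposition~\ref{prop:Phisom} carries $\gamma$ to $\phi_{\ast}\gamma = \phi_{\ast}[C] \in \Chow_{\phi_{\ast}\beta}(X^{\dag})$.

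Next I would feed Theorem~\ref{thm:smooth} into this setup. That theorem gives Conjecture~\ref{GV:conj2} at $\gamma = [C]$, namely $n_{g,\gamma}^{P,\mathrm{loc}} = n_{g,\gamma}^{\mathrm{loc}}$ for every $g$; moreover its proof, via Proposition~\ref{lem:M:CY}, establishes that $(\Sh_{\beta}(X), s_{\Sh})$ is strictly CY at $\gamma$, so the CY hypothesis required by Corollary~\ref{cor:ng} holds automatically and the invariant $n_{g,\phi_{\ast}\gamma}^{\mathrm{loc}}$ on $X^{\dag}$ is well defined. Applying the equivalence in Corollary~\ref{cor:ng2} then transports the equality $n_{g,\gamma}^{P,\mathrm{loc}} = n_{g,\gamma}^{\mathrm{loc}}$ to $n_{g,\phi_{\ast}\gamma}^{P,\mathrm{loc}} = n_{g,\phi_{\ast}\gamma}^{\mathrm{loc}}$, which is exactly the content of Conjecture~\ref{GV:conj2} for the one-cycle $\phi_{\ast}[C]$ on $X^{\dag}$.

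The argument is essentially formal once these two inputs are invoked, so I do not expect a genuine obstacle. The only points deserving a moment of care are checking that the hypothesis $C \not\subset \Ex(f)$ really places $\gamma$ in the open locus $U_{\beta}$ on which the flop comparison is valid, and that the CY orientation data used on the $X$-side is matched, under $\Phi_{\ast}$, by a CY orientation on the $X^{\dag}$-side; both are guaranteed by Corollary~\ref{cor:ng} together with the compatibility of $d$-critical structures recorded in Remark~\ref{rmk:compare:dcrit}. The mathematical substance lies entirely in Theorem~\ref{thm:smooth} and the flop-invariance results, and this corollary merely combines them.
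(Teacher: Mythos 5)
Your proposal is correct and follows exactly the paper's route: the paper derives this corollary directly from Theorem~\ref{thm:smooth} combined with Corollary~\ref{cor:ng2}, which is precisely the composition you carry out. Your additional care about $\gamma \in U_{\beta}$ and the CY condition at $\gamma$ (via Proposition~\ref{lem:M:CY}) simply makes explicit what the paper declares to be obvious.
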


We state the next example. 
Let 
$S$ be a smooth projective surface with $H^1(\oO_S)=0$, and 
take a blow-up 
\begin{align*}
h \colon S^{\dag} \to S
\end{align*}
at a point $p\in S$. 
Then there exist smooth projective 3-folds $X$, $X^{\dag}$
and a flop diagram (\ref{flop:dia})
satisfying the following conditions
(see~\cite[Lemma~4.2]{TodS}) 
\begin{itemize}
\item Both of the
 exceptional locus $Z=\Ex(f)$, $Z^{\dag} =\Ex(f^{\dag})$
are 
irreducible 
$(-1, -1)$-curves. 
\item There are closed embeddings 
\begin{align}\label{emb:S}
i \colon S \hookrightarrow X, \quad 
i^{\dag} \colon S^{\dag} \hookrightarrow X^{\dag}
\end{align}
such that 
$S \cap Z$ consists of one point, 
the strict transform of $S$
in $X^{\dag}$ 
coincides with $S^{\dag}$, 
and $Z^{\dag} \subset S^{\dag}$ coincides with 
the exceptional locus of $h \colon S^{\dag} \to S$.   
\item There are open neighborhoods 
$S \subset X_{0}$, $S^{\dag} \subset X_0^{\dag}$
and isomorphisms
\begin{align}\label{isom:KS}
X_0 \cong \mathrm{Tot}(K_S), \ 
X_0^{\dag} \cong \mathrm{Tot}(K_{S^{\dag}})
\end{align}
such that the embeddings (\ref{emb:S})
are identified with the zero sections. 
\end{itemize}
Below we regard 
one cycles on $S$, $S^{\dag}$
as one cycles on $X$, $X^{\dag}$
by isomorphisms (\ref{isom:KS}) and 
zero sections. 
Applying the 
result of Theorem~\ref{thm:locsur} and the 
argument of Corollary~\ref{cor:ng}, we have the following: 
\begin{cor}\label{cor:blowup}
(i) 
For any irreducible 
curve $C \subset S$, 
Conjecture~\ref{GV:conj2} holds for the one cycle 
$\phi_{\ast}C=h^{\ast}C$ on $X^{\dag}$. 

(ii) 
For any irreducible curve $C^{\dag} \subset S^{\dag}$, 
Conjecture~\ref{GV:conj2} holds for the one cycle 
$\phi_{\ast}^{-1}C^{\dag}$. 
\end{cor}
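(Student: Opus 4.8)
The plan is to deduce both statements by combining the local PT/GV correspondence for local surfaces (Theorem~\ref{thm:locsur}) with the flop-invariance of the correspondence established in Corollary~\ref{cor:ng2}. The essential observation is that the invariants $n_{g, \gamma}^{\rm loc}$ and $P_{n, \gamma}^{\rm loc}$ are \emph{local} on the Chow variety: since the fundamental cycle of any sheaf or stable pair contributing to them equals $\gamma$, the relevant moduli spaces, their canonical $d$-critical structures, and the perverse sheaves of vanishing cycles are all supported in an arbitrarily small neighborhood of $\Supp \gamma$. Hence, whenever $\Supp \gamma$ lies in an open subset isomorphic to $\mathrm{Tot}(K_S)$, the local invariants at $\gamma$ agree with those computed on $\mathrm{Tot}(K_S)$, and Theorem~\ref{thm:locsur} applies even though $X$ and $X^{\dag}$ are not themselves local surfaces.

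For part (i), I would argue as follows. The curve $C \subset S$ is irreducible, and since $S \cap Z$ is a single point the flopping curve $Z = \Ex(f)$ is not contained in $S$; in particular $C \neq Z$ and $C$ contains no component of $\Ex(f)$, so that $C \in U_{\beta}$. Viewing $C$ as a one-cycle on $X$ supported in $X_0 \cong \mathrm{Tot}(K_S)$, the locality observation together with Theorem~\ref{thm:locsur} gives Conjecture~\ref{GV:conj2} for $\gamma = C$ on $X$, i.e. $n_{g, C}^{P, \rm loc} = n_{g, C}^{\rm loc}$ for all $g$. Corollary~\ref{cor:ng2} then transfers this identity to $\phi_{\ast}C$, yielding Conjecture~\ref{GV:conj2} for $\phi_{\ast}C$ on $X^{\dag}$. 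The remaining point is the identification $\phi_{\ast}C = h^{\ast}C$ of the flopped cycle with the total transform of $C$ under $h \colon S^{\dag} \to S$; this follows from the explicit description of the flop near $S$ recorded in the setup (the strict transform of $S$ is $S^{\dag}$, and $Z^{\dag}$ is the $h$-exceptional curve), since the transform acquires the component $Z^{\dag}$ with multiplicity equal to $\mathrm{mult}_p(C)$.

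Part (ii) proceeds symmetrically, running the flop in the opposite direction $\phi^{-1} \colon X^{\dag} \dashrightarrow X$. Here one uses that $S^{\dag} = \mathrm{Bl}_p S$ again satisfies $H^1(\oO_{S^{\dag}}) = 0$ (the irregularity is a birational invariant of smooth surfaces), so Theorem~\ref{thm:locsur} applies on $X_0^{\dag} \cong \mathrm{Tot}(K_{S^{\dag}})$. For irreducible $C^{\dag} \subset S^{\dag}$ with $C^{\dag} \neq Z^{\dag}$, one has $C^{\dag} \in U_{\phi_{\ast}\beta}$ (it contains no component of $\Ex(f^{\dag}) = Z^{\dag}$), so Theorem~\ref{thm:locsur} gives Conjecture~\ref{GV:conj2} for $C^{\dag}$ on $X^{\dag}$, and Corollary~\ref{cor:ng2}, applied to the flop $\phi^{-1}$, transfers it to $\phi_{\ast}^{-1}C^{\dag}$ on $X$. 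In the boundary case $C^{\dag} = Z^{\dag}$, the corresponding cycle is the smooth $(-1,-1)$-curve $Z$ on $X$, for which Conjecture~\ref{GV:conj2} already holds by Theorem~\ref{thm:smooth}.

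The construction is short, and I expect the main subtleties to be bookkeeping rather than deep. Two points require care: first, making the locality claim precise enough to invoke Theorem~\ref{thm:locsur} on $X_0$ and $X_0^{\dag}$, which amounts to checking that the local moduli spaces, their canonical $d$-critical and hence CY structures, and the vanishing cycle sheaves restrict compatibly — this follows from Corollary~\ref{thm:CYloc} together with the fact that all objects in sight are supported near $\Supp \gamma$; and second, confirming the genericity hypothesis $C \in U_{\beta}$ (resp. $C^{\dag} \in U_{\phi_{\ast}\beta}$) and the identification $\phi_{\ast}C = h^{\ast}C$, both of which reduce to the concrete geometry of the flop recorded in the setup. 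Once these are in place, Corollary~\ref{cor:ng2} does the rest, and the non-reduced or non-planar nature of the resulting examples arises precisely from the exceptional component $Z^{\dag}$ (resp. $Z$) appearing with multiplicity in the transformed cycle.
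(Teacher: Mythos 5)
Your proposal is correct and follows essentially the same route as the paper: the paper's (very terse) proof consists precisely of regarding cycles on $S$, $S^{\dag}$ as cycles on $X$, $X^{\dag}$ via the identifications $X_0 \cong \mathrm{Tot}(K_S)$, $X_0^{\dag} \cong \mathrm{Tot}(K_{S^{\dag}})$ and the zero sections, applying Theorem~\ref{thm:locsur} there (which is legitimate by the locality of $n_{g,\gamma}^{\rm{loc}}$ and $P_{n,\gamma}^{\rm{loc}}$ over the Chow variety, exactly as you argue), and then transferring the identity across the flop by the argument of Corollary~\ref{cor:ng} and Corollary~\ref{cor:ng2}; your checks that $C \in U_{\beta}$, that $H^1(\oO_{S^{\dag}})=0$, and that $\phi_{\ast}C = h^{\ast}C$ acquires $Z^{\dag}$ with multiplicity $\mathrm{mult}_p(C)$ are the same bookkeeping the paper leaves implicit. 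The one inaccuracy is your boundary case $C^{\dag}=Z^{\dag}$ in (ii): since $Z^{\dag}$ is an irreducible component of $\Ex(f^{\dag})$, it does not lie in $U_{\phi_{\ast}\beta}$, so $\phi_{\ast}^{-1}C^{\dag}$ is simply undefined there (on homology $\phi_{\ast}^{-1}[Z^{\dag}]=-[Z]$ is not effective), and this case must be excluded from the statement rather than settled via Theorem~\ref{thm:smooth}; this does not affect the proof of the corollary as intended.
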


\begin{rmk}
Although $X$, $X^{\dag}$ may 
not be CY, they are CY at
$X_0 \cup C$ and $X_0^{\dag}$, so the 
statements of Corollary~\ref{cor:blowup} make sense. 
\end{rmk}

\begin{rmk}
In Corollary~\ref{cor:blowup} (i), suppose that the multiplicity of $C$ at $p$ is $m$. 
Then $\phi_{\ast}C=\overline{C}+mZ$, 
where $\overline{C} \subset S^{\dag}$ is the strict transform of $C$. 
In particular, $\phi_{\ast}C$ is not reduced, but it is planar. 
\end{rmk}
\begin{rmk}
In Corollary~\ref{cor:blowup} (ii), suppose that 
the curves $C^{\dag}$, $Z^{\dag}$ in 
$S^{\dag}$ intersect
 with 
multiplicity $m$. Then $\phi_{\ast}^{-1}C^{\dag}=h(C)+mZ$. 
In this case, the cycle 
$\phi_{\ast}^{-1}C^{\dag}$ is not reduced, not planar. 
\end{rmk}

\section{Non-primitive examples}\label{sec:non-prim}
We give some examples where Conjecture~\ref{GV:conj2} holds for 
non-primitive one cycles
and $g\ge 1$. 
\subsection{Super rigid elliptic curves}
Let 
$C$ be an elliptic curve 
and 
\begin{align*}
X=\mathrm{Tot}(L \oplus L^{-1})
\end{align*} for 
a for generic line bundle $L$ on $C$ with 
degree zero.
Note that $X$ is a non-compact CY 3-fold, with 
unique projective curve $C \subset X$
given by the zero section.   
For $\beta=m[C]$, the 
Chow variety $\Chow_{\beta}(X)$ consists of 
one point $m[C]$, and the moduli 
space 
$\Sh_{m[C]}(X)$ is isomorphic to 
$C$ itself (see~\cite[Proposition~4.4]{HST}). 
Therefore we have 
\begin{align*}
n_{1, m[C]}=0, \ n_{g, m[C]}=0, \ g\neq 1. 
\end{align*}
As $C$ is smooth, our GV invariants 
agree with the invariants defined in~\cite{HST}. 
In this case, 
Conjecture~\ref{conj:GV/GW} 
is checked in~\cite{HST} using the result of~\cite{PHodge}. 
Combined with DT/GW correspondence for 
local curves~\cite{BrPa}, 
Conjecture~\ref{GV:conj2} holds for the one cycle $\gamma=m[C]$. 

\subsection{Elliptic fibrations}
Let $X$ be a smooth projective CY 3-fold with 
an elliptic fibration
\begin{align*}
\pi \colon X \to S
\end{align*}
such that every scheme theoretic fiber is an integral curve. 
Let $F \in H_2(X, \mathbb{Z})$ be a fiber class 
of $\pi$, and set $\beta=n[F]$. 
Then 
$\Chow_{\beta}(X)=\mathrm{Sym}^n(S)$ and 
we have the commutative diagram
\begin{align}\label{dia:HCM}
\xymatrix{
X \ar[d]_{\pi} \ar[r]^(.4){\cong} & \Sh_{\beta}(X) \ar[d] \\
S \ar@<-0.3ex>@{^{(}->}[r] &  \mathrm{Sym}^n(S). 
}
\end{align}
Here the right arrow is the Hilbert-Chow map, 
and the bottom arrow is the diagonal map. 
We have the decomposition
\begin{align*}
\dR \pi_{\ast} \mathrm{IC}(X)=
\mathrm{IC}(S)[1] \oplus V \oplus \mathrm{IC}(S)[-1]
\end{align*}
where $V=R^1 \pi_{\ast}\mathbb{Q}_X[2]$ is a
perverse sheaf on $S$. 
For $s \in S$, let $X_s$ be the fiber of $\pi$ at $s$
which is either an elliptic curve, 
rational curve with one node or a cusp. 
In any case, we have
\begin{align*}
R^1 \pi_{\ast}\mathbb{Q}_X|_{s}=H^1(X_s, \mathbb{Q})=\mathbb{Q}^{2-e(X_s)}. 
\end{align*}
Then an easy calculation shows 
\begin{align*}
\chi(\mathrm{IC}(S))y^{-1}+\chi(V)+\chi(\mathrm{IC}(S))y
=-e(X)+e(S)(y^{\frac{1}{2}}+y^{-\frac{1}{2}})^2. 
\end{align*}
By the diagram (\ref{dia:HCM}), we obtain 
\begin{align*}
n_{0, \beta}=-e(X), \ n_{1, \beta}=e(S), \ 
n_{g, \beta}=0, \ g \ge 2.
\end{align*}
The invariants $n_{g, \beta}^P$ from stable pairs
are computed in~\cite[Theorem~6.9]{Tsurvey} via wall-crossing method,
and completely agree with the above $n_{g, \beta}$. 
The argument here is easily applied to the local 
version, proving Conjecture~\ref{GV:conj2}
for the one cycle $\gamma=n[X_s]$ for some $s \in S$.

\subsection{Hitchin moduli spaces}\label{subsec:hitchin}
Let $C$ be a smooth projective curve and 
\begin{align*}
X=\mathrm{Tot}(\oO_C \oplus K_C)
\end{align*}
a non-compact CY 3-fold. 
Let $C \subset X$ be the zero section, and 
take the curve class $\beta=r[C]$. 
Then the moduli space $\Sh_{\beta}(X)$ is isomorphic to 
the product of $\mathbb{A}^1$ with 
Hitchin moduli space of rank $r$
and Euler characteristic one stable Higgs bundles on $C$. 
In this case, 
by the work of Chuang-Diaconescu-Pan~\cite{CDP2}, 
Conjecture~\ref{GV:conj1} (or rather its refined version)
is reduced to the following conjectures: 
\begin{itemize}
\item Cataldo-Hausel-Migliorini's $P=W$ conjecture~\cite{CHM}:
it claims the perverse filtration on the
Hitchin moduli space coincides with the weight filtration
of the character variety
under their natural diffeomorphism. 
Here the character variety 
is the moduli space of representations of 
$\pi_1(C)$. 

\item Hausel-Rodriguez-Villegas's conjecture~\cite{HRV}:
it 
describes the generating series of mixed Poincar\'e
polynomials of character varieties in terms of 
explicit sum of rational functions associated to 
Young diagrams. 
\end{itemize}
The first conjecture is proven in~\cite{CHM} when $r=2$; for character varieties, the weight
polynomial (ignoring cohomological degree) is calculated in \cite{HRV}. 
By combining these results, Conjecture~\ref{GV:conj1} holds for 
$\beta=2[C]$. 
By applying the fiberwise $(\mathbb{C}^{\ast})^{\times 2}$-action 
on $X$ and localizing, we also obtain 
Conjecture~\ref{GV:conj2} for the one cycle $\gamma=2[C]$.

\appendix
\addcontentsline{toc}{section}{}
\section{Calabi-Yau orientation data}\label{sec:append}
In this Appendix, we discuss the 
role of orientation data 
on our definition of GV type 
invariants in Section~\ref{subsec:GVtype}. 
In general, they depend on 
a choice of an orientation data. 
Our idea to solve this issue is that we impose 
an additional condition (called CY condition)
of an orientation data,
and show that the resulting GV type invariants 
are independent of 
an orientation data as long as it satisfies the 
CY condition. 

Roughly speaking, a CY condition of an orientation 
data is that it is trivial along the fibers of 
the map $\pi \colon M^{\rm{red}} \to T$
as a line bundle. 
This is a quite strong restriction, and
such an orientation data 
does not always exist for arbitrary $d$-critical schemes. 
However for the moduli space of one dimensional 
sheaves and its HC map, we expect that such an orientation data 
exists. In other words, we expect that the
HC map for the moduli space of one dimensional sheaves
is a kind of Calabi-Yau fibration in a somewhat virtual sense.

\subsection{Dependence on orientation data}
Let us consider the situation in Section~\ref{subsec:GVtype},
i.e. $\mM=(M, s, K_{M, s}^{1/2})$ be an oriented $d$-critical 
scheme and $\pi \colon M^{\rm{red}} \to T$
be a projective morphism 
for a finite type complex scheme $T$. 
For $g \ge 0$ and $t \in T$, let  
\begin{align}\label{append:gvtype}
\GV_{g, \mM/T} \in \mathbb{Z}, \ 
\GVloc_{g, \mM/T, t} \in \mathbb{Z}
\end{align}
be the GV type invariants
given in Lemma~\ref{lem:ng}, Lemma~\ref{lem:ngloc}
respectively. 
For $g=0$, they do not depend on a choice of 
an orientation data $K_{M, s}^{1/2}$
(see Lemma~\ref{lem:g=0}). 
For $g\ge 1$, they may depend on a choice of an 
orientation data. 
However we have the following lemma: 
\begin{lem}\label{lem:orient}
Let $K_{M, s}^{'1/2}$
be another orientation of $(M, s)$. 
Suppose that for a Stein factorization 
\begin{align}\label{stein}
\pi \colon M^{\rm{red}} \stackrel{\pi_1}{\to} 
\overline{T}
\stackrel{\pi_2}{\to} T
\end{align}
i.e. $\overline{T}=\Spec_T(\pi_{\ast}\oO_{M^{\rm{red}}})$, 
there exists a line bundle $L_{\overline{T}}$ on $\overline{T}$
such that 
\begin{align*}
K_{M, s}^{'1/2} \cong 
K_{M, s}^{1/2} \otimes \pi_1^{\ast}L_{\overline{T}}
\end{align*}
as line bundles. 
Then for $\mM'=(M, s, K_{M, s}^{'1/2})$
and $t \in T$, 
we have 
\begin{align*}
\GV_{g, \mM/T}=\GV_{g, \mM'/T}, \ 
\GVloc_{g, \mM/T, t}=\GVloc_{g, \mM'/T, t}. 
\end{align*}
\end{lem}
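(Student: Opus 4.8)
The plan is to show that the two oriented structures yield perverse sheaves differing by a rank one local system pulled back from $\overline{T}$, and that such a twist changes none of the Euler characteristics entering the definitions. First I would compare the orientations. Squaring the given isomorphism $K_{M, s}^{'1/2}\cong K_{M, s}^{1/2}\otimes \pi_1^{\ast}L_{\overline{T}}$ and using that both sides are square roots of $K_{M,s}$ gives $\pi_1^{\ast}L_{\overline{T}}^{\otimes 2}\cong \oO_{M^{\rm red}}$. Since $\pi_1\colon M^{\rm red}\to \overline{T}$ is the Stein factorization, it is proper with $\pi_{1\ast}\oO_{M^{\rm red}}=\oO_{\overline{T}}$, so $\pi_1^{\ast}$ is injective on Picard groups and hence $L_{\overline{T}}^{\otimes 2}\cong \oO_{\overline{T}}$. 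Thus $L_{\overline{T}}$ is a $2$-torsion line bundle on $\overline{T}$ and determines a rank one $\mathbb{Z}/2\mathbb{Z}$-local system $\lL_{\overline{T}}$ on $\overline{T}$. By Theorem~\ref{thm:BDJS}, changing the orientation of $(M,s)$ by a $2$-torsion line bundle tensors the glued sheaf of vanishing cycles by the associated local system, so for $\mM'=(M,s,K_{M,s}^{'1/2})$ we obtain $\phi_{\mM'}\cong \phi_{\mM}\otimes \pi_1^{\ast}\lL_{\overline{T}}$.

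Next I would push this forward through the factorization $\pi=\pi_2\circ \pi_1$. Applying the projection formula to $\pi_1$ (legitimate since $\lL_{\overline{T}}$ is locally free of rank one) gives
\[
\dR(\pi_1)_{\ast}\phi_{\mM'}\cong \dR(\pi_1)_{\ast}\bigl(\phi_{\mM}\otimes \pi_1^{\ast}\lL_{\overline{T}}\bigr)\cong \bigl(\dR(\pi_1)_{\ast}\phi_{\mM}\bigr)\otimes \lL_{\overline{T}}.
\]
Because $\pi_2\colon \overline{T}\to T$ is finite, $\dR(\pi_2)_{\ast}$ is t-exact and commutes with $\pH^{i}(-)$, and tensoring by the rank one local system $\lL_{\overline{T}}$ is t-exact on $\Perv(\overline{T})$; hence
\[
\pH^{i}(\dR\pi_{\ast}\phi_{\mM'})\cong \dR(\pi_2)_{\ast}\bigl(\pH^{i}(\dR(\pi_1)_{\ast}\phi_{\mM})\otimes \lL_{\overline{T}}\bigr),
\]
which I would compare with $\pH^{i}(\dR\pi_{\ast}\phi_{\mM})\cong \dR(\pi_2)_{\ast}\pH^{i}(\dR(\pi_1)_{\ast}\phi_{\mM})$. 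This reduces both identities to the claim that tensoring by $\lL_{\overline{T}}$ preserves Euler characteristics.

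Finally I would invoke the identity $\chi(E)=\int \nu_E\,de$ from the paper's conventions. For any constructible complex $\mathcal{G}$ on $\overline{T}$ one has $\nu_{\mathcal{G}\otimes \lL_{\overline{T}}}(p)=\chi(\mathcal{G}|_{p}\otimes \lL_{\overline{T}}|_{p})=\chi(\mathcal{G}|_{p})=\nu_{\mathcal{G}}(p)$, since $\lL_{\overline{T}}$ has rank one, whence $\chi(\mathcal{G}\otimes \lL_{\overline{T}})=\chi(\mathcal{G})$; as $\pi_2$ is proper it also preserves $\chi$ of hypercohomology, giving $\GV_{g, \mM/T}=\GV_{g, \mM'/T}$. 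For the local invariants, proper base change along the finite map $\pi_2$ expresses the stalk at $t$ as the sum over $\pi_2^{-1}(t)$ of stalks of $\pH^{i}(\dR(\pi_1)_{\ast}\phi_{\mM})\otimes \lL_{\overline{T}}$, and the rank one twist again leaves each fiberwise Euler characteristic unchanged, yielding $\GVloc_{g, \mM/T, t}=\GVloc_{g, \mM'/T, t}$. I expect the main obstacle to be the first step: one must confirm that the difference of the two orientations descends to a genuine $2$-torsion class on $\overline{T}$ rather than merely on $M^{\rm red}$, for it is precisely this factorization of the twisting local system through $\pi_1$ that makes the projection formula and the finiteness of $\pi_2$ applicable; the remaining t-exactness and rank one Euler characteristic invariance are then routine.
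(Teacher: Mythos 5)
Your proposal is correct and follows essentially the same route as the paper's proof: descend the $2$-torsion difference $L_{\overline{T}}$ to $\overline{T}$ using $\pi_{1\ast}\oO_{M^{\rm{red}}}=\oO_{\overline{T}}$, identify $\phi_{\mM'}\cong\phi_{\mM}\otimes\pi_1^{\ast}\lL_{\overline{T}}$ for the associated rank one local system, push forward through the Stein factorization using the projection formula and the t-exactness of the finite map $\pi_2$, and conclude via invariance of stalkwise Euler characteristics under rank one twists. The only (harmless) difference is that the paper descends the trivializing isomorphism $\pi_1^{\ast}L_{\overline{T}}^{\otimes 2}\cong\oO_{M^{\rm{red}}}$ itself to an isomorphism $s'\colon L_{\overline{T}}^{\otimes 2}\cong\oO_{\overline{T}}$ and builds the $\mathbb{Z}/2\mathbb{Z}$-bundle of local square roots of $s'$, whereas you descend only the isomorphism class of the bundle; since the local system depends on the choice of trivialization, the paper's formulation is the marginally more careful one, but your argument is sound.
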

\begin{proof}
The isomorphism (\ref{isom:orient})
and the similar isomorphism for $K_{M, s}^{'1/2}$
gives an isomorphism 
$s \colon 
\pi_1^{\ast}L_{\overline{T}}^{\otimes 2} 
\stackrel{\cong}{\to} \oO_{M^{\rm{red}}}$.
Since $\pi_1$
satisfies $\pi_{1\ast}\oO_{M^{\rm{red}}}=\oO_{\overline{T}}$, 
the isomorphism $s$ is pulled 
back from an isomorphism 
$s' \colon L_{\overline{T}}^{\otimes 2} \stackrel{\cong}{\to}
\oO_{\overline{T}}$
via $\pi_1^{\ast}$. 
Let $\tau_T \colon \widetilde{T} \to \overline{T}$ be the   
$\mathbb{Z}/2\mathbb{Z}$-principal bundle 
which parametrizes local square roots of $s'$, and 
$\lL_{\overline{T}}$ the rank one local system on $\overline{T}$
given by $\tau_{T\ast}\mathbb{Q}_{\widetilde{T}}=
\mathbb{Q}_{\overline{T}} \oplus \lL_{\overline{T}}$. 
Let $\phi_{\mM'}$ be the vanishing cycle sheaf
on $M$
in Theorem~\ref{thm:BDJS}
defined from the 
oriented $d$-critical scheme $\mM'$. 
Then
the property (\ref{isom:IC})
shows that $\phi_{\mM'}=\phi_{\mM} \otimes \pi_1^{\ast}\lL_{\overline{T}}$. 
Also since $\pi_2$ is finite, $\dR \pi_{2\ast}=\pi_{2\ast}$ 
takes perverse sheaves on $\overline{T}$
to perverse sheaves on $T$.  
Therefore we obtain the isomorphisms
\begin{align*}
\pH^{i}(\dR \pi_{\ast}\phi_{\mM'})
\cong \pi_{2\ast}\left( \pH^{i}(\dR \pi_{1\ast}\phi_{\mM}) \otimes 
\lL_{\overline{T}} \right). 
\end{align*}
The lemma follows from the above
isomorphisms. 
\end{proof}

\subsection{Calabi-Yau $d$-critical schemes}
We introduce the following notion of CY $d$-critical schemes:
\begin{defi}\label{loc:cy}
(i)  
A $d$-critical scheme $(M, s)$ is called 
Calabi-Yau (CY for short)
 if $K_{M, s} \cong \oO_{M^{\rm{red}}}$. 

(ii)
An oriented $d$-critical scheme 
$(M, s, K_{M, s}^{1/2})$ is called 
CY if $K_{M, s}^{1/2} \cong \oO_{M^{\rm{red}}}$. 

(iii) 
 A $d$-critical scheme $(M, s)$
with a projective morphism $\pi \colon M^{\rm{red}} \to T$
is called CY at
$t \in T$ if 
there is an open neighborhood $t \in U \subset T$
such that, by setting $M_U^{\rm{red}} \cneq \pi^{-1}(U)$,
 the $d$-critical scheme
\begin{align}\label{notation:MU}
(M_U, s_U), \ 
M_U \cneq \iota(M_U^{\rm{red}}), \ s_U \cneq s|_{M_U}
\end{align}
is CY. Here $\iota \colon M^{\rm{red}} \hookrightarrow M$ is the 
closed immersion.

(iv) A $d$-critical 
scheme $(M, s)$ with a projective morphism 
$\pi \colon M^{\rm{red}} \to T$
is called a CY fibration over $T$
 if and only if 
it is CY at all of $t \in T$. 
\end{defi}

The following lemma is obvious. 
\begin{lem}
For a $d$-critical 
scheme $(M, s)$ with a projective morphism
$\pi \colon M^{\rm{red}} \to T$, 
it is a CY fibration over $T$ if and only if 
there is a line bundle $L$ on $\overline{T}$
for the Stein factorization (\ref{stein}) 
such that $K_{M, s} \cong \pi_1^{\ast}L$
\end{lem}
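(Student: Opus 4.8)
The plan is to prove the two implications separately, the key tool being the standard theory of the Stein factorization together with cohomology and base change for the proper morphism $\pi_1$. Recall that in the factorization $\pi \colon M^{\rm{red}} \xrightarrow{\pi_1} \overline{T} \xrightarrow{\pi_2} T$ the morphism $\pi_1$ is proper with $\pi_{1\ast}\oO_{M^{\rm{red}}} \cong \oO_{\overline{T}}$, so its fibres are connected, while $\pi_2$ is finite. Throughout, ``CY at $t$'' means $K_{M, s}|_{M_U^{\rm{red}}} \cong \oO_{M_U^{\rm{red}}}$ for a suitable open $t \in U \subset T$, where $M_U^{\rm{red}} = \pi^{-1}(U)$.

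For the implication $(\Leftarrow)$, suppose $K_{M, s} \cong \pi_1^{\ast}L$ for a line bundle $L$ on $\overline{T}$. Given $t \in T$, the fibre $\pi_2^{-1}(t)$ is finite, hence admits an affine open neighbourhood $V \subset \overline{T}$ on which $L$ is trivial. Since $\pi_2$ is finite, hence proper and closed, the set $U \cneq T \setminus \pi_2(\overline{T} \setminus V)$ is an open neighbourhood of $t$ with $\pi_2^{-1}(U) \subset V$; then $K_{M, s}|_{M_U^{\rm{red}}} \cong \pi_1^{\ast}(L|_{\pi_2^{-1}(U)})$ is trivial. Hence $(M, s)$ is CY at every $t$, i.e.\ a CY fibration.

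For the harder implication $(\Rightarrow)$, assume $(M, s)$ is a CY fibration. First I would observe that $K_{M, s}$ is trivial on each fibre of $\pi_1$: for $\bar t \in \overline{T}$ with $t = \pi_2(\bar t)$, choosing $U \ni t$ as in the CY condition gives $\pi_1^{-1}(\bar t) \subseteq \pi^{-1}(U) = M_U^{\rm{red}}$, on which $K_{M, s}$ is globally trivial. Setting $L \cneq \pi_{1\ast}K_{M, s}$, the constancy of $h^0(\pi_1^{-1}(\bar t), K_{M, s}|_{\pi_1^{-1}(\bar t)}) = 1$ together with Grauert's theorem shows that $L$ is a line bundle on $\overline{T}$ whose formation commutes with base change. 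The evaluation morphism $\pi_1^{\ast}L = \pi_1^{\ast}\pi_{1\ast}K_{M, s} \to K_{M, s}$ restricts on each fibre to the isomorphism $H^0(\pi_1^{-1}(\bar t), \oO) \otimes \oO \to \oO$ obtained from a local trivialization, so by Nakayama's lemma it is an isomorphism, giving $K_{M, s} \cong \pi_1^{\ast}L$ as desired.

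The main obstacle will be making the cohomology-and-base-change step fully rigorous on the possibly singular scheme $M^{\rm{red}}$ with possibly non-reduced scheme-theoretic fibres: one must verify that $h^0$ of the restriction of $K_{M, s}$ to the fibres of $\pi_1$ is genuinely constant equal to one, where connectedness of the fibres and the identity $\pi_{1\ast}\oO_{M^{\rm{red}}} = \oO_{\overline{T}}$ are precisely what force $H^0(\text{fibre}, \oO) = \mathbb{C}$, and that the base-change and Nakayama arguments then apply verbatim. By contrast, the backward direction is routine once one uses that a finite morphism is closed and that any line bundle trivializes on a neighbourhood of a finite set of points.
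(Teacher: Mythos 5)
The paper gives no proof of this lemma (it is declared ``obvious''), so your argument must stand on its own. Your backward implication is essentially correct: since $\pi_2$ is finite it is affine and closed, so $\pi_2^{-1}(t)$ lies in the affine open $\pi_2^{-1}(W)$ for any affine open $W \ni t$ (this, rather than finiteness of the set alone, is what guarantees an affine neighbourhood), a line bundle on an affine scheme is trivial near a finite set of points, and your shrinking $U \cneq T \setminus \pi_2(\overline{T}\setminus V)$ does the rest. The forward implication, however, has a genuine gap --- the very one you flag at the end. Grauert's theorem and cohomology-and-base-change require $K_{M,s}$ to be flat over $\overline{T}$; since $K_{M,s}$ is a line bundle on $M^{\rm{red}}$, this amounts to flatness of $\pi_1$ itself, which is neither assumed nor true in the situations where the lemma is applied: for Hilbert--Chow maps the fibre dimension jumps (e.g.\ in Lemma~\ref{moduli:pi} the fibres of $\pi$ are either points or the curves $C_i$), so $\pi_1$ cannot be flat. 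Moreover your input to Grauert, the constancy $h^0(\pi_1^{-1}(\bar t), \oO) = 1$, is itself unjustified: the Stein property gives connectedness of fibres, but a non-flat $\pi_1$ can have non-reduced scheme-theoretic fibres, for which $h^0(\oO)$ may exceed $1$. So the Grauert/base-change/Nakayama chain does not go through as written, and it cannot be repaired within that strategy without extra hypotheses.

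The repair is to not pass to fibres at all: the CY-fibration hypothesis is an open condition on $T$, strictly stronger than fibrewise triviality, and using it directly makes the statement genuinely obvious. Choose an open cover $\{U_i\}$ of $T$ with $K_{M,s}|_{\pi^{-1}(U_i)} \cong \oO$, and set $\overline{U}_i \cneq \pi_2^{-1}(U_i)$, so that the $\overline{U}_i$ cover $\overline{T}$ and $\pi^{-1}(U_i) = \pi_1^{-1}(\overline{U}_i)$. Define $L \cneq \pi_{1\ast}K_{M,s}$. Pushforward commutes with restriction to open subsets of the base, so
\begin{align*}
L|_{\overline{U}_i} \cong \pi_{1\ast}\bigl(K_{M,s}|_{\pi_1^{-1}(\overline{U}_i)}\bigr)
\cong \pi_{1\ast}\oO_{\pi_1^{-1}(\overline{U}_i)} = \oO_{\overline{U}_i}
\end{align*}
by the Stein property; hence $L$ is a line bundle. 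Under any trivialization of $K_{M,s}$ on $\pi_1^{-1}(\overline{U}_i)$, the counit $\pi_1^{\ast}L \to K_{M,s}$ is identified over $\overline{U}_i$ with the canonical isomorphism $\pi_1^{\ast}\pi_{1\ast}\oO \to \oO$, hence is an isomorphism globally. This uses only $\pi_{1\ast}\oO_{M^{\rm{red}}} = \oO_{\overline{T}}$: no flatness, no base change, no Nakayama.
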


We introduce the following notion of 
CY orientation data: 
\begin{defi}\label{defi:CYorient}
For a $d$-critical 
scheme $(M, s)$ with a projective morphism
$\pi \colon M^{\rm{red}} \to T$, 
suppose that it is a CY fibration over $T$. 
A CY orientation data of $(M, s)$ is an orientation 
data 
$K_{M, s}^{1/2}$
satisfying 
$K_{M, s}^{1/2} \cong \pi_1^{\ast} L^{1/2}$
for a line bundle $L^{1/2}$ on $\overline{T}$.
Here 
$M^{\rm{red}} \stackrel{\pi_1}{\to} \overline{T} \to T$ is the 
Stein factorization. 
\end{defi}

By Lemma~\ref{lem:orient}, we immediately have the following lemma: 
\begin{lem}\label{lem:gvcompare}
The GV type invariants 
(\ref{append:gvtype}) are independent of 
a choice of an orientation data as long as it is CY 
orientation data. 
\end{lem}

\begin{rmk}\label{rmk:loc/global}
If $\pi \colon M^{\rm{red}} \to T$ is a CY fibration, it is not 
a priori true that it always has a CY orientation data 
as in Definition~\ref{defi:CYorient}. 
However of course such an orientation data always exists
locally on $T$. Therefore using
a local CY orientation data, 
we can define the local GV type invariant
$\GVloc_{g, \mM/T, t} \in \mathbb{Z}$. 
Then following the relation (\ref{int:id}), 
we can define the global GV type invariant 
by the integration
\begin{align*}
\GV_{g, \mM/T} \cneq \int_{T} \GVloc_{g, \mM/T, t} \ de. 
\end{align*}
\end{rmk}

\subsection{Conjecture on Calabi-Yau conditions}
We keep the situation and notation from the previous subsection. 
We conjecture
that the GV invariants are 
always well-defined: 
\begin{conj}\label{conj:GV1}
The $d$-critical scheme $(\Sh_{\beta}(X), s)$ in 
Theorem~\ref{thm:CYM}
is a CY fibration over $\Chow_{\beta}(X)$
so that the local/global GV invariants $n_{g, \gamma}^{\rm{loc}}$, 
$n_{g, \beta}$ are defined
by Definition~\ref{def:locgv}, Remark~\ref{def:global}
respectively. 
\end{conj}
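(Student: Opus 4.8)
The plan is to reduce Conjecture~\ref{conj:GV1} to a fiberwise triviality statement for the virtual canonical bundle and then to trivialize it using the Calabi--Yau structure. By Definition~\ref{loc:cy} and the lemma preceding it, it suffices to work locally over $\Chow_\beta(X)$ and, for the Stein factorization~\eqref{stein} $\Sh_\beta^{\rm red}(X)\xrightarrow{\pi_1}\overline T\to\Chow_\beta(X)$ of the Hilbert--Chow morphism, to produce a line bundle $L$ on $\overline T$ with $K_{\Sh}^{\rm vir}\cong\pi_1^\ast L$; equivalently, one must show that $K_{\Sh}^{\rm vir}=\det(\dR p_{\Sh\ast}\dR\HOM(\eE,\eE))|_{\Sh_\beta^{\rm red}(X)}$ restricts trivially to each fiber $\pi^{-1}(\gamma)$. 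First I would analyze this determinant via Serre duality on $X$: since $K_X\cong\oO_X$, the complex $\dR\HOM(\eE,\eE)$ is self-dual up to the shift by $3$, and for a fine stable sheaf $\Ext^0(E,E)$ and $\Ext^3(E,E)$ are canonically trivial, so that fiberwise $K_{\Sh}^{\rm vir}$ is controlled entirely by the middle term $\Ext^1(E,E)\cong\Ext^2(E,E)^\vee$. The content of the conjecture is then that the line bundle assembled from these $\Ext$-groups is pulled back from the base along $\pi_1$.

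The main step is to construct a good local fibered model for $\pi$ near an arbitrary cycle $\gamma$, generalizing the constructions of Sections~\ref{sec:local} and~\ref{sec:smooth}. There, over a small neighborhood $U\subset\Chow_\beta(X)$ one realizes the relevant open part of $\Sh_\beta(X)$, up to a smooth factor coming from the dual obstruction cone, in terms of a relative compactified Jacobian $\overline J\to T$ attached to a family of curves, and the crucial input is that the relative canonical bundle of $\overline J$ over $T$ is trivial (\cite[Theorem~A]{MRV}). I would attempt to build such a description for general $\gamma$: associate to its (possibly reducible, non-reduced, non-planar) support curve the combinatorial data of its multiplicities, spread this into a family over $U$, and identify the fibers of $\pi$ with moduli of pure rank-one sheaves on these curves. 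Combining such an identification with the comparison of obstruction theories in Proposition~\ref{prop:natU} and the dual obstruction cone formalism of Subsection~\ref{subsec:cone} would express $K_{\Sh}^{\rm vir}$, modulo a line bundle pulled back from the base, as the relative canonical bundle of this fiberwise Jacobian, and triviality would follow from the hoped-for extension of~\cite{MRV}.

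An alternative, more intrinsic route is to work with the derived enhancement of Remark~\ref{rmk:shifted}: the $d$-critical structure, and hence $K_{\Sh}^{\rm vir}$, arises from the canonical $(-1)$-shifted symplectic structure on $\widehat\Sh_\beta(X)$ induced by the Calabi--Yau structure on $X$. One would like to promote this to a relative statement over $\Chow_\beta(X)$, namely a fiberwise Calabi--Yau (orientation) datum for the Hilbert--Chow map, so that triviality of $K_{\Sh}^{\rm vir}$ along the fibers becomes a formal consequence of relative orientability, bypassing explicit compactified-Jacobian geometry.

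The hard part will be controlling the local structure of $\pi$ over $\Chow_\beta(X)$ for genuinely singular cycles. Both the Chow variety and the relevant compactified Jacobians are poorly understood away from the integral planar case, so neither the existence of a well-behaved fibered local model nor the triviality of the relative canonical bundle of the fiberwise Jacobian is available off the shelf; the authors flag precisely this gap in the introduction. Thus the crux is either to extend \cite{MRV}-type triviality results to non-reduced and non-planar curves, or to establish a relative Calabi--Yau structure for the Hilbert--Chow map directly at the derived level.
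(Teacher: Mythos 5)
This statement is labelled a \emph{conjecture} in the paper, and the paper does not prove it; what it offers is partial evidence, so your proposal should be judged against that evidence rather than against a complete argument. Read that way, your sketch is a sensible research plan and correctly locates the obstruction, but it misses the one unconditional result the paper actually establishes, namely Proposition~\ref{prop:Ktriv}: for a normal quasi-projective base $T$ and a $T$-flat family $\fF \in \Coh(X\times T)$ of one-dimensional sheaves whose fundamental cycle $[\fF_t]$ is constant, one has $\det(\dR p_{T\ast}\dR \hH om_{X\times T}(\fF,\fF)) \cong \oO_T$. The proof is a K-theoretic codimension count, not a Jacobian model: by Grothendieck's theorem (\cite[Theorem~3.10]{HG}) the tensor product respects the codimension filtration $K^{\ge i}$ on a smooth variety; constancy of the cycle gives $[\fF], [\fF]^{\vee} \in [\oO_{C\times T}] + K^{\ge 3}(X\times T)$ while $[\oO_C]\otimes[\oO_C] \in K^{\ge 4}(X)=0$, so $[\fF]^{\vee}\otimes[\fF] \in K^{\ge 5}(X\times T)$, its pushforward lies in $K^{\ge 2}(T)$, and the determinant is trivial. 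This yields triviality of $K_{\Sh}^{\rm vir}$ on every fiber of the Hilbert--Chow map with at worst normal singularities (and numerical triviality in general) with no input from compactified Jacobians; but it says nothing when the fibers fail to be normal, which is exactly why the statement remains open.

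Your two routes track the paper's partial results rather than this appendix argument. The reduction to a line bundle pulled back along the Stein factorization is precisely the framework of Definition~\ref{loc:cy} and Definition~\ref{defi:CYorient}, and the relative-compactified-Jacobian strategy via \cite[Theorem~A]{MRV} is exactly how the paper verifies the CY fibration property in the cases it can handle: local surfaces (Corollary~\ref{thm:CYloc}) and smooth curves (Proposition~\ref{lem:M:CY}), where strictly CY $d$-critical charts are produced from versal families of curves. The derived, relative Calabi--Yau route you mention is likewise flagged by the paper (Remark~\ref{rmk:shifted}, Remark~\ref{rmk:compare:dcrit}, and the reference to \cite{CBTD}) but not carried out there either. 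So there is no error in your proposal, only the same open gap the authors themselves acknowledge --- extending MRV-type triviality beyond integral planar curves, or constructing a relative CY structure for the Hilbert--Chow map --- together with one concrete omission: the K-theoretic argument of Proposition~\ref{prop:Ktriv}, which gives fiberwise triviality on normal fibers essentially for free and is the natural unconditional statement to record before attacking the general case.
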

We have the following evidence 
of the above conjecture: 
\begin{prop}\label{prop:Ktriv}
Let $T$ be a normal quasi-projective 
variety and $\fF \in \Coh(X \times T)$
be a $T$-flat family of 
one dimensional sheaves on $X$ 
such that the fundamental 
cycle $[\fF_t] \in \Chow(X)$ 
for $t \in T$ is constant. 
Then we have 
\begin{align*}
\det \left(\dR p_{T\ast} \dR \hH om_{X \times T}(\fF, \fF)
\right) \cong \oO_T. 
\end{align*}
\end{prop}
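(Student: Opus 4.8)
The plan is to compute the first Chern class of the line bundle $\det(\dR p_{T\ast}\dR\hH om_{X\times T}(\fF,\fF))$ by Grothendieck--Riemann--Roch, observe that it vanishes because the fundamental cycle is constant, and then upgrade this vanishing to an honest trivialization by passing to the determinant-of-cohomology (Deligne-pairing) refinement of Riemann--Roch. First I would reduce to the case that $T$ is smooth: since $T$ is normal, its singular locus has codimension $\ge 2$, so restriction induces an isomorphism $\Pic(T)\xrightarrow{\cong}\Pic(T^{\mathrm{sm}})$, and the formation of $\det\dR p_{T\ast}(-)$ commutes with the flat restriction to $X\times T^{\mathrm{sm}}$; hence it suffices to trivialize the determinant over the smooth locus. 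On a smooth $T$ the sheaf $\fF$ is perfect on the smooth scheme $X\times T$ and $p_T$ is smooth of relative dimension $3$ with relative tangent bundle $\mathrm{pr}_X^{\ast}T_X$.

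Next I would run the numerical computation. Writing $G\cneq\dR\hH om_{X\times T}(\fF,\fF)$, GRR gives $\ch(\dR p_{T\ast}G)=p_{T\ast}(\ch(\fF)^{\vee}\ch(\fF)\,\td(\mathrm{pr}_X^{\ast}T_X))$, and $c_1$ of the determinant is the image in $H^2(T)$ of the $H^8(X\times T)$-component of the integrand. Since $\fF$ is supported in codimension $2$ we have $\ch(\fF)=\ch_2(\fF)+\ch_3(\fF)+\cdots$, and because $X$ is Calabi--Yau we have $c_1(X)=0$, so the odd Todd classes vanish; a degree count then shows that the only term of total degree $8$ is $\ch_2(\fF)^2$, whence $c_1(\det\dR p_{T\ast}G)=p_{T\ast}(\ch_2(\fF)^2)$. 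The constancy of the fundamental cycle enters here: the support cycle of $\fF$ on $X\times T$ is $\sum_i m_i\,[C_i\times T]=\mathrm{pr}_X^{\ast}\gamma$ for the fixed cycle $\gamma=\sum_i m_i[C_i]\in\Chow(X)$, so that $\ch_2(\fF)=\mathrm{pr}_X^{\ast}[\gamma]$ is pulled back from $X$. Consequently $\ch_2(\fF)^2=\mathrm{pr}_X^{\ast}([\gamma]^2)$ lies in $\mathrm{pr}_X^{\ast}H^8(X)=0$, since $\dim_{\mathbb C}X=3$. This already yields $c_1(\det\dR p_{T\ast}G)=0$.

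The hard part is upgrading $c_1=0$ to an actual isomorphism $\det\dR p_{T\ast}G\cong\oO_T$, since on a smooth quasi-projective $T$ a line bundle with vanishing rational $c_1$ may still lie in $\Pic^0(T)$ or be torsion. To handle this I would use the functorial Riemann--Roch for the determinant of cohomology, which identifies $\det\dR p_{T\ast}G$, up to canonically trivial factors coming from the lower-degree components (these push forward into $H^{\le 0}(T)$ and contribute only constants), with a Deligne pairing built out of the $H^8$-component $\ch_2(\fF)^2$. Because $\ch_2(\fF)=\mathrm{pr}_X^{\ast}[\gamma]$ is pulled back from the fibre factor $X$ and is therefore constant along $T$, this Deligne pairing is canonically the trivial bundle, giving the desired isomorphism. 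The main technical obstacle is thus not the vanishing of $c_1$ but making this last step rigorous: one must either invoke a precise line-bundle-level Riemann--Roch isomorphism, in the style of Deligne and Knudsen--Mumford extended to the perfect complex $G$ via its Chern-class filtration, and verify that every contribution other than the one from $\mathrm{pr}_X^{\ast}[\gamma]$ is trivial, or else argue directly that the determinant is rigid along $T$ and reduce to a single fibre where triviality is manifest.
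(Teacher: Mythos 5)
Your reduction to smooth $T$ and your GRR computation of $c_1$ are both correct, and the first step (normality gives $\Pic(T)\cong\Pic(T^{\rm sm})$) is exactly how the paper begins. But the proof has a genuine gap precisely where you flag it: the passage from $c_1\bigl(\det \dR p_{T\ast}\dR\hH om_{X\times T}(\fF,\fF)\bigr)=0$ to an actual isomorphism with $\oO_T$ is the whole content of the proposition, and your proposal does not establish it. A line bundle on a smooth quasi-projective variety with vanishing rational $c_1$ can still be a nontrivial element of $\Pic^0(T)$ or torsion, as you yourself note; and the proposed remedy --- a line-bundle-level Riemann--Roch via Deligne pairings or intersection bundles for the perfect complex $\dR\hH om(\fF,\fF)$ in relative dimension $3$ --- is only sketched. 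Making it rigorous is a substantial undertaking (Elkik-type intersection bundles for perfect complexes, the non-properness of $p_T$ handled through the compact support of $\fF$, and a check that cycle-level constancy, not merely constancy of the cohomology class, trivializes the resulting pairing), while the alternative ``rigidity'' suggestion is not an argument: a line bundle on $T$ is not determined by its restriction to a single point. As written, the proposal proves only $c_1=0$, not the statement.

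The paper closes exactly this gap by a more elementary device that you could substitute for your final paragraph: work in $K$-theory with the codimension filtration $K^{\ge i}$ rather than in cohomology. On a smooth variety this filtration is multiplicative under tensor product (a classical result of Grothendieck), so writing $[\fF]\equiv[\oO_{C\times T}]$ and $[\fF]^{\vee}\equiv[\oO_{C\times T}]$ modulo $K^{\ge 3}(X\times T)$ --- this is where constancy of the fundamental cycle enters, at the cycle level --- and using $[\oO_C]\otimes[\oO_C]\in K^{\ge 4}(X)=0$, one gets $[\dR\hH om_{X\times T}(\fF,\fF)]=[\fF]^{\vee}\otimes[\fF]\in K^{\ge 5}(X\times T)$; pushing forward along the $3$-dimensional fibers then puts $[\dR p_{T\ast}\dR\hH om_{X\times T}(\fF,\fF)]$ in $K^{\ge 2}(T)$. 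The point is that $\det$ kills $K^{\ge 2}(T)$ \emph{as a line bundle}, not just in $H^2(T,\mathbb{Q})$: the determinant of a coherent sheaf supported in codimension $\ge 2$ on a smooth variety is trivial. This is precisely the integral, bundle-level refinement your argument is missing, obtained without any determinant-of-cohomology machinery; replacing your last step by this $K$-theoretic argument completes the proof.
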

\begin{proof}
For a smooth quasi-projective variety $Y$, 
let
\begin{align*}
K^{\ge i}(Y) \subset K(Y)
\end{align*}
be
the subgroup
generated by 
sheaves whose supports have codimensions bigger than or
equal to $i$.
By a classical result of
Grothendieck (see~\cite[Theorem~3.10]{HG}), the 
smoothness of $Y$ implies that the
tensor product on the K-theory restricts to the map
\begin{align}\label{Ktheory}
\otimes \colon K^{\ge i}(Y) \times K^{\ge j}(Y)
\to K^{\ge i+j}(Y). 
\end{align}
For a normal variety $T$, 
any line bundle on it is determined by 
its smooth part, so we may assume that $T$ is smooth. 
Let $C \subset X$ be a subscheme whose 
fundamental cycle coincides with $[\fF_t]$. 
By the property (\ref{Ktheory}), 
we have 
\begin{align}\label{OcOc}
[\oO_C] \otimes [\oO_C] \in K^{\ge 4}(X)=0. 
\end{align}
Also by the assumption,
we have $[\fF], [\fF]^{\vee} \in K^{\ge 2}(X \times T)$ 
and 
\begin{align}\label{FOCT}
[\fF], [\fF]^{\vee}
 \in [\oO_{C \times T}]+ K^{\ge 3}(X \times T). 
\end{align}
By (\ref{Ktheory}), (\ref{OcOc}) and (\ref{FOCT}), 
we have 
\begin{align*}
[\dR \hH om_{X \times T}(\fF, \fF)]=
[\fF]^{\vee} \otimes [\fF] \in K^{\ge 5}(X \times T). 
\end{align*}
Since $X$ is three-dimensional, it follows that 
\begin{align*}
[\dR p_{T\ast} \dR \hH om_{X \times T}(\fF, \fF)]
\in K^{\ge 2}(T). 
\end{align*}
By taking the determinant, we obtain the proposition.
\end{proof}
\begin{rmk}
The above proposition 
in particular implies that the virtual 
canonical line bundle of $\Sh_{\beta}(X)$
is numerically 
trivial 
on any fiber of
the HC map 
\begin{align*}
\pi \colon \Sh_{\beta}^{\rm{red}}(X) \to \Chow_{\beta}(X)
\end{align*}
and 
trivial
on any fiber of $\pi$ with at worst normal singularities. 
These are necessary conditions for Conjecture~\ref{conj:GV1}. 
When $\Chow_{\beta}(X)$ is a one point,
the above proposition implies Conjecture~\ref{conj:GV1}
if $\Sh_{\beta}^{\rm{red}}(X)$ is normal. 
On the other hand if $\Sh_{\beta}^{\rm{red}}(X)$ is not normal, 
the argument of the above proposition does not imply 
Conjecture~\ref{conj:GV1}. 
\end{rmk}

\begin{rmk}
We may ask a question whether $(\Sh_{\beta}(X), s)$
is strictly CY at any point in $\Chow_{\beta}(X)$ 
(see Definition~\ref{defi:slcy}), which is stronger 
than Conjecture~\ref{conj:GV1}. 
In Section~\ref{sec:local}, we more or less proved 
such a statement for the local surface case. 
As we have no other evidence, we just leave it 
as a question (rather than a conjecture) in this paper. 
\end{rmk}

\newcommand{\etalchar}[1]{$^{#1}$}
\providecommand{\bysame}{\leavevmode\hbox to3em{\hrulefill}\thinspace}
\providecommand{\MR}{\relax\ifhmode\unskip\space\fi MR }
\providecommand{\MRhref}[2]{%
  \href{http://www.ams.org/mathscinet-getitem?mr=#1}{#2}
}
\providecommand{\href}[2]{#2}


\vspace{5mm}

Davesh Maulik \\
Massachusetts Institute of Technology, 
 Departement of Mathematics, 
 77 Massachusetts Avenue
 Cambridge, MA 02139, US.

\textit{E-mail address}: maulik@mit.edu

\vspace{5mm}

Yukinobu Toda \\
Kavli Institute for the Physics and 
Mathematics of the Universe, University of Tokyo,
5-1-5 Kashiwanoha, Kashiwa, 277-8583, Japan.

\textit{E-mail address}: yukinobu.toda@ipmu.jp

\end{document}